\documentclass{amsart}
\usepackage{amsfonts}%
\usepackage{amssymb}
\usepackage{mathrsfs}
\allowdisplaybreaks
  \usepackage{graphics}
  \usepackage{epsfig}
\usepackage{graphicx}  \usepackage{epstopdf}
 \usepackage[colorlinks=true]{hyperref}
\hypersetup{urlcolor=blue, citecolor=red}

  \textheight=8.2 true in
   \textwidth=5.0 true in
    \topmargin 30pt
     \setcounter{page}{1}


\newcommand{\dv}{\mathrm{div}\,}
\newcommand{\cl}{\mathrm{curl}\,}


\newtheorem{Theorem}{Theorem}[section]
\newtheorem{Lemma}{Lemma}[section]
\newtheorem{Proposition}{Proposition}[section]

\theoremstyle{definition}
\newtheorem{Def}{Definition}[section]

\theoremstyle{remark}
\newtheorem{Remark}{Remark}[section]

\title[Well-posedness and low Mach number limit of the  MHD equations]
 {Local well-posedness and low Mach number limit of
 the  compressible  magnetohydrodynamic equations in critical spaces}

\author[ Fucai Li, Yanmin Mu and Dehua Wang]{}

\subjclass{Primary: 76W05; Secondary: 35B40.}

\keywords{Isentropic compressible   magnetohydrodynamic equations,
 incompressible   magnetohydrodynamic equations, local well-posedness, low Mach number limit,
  critical spaces}

 \email{fli@nju.edu.cn}
  \email{yminmu@126.com}
 \email{dwang@math.pitt.edu}

\thanks{Y. Mu is the corresponding author}


\begin{document}
\maketitle

\centerline{\scshape Fucai Li}
\medskip
{\footnotesize
 \centerline{Department of Mathematics, Nanjing University}
  \centerline{Nanjing
 210093,  China}

\medskip

\centerline{\scshape Yanmin Mu}
\medskip
{\footnotesize
 \centerline{School  of Applied Mathematics, Nanjing University of Finance \& Economics}
  \centerline{Nanjing
 210046,  China}

\medskip

\centerline{\scshape Dehua Wang}
\medskip
{\footnotesize
 \centerline{Department of Mathematics,   University of Pittsburgh}
  \centerline{ Pittsburgh, PA 15260}

\bigskip


\begin{abstract}
The local well-posedness and low Mach number limit are considered for
 the  multi-dimensional isentropic  compressible  viscous magnetohydrodynamic equations in critical spaces.
First  the local well-posedness of solution  to the
viscous magnetohydrodynamic equations with large initial data is established.
 Then the low Mach number limit is studied for general large data and it is proved
 that  the solution of  the compressible  magnetohydrodynamic equations
   converges to
 that of the incompressible   magnetohydrodynamic equations as the Mach number
 tends to zero. Moreover,  the   convergence rates are obtained.
  \end{abstract}


\maketitle



\section{Introduction}
In this paper we consider  the local well-posedness and low Mach number limit to
 the following isentropic  compressible
magnetohydrodynamic (MHD) equations  in critical spaces (see \cite{LL,KL,PD}):
\begin{align}
&\partial_t\rho+\dv(\rho u)=0,\label{Aa1}\\
&\partial_t(\rho u)+\dv(\rho u\otimes u)+\nabla P(\rho)\nonumber\\
& \qquad\quad=
H\cdot\nabla H-\frac{1}{2}\nabla(|H|^2)+\mu\Delta u+(\mu+\lambda)\nabla\dv u,\label{Aa2}\\
&\partial_tH-\cl ( u \times H)=-\cl (\nu\,\cl H),\quad\dv H=0,\label{Aa3}\\
&(\rho,u,H)|_{t=0}=(\rho_{0},u_{0},H_{0})(x), \quad x\in \mathbb{R}^d.\label{Aa4}
\end{align}
Here $\rho$ denotes the density of the fluid, $u=(u^{(1)},\dots,u^{(d)})\in\mathbb{R}^d\, (d=2,3)$   is the fluid velocity field,  $H=(H^{(1)}, \dots,H^{(d)})\in\mathbb{R}^d$
is the magnetic field,  and $P$  is the pressure function satisfying $P^{'}(\rho)>0$.
The constants $\mu>0$ and $\lambda$ denotes the shear and bulk viscosity coefficients of the flow, respectively, satisfying  $ 2\mu+\lambda>0$.
The constant $\nu>0 $ is the magnetic diffusivity acting as a magnetic diffusion coefficient of the magnetic field.

The system \eqref{Aa1}-\eqref{Aa3} can be derived from the isentropic Navier-Stokes-Maxwell system by taking the zero dielectric constant limit \cite{JLi}.
 Recently, many results on the system \eqref{Aa1}-\eqref{Aa3} were obtained.
 Li and Yu \cite{LY} obtained the optimal decay rate of smooth solution when the initial data is a small perturbation of some give constant
 state. Suen and Hoff \cite{SH} established the global weak solutions when the initial energy is small. Later, this result was extended
 to the case when the initial data may contain  large oscillations or vacuum \cite{LXZ,LYZ}. Hu and Wang \cite{HW1} obtained the global existence
 and large-time behavior of general weak solution with finite energy in the sense of \cite{f2,f1,Lio}.
 Li, Su and Wang \cite{LSW} obtained the local strong solution to \eqref{Aa1}-\eqref{Aa3} with large initial data.
 Suen \cite{Su} as well as Xu and Zhang \cite{XZ} established some blow-up criteria for \eqref{Aa1}-\eqref{Aa3}.
The low Mach number limit of the system    \eqref{Aa1}-\eqref{Aa3} has also been studied recently.
  Hu and Wang \cite{HW} proved the convergence of the
weak solutions of the compressible MHD equations   to a weak solution of the viscous incompressible  MHD equations.
Jiang, Ju and Li obtained the convergence of the weak  solutions of the compressible MHD equations
 to the strong solution of the ideal incompressible MHD equations   in
  the whole space \cite{JJL1} or to the viscous incompressible  MHD equations  in torus \cite{JJL2} for general
  initial data. Feireisl, Novotny,  and Sun \cite{FNS} extended and improved the results in \cite{JJL1}
  to the unbounded domain case.  Li \cite{Li} studied the invisid, incompressible limit of the viscous isentropic
compressible MHD  equations for local smooth solutions  with well-prepared initial data.  Dou,  Jiang, and Ju \cite{DJJ}
studied the low Mach number limit for the compressible magnetohydrodynamic equations in
a bounded domain with perfectly conducting boundary.
See the  recent papers \cite{DJJ,FNS,JJL1,JJL2,Li}  and the references therein for more discussions of   other related results.

We point out that all of the above results were carried out in the framework of Sobolev spaces.
Obviously, up to a change of the pressure function $P$ into $l^2P$ in the system \eqref{Aa1}-\eqref{Aa3}, it
  is invariant under the scaling:  
\begin{align}\label{scaling}
 \rho^\epsilon(t,x), u^\epsilon(t,x), H^\epsilon(t,x))\rightarrow  \rho^\epsilon(l^2t,lx),  l u^\epsilon(l^2t,lx), l^2H^\epsilon(l^2t,lx).
\end{align}
Thus it is natural to study the system \eqref{Aa1}-\eqref{Aa3} in critical spaces. \big(A function space $\mathcal{E}\in
\mathcal{S}'(\mathbb{R}^+\times \mathbb{R}^d; \mathbb{R}\times \mathbb{R}^d\times \mathbb{R}^d)$ is called
a critical space for the system \eqref{Aa1}-\eqref{Aa3} if the associated norm is invariant under the
transformation of \eqref{scaling} (up to a constant independent of $l$)\big).
In \cite{Ha}, Hao obtained the global existence of solution to the  system \eqref{Aa1}-\eqref{Aa3} in
the critical space when the initial data is a small  perturbation of some given constant
state. In \cite{Mu}, the second author low Mach number limit of
 the  system \eqref{Aa1}-\eqref{Aa3} for small initial data in   Besov space.
 In \cite{BY}, Bian and Yuan studied the inviscid version of \eqref{Aa1}-\eqref{Aa3} in the super critical
Besov spaces.

The purpose of this  paper is to study the local well-posedness and low Mach number limit of
 the  system \eqref{Aa1}-\eqref{Aa3} with large initial data  in critical Besov spaces in the whole space $\mathbb{R}^d$.
We add the the following condition to  the system  \eqref{Aa1}-\eqref{Aa3} in the far field:
\begin{equation}\label{far}
\rho \rightarrow 1,  \   u \rightarrow 0, \  H \rightarrow 0 \ \ \text{as}  \ \  |x|\rightarrow \infty.
\end{equation}
Denoting $$a:=\rho-1,$$
 introducing the viscosity operator $$\mathcal{A}:=\mu\Delta+(\lambda+\mu)\nabla\dv\!,$$
and using the identities:
\begin{gather*}
\cl\cl H= \nabla \dv H-\Delta H,\\
\cl( u \times H) = u(\dv H)- H(\dv u) +H\cdot \nabla u-u\cdot \nabla H,
\end{gather*}
 we can
 rewrite the Cauchy problem  \eqref{Aa1}-\eqref{Aa4} as the following:
\begin{align}
&\partial_t a+ u\cdot\nabla a=-(1+a)\dv u,\label{Aa5}\\
&\partial_t u+ u\cdot\nabla u-\frac{1}{1+a}\mathcal{A}u+\nabla G(a)=
\frac{1}{1+a}\Big(H\cdot\nabla H-\frac{1}{2}\nabla(|H|^2)\Big),\label{Aa6}\\
&\partial_tH+u\cdot\nabla H-H\cdot\nabla u-\nu\Delta H=-(\dv u)H,\quad\dv H=0,\label{Aa7}\\
&(a,u,H)|_{t=0}=(a_{0},u_{0},H_{0})(x), \quad x\in \mathbb{R}^d, \label{Aa8}
\end{align}
where $$\nabla G(a):=\frac{1}{1+a}\nabla P(1+a).$$

To state our results, we introduce the following function spaces:
\begin{align*}
  E^{\alpha}_{T}:=\, & \widetilde{C}_{T}( \dot{B}^{\frac{d}{2}}_{2,1}\cap \dot{B}^{\frac{d}{2}+\alpha}_{2,1})
\times\big(\widetilde{C}_{T}( \dot{B}^{\frac{d}{2}-1}_{2,1}\cap \dot{B}^{\frac{d}{2}-1+\alpha}_{2,1})
\cap L^{1}_T(\dot{B}^{\frac{d}{2}+1}_{2,1}\cap \dot{B}^{\frac{d}{2}+1+\alpha}_{2,1})\big)^{d}\nonumber\\
 &\times\big(\widetilde{C}_{T}( \dot{B}^{\frac{d}{2}-1}_{2,1}\cap \dot{B}^{\frac{d}{2}-1+\alpha}_{2,1})
 \cap L^{1}_T(\dot{B}^{\frac{d}{2}+1}_{2,1}\cap \dot{B}^{\frac{d}{2}+1+\alpha}_{2,1})\big)^{d},\nonumber
 \\
F^{\frac{d}{2}+\beta}_{T}:=\,& \mathcal{C}_{b}([0,T];\dot{B}^{\frac{d}{2}+\beta-1}_{2,1})^{2d}\cap L^{1}_T(\dot{B}^{\frac{d}{2}+\beta+1}_{2,1})^{2d}
 \end{align*}
 with
 \begin{align*}
& \widetilde{\mathcal{C}}_{T}(\dot{B}^{s}_{p,1}):=\mathcal{C}([0,T];\dot{B}^{s}_{p,1})\cap
\widetilde{L}^{\infty}_T(\dot{B}^{s}_{p,1}),
 \end{align*}
where $\dot{B}^{s}_{p,1}$ denotes the homogeneous Besov space. We shall explain these notations in detail in  Appendix \ref{AppA}.

Our first result of this paper reads as follows.

\begin{Theorem}\label{ThA}
Assume that the initial data $(a_{0},u_{0},H_{0})$ satisfy $ \dv H_0=0$ and
$$
a_{0}\in \dot{B}^{\frac{d}{2}}_{2,1}\cap\dot{B}^{\frac{d}{2}+\alpha}_{2,1},\quad
u_{0}\in \dot{B}^{\frac{d}{2}-1}_{2,1}\cap\dot{B}^{\frac{d}{2}-1+\alpha}_{2,1},\quad
H_{0}\in \dot{B}^{\frac{d}{2}}_{2,1}\cap\dot{B}^{\frac{d}{2}+\alpha}_{2,1},\quad
$$
 for some $\alpha\in (0,1]$.  If, in addition, $\displaystyle \inf_{x\in \mathbb{R}^d}a_{0}(x)>-1,$  then there exists a   $T>0$ such that
 the problem \eqref{Aa5}-\eqref{Aa8} has a unique solution $(a,u,H)$ on $[0,T]\times \mathbb{R}^{d}$ which belongs to
 $ E^{\alpha}_{T}$ and satisfies $\displaystyle{\inf_{(t,x)\in [0,T]\times\mathbb{R}^{d}}}a(t,x)>-1$.
\end{Theorem}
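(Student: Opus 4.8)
The plan is to construct the solution by a Friedrichs-type iteration adapted to the critical Besov framework, derive uniform bounds in $E^{\alpha}_{T}$ on a short interval, and then pass to the limit. Starting from a smooth first iterate, I would define $(a^{n+1},u^{n+1},H^{n+1})$ as the solution of the \emph{linear} system obtained by freezing the previous iterate in all nonlinear coefficients: the transport equation
\[
\partial_t a^{n+1}+u^n\cdot\nabla a^{n+1}=-(1+a^n)\dv u^n
\]
for the density, the Lamé-type equation $\partial_t u^{n+1}-\tfrac{1}{1+a^n}\mathcal{A}u^{n+1}=f^n$ for the velocity, and the heat equation $\partial_t H^{n+1}-\nu\Delta H^{n+1}=g^n$ for the magnetic field, where $f^n$ and $g^n$ gather the convective and coupling terms of \eqref{Aa6}-\eqref{Aa7}. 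Each linear problem is solved in the required class by the transport and parabolic (maximal $L^1_T$-regularity) estimates recalled in Appendix~\ref{AppA}.

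Second, I would close uniform estimates by combining three ingredients. For the density, the borderline transport estimate in the critical space gives
\[
\|a^{n+1}\|_{\widetilde L^\infty_T(\dot B^{d/2}_{2,1}\cap\dot B^{d/2+\alpha}_{2,1})}
\lesssim e^{C\int_0^T\|\nabla u^n\|_{\dot B^{d/2}_{2,1}}\,dt}\Big(\|a_0\|_{\dot B^{d/2}_{2,1}\cap\dot B^{d/2+\alpha}_{2,1}}+\|(1+a^n)\dv u^n\|_{L^1_T(\dot B^{d/2}_{2,1}\cap\dot B^{d/2+\alpha}_{2,1})}\Big),
\]
which is admissible precisely because $\alpha\le1$ keeps the top index $d/2+\alpha$ within the range $(-d/2,d/2+1]$ where the endpoint transport estimate holds. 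For the velocity and magnetic field, maximal $L^1_T$-regularity of the Lamé and heat semigroups produces the parabolic gain from $\dot B^{d/2-1}_{2,1}\cap\dot B^{d/2-1+\alpha}_{2,1}$ data up to $L^1_T(\dot B^{d/2+1}_{2,1}\cap\dot B^{d/2+1+\alpha}_{2,1})$. Finally, since $\dot B^{d/2}_{2,1}$ is an algebra, the product law together with the composition estimates for $G(a^n)$ and $1/(1+a^n)$ bounds $f^n$ and $g^n$. The decisive matching is that the parabolic gain on $u^n$ furnishes exactly the quantity $\|\nabla u^n\|_{L^1_T(\dot B^{d/2}_{2,1})}$ controlling the exponential factor in the transport bound; choosing $T$ small (depending only on the data norm) makes this factor $\le2$ and renders the iteration map stable on a fixed ball of $E^{\alpha}_{T}$. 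Because $a\in\mathcal C([0,T];\dot B^{d/2}_{2,1})\hookrightarrow\mathcal C([0,T]\times\mathbb R^d)$ and $\|a^{n+1}-a_0\|_{L^\infty_{T,x}}\to0$ as $T\to0$, the condition $\inf_x a_0>-1$ propagates to $\inf_{t,x}a>-1$ on the short interval.

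Third, I would prove convergence by estimating the differences $\delta a^n:=a^{n+1}-a^n$, $\delta u^n$, $\delta H^n$. These solve the same linear equations with source terms at least linear in the previous differences, so the analogous transport and parabolic estimates—now carried out with indices lowered by one, reflecting the unavoidable loss of regularity for quasilinear systems—show that $\sum_n(\delta a^n,\delta u^n,\delta H^n)$ converges in the lower-regularity version of $E^{\alpha}_{T}$. The limit $(a,u,H)$ solves \eqref{Aa5}-\eqref{Aa8}; the uniform $E^{\alpha}_{T}$-bound and Fatou's lemma place it in $E^{\alpha}_{T}$, the $\widetilde C_T$ structure yields time continuity, and $\dv H=0$ is preserved because applying $\dv$ to \eqref{Aa7} gives a transport-diffusion equation for $\dv H$ with zero initial data. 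Uniqueness follows from the very same contraction estimate applied to the difference of two solutions issued from the same data.

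The main obstacle is the top-order variable coefficient $\tfrac{1}{1+a}\mathcal{A}u$. For large data $a$ is not small, so this term cannot be treated as a perturbation of the constant-coefficient operator $\mathcal{A}$, and one must establish maximal regularity for the genuinely variable-coefficient Lamé operator uniformly in $n$. I would handle this by writing $\tfrac{1}{1+a^n}=\tfrac{1}{1+a_0}+\big(\tfrac{1}{1+a^n}-\tfrac{1}{1+a_0}\big)$, keeping the essentially time-frozen leading part on the left and absorbing the remainder—whose norm $\|a^n-a_0\|_{\widetilde L^\infty_T(\dot B^{d/2}_{2,1})}$ tends to $0$ with $T$—via paraproduct and commutator estimates. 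This variable-coefficient parabolic analysis, combined with the endpoint nature of the transport estimate in $\dot B^{d/2}_{2,1}$, is where the real work concentrates; the remaining nonlinear terms reduce to the product, composition, and commutator calculus in homogeneous Besov spaces collected in Appendix~\ref{AppA}.
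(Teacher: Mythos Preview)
Your proposal is a coherent and essentially correct alternative, but it differs from the paper's proof in two structural respects worth noting.

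First, the paper does not run a Picard iteration in which $(a^{n+1},u^{n+1},H^{n+1})$ solves a linear system with frozen coefficients. Instead it applies a Friedrichs spectral truncation $\dot{\mathbb{E}}_n$ and solves the resulting \emph{nonlinear} ODE in the finite-dimensional space $\dot L^2_n$; the full nonlinear structure is preserved, so the a~priori estimate of Proposition~\ref{PrA} transfers verbatim to $(a^n,u^n,H^n)$. Convergence is then obtained by compactness (equicontinuity in time plus Ascoli) rather than by a Cauchy/contraction argument, and uniqueness is proved separately in the weaker space $F^\alpha_T$. Your iterative scheme with convergence in a lowered-index norm is the more classical route; it gives uniqueness for free but requires you to verify that the linearised equations you write down actually admit solutions in the target class at each step, whereas the paper's ODE step is automatic.

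Second, the smallness mechanism in the paper is made explicit through the free solutions $u_L=e^{t\mathcal A}u_0$ and $H_L=e^{\nu t\Delta}H_0$: one works with $\bar u=u-u_L$, $\bar H=H-H_L$, and the conditions \eqref{bb3}--\eqref{bb4} for closing the estimates involve $U_L^\alpha(T)$ and $H_L(T)$, which tend to $0$ as $T\to0$ by the semigroup smoothing (Remark~\ref{Rem21}). Your ``choose $T$ small'' step would have to reproduce this, and the $u_L$-splitting is the clean way to do it. As for the variable-coefficient Lam\'e operator, the paper does not perturb around $a_0$; it invokes Proposition~\ref{prop4.2} (from \cite{BCD}) directly with $c=-I(a)$, and the crucial point is the factor $\|c\|_{\dot B^{d/2+\alpha}_{2,1}}^{2/\alpha}$ in the exponential, which explains the role of the extra $\alpha$-regularity. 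Your proposed splitting $\tfrac{1}{1+a^n}=\tfrac{1}{1+a_0}+\text{(small in time)}$ is viable, but note that the time-frozen piece $\tfrac{1}{1+a_0}\mathcal A$ is still a large variable-coefficient operator and its maximal regularity is again exactly the content of Proposition~\ref{prop4.2}; you do not avoid that estimate, you merely apply it with a time-independent $c$.
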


\begin{Remark}\label{RemA}
   Theorem \ref{ThA} still holds for $\alpha=0$. Here we assume additional  regularity  on the initial
   data to obtain more regular solution, which is needed in the study of the low Mach number limit to the system \eqref{Aa1}-\eqref{Aa3} below.
For the case $\alpha=0$, the proof of the uniqueness of   solution in dimension two  needs additional arguments, and we refer the readers to \cite{Db,De}
for  the corresponding discussions on the isentropic Navier-Stokes equations.

\end{Remark}

Denote by $\epsilon$ the (scaled) Mach number.  Introducing the scaling:
\begin{equation*}
 \rho(x,t)=\rho^\epsilon (x,\epsilon t),\quad u(x,t)=\epsilon u^\epsilon(x,\epsilon t), \quad  H(x,t)=\epsilon H^\epsilon(x,\epsilon t),
 \end{equation*}
 and assuming that the viscosity coefficients ${\mu}$,
${\xi}$, and ${\nu}$
  are   small constants and scaled as:
\begin{align}\nonumber
  {\mu}=\epsilon \mu^\epsilon, \quad {\lambda}=\epsilon \lambda^\epsilon,
  \quad {\nu}=\epsilon \nu^\epsilon,
\end{align}
then   we can rewrite the problem \eqref{Aa1}-\eqref{Aa4} as the following:
\begin{align}
&\partial_t\rho^{\epsilon}+\dv(\rho^{\epsilon} u^{\epsilon})=0,\label{Aa9}\\
&\partial_t(\rho^{\epsilon} u^{\epsilon})+\dv(\rho^{\epsilon} u^{\epsilon}\otimes u^{\epsilon})+\frac{\nabla P^{\epsilon}}{\epsilon^2}\nonumber\\
& \qquad  \quad =
H^{\epsilon}\cdot\nabla H^{\epsilon}-\frac{1}{2}\nabla(|H^{\epsilon}|^2)
+\mu^{\epsilon}\Delta u^{\epsilon}+(\mu^{\epsilon}+\lambda^{\epsilon})\nabla\dv u^{\epsilon},\label{Aa10}\\
&\partial_tH^{\epsilon}+(\dv u^{\epsilon})H^{\epsilon}+u^{\epsilon}\cdot\nabla H^{\epsilon}-H^{\epsilon}\cdot\nabla u^{\epsilon}
  =\nu^{\epsilon}\Delta H^{\epsilon},\quad\dv H^{\epsilon}=0,\label{Aa11}\\
&(\rho^{\epsilon},u^{\epsilon},H^{\epsilon})|_{t=0}=(\rho^{\epsilon}_{0},u^{\epsilon}_{0},H^{\epsilon}_{0})(x),  \quad x\in \mathbb{R}^d,\label{Aa12}
\end{align}
where $P^{\epsilon}:=P(\rho^\epsilon)$ stands for the pressure.
 For the simplicity of notations and presentation,  we shall assume  that
$\mu^\epsilon $, $\lambda^\epsilon $, and
$\nu^\epsilon $ are constants, independent of $\epsilon$, and still denote them as  ${\mu}$,
${\lambda}$, and ${\nu}$ with an abuse of notations.

  Formally, if we let $\epsilon$ go to zero, then we  have $\nabla P^{\epsilon}\rightarrow 0$.
  Thus, if $P'(\cdot)$ does not vanish, the limit density has to be a constant.
Denote by  $(v,B)$   the limit of $(u^{\epsilon},H^{\epsilon})$. Taking  the limit in the mass equation \eqref{Aa9}
implies that the limit $v$ is divergence-free. Passing to the limit in the equations  \eqref{Aa10} and \eqref{Aa11},
we conclude that  $(v,B)$ must satisfy the following incompressible MHD equations:
\begin{align}
&\partial_t v+v\cdot\nabla v-\mu\Delta v+\nabla\pi=
B\cdot\nabla B-\frac{1}{2}\nabla(|B|^2),\label{Aa14}\\
&\partial_tB+v\cdot\nabla B-B\cdot\nabla v=\nu\Delta B,\label{Aa15}\\
&\dv v=0, \quad \dv B=0,\label{Aa13}\\
&(v,B)|_{t=0}=(v_{0},B_{0})(x),  \quad x\in \mathbb{R}^d. \label{Aa16}
\end{align}

As mentioned before, the rigorous derivation of the above heuristic process was  proved recently in \cite{Li,HW,JJL1,JJL2}
 in the framework of Sobolev spaces.
Here we  want to   justify the above formal process in critical Besov spaces.
More precisely, we shall establish the convergence of the system \eqref{Aa9}-\eqref{Aa11} to the system \eqref{Aa14}-\eqref{Aa13}
based on the results  obtained in Theorem \ref{ThA}.  We shall focus on the case of ill-prepared data
where the  acoustic waves caused by the oscillations must  be considered.

Writing $\rho^{\epsilon}=1+\epsilon b^{\epsilon}$, it is easy to check  that   $(b^{\epsilon},u^{\epsilon},H^{\epsilon})$
satisfies
\begin{align}
&\partial_t b^{\epsilon}+\frac{\dv u^{\epsilon}}{\epsilon}=-\dv(b^{\epsilon}u^{\epsilon}),\label{Aa17}\\
&\partial_t u^{\epsilon}+u^{\epsilon}\cdot\nabla u^{\epsilon}-\frac{\mathcal{A}u^{\epsilon}}{1+\epsilon b^{\epsilon}}
+\frac{ P^{'}(1+\epsilon b^{\epsilon})\nabla b^{\epsilon}}{(1+\epsilon b^{\epsilon})\epsilon}\nonumber\\
& \qquad \qquad \qquad \qquad \qquad= \frac{1}{1+\epsilon b^{\epsilon}}\big(H^{\epsilon}\cdot\nabla H^{\epsilon}-\frac{1}{2}\nabla(|H^{\epsilon}|^2)\big) ,\label{Aa18}\\
&\partial_tH^{\epsilon}+(\dv u^{\epsilon})H^{\epsilon}+u^{\epsilon}\cdot\nabla H^{\epsilon}-H^{\epsilon}\cdot\nabla u^{\epsilon}
=\nu\Delta H^{\epsilon},\quad\dv H^{\epsilon}=0,\label{Aa19}\\
&(b^{\epsilon},u^{\epsilon},H^{\epsilon})|_{t=0}=(b^{\epsilon}_{0},u^{\epsilon}_{0},H^{\epsilon}_{0})(x),  \quad x\in \mathbb{R}^d. \label{Aa20}
\end{align}
  For the sake of simplicity, we shall also assume that the initial data $(b_{0}^{\epsilon},u^{\epsilon}_{0},H^{\epsilon}_{0})$
does not depend on $\epsilon$ and will be   denoted by $(b_{0},u_{0},H_{0})$. The general
case of  $ (b_{0}^{\epsilon},u^{\epsilon}_{0},H^{\epsilon}_{0}) \rightarrow (b_{0},u_{0},H_{0})$  as $\epsilon \rightarrow 0$
in some Besov spaces can be treated similarly by a slight modification of the arguments presented here.

Denoting  $\mathcal{P}$ the Leray projector on solenoidal vector fields defined by $\mathcal{P}:=I -\mathcal{P}^{\bot}$ with
$\mathcal{P}^{\bot}:=\Delta^{-1}\nabla \dv$, and introducing the following functional space:
\begin{align}
E^{\frac{d}{2}+\beta}_{\epsilon,T}:=\,& \mathcal{C}_{b}([0,T];\tilde{B}^{\frac{d}{2}+\beta,\infty}_{\epsilon})
\cap L^{1}_T(\tilde{B}^{\frac{d}{2}+\beta,1}_{\epsilon})\nonumber\\
            &\times\big(\mathcal{C}_{b}([0,T];\dot{B}^{\frac{d}{2}-1+\beta}_{2,1})\cap L^{1}_T(\dot{B}^{\frac{d}{2}+1+\beta}_{2,1})\big)^{d}\nonumber\\
            &\times\big(\mathcal{C}_{b}([0,T];\dot{B}^{\frac{d}{2}-1+\beta}_{2,1})\cap L^{1}_T(\dot{B}^{\frac{d}{2}+1+\beta}_{2,1})\big)^{d},\nonumber
            \end{align}
 with the norm
 \begin{align}
 \|(\rho,u,H)\|_{E^{\frac{d}{2}+\beta}_{\epsilon,T}}:=\,&\|\rho\|_{L^{\infty}_T(\tilde{B}^{\frac{d}{2}+\beta,\infty}_{\epsilon})
 \cap L^{1}_T(\tilde{B}^{\frac{d}{2}+\beta,1}_{\epsilon})} +\|u\|_{L^{\infty}_T(\dot{B}^{\frac{d}{2}-1+\beta}_{2,1})
\cap L^{1}_T(\dot{B}^{\frac{d}{2}+1+\beta}_{2,1}))}\nonumber\\
&+\|H\|_{L^{\infty}_T(\dot{B}^{\frac{d}{2}-1+\beta}_{2,1})
\cap L^{1}_T(\dot{B}^{\frac{d}{2}+1+\beta}_{2,1})},\nonumber
\end{align}
our second result of the paper can be stated as follows.

\begin{Theorem}\label{ThB} Let $T_{0}\in (0,\infty].$
Assume that the initial data $(b_{0},u_{0},H_{0})$ satisfy $\dv H_0=0$ and
$$
b_{0}\in \dot{B}^{\frac{d}{2}-1}_{2,1}\cap\dot{B}^{\frac{d}{2}+\alpha}_{2,1} ,
\quad (u_{0},H_{0})\in (\dot{B}^{\frac{d}{2}-1}_{2,1}\cap\dot{B}^{\frac{d}{2}-1+\alpha}_{2,1})^{d+d}
$$
with $\alpha\in(0,1/2)$ if $d=3$ or $\alpha \in (0,1/6]$ if $d= 2$.
 Suppose that the incompressible system \eqref{Aa13}-\eqref{Aa16} with initial
data  $(\mathcal{P}u_{0},H_{0})$ has a solution $(v,B)\in F^{\frac{d}{2}}_{T_{0}}\cap F^{\frac{d}{2}+\alpha}_{T_{0}}$.
Let $V:=\|(v,B)\|_{F^{\frac{d}{2}}_{T_{0}}\cap F^{\frac{d}{2}}_{T_{0}}}$ and
$$
X_{0}:=\|b_{0}\|_{\dot{B}^{\frac{d}{2}-1}_{2,1}\cap\dot{B}^{\frac{d}{2}+\alpha}_{2,1}}+
\|\mathcal{P}^{\bot}u_{0}\|_{\dot{B}^{\frac{d}{2}-1}_{2,1}\cap\dot{B}^{\frac{d}{2}-1+\alpha}_{2,1}}
+\|H_{0}\|_{\dot{B}^{\frac{d}{2}-1}_{2,1}\cap\dot{B}^{\frac{d}{2}-1+\alpha}_{2,1}},
$$
then there exist two positive constants $\epsilon_{0}$ and $C$, depending only on $d,\alpha,\lambda,\mu,\nu, P, V,$ and $X_{0}$,
 such that the following results hold true:
\begin{itemize}

 \item[(i)] For all $\epsilon \in (0,\epsilon_0] $, the problem \eqref{Aa17}-\eqref{Aa20} has a unique global solution $(b^{\epsilon},u^{\epsilon},H^{\epsilon})$
in $E^{\frac{d}{2}}_{\epsilon,T_{0}}\cap E^{\frac{d}{2}+\alpha}_{\epsilon,T_{0}}$ such that
$$\|(b^{\epsilon},u^{\epsilon},H^{\epsilon})\|_{E^{\frac{d}{2}}_{\epsilon,T_{0}}\cap E^{\frac{d}{2}+\alpha}_{\epsilon,T_{0}}}\leq C;$$

\item[(ii)]   $(\mathcal{P}u^{\epsilon},H^{\epsilon})\rightarrow (v, B)$ in $ F^{\frac{d}{2}}_{T_{0}}\cap F^{\frac{d}{2}+\alpha}_{T_{0}}$
 as $\epsilon \rightarrow 0$, and
\begin{align*}
 & \|\mathcal{P}u^{\epsilon}-v\|_{F^{\frac{d}{2}}_{T_{0}}\cap F^{\frac{d}{2}+\alpha}_{T_{0}}}\leq C\epsilon^{\frac{2\alpha}{2+d+2\alpha}},\\
& \|H^{\epsilon}-B\|_{F^{\frac{d}{2}}_{T_{0}}\cap F^{\frac{d}{2}+\alpha}_{T_{0}}}\leq C\epsilon^{\frac{2\alpha}{2+d+2\alpha}};
\end{align*}

\item[(iii)]  $(b^{\epsilon},\mathcal{P}^{\bot}u^{\epsilon})$ tends to $(0,0)$ as  $\epsilon \rightarrow 0$ in the following sense:
\begin{align*}
& \|(b^{\epsilon},\mathcal{P}^{\bot}u^{\epsilon})\|_{L^{p}_{T}(\dot{B}^{\alpha-1+\frac{1}{p}}_{\infty,1})}\leq C\epsilon^{\frac{1}{p}} \ \ \text{if}\ \ d=3
\ \text{and} \ \ 2< p<\infty,\\
   &\|(b^{\epsilon},\mathcal{P}^{\bot}u^{\epsilon})\|_{L^{4}_{T}(\dot{B}^{\alpha-\frac{3}{4}}_{\infty,1})}\leq C\epsilon^{\frac{1}{4}}\ \  \ \, \ \text{if}\ \ d= 2.
\end{align*}
\end{itemize}
\end{Theorem}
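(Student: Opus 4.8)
The plan is to follow the now-classical two-scale strategy for low Mach number limits in critical spaces, treating the compressible correction as a damped acoustic wave and the solenoidal part as a perturbation of the incompressible MHD system \eqref{Aa14}-\eqref{Aa16}. First I would split the velocity through the Leray projectors, $u^\epsilon=\mathcal{P}u^\epsilon+\mathcal{P}^{\bot}u^\epsilon$, and regroup the unknowns into an incompressible block $(\mathcal{P}u^\epsilon,H^\epsilon)$ and an acoustic block $(b^\epsilon,\mathcal{P}^{\bot}u^\epsilon)$. Since $\dv u^\epsilon=\dv\mathcal{P}^{\bot}u^\epsilon$ and $\mathcal{P}^{\bot}\mathcal{A}\mathcal{P}^{\bot}=(2\mu+\lambda)\Delta$, applying $\mathcal{P}^{\bot}$ to \eqref{Aa18} shows that the acoustic block solves a damped wave system of the schematic form $\partial_t b^\epsilon+\epsilon^{-1}\dv\mathcal{P}^{\bot}u^\epsilon=f^\epsilon$ and $\partial_t\mathcal{P}^{\bot}u^\epsilon+\epsilon^{-1}P'(1)\nabla b^\epsilon-(2\mu+\lambda)\Delta\mathcal{P}^{\bot}u^\epsilon=g^\epsilon$, where $f^\epsilon$ and $g^\epsilon$ collect the quadratic transport terms, the commutators produced by the variable coefficient $1/(1+\epsilon b^\epsilon)$, and the magnetic force $H^\epsilon\cdot\nabla H^\epsilon-\frac{1}{2}\nabla(|H^\epsilon|^2)$.

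For part (i) I would run a continuation argument on $[0,T_0]$, starting from the local solution provided by Theorem \ref{ThA} and propagating a set of a priori bounds uniform in $\epsilon$. For the acoustic block I would exploit the $\epsilon$-weighted Besov norms $\tilde{B}^{s,r}_{\epsilon}$ built into $E^{\frac{d}{2}+\beta}_{\epsilon,T}$: at each dyadic frequency one performs an energy estimate in which the skew-symmetric coupling $(\epsilon^{-1}\dv,\epsilon^{-1}P'(1)\nabla)$ cancels, so the singular factor $\epsilon^{-1}$ never surfaces on the right-hand side, while the parabolic smoothing from $-(2\mu+\lambda)\Delta$ delivers the $L^1_T(\dot{B}^{\frac{d}{2}+1}_{2,1})$ control. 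For the incompressible block I would use the maximal-regularity estimates for the heat semigroup applied to the MHD nonlinearities, absorbing the coupling with the acoustic part via product and composition estimates in $\dot{B}^{s}_{2,1}$. The bound $V$ on $(v,B)$ together with the smallness of $\epsilon$ then closes the estimates and extends the solution to all of $[0,T_0]$.

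Part (iii) is where the decay is genuinely generated: discarding the diffusion, the acoustic block obeys a wave equation with propagation speed $\epsilon^{-1}\sqrt{P'(1)}$, so its potential component oscillates at frequency $\sim\epsilon^{-1}$ and disperses. I would apply the dispersive (Strichartz) estimates for this rescaled wave operator to obtain the $L^p_T(\dot{B}^{\alpha-1+1/p}_{\infty,1})$ bounds with the explicit gain $\epsilon^{1/p}$; the restrictions $\alpha\in(0,1/2)$ for $d=3$ and $\alpha\in(0,1/6]$ for $d=2$ are precisely the admissibility ranges forced by the weaker dispersion in two dimensions together with the regularity required to control $f^\epsilon,g^\epsilon$. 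For part (ii) I would set $w^\epsilon:=\mathcal{P}u^\epsilon-v$ and $\delta^\epsilon:=H^\epsilon-B$, derive the error system (the incompressible MHD equations linearized about $(v,B)$ with forcing assembled from the acoustic block), and bound the forcing by interpolating between the uniform high-regularity estimate of part (i) and the dispersive decay of part (iii); optimizing this interpolation yields the rate $\epsilon^{2\alpha/(2+d+2\alpha)}$.

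The hard part will be the uniform a priori estimate for the acoustic block at the critical level. Even though the skew-symmetric cancellation removes the leading $\epsilon^{-1}$ singularity, the terms $f^\epsilon$ and $g^\epsilon$ couple the acoustic and solenoidal parts and sit at the borderline regularity, so controlling them without losing a power of $\epsilon$ demands the full strength of the $\epsilon$-weighted norms and a careful frequency splitting at $|\xi|\sim\epsilon^{-1}$, with low frequencies governed by the wave and high frequencies by the diffusion. The magnetic field introduces several additional quadratic interactions that must be folded into the same continuation scheme, so that the bookkeeping, rather than any single new idea, is the principal challenge.
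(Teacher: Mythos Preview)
Your proposal is correct and follows essentially the same architecture as the paper: the Leray splitting into an acoustic block governed by a damped wave system (handled via the skew-symmetric energy identity in the $\epsilon$-weighted hybrid norms together with Strichartz estimates for the rescaled wave operator) and an incompressible error block $(w^\epsilon,B^\epsilon)$ governed by a linearized MHD system, all closed by a bootstrap/continuation argument. The only point where the paper is more specific than your sketch is in part (ii): rather than a generic ``interpolation'', the coupling terms such as $\mathcal{P}^{\bot}u^\epsilon\cdot\nabla v$ are handled by a modified Bony decomposition with a tunable frequency cutoff $\eta$, and the rate $\epsilon^{2\alpha/(2+d+2\alpha)}$ arises from optimizing $\eta$---but this is exactly the mechanism your ``optimizing this interpolation'' alludes to.
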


\begin{Remark}
The regularity assumption on  the solution of the incompressible system \eqref{Aa13}-\eqref{Aa16} is reasonable. Since  we can not find it in the literature, we
shall present a brief proof  in Proposition \ref{imhd} of the Appendix \ref{lo-imhd}.
\end{Remark}

\begin{Remark}
 In Theorem \ref{ThB}, we need the additional constraint  on the index $\alpha$ since it  provides some decay in $\epsilon$,
 which is used in many places of  the proof.
\end{Remark}

\begin{Remark}
For the case $d = 2$, one can choose $T_0 =+\infty$ in Theorem \ref{ThB}.
\end{Remark}

\begin{Remark}
Due to the absence  of disperse effects on the oscillation equations,  it is more complicated and difficult to study
the low Mach number limit of the system \eqref{Aa9}-\eqref{Aa11} for the period case in  Besov spaces. We shall report this result in a forthcoming paper.
\end{Remark}

We now recall a few closely related results on the isentropic Navier-Stokes equations
(i.e., $H=0$ in the system \eqref{Aa1}-\eqref{Aa3}).
In  Danchin \cite{Daa}    the global well-posedness of
 isentropic Navier-Stokes equations in the critical Besov space was first obtained  when the initial data is a small perturbation around some given constant state,
 and recently,  the results of \cite{Daa} were extended to more general Besov spaces
 in \cite{CD,CMZ,Has}.
In a series of papers by Danchin \cite{DA,Db,De},   the local well-posedness  of solutions to the  isentropic Navier-Stokes equations
with large initial data was proved. In \cite{DC,D02},  the zero Mach number limit of the isentropic Navier-Stokes equations in the whole space or torus with ill-prepared initial data was studied.

Next we give some comments on the proofs of Theorems \ref{ThA} and \ref{ThB}.  We remark that  when $H=0$  our results   coincide with the results
obtained by Danchin \cite{DC,Db,De} on the isentropic Navier-Stokes equations, hence extend some results in \cite{DC,Db,De}
to the isentropic compressible MHD equations. In our proofs of  Theorems \ref{ThA} and \ref{ThB}, we use some ideas developed in \cite{DC,Db,De}.
Besides the difficulties mentioned in \cite{DC,Db,De}, here the main difficulty is the strong coupling of the velocity and the magnetic field.
We shall deal with them in detail in the proofs of  Theorems \ref{ThA} and \ref{ThB}. More precisely,  in  the proof of Theorem \ref{ThA}, we introduce
a linearized  version of the equation for the magnetic field \eqref{bb2} below and obtain a tame estimate of the solution,   and  the coupling terms of the velocity and the magnetic field
are analyzed in detail
in each step of the proof of Theorem \ref{ThA};  see especially   the proof of Proposition  \ref{PrA} in Section \ref{local} below.
In the proof of Theorem \ref{ThB},  several new systems analogous to the incompressible MHD
equations (see the systems \eqref{eqb3} and \eqref{eqb6} and Propositions \ref{AProp2} and \ref{AProp3} below)
are introduced and studied to establish the estimates on the incompressible part of the original compressible MHD equations,  and also
the coupling terms of the velocity and the magnetic field are analyzed in detail
in each step of the proof of Theorem \ref{ThB}; see Section \ref{lowlimit} below. In particular, the special structure of the isentropic compressible
MHD  system
are fully utilized in our analysis.


Our paper is organized as follows.
In Section \ref{local},  we establish the local existence and uniqueness of solution to the
problem \eqref{Aa5}-\eqref{Aa8}.
In Section \ref{lowlimit}, we discuss the low Mach limit of the problem \eqref{Aa17}-\eqref{Aa20}.
We close our paper with two appendices. In  Appendix \ref{AppA}, we define some functional spaces (homogeneous and hybrid Besov spaces), recall some basic
tools on paradifferential calculus and state some tame estimates for composition or product.
Finally, in Appendix  \ref{lo-imhd}, we present the  regularity results  on incompressible MHD equations  \eqref{Aa14}-\eqref{Aa13}
and the analogies which are needed in the proof of Theorem \ref{ThB}.

\bigskip

\section{Local Well-posedness of the Compressible MHD Equations}\label{local}

In this section we shall establish the local well-posedness of the compressible MHD equations \eqref{Aa5}-\eqref{Aa7}.
We shall follow and adapt the methods developed by Danchin in \cite{DA,Db,De} (see also \cite{BCD}). We shall focus on the analysis  of  the
coupling terms of the velocity field $u$ and the magnetic field $H$.  We divide this section into four parts.
First, we recall some basic results on the linear transport equation  and prove a result on the linearized magnetic field equation
and a result on the smooth solution of the system \eqref{Aa5}-\eqref{Aa7} which is new in some sense and play an essential role in our proof.
Next, we establish the local existence of the solution. Third, we discuss the uniqueness of the solution.
Finally, we state  a continuation criterion of the solution.


Letting  $I$ be an interval of $\mathbb{R}$ and $X$ be a Banach space, we use the notation
$\mathcal{C}_{b}(I,X)$  to denote the set of bounded and continuous function on $I$ with values in  $X$.
Similarly, $L^{r}(I,X)$ is used to  denote  the set of measurable functions on $I$ valued in $X$
such that the map $t\rightarrow\|u(t)\|_{X}$  belongs to the Lebesgue space $L^{r}(I)$.
If $I=[0,T]$, we shall abbreviate $L^{r}(I,X)$ as $L^{r}_T(X)$.
For any $p\geq 1$ we use $p'$ to denote the conjugate exponent of $p$, defined by
$\frac{1}{p}+\frac{1}{p'}=1$. We use the letter $C$ to denote the positive constant which may change from line to line. We also omit
the spatial domain $\mathbb{R}^d\,(d=2, 3)$  in the integrals and the norms of function spaces for simplicity of presentation.


\subsection{A priori estimates for the linearized equations}

Let us first recall the standard estimates in the Besov spaces for the following linear transport equation:
\begin{equation}\label{bb1}
\partial_{t}a+ v\cdot \nabla a=f, \quad a|_{t=0}=a_{0}.
\end{equation}

\begin{Proposition}[\cite{BCD}]\label{prop4.1}
 Let $\sigma\in (-\frac{d}{2},\frac{d}{2}]$.
There exists a constant C, depending only on $d$ and $\sigma$, such that for all solution
$a\in L^{\infty}_T(\dot{B}^{\sigma}_{2,1})$ of \eqref{bb1}, initial data $a_{0}$ in $\dot{B}^{\sigma}_{2,1}$,
and $f$ in $ L^{1}_T(\dot{B}^{\sigma}_{2,1})$, we have, for a.e. $t\in[0,T]$,
\begin{align}
\|a\|_{\widetilde{L}^{\infty}_{t}(\dot{B}^{\sigma}_{2,1})}\leq \bigg\{\|a_{0}\|_{\dot{B}^{\sigma}_{2,1}}
+\int_{0}^{t}e^{-C V(\tau)}\|f(\tau)\|_{\dot{B}^{\sigma}_{2,1}} {\rm d}\tau \bigg\}e^{C V(t)}\nonumber
\end{align}
with $$V:= \int_{0}^{t}V^{'}(\tau)\mathrm{d}\tau$$ and $$V^{'}(t) :=\|\nabla v(t)\|_{\dot{B}^{\frac{d}{2}}_{2,1}}.$$

\end{Proposition}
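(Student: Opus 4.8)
The plan is to establish the a priori estimate in \eqref{bb1} by localizing the transport equation in frequency and exploiting the almost-orthogonality of the Littlewood--Paley blocks. First I would apply the homogeneous dyadic block $\dot{\Delta}_j$ to \eqref{bb1}, writing $a_j := \dot{\Delta}_j a$, to obtain
\begin{equation*}
\partial_t a_j + v\cdot\nabla a_j = \dot{\Delta}_j f + R_j, \qquad R_j := v\cdot\nabla \dot{\Delta}_j a - \dot{\Delta}_j(v\cdot\nabla a),
\end{equation*}
where $R_j$ is the commutator between the convection operator and the frequency projection. Taking the $L^2$ inner product of this equation with $a_j$, the transport term $\int (v\cdot\nabla a_j)\, a_j\,{\rm d}x = -\tfrac12\int (\dv v)\,|a_j|^2\,{\rm d}x$ can be controlled by $\|\dv v\|_{L^\infty}\|a_j\|_{L^2}^2$, which after integration in time contributes a factor governed by $\int_0^t \|\nabla v(\tau)\|_{L^\infty}\,{\rm d}\tau$. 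This yields, for each fixed $j$, the differential inequality
\begin{equation*}
\frac{{\rm d}}{{\rm d}t}\|a_j(t)\|_{L^2} \leq C\|\nabla v(t)\|_{L^\infty}\|a_j(t)\|_{L^2} + \|\dot{\Delta}_j f(t)\|_{L^2} + \|R_j(t)\|_{L^2}.
\end{equation*}

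The key step is then to multiply by the Besov weight $2^{j\sigma}$, sum the resulting Gronwall-type bound over $j\in\mathbb{Z}$ with the $\ell^1$ norm, and absorb the commutator sum $\sum_j 2^{j\sigma}\|R_j\|_{L^2}$. For the commutator I would invoke the standard estimate (available from \cite{BCD}) of the form $\sum_j 2^{j\sigma}\|R_j\|_{L^2} \lesssim \|\nabla v\|_{\dot{B}^{d/2}_{2,1}}\|a\|_{\dot{B}^\sigma_{2,1}}$, valid precisely in the range $\sigma\in(-\tfrac{d}{2},\tfrac{d}{2}]$; this restriction on $\sigma$ is dictated by the product/paraproduct laws in homogeneous Besov spaces and is the reason the hypothesis appears in the statement. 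Crucially, both the $\dv v$ contribution and the commutator contribution are controlled by $\|\nabla v\|_{\dot{B}^{d/2}_{2,1}}$ — here one uses the embedding $\dot{B}^{d/2}_{2,1}\hookrightarrow L^\infty$ so that $\|\nabla v\|_{L^\infty}\lesssim V'(t)$ — so they combine into a single coefficient $V'(t)$ as defined in the statement.

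After summing, the plan is to set $X(t) := \|a\|_{\widetilde{L}^\infty_t(\dot{B}^\sigma_{2,1})}$ (or, more carefully, the $\ell^1$-in-$j$ sum of the time-sup of the blocks, which is exactly the $\widetilde{L}^\infty$ norm rather than the $L^\infty$ norm — this distinction is what makes the $\widetilde{L}^\infty_t$ space appear on the left-hand side) and arrive at an inequality of the form
\begin{equation*}
X(t) \leq \|a_0\|_{\dot{B}^\sigma_{2,1}} + \int_0^t \|f(\tau)\|_{\dot{B}^\sigma_{2,1}}\,{\rm d}\tau + C\int_0^t V'(\tau)\,X(\tau)\,{\rm d}\tau.
\end{equation*}
Applying Gronwall's lemma to this integral inequality produces the claimed bound with the factor $e^{CV(t)}$ outside and the $e^{-CV(\tau)}$ weight inside the time integral of $f$.

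The main obstacle is the careful treatment of the commutator term $R_j$: one must verify the $\ell^1$-summability estimate with the correct Besov norms and check that the endpoint restriction $\sigma\leq \tfrac{d}{2}$ (and $\sigma > -\tfrac{d}{2}$) is exactly what the paraproduct decomposition of $R_j$ requires. A secondary technical point is the regularity needed to justify the $L^2$ energy computation on each block and the passage to the $\widetilde{L}^\infty_t$ norm under the $\ell^1$ sum; since this estimate is quoted from \cite{BCD}, I would cite that reference for the commutator bound rather than rederiving it, and present the Gronwall argument as the substantive content.
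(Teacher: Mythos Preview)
The paper does not give its own proof of this proposition: it is quoted directly from \cite{BCD} and no argument is supplied. Your outline is exactly the standard proof from that reference (localize with $\dot{\Delta}_j$, perform an $L^2$ energy estimate on each block, control the commutator $R_j=[v\cdot\nabla,\dot{\Delta}_j]a$ by $c_j 2^{-j\sigma}\|\nabla v\|_{\dot{B}^{d/2}_{2,1}}\|a\|_{\dot{B}^\sigma_{2,1}}$ under the restriction $-\tfrac{d}{2}<\sigma\le\tfrac{d}{2}+1$, sum in $j$, and apply Gronwall), so there is nothing to compare and your sketch is correct.
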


\begin{Proposition}[\cite{BCD}]\label{le4.1}
 Let $1\leq p\leq p_{1}\leq\infty.$  Assume that
$$
\sigma >-d \min\Big\{\frac{1}{p_{1}},\frac{1}{p^{'}}\Big\} \ \ \  \text{or} \ \ \
  \sigma>-1-d \min\Big\{\frac{1}{p_{1}},\frac{1}{p^{'}}\Big\}\ \ \text{if} \ \ \dv v=0
$$
with the additional condition
$
  \sigma \leq1+\frac{d}{p_{1}}.
$
Let $a_{0}\in \dot{B}^{\sigma}_{p,r},f\in L^{1}_T(\dot{B}^{\sigma}_{p,1})$ and $v$ be a time-dependent vector field
such that $v\in L^{\rho}_T(\dot{B}^{-M}_{\infty,\infty})$ for some $\rho >1 $ and $M>0$ and
$$
\nabla v\in L^{1}_T(\dot{B}^{p_{1}}_{p_{1},\infty}\cap L^{\infty})\ \ \text{if}\ \ \sigma<1+\frac{d}{p_{1}}.
$$
Then the problem \eqref{bb1} has a unique solution $a$ in the space $\mathcal{C}([0,T];\dot{B}^{\sigma}_{p,1})$.
\end{Proposition}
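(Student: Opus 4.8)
The plan is to prove the result in three stages: first establish the fundamental a priori estimate controlling $\|a\|_{\widetilde{L}^{\infty}_{t}(\dot{B}^{\sigma}_{p,1})}$ by the data, then construct a solution via a regularization scheme using these uniform bounds, and finally deduce uniqueness from the same a priori estimate applied to the difference of two solutions. The estimate generalizes Proposition \ref{prop4.1} from the Hilbertian case $p=2$ to the full range $1\le p\le p_{1}\le\infty$, and once it is in hand both existence and uniqueness follow along the usual lines for linear transport equations.

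For the a priori estimate I would localize in frequency by applying the dyadic block $\dot{\Delta}_j$ to \eqref{bb1}, which yields
\begin{equation*}
\partial_t\dot{\Delta}_j a + v\cdot\nabla\dot{\Delta}_j a = \dot{\Delta}_j f + R_j, \qquad R_j:=[v\cdot\nabla,\dot{\Delta}_j]a.
\end{equation*}
An $L^{p}$ energy estimate on each block, testing the equation against $|\dot{\Delta}_j a|^{p-2}\dot{\Delta}_j a$, produces a Gronwall-type inequality in which the transport term contributes a factor of $\|\dv v\|_{L^\infty}$; this term is absent, allowing the weaker lower bound on $\sigma$, precisely when $\dv v=0$. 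Multiplying by $2^{j\sigma}$, summing over $j$, and invoking the commutator estimate for $\sum_j 2^{j\sigma}\|R_j\|_{L^p}$ in terms of $\|\nabla v\|\,\|a\|_{\dot{B}^{\sigma}_{p,1}}$ then yields the stated bound with $V'(t)$ the appropriate norm of $\nabla v$. The hypotheses $\sigma>-d\min\{1/p_1,1/p'\}$ (or its divergence-free variant) together with $\sigma\le 1+d/p_1$ are exactly what this commutator estimate requires, so they are consumed here.

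For existence I would regularize the problem, setting $v_n:=S_n v$ and truncating $a_0$ and $f$ to smooth, spectrally supported data. Since $v_n$ is then Lipschitz, the regularized transport equation is solved classically by the method of characteristics, giving smooth solutions $a_n$. The a priori estimate furnishes a bound on $a_n$ in $\widetilde{L}^{\infty}_T(\dot{B}^{\sigma}_{p,1})$ uniform in $n$, and combined with the equation this controls $\partial_t a_n$ in a space of negative regularity; by weak-$*$ compactness and interpolation one extracts a limit $a$. The mild assumption $v\in L^{\rho}_T(\dot{B}^{-M}_{\infty,\infty})$ guarantees that $v\cdot\nabla a$ is well defined as a distribution and that one may pass to the limit in the product. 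Time continuity of $a$ in $\dot{B}^{\sigma}_{p,1}$ then follows from the uniform $\widetilde{L}^{\infty}$ bound, the summability of the $\ell^1$ index, and the equation for $\partial_t a$.

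Uniqueness reduces to the homogeneous problem: the difference $w$ of two solutions with identical data solves \eqref{bb1} with $f=0$ and $w|_{t=0}=0$, so the a priori estimate gives $\|w\|_{\widetilde{L}^{\infty}_{t}(\dot{B}^{\sigma}_{p,1})}=0$, hence $w\equiv 0$. The main obstacle throughout is the commutator estimate for $R_j$: bounding it uniformly requires splitting $[v\cdot\nabla,\dot{\Delta}_j]a$ via Bony's decomposition into paraproduct and remainder pieces and tracking the endpoint behaviour at the boundary of the admissible range of $\sigma$, which is exactly where the upper restriction $\sigma\le 1+d/p_1$ and the divergence-free improvement come into play. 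A secondary technical point is justifying the a priori estimate rigorously, rather than only formally, for solutions that merely lie in $\widetilde{L}^{\infty}_T(\dot{B}^{\sigma}_{p,1})$; this is handled by first establishing it for the smooth approximants $a_n$ and then passing to the limit.
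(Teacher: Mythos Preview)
The paper does not give its own proof of this proposition: it is quoted verbatim from \cite{BCD} (note the attribution in the proposition header) and used as a black box, so there is nothing in the paper to compare your argument against. Your outline is essentially the standard proof found in \cite{BCD} (localized $L^p$ estimates plus the commutator bound, Friedrichs-type regularization for existence, and the a priori estimate for uniqueness), so it is correct in spirit and matches the source the paper is citing.
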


For the momentum equation \eqref{Aa6}, we have to consider a linearization which allows for non-constant coefficients, namely,
\begin{align}
 \partial_{t}u+v\cdot\nabla u+u\cdot\nabla w-b\mathcal{A}u=g, \quad u|_{t=0}=u_{0},\label{ba2}
\end{align}
where $b$ is a given positive function depending on $(t,x)$ and tending to some constant (say 1) when $x$ goes to infinity.

\begin{Proposition}[\cite{BCD}]\label{prop4.2}
Let $\alpha\in(0,1]$  and $s\in (-\frac{d}{2},\frac{d}{2}]$. Assume that b=1+c with $c\in L^{\infty}_{T}(\dot{B}^{\frac{d}{2}+\alpha}_{2,1})$ and that
$$
b_{*}:=\inf_{(t,x)\in [0,T]\times \mathbb{R}^{d}} b(t,x)>0.
$$
Assume that $u_0\in \dot{B}^{s}_{2,1}$, $g\in L^1_T(\dot{B}^{s}_{2,1})$, and
$v,w\in L^1_T(\dot{B}^{\frac{d}{2}+1}_{2,1})$ are time-dependent vector fields.
Then there exists a universal constant $\kappa$, and a constant C depending only on $d,\alpha,\,and\, s$,
such that, for all $t\in [0,T],$ the solution of the problem \eqref{ba2} satisfies
\begin{align}
&\|u\|_{\widetilde{L}^{\infty}_{t}(\dot{B}^{s}_{2,1})}+\kappa b_{*}\mu\|u\|_{L^{1}_{t}(\dot{B}^{s+2}_{2,1})}
\leq\big(\|u_{0}\|_{\dot{B}^{s}_{2,1}}+\|g\|_{L^{1}_{t}(\dot{B}^{s}_{2,1})}\big)\nonumber \\
&\qquad \times\exp \bigg\{C\int^{t}_{0}\Big(\|v\|_{\dot{B}^{\frac{d}{2}+1}_{2,1}}+\|w\|_{\dot{B}^{\frac{d}{2}+1}_{2,1}}+
b_{*}\mu\Big(\frac{2\mu+\lambda}{b_{*}\mu}\Big)^{\frac{2}{\alpha}}\|c\|_{\dot{B}^{\frac{d}{2}+\alpha}_{2,1}}^{\frac{2}{\alpha}}\Big){\rm d}\tau
\bigg\}.\nonumber
\end{align}
\end{Proposition}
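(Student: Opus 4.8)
The plan is to localize \eqref{ba2} with the Littlewood--Paley blocks and run a block-by-block energy estimate, exploiting the parabolic smoothing of the variable-coefficient operator $b\mathcal{A}$. Applying the homogeneous projector $\dot\Delta_q$, writing $b=1+c$, and using that the Fourier multiplier $\mathcal{A}$ commutes with $\dot\Delta_q$, the block $u_q:=\dot\Delta_q u$ solves
\begin{equation*}
\partial_t u_q + v\cdot\nabla u_q - b\mathcal{A}u_q = \dot\Delta_q g - \dot\Delta_q(u\cdot\nabla w) - [\dot\Delta_q,v\cdot\nabla]u + [\dot\Delta_q,c]\mathcal{A}u .
\end{equation*}
Taking the $L^2$ scalar product with $u_q$, the transport term contributes only $\tfrac12\int(\dv v)|u_q|^2$, which after weighting by $2^{qs}$ is controlled by $\|\dv v\|_{L^\infty}\lesssim\|\nabla v\|_{\dot B^{\frac d2}_{2,1}}=\|v\|_{\dot B^{\frac d2+1}_{2,1}}$ through the embedding $\dot B^{\frac d2}_{2,1}\hookrightarrow L^\infty$. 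This reduces matters to (i) a coercive lower bound for $-\int b\mathcal{A}u_q\cdot u_q$, and (ii) estimates of the three source terms on the right.

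The heart of the argument, and the main obstacle, is (i) together with the coefficient commutator $[\dot\Delta_q,c]\mathcal{A}u$. Integrating $-\int b\mathcal{A}u_q\cdot u_q$ by parts and using $b\ge b_*$, the ellipticity of $\mathcal{A}$ (guaranteed by $\mu>0$ and $2\mu+\lambda>0$) and Bernstein's inequality, the principal term is bounded below by $\kappa b_*\mu\,2^{2q}\|u_q\|_{L^2}^2$; the integration by parts also leaves a remainder $\int\nabla c\cdot(\cdots)$. Because $\nabla c\in\dot B^{\frac d2-1+\alpha}_{2,1}$ does not embed into $L^\infty$ when $\alpha<1$, a direct bound fails, and this remainder shares the same bilinear structure as the commutator $[\dot\Delta_q,c]\mathcal{A}u$. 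The device is to split $c=\dot S_N c+(\mathrm{Id}-\dot S_N)c$ at a threshold $N=N(q)$ to be optimized. The high-frequency part is kept with both derivatives on $u$ and estimated via $\|(\mathrm{Id}-\dot S_N)c\|_{L^\infty}\lesssim 2^{-N\alpha}\|c\|_{\dot B^{\frac d2+\alpha}_{2,1}}$, contributing $\sim(2\mu+\lambda)2^{-N\alpha}\|c\|\,2^{2q}\|u_q\|_{L^2}^2$; the low-frequency part, being Lipschitz, is treated after an integration by parts (equivalently a commutator gain of $2^{-q}$) via $\|\nabla\dot S_N c\|_{L^\infty}\lesssim 2^{N(1-\alpha)}\|c\|_{\dot B^{\frac d2+\alpha}_{2,1}}$, contributing $\sim(2\mu+\lambda)2^{N(1-\alpha)}\|c\|\,2^{q}\|u_q\|_{L^2}^2$. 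Choosing $2^{N}\sim\big((2\mu+\lambda)\|c\|_{\dot B^{\frac d2+\alpha}_{2,1}}/(b_*\mu)\big)^{1/\alpha}$ makes the high-frequency contribution absorbable into the coercive term, and a Young inequality applied to the leftover factor $2^{q}$ in the low-frequency contribution turns it into a zero-order term with coefficient $b_*\mu\big((2\mu+\lambda)/(b_*\mu)\big)^{2/\alpha}\|c\|_{\dot B^{\frac d2+\alpha}_{2,1}}^{2/\alpha}$. The exponent is exactly $2/\alpha=2+2(1-\alpha)/\alpha$, which is where the peculiar power in the statement originates.

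For (ii) the remaining sources are standard paradifferential terms: the transport commutator obeys $\sum_q 2^{qs}\|[\dot\Delta_q,v\cdot\nabla]u\|_{L^2}\lesssim\|\nabla v\|_{\dot B^{\frac d2}_{2,1}}\|u\|_{\dot B^s_{2,1}}$ for $s\in(-\tfrac d2,\tfrac d2+1]$, covering the range $s\in(-\tfrac d2,\tfrac d2]$ at hand, and the product obeys $\|u\cdot\nabla w\|_{\dot B^s_{2,1}}\lesssim\|u\|_{\dot B^s_{2,1}}\|w\|_{\dot B^{\frac d2+1}_{2,1}}$. Dividing the block inequality by $\|u_q\|_{L^2}$, integrating in time, multiplying by $2^{qs}$ and summing over $q$, the dissipative terms assemble into $\kappa b_*\mu\|u\|_{L^1_t(\dot B^{s+2}_{2,1})}$ and the data into $\|u_0\|_{\dot B^s_{2,1}}+\|g\|_{L^1_t(\dot B^s_{2,1})}$, while all lower-order contributions form the time-integrand $\|v\|_{\dot B^{\frac d2+1}_{2,1}}+\|w\|_{\dot B^{\frac d2+1}_{2,1}}+b_*\mu\big((2\mu+\lambda)/(b_*\mu)\big)^{2/\alpha}\|c\|_{\dot B^{\frac d2+\alpha}_{2,1}}^{2/\alpha}$. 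A final Gronwall argument, with the time-supremum taken blockwise before summation so as to produce the $\widetilde L^\infty_t$ norm on the left, yields the claimed estimate. I expect the variable-coefficient coercivity of the middle paragraph to be the only genuinely delicate point; everything else is routine Besov calculus as in \cite{BCD}.
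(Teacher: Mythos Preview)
The paper does not prove this proposition at all; it is quoted verbatim from \cite{BCD} and no proof is supplied here. Your sketch is a faithful outline of the argument given there (localize by $\dot\Delta_q$, run an $L^2$ energy estimate on each block, handle the variable coefficient $c$ in the diffusion by the low/high-frequency splitting $c=\dot S_Nc+(\mathrm{Id}-\dot S_N)c$ with $2^N$ chosen proportionally to $\big((2\mu+\lambda)\|c\|_{\dot B^{d/2+\alpha}_{2,1}}/(b_*\mu)\big)^{1/\alpha}$, then close with Young and Gronwall), and your bookkeeping of the exponents correctly reproduces the factor $b_*\mu\big((2\mu+\lambda)/(b_*\mu)\big)^{2/\alpha}\|c\|^{2/\alpha}$ in the Gronwall weight. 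The transport commutator and the product $u\cdot\nabla w$ are indeed routine in the stated range of $s$, so your identification of the coefficient commutator as the only delicate step is accurate.
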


For the linearized magnetic equation associated with the system \eqref{Aa5}-\eqref{Aa8}:
 \begin{equation}\label{bb2}
\left\{\begin{array}{l}
\partial_{t}H+ u\cdot \nabla H+H\cdot\nabla u-\nu\Delta H= -(\dv u)H+g,\\
\dv H=0,\\
H|_{t=0}=H_{0},
\end{array}
\right.
\end{equation}
we have  the following proposition.
\begin{Proposition}\label{prop4.3}
Let $s\in (-\frac{d}{2},\frac{d}{2}].$
Assume that $H_0\in \dot{B}^{s}_{2,1}$ with $\dv H_0=0$, $g\in L^1_T(\dot{B}^{s}_{2,1})$, and
$u\in L^1_T( \dot{B}^{\frac{d}{2}+1}_{2,1})$ is time-dependent vector field.
Then there exist a universal constant $\kappa$, and a constant C depending only
on $d$ and $s$, such that
$$
\|H\|_{\widetilde{L}^{\infty}_{t}(\dot{B}^{s}_{2,1})}+\kappa \nu \|H\|_{L^{1}_{t}(\dot{B}^{s+2}_{2,1})}
\leq\big( \|H_{0}\|_{\dot{B}^{s}_{2,1}}+\|g\|_{L^{1}(\dot{B}^{s}_{2,1})}\big) \exp \bigg\{C\int_{0}^{t}\|\nabla u\|_{\dot{B}^{\frac{d}{2}}_{2,1}}{\rm d}\tau\bigg\}.
$$
\end{Proposition}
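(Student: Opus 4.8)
The plan is to treat \eqref{bb2} as a linear heat equation with transport plus first- and zeroth-order coupling, namely $\partial_t H+u\cdot\nabla H-\nu\Delta H=F$ with $F:=-H\cdot\nabla u-(\dv u)H+g$, and to run a frequency-localized energy estimate in the spirit of Proposition \ref{prop4.2}. First I would apply the homogeneous dyadic block $\dot{\Delta}_j$ to \eqref{bb2} and write
\[\partial_t\dot{\Delta}_jH+u\cdot\nabla\dot{\Delta}_jH-\nu\Delta\dot{\Delta}_jH=\dot{R}_j+\dot{\Delta}_jF,\]
where $\dot{R}_j:=[u\cdot\nabla,\dot{\Delta}_j]H$ is the usual commutator. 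Taking the $L^2$ inner product with $\dot{\Delta}_jH$, the diffusion term is bounded below by $\kappa\nu2^{2j}\|\dot{\Delta}_jH\|_{L^2}^2$ via Bernstein's inequality (spectral localization), while the transport term integrates by parts to $-\tfrac12\int(\dv u)|\dot{\Delta}_jH|^2\,dx$, whose absolute value is controlled by $\tfrac12\|\dv u\|_{L^\infty}\|\dot{\Delta}_jH\|_{L^2}^2$. After dividing by $\|\dot{\Delta}_jH\|_{L^2}$ this yields the block-wise inequality
\[\frac{d}{dt}\|\dot{\Delta}_jH\|_{L^2}+\kappa\nu2^{2j}\|\dot{\Delta}_jH\|_{L^2}\le\|\dot{R}_j\|_{L^2}+\|\dot{\Delta}_j(H\cdot\nabla u)\|_{L^2}+\|\dot{\Delta}_j((\dv u)H)\|_{L^2}+\|\dot{\Delta}_jg\|_{L^2}+\tfrac12\|\dv u\|_{L^\infty}\|\dot{\Delta}_jH\|_{L^2}.\]

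Next I would multiply by $2^{js}$, sum over $j\in\mathbb{Z}$ in $\ell^1$ (which is the definition of $\dot{B}^s_{2,1}$), and estimate the three coupling contributions so that each one produces the single factor $\|\nabla u\|_{\dot{B}^{d/2}_{2,1}}$. For the commutator I would invoke the standard bound $\sum_j2^{js}\|\dot{R}_j\|_{L^2}\lesssim\|\nabla u\|_{\dot{B}^{d/2}_{2,1}}\|H\|_{\dot{B}^s_{2,1}}$. The terms $H\cdot\nabla u$ and $(\dv u)H$ are handled by the product (tame) estimate $\|fg\|_{\dot{B}^s_{2,1}}\lesssim\|f\|_{\dot{B}^{d/2}_{2,1}}\|g\|_{\dot{B}^s_{2,1}}$ of Appendix \ref{AppA}, giving $\|H\cdot\nabla u\|_{\dot{B}^s_{2,1}}+\|(\dv u)H\|_{\dot{B}^s_{2,1}}\lesssim\|\nabla u\|_{\dot{B}^{d/2}_{2,1}}\|H\|_{\dot{B}^s_{2,1}}$; the same reasoning controls $\|\dv u\|_{L^\infty}\lesssim\|\nabla u\|_{\dot{B}^{d/2}_{2,1}}$ through the embedding $\dot{B}^{d/2}_{2,1}\hookrightarrow L^\infty$. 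Integrating the block inequality in time via Duhamel's formula, the dissipative factor $e^{-\kappa\nu2^{2j}(t-\tau)}$ is retained so that, after the $\ell^1$ summation, it produces the gain $\kappa\nu\|H\|_{L^1_t(\dot{B}^{s+2}_{2,1})}$ on the left-hand side (this is where the weight $2^{j(s+2)}$ is recovered).

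Finally I would collect these estimates into
\[\|H\|_{\widetilde{L}^\infty_t(\dot{B}^s_{2,1})}+\kappa\nu\|H\|_{L^1_t(\dot{B}^{s+2}_{2,1})}\le\|H_0\|_{\dot{B}^s_{2,1}}+\|g\|_{L^1_t(\dot{B}^s_{2,1})}+C\int_0^t\|\nabla u\|_{\dot{B}^{d/2}_{2,1}}\|H\|_{\widetilde{L}^\infty_\tau(\dot{B}^s_{2,1})}\,d\tau,\]
and conclude by Gronwall's lemma, which produces exactly the exponential weight $\exp\{C\int_0^t\|\nabla u\|_{\dot{B}^{d/2}_{2,1}}\,d\tau\}$. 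The main obstacle, and the genuinely new feature compared with the scalar transport--diffusion case treated in \cite{BCD}, is the magnetic stretching term $H\cdot\nabla u$, which is first order in $u$ and couples the two unknowns; the crucial observation is that on the range $s\in(-\tfrac d2,\tfrac d2]$ it is controlled by the very same quantity $\|\nabla u\|_{\dot{B}^{d/2}_{2,1}}\|H\|_{\dot{B}^s_{2,1}}$ that governs the commutator, so it can be absorbed into the Gronwall exponent without damaging the linear structure of the estimate. One must also check that the restriction $s\in(-\tfrac d2,\tfrac d2]$ simultaneously validates the commutator bound and, in particular, the product law applied to $H\cdot\nabla u$, which is precisely what fixes this admissible range of $s$ in the statement.
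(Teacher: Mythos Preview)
Your proposal is correct and follows essentially the same route as the paper: localize \eqref{bb2} by $\dot{\Delta}_j$, perform an $L^2$ energy estimate on each block (Bernstein for the diffusion, integration by parts for the transport), control the commutator $[u\cdot\nabla,\dot{\Delta}_j]H$ and the products $H\cdot\nabla u$, $(\dv u)H$ by $c_j2^{-js}\|\nabla u\|_{\dot{B}^{d/2}_{2,1}}\|H\|_{\dot{B}^s_{2,1}}$ on the range $s\in(-d/2,d/2]$, sum in $\ell^1$, and close with Gronwall. The only cosmetic difference is that the paper integrates the scalar ODE for $\|\dot{\Delta}_jH\|_{L^2}$ directly before summing, whereas you phrase this step as Duhamel; the outcome is identical.
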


\begin{proof}
We shall adopt the homogeneous Littlewood-Paley decomposition technique to obtain the desired estimate.
 More precisely, by applying $\dot{\Delta}_{j}$ to \eqref{bb2}, we obtain that
$$
\partial_{t}H_{j}+u\cdot\nabla H_{j}-\nu\Delta H_{j}=-\dot{\Delta}_{j}((\dv u)H)-\dot{\Delta}_{j}(H\cdot\nabla u)+R_{j}+g_{j},
\ \ H_{j}|_{t=0}=H_{{0}j},
$$
where
$$
H_{j}:=\dot{\Delta}_{j}H, \quad  R_{j}:=\sum_{k}[u^{k},\dot{\Delta}_{j}]\partial_{k}H, \quad  g_{j}:=\dot{\Delta}_{j}g,
\quad H_{{0}j}:=\dot{\Delta}_{j}H_{{0}}.
$$

Taking the $L^{2}$ inner product of the above equation with $H_{j}$, we easily get
\begin{align}
&\frac{1}{2}\frac{{\rm d}}{{\rm d}t}\|H_{j}\|_{L^{2}}^{2}-\frac{1}{2}\int |H_{j}|^{2}\dv u {\rm d}x +\nu\int |\nabla H_{j}|^{2}{\rm d}x
\nonumber\\
&\qquad \quad\leq\|H_{j}\|_{L^{2}}\big(\|\dot{\Delta}_{j}(\dv u H)\|_{L^{2}}+\|\dot{\Delta}_{j}(H\cdot\nabla u)\|_{L^{2}}+\|R_{j}\|_{L^{2}}+\|g_{j}\|_{L^{2}}\big).\nonumber
\end{align}
Hence, by the Bernstein's inequality, we get, for some universal constant $\kappa$,
\begin{align}\label{bAA}
\frac{1}{2}&\frac{{\rm d}}{{\rm d}t}\|H_{j}\|_{L^{2}}^{2} +2\kappa\nu 2^{2j} \|\nabla H_{j}\|_{L^{2}}
\nonumber\\
&\leq\|H_{j}\|_{L^{2}}\big(\|\dot{\Delta}_{j}(\dv u H)\|_{L^{2}}+\|\dot{\Delta}_{j}(H\cdot\nabla u)\|_{L^{2}}\nonumber\\
& \quad+\|R_{j}\|_{L^{2}}+\frac{1}{2}\|\dv u\|_{L^{\infty}}\|H_{j}\|_{L^{2}}+\|g_{j}\|_{L^{2}}\big).
\end{align}
Thanks to   Proposition \ref{prop2.2} and  the commutator estimates (see Lemma 2.100 in \cite{BCD}),
 we have the following estimates for $\dot{\Delta}_{j}(\dv u H)$,
$\dot{\Delta}_{j}(H\cdot\nabla u)$ and $R_{j}$:
\begin{align*}
&\|\dot{\Delta}_{j}(\dv u H)\|_{L^{2}}\leq C c_{j}2^{-js}\|\dv u\|_{\dot{B}^{\frac{d}{2}}_{2,1}}
\|H\|_{\dot{B}^{s}_{2,1}},\ \ \text{if} \ \  -\frac{d}{2}<s\leq\frac{d}{2},\\
&\|\dot{\Delta}_{j}(H\cdot\nabla u)\|_{L^{2}}\leq C c_{j}2^{-js}\|\nabla u\|_{\dot{B}^{\frac{d}{2}}_{2,1}}
\|H\|_{\dot{B}^{s}_{2,1}},\ \ \text{if}\ \  -\frac{d}{2}<s\leq\frac{d}{2},\\
&\|R_{j}\|_{L^{2}}\leq C c_{j}2^{-js}\|\nabla u\|_{\dot{B}^{\frac{d}{2}}_{2,1}}
\|H\|_{\dot{B}^{s}_{2,1}},\ \ \text{if} \ \ -\frac{d}{2}<s\leq\frac{d}{2}+1,
\end{align*}
where $(c_{j})_{j\in \mathbb{Z}}$ denotes a positive sequence such that $\sum_{j\in \mathbb{Z}}c_{j}=1$.

Formally dividing both sides of the inequality \eqref{bAA} by $\|H_{j}\|_{L^{2}}$ and integrating over $[0,t]$,
one has
\begin{align}
&\|H_{j}(t)\|_{L^{2}}+2\kappa\nu2^{2j}\int^{t}_{0}\|H_{j}\|_{L^{2}}{\rm d}\tau\nonumber\\
 & \qquad \qquad \leq \|H_{j}(0)\|_{L^{2}}+\|g_{j}\|_{L^{2}}
+C2^{-js}\int^{t}_{0}c_{j}\|\nabla u\|_{\dot{B}^{\frac{d}{2}}_{2,1}}\|H\|_{\dot{B}^{s}_{2,1}}{\rm d}\tau. \nonumber
\end{align}
Now, multiplying both sides by $2^{js}$ and summing over $j$, we end up with
\begin{align}
\|H\|_{\widetilde{L}^{\infty}_{t}(\dot{B}^{s}_{2,1})}+\kappa\nu \|H\|_{L^{1}_{t}(\dot{B}^{s+2}_{2,1})}
&\leq \|H(0)\|_{\dot{B}^{s}_{2,1}}+\|g_{j}\|_{L^{2}}+C\int^{t}_{0}\|\nabla u\|_{\dot{B}^{\frac{d}{2}}_{2,1}}\|H\|_{\dot{B}^{s}_{2,1}}{\rm d}\tau\nonumber
\end{align}
for some constant $C$ depending only on $d$ and $s$. Applying Gronwall's lemma then completes  the proof.
\end{proof}

With these estimates in hand,  we can prove the following result for smooth solutions to the problem \eqref{Aa5}-\eqref{Aa8}.

\begin{Proposition}\label{PrA}
Let $(a,u,H)$ satisfy \eqref{Aa5}-\eqref{Aa8} on $[0,T]\times\mathbb{R}^{d}$. Suppose that  there exist two positive constants $b_{*}$ and $b^{*}$, such that
$$b_{*}\leq 1+a_{0}\leq b^{*}$$
and that $a\in \mathcal{C}^{1}([0,T];\dot{B}^{\frac{d}{2}}_{2,1}\cap\dot{B}^{\frac{d}{2}+\alpha}_{2,1})$ and
$(u,H) \in \mathcal{C}^{1}([0,T];\dot{B}^{\frac{d}{2}-1}_{2,1}\cap\dot{B}^{\frac{d}{2}+1+\alpha}_{2,1})^{d+d}.$
Assume, in addition, that there exists a function $u_{L}\in \mathcal{C}^{1}([0,T];\dot{B}^{\frac{d}{2}-1}_{2,1}\cap\dot{B}^{\frac{d}{2}+1+\alpha}_{2,1})^{d}$
satisfies
$$
\partial_{t}u_{L}-\mathcal{A}u_{L}=0,\qquad u_{L}|_{t=0}=u_{0},
$$
and that there exists a function $H_{L}\in \mathcal{C}^{1}([0,T];\dot{B}^{\frac{d}{2}-1}_{2,1}\cap\dot{B}^{\frac{d}{2}+1+\alpha}_{2,1})^{d}$
satisfies
$$
\partial_{t}H_{L}-\nu\Delta H_{L}=0,\qquad {H_{L}}|_{t=0}=H_{0}.
$$
Denote  $ \bar{u} := u-u_{L}$ and
\begin{align*}
& A_{0}^{\alpha} := \|a_{0}\|_{\dot{B}^{\frac{d}{2}}_{2,1}\cap\dot{B}^{\frac{d}{2}+\alpha}_{2,1}},\quad \ \
A^{\alpha}(t) := \|a\|_{L^{\infty}_{t}(\dot{B}^{\frac{d}{2}}_{2,1}\cap\dot{B}^{\frac{d}{2}+\alpha}_{2,1})},\\
& U_{0}^{\alpha}(t) := \|u_{0}\|_{\dot{B}^{\frac{d}{2}-1}_{2,1}\cap\dot{B}^{\frac{d}{2}-1+\alpha}_{2,1}},\quad \ \
U^{\alpha}_{L}(t) := \|u_{L}\|_{L^{1}_{t}(\dot{B}^{\frac{d}{2}+1}_{2,1}\cap\dot{B}^{\frac{d}{2}+1+\alpha}_{2,1})},\\
& \bar{U}^{\alpha}(t) := \|\bar{u}\|_{L^{\infty}_{t}(\dot{B}^{\frac{d}{2}-1}_{2,1}
\cap\dot{B}^{\frac{d}{2}-1+\alpha}_{2,1})}+b_{*}\mu \|\bar{u}\|_{L^{1}_{t}(\dot{B}^{\frac{d}{2}+1}_{2,1}
\cap\dot{B}^{\frac{d}{2}+1+\alpha}_{2,1})},\\
& H_{0}^{\alpha} := \|H_{0}\|_{\dot{B}^{\frac{d}{2}-1}_{2,1}\cap\dot{B}^{\frac{d}{2}-1+\alpha}_{2,1}},\quad
H_{L}(t) := \|H_{L}\|_{L^{1}_{t}(\dot{B}^{\frac{d}{2}+1}_{2,1})},\\
& H^{\alpha}(t) := \|H\|_{L^{\infty}_{t}(\dot{B}^{\frac{d}{2}-1}_{2,1}
\cap\dot{B}^{\frac{d}{2}-1+\alpha}_{2,1})}+\nu \|H\|_{L^{1}_{t}(\dot{B}^{\frac{d}{2}+1}_{2,1}
\cap\dot{B}^{\frac{d}{2}+1+\alpha}_{2,1})}.
\end{align*}
Assume further that there exist two constant $\eta$ and C, depending only on $d,\alpha$, and G, such that
\begin{align}
 & b_{*}\mu(\frac{\bar{\nu}}{b_{*}\mu})^{\frac{2}{\alpha}}T(A_{0}^{\alpha}+1)^{\frac{2}{\alpha}}\leq\eta,\label{bb3}\\
& \big(1+A_{0}^{\alpha}\big)^{2}
\Big(T(1+(H_{0}^{\alpha})^{2})+\bar{\nu}U_{L}^{\alpha}(T)+(U_{0}^{\alpha}+\bar{\nu})(H_{0}^{\alpha})^{2}U_{L}^{\alpha}(T)\nonumber\\
& \qquad \qquad\qquad\qquad   +(U_{0}^{\alpha}+(H_{0}^{\alpha})^{2}+1)(H_{0}^{\alpha})^{\frac{3}{2}}H_{L}^{\frac{1}{2}}(T)\Big)\leq\eta b_{*}\mu,\label{bb4}
\end{align}
then we have
\begin{align}
&\frac{b_{*}}{2} \leq1+a(t,x)\leq2b^{*} \ \  \text{for  all}\ \   (t,x)\in[0,T]\times\mathbb{R}^{d},\label{bb5}\\
&A^{\alpha}(T) \leq2A_{0}^{\alpha}+1,\quad H^{\alpha}(T)\leq 2H_{0}^{\alpha},\label{bb6}\\
&\bar{U}^{\alpha}(T) \leq C\big(1+A_{0}^{\alpha}\big)^{2}
\Big(T(1+(H_{0}^{\alpha})^{2})+\bar{\nu}U_{L}^{\alpha}(T)+(U_{0}^{\alpha}+\bar{\nu})(H_{0}^{\alpha})^{2}U_{L}^{\alpha}(T)\nonumber\\
&\qquad\qquad  +(U_{0}^{\alpha}+(H_{0}^{\alpha})^{2}+1)(H_{0}^{\alpha})^{\frac{3}{2}}H_{L}^{\frac{1}{2}}(T)\Big),\label{bb7}
\end{align}
with $\bar{\nu}:=\lambda+2\mu$.
\end{Proposition}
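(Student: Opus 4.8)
The plan is to run a continuity (bootstrap) argument in which the three unknowns $a$, $H$ and $\bar u:=u-u_L$ are controlled through the three linearized estimates of Propositions \ref{prop4.1}, \ref{prop4.3} and \ref{prop4.2}, all coupled through the single quantity $V(t):=\int_0^t\|\nabla u\|_{\dot B^{d/2}_{2,1}}\,d\tau$. Concretely, I would let $T^\ast\in(0,T]$ be the largest time up to which a relaxed form of \eqref{bb5}--\eqref{bb6} holds (say with the constants in the right-hand sides enlarged by a fixed factor); this set is nonempty and relatively open thanks to the assumed $\mathcal C^1$ regularity of $(a,u,H)$ and the strict initial inequality $b_\ast\le 1+a_0\le b^\ast$. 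On $[0,T^\ast]$ the composition estimates of Appendix \ref{AppA} apply to $1/(1+a)$ and to $G(a)$, and the exponential weights generated by the linear estimates stay close to $1$ because the smallness hypotheses \eqref{bb3}--\eqref{bb4} force $V$ to be small; recovering the sharp constants of \eqref{bb5}--\eqref{bb6} on $[0,T^\ast]$ then forces $T^\ast=T$ by continuity.

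For the density I would use that $1+a$ is transported by $u$, with $\frac{d}{dt}(1+a)=-(1+a)\,\dv u$ along the flow, so that \eqref{bb5} reduces to the smallness of $\int_0^t\|\dv u\|_{L^\infty}\,d\tau$; since $\dot B^{d/2}_{2,1}\hookrightarrow L^\infty$ and $\int_0^t\|u\|_{\dot B^{d/2+1}_{2,1}}\,d\tau\le U_L^\alpha(T)+\bar U^\alpha(T)/(b_\ast\mu)$, this integral is $\lesssim\eta$ on $[0,T^\ast]$ by \eqref{bb3}--\eqref{bb4}. For the $\dot B^{d/2}_{2,1}\cap\dot B^{d/2+\alpha}_{2,1}$ bound in \eqref{bb6} I would feed \eqref{Aa5} with source $f=-(1+a)\dv u$ into Proposition \ref{prop4.1}, estimating $\|(1+a)\dv u\|$ by the tame product estimates and absorbing the $e^{CV}$ factor by the smallness of $V$; the surviving $+1$ in $2A_0^\alpha+1$ absorbs the small source and the excess $e^{CV}-1$. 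For the magnetic field I would apply Proposition \ref{prop4.3} with $g=0$ (the equation \eqref{Aa7} is exactly of that form, up to the sign of the $H\cdot\nabla u$ term, which is immaterial in the energy estimate) at the two regularities $s=\tfrac d2-1$ and $s=\tfrac d2-1+\alpha$; the exponential $\exp(CV)$ is again close to $1$, yielding $H^\alpha(T)\le 2H_0^\alpha$.

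The heart of the proof is the velocity bound \eqref{bb7}. I would subtract the free evolution $u_L$ and check that $\bar u$ solves an equation of the type \eqref{ba2}, namely $\partial_t\bar u+u\cdot\nabla\bar u-\tfrac{1}{1+a}\mathcal A\bar u=g$ with $\bar u|_{t=0}=0$, where
\[
g=-\,u\cdot\nabla u_L-\nabla G(a)+\tfrac{1}{1+a}\Big(H\cdot\nabla H-\tfrac12\nabla(|H|^2)\Big)+\Big(\tfrac{1}{1+a}-1\Big)\mathcal A u_L .
\]
Proposition \ref{prop4.2} with $v=u$, $w=0$ and $c=\tfrac{1}{1+a}-1$, applied at $s=\tfrac d2-1$ and $s=\tfrac d2-1+\alpha$, then gives $\bar U^\alpha(T)\lesssim\|g\|_{L^1_T(\dot B^{d/2-1}_{2,1}\cap\dot B^{d/2-1+\alpha}_{2,1})}$ times an exponential kept bounded by \eqref{bb3} (its nontrivial ingredient $b_\ast\mu(\bar\nu/(b_\ast\mu))^{2/\alpha}\|c\|_{\dot B^{d/2+\alpha}_{2,1}}^{2/\alpha}$, with $\bar\nu=\lambda+2\mu$, is exactly what \eqref{bb3} controls). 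It remains to estimate each piece of $g$: the convection $u\cdot\nabla u_L$ and the linear correction $(\tfrac{1}{1+a}-1)\mathcal A u_L$ produce the $\bar\nu U_L^\alpha$ and $(U_0^\alpha+\bar\nu)(\cdots)U_L^\alpha$ contributions; the pressure term $\nabla G(a)$, treated by the composition estimate, gives the $T(1+\cdots)$ contribution; and the quadratic magnetic term is responsible for every $H$-factor in \eqref{bb7}.

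The main obstacle is exactly the bookkeeping of this magnetic coupling term. I would bound $\|H\cdot\nabla H\|_{\dot B^{d/2-1}_{2,1}}$ and its $\alpha$-shift by the tame product estimates, then integrate in time after the interpolation $\|H\|_{\dot B^{d/2}_{2,1}}\lesssim\|H\|_{\dot B^{d/2-1}_{2,1}}^{1/2}\|H\|_{\dot B^{d/2+1}_{2,1}}^{1/2}$ and the already-established $H^\alpha(T)\le 2H_0^\alpha$; the delicate balance of $L^\infty_T$ factors of $H$ against $L^1_T$ factors of $H_L$ is what produces the fractional exponents in $(U_0^\alpha+(H_0^\alpha)^2+1)(H_0^\alpha)^{3/2}H_L^{1/2}(T)$. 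The remaining work is organizational rather than conceptual: since every estimate feeds $V$ and $\bar U^\alpha$, I would collect all contributions, use \eqref{bb4} to absorb the copies of $\bar U^\alpha$ sitting on the right into the left-hand side, and invoke continuity in $t$ to upgrade the relaxed bootstrap bounds to the sharp \eqref{bb5}--\eqref{bb7}, thereby reaching $T^\ast=T$.
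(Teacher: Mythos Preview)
Your overall plan---bootstrap, transport estimate for $a$, Proposition~\ref{prop4.3} for $H$, Proposition~\ref{prop4.2} for $\bar u$---matches the paper's, and your derivation of the source $g$ for the $\bar u$ equation is correct (the paper moves the piece $\bar u\cdot\nabla u_L$ to the left via the $w$-slot of Proposition~\ref{prop4.2}, but your choice $w=0$ with $-u\cdot\nabla u_L$ in $g$ works just as well). The gap is in the magnetic coupling, and it is not just bookkeeping.

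Applying Proposition~\ref{prop4.3} with $g=0$ yields $H^\alpha(T)\le 2H_0^\alpha$, hence $\nu\|H\|_{L^1_T(\dot B^{d/2+1}_{2,1})}\le 2H_0^\alpha$. This bound does \emph{not} decay as $T\to 0$, so the magnetic forcing term in $g$ satisfies only
\[
\Big\|\tfrac{1}{1+a}\big(H\cdot\nabla H-\tfrac12\nabla|H|^2\big)\Big\|_{L^1_T(\dot B^{d/2-1+\beta}_{2,1})}
\lesssim (1+A_0^\alpha)\,H_0^\alpha\cdot \tfrac{H_0^\alpha}{\nu},
\]
which can never be made $\le \eta\, b_*\mu$ for a universal small $\eta$. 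In particular, no interpolation between the $L^\infty_T$ and $L^1_T$ bounds contained in $H^\alpha(T)\le 2H_0^\alpha$ will ever manufacture the factor $H_L^{1/2}(T)=\|H_L\|_{L^1_T(\dot B^{d/2+1}_{2,1})}^{1/2}$ required in \eqref{bb7}; the function $H_L$ simply does not appear in your scheme. The paper resolves this by introducing $\bar H:=H-H_L$ and applying Proposition~\ref{prop4.3} a \emph{second} time, to $\bar H$, with the nonzero source $-\,(\dv u)H_L-u\cdot\nabla H_L-H_L\cdot\nabla u$. Because $\bar H|_{t=0}=0$, this yields
\[
\|\bar H\|_{L^1_T(\dot B^{d/2+1}_{2,1})}\lesssim \big(U_0^\alpha+\bar U^\alpha(T)\big)\,(H_0^\alpha)^{1/2}\,H_L(T)^{1/2},
\]
whence $\|H\|_{L^1_T(\dot B^{d/2+1}_{2,1})}\le H_L(T)+\|\bar H\|_{L^1_T(\dot B^{d/2+1}_{2,1})}$ is genuinely small for small $T$. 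Feeding this improved $L^1_T$ bound (rather than the crude one coming from $H^\alpha$) into the magnetic forcing term is precisely what produces the $(H_0^\alpha)^{3/2}H_L^{1/2}(T)$ structure in \eqref{bb7} and allows the bootstrap to close under \eqref{bb4}. You should add this $\bar H$ step; without it the argument does not close.
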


\begin{proof} Setting   $\bar{H}:= H-H_{L}$ and $I(a):= \frac{a}{1+a}$,
we may write the system satisfied by $(a,\bar{u},H, \bar H)$ as:
\begin{equation}\label{bb8}
\left\{\begin{array}{l}
\partial_{t}a+u\cdot\nabla a+(1+a)\dv u=0,\\
\partial_{t}u+u\cdot\nabla u+\bar{u}\cdot \nabla u_{L}-\frac{1}{1+a}\mathcal{A}\bar{u}\\
\qquad \qquad =
-u_{L}\cdot\nabla u_{L}-I(a)\mathcal{A}u_{L}-\nabla G(a)+\frac{1}{1+a}\big(H\cdot\nabla H-\frac{1}{2}\nabla|H|^{2}\big), \\
\partial_{t}H+u\cdot\nabla H-H\cdot\nabla u-\nu\Delta H=-\dv u H  , \quad \dv H=0,\\
\partial_{t}\bar{H}+u\cdot\nabla \bar{H}-\bar{H}\cdot\nabla u-\nu\Delta \bar{H}  \\
\qquad \qquad=-\dv u \bar{H}  -\dv u H_{L}-u\cdot\nabla H_{L}-H_{L}\cdot\nabla u,   \\
(a,\bar{u},H,\bar{H})_{|t=0}=(a_{0},0,H_{0},0),
\end{array}\right.
\end{equation}

We first estimate the bound of $a$. Applying the product law in Besov spaces, we get
$$
\|(1+a)\dv u\|_{\dot{B}^{\frac{d}{2}}_{2,1}\cap\dot{B}^{\frac{d}{2}+\alpha}_{2,1}}
\leq C\Big(1+\|a\|_{\dot{B}^{\frac{d}{2}}_{2,1}\cap\dot{B}^{\frac{d}{2}+\alpha}_{2,1}}\Big)
\|\dv u\|_{\dot{B}^{\frac{d}{2}}_{2,1}\cap\dot{B}^{\frac{d}{2}+\alpha}_{2,1}}.
$$
Hence, combining Proposition \ref{prop4.1} with Gronwall's lemma yields, for all $t\in[0,T],$
\begin{align}\label{bb9}
A^{\alpha}(t)\leq\, & A_{0}^{\alpha}\exp\bigg\{C\int_{0}^{t}\|\nabla u\|_{\dot{B}^{\frac{d}{2}}_{2,1}\cap\dot{B}^{\frac{d}{2}+\alpha}_{2,1}}{\rm d}\tau\bigg\}\nonumber\\
& +\exp\bigg\{C\int_{0}^{t}\|\nabla u\|_{\dot{B}^{\frac{d}{2}}_{2,1}\cap\dot{B}^{\frac{d}{2}+\alpha}_{2,1}}{\rm d}\tau\bigg\}-1.
\end{align}
In order to ensure that the condition \eqref{bb5} is satisfied, we   use the fact:
$$
(\partial_{t}+u\cdot\nabla)(1+a)^{\pm1}\pm(1+a)^{\pm1}\dv u=0.
$$
Hence, taking advantage of Gronwall's lemma, we obtain that
$$\|(1+a)^{\pm1}(t)\|_{L^{\infty}}\leq \|(1+a_{0})^{\pm1}\|_{L^{\infty}}\exp\bigg\{\int_{0}^{t}\|\dv u\|_{L^{\infty}}{\rm d}\tau\bigg\}.$$
Therefore, the condition \eqref{bb5} is satisfied on $[0,t]$ if
\begin{align}\label{bb10}
\int_{0}^{t}\|\dv u\|_{L^{\infty}} {\rm d}\tau \leq \log2.
\end{align}

Now, we estimate $H$ and $\bar{H}$. By Proposition \ref{prop4.3} and Remark \ref{rem2.1}, we have
\begin{align*}
& H^{\alpha}(t)\leq C H_{0}^{\alpha}(t)\exp\bigg\{C\bigg(\int_{0}^{t}\|u_{L}\|_{\dot{B}^{\frac{d}{2}+1}_{2,1}}{\rm d}\tau+
\int_{0}^{t}\|\bar{u}\|_{\dot{B}^{\frac{d}{2}+1}_{2,1}}{\rm d}\tau\bigg)\bigg\}, \\
&\|H_{L}\|_{L^{\infty}_{t}(\dot{B}^{\frac{d}{2}-1}_{2,1})}+
\|H_{L}\|_{L^{1}_{t}(\dot{B}^{\frac{d}{2}+1}_{2,1})}\leq C H_{0}^{\alpha},\nonumber\\
&\|u_{L}\|_{L^{\infty}_{t}(\dot{B}^{\frac{d}{2}-1}_{2,1})}+
\|u_{L}\|_{L^{1}_{t}(\dot{B}^{\frac{d}{2}+1}_{2,1})}\leq C U_{0}^{\alpha}.
\end{align*}
Thanks to Proposition \ref{prop2.2}, we obtain that
\begin{align*}
\| u\cdot\nabla H_{L}\|_{L^{1}_{t}(\dot{B}^{\frac{d}{2}-1}_{2,1})}
\leq \,& C
\|u\|_{L_{T}^{\infty}(\dot{B}^{\frac{d}{2}-1}_{2,1})}\|H_{L}\|_{L_{T}^{1}(\dot{B}^{\frac{d}{2}+1}_{2,1})}\nonumber\\
 \leq\, & C(U_{0}^{\alpha}+\bar{U}^{\alpha}(T))(H_{0}^{\alpha})^{\frac{1}{2}}
\|H_{L}\|^{\frac{1}{2}}_{L_{T}^{1}(\dot{B}^{\frac{d}{2}+1}_{2,1})},\nonumber\\
\| H_{L} \cdot\nabla u\|_{L^{1}_{t}(\dot{B}^{\frac{d}{2}-1}_{2,1})}\leq \,& C
\|u\|_{L_{T}^{2}(\dot{B}^{\frac{d}{2}}_{2,1})}\|H_{L}\|_{L_{T}^{2}(\dot{B}^{\frac{d}{2}}_{2,1})}\nonumber\\
  \leq\, & C \|u\|^{\frac{1}{2}}_{L_{T}^{\infty}(\dot{B}^{\frac{d}{2}-1}_{2,1})}
\|u\|^{\frac{1}{2}}_{L_{T}^{1}(\dot{B}^{\frac{d}{2}+1}_{2,1})}
\|H_{L}\|^{\frac{1}{2}}_{L_{T}^{\infty}(\dot{B}^{\frac{d}{2}-1}_{2,1})}
\|H_{L}\|^{\frac{1}{2}}_{L_{T}^{1}(\dot{B}^{\frac{d}{2}+1}_{2,1})}\nonumber\\
\leq\, & C(U_{0}^{\alpha}+\bar{U}^{\alpha}(T))(H_{0}^{\alpha})^{\frac{1}{2}}
\|H_{L}\|^{\frac{1}{2}}_{L_{T}^{1}(\dot{B}^{\frac{d}{2}+1}_{2,1})}.
\end{align*}
Therefore,
\begin{align*}
&\|\bar{H}\|_{L^{1}_{t}(\dot{B}^{\frac{d}{2}+1}_{2,1})}\leq C\Big(\| H_{L} \cdot\nabla u\|_{L^{1}_{t}(\dot{B}^{\frac{d}{2}-1}_{2,1})}
+\| u\cdot\nabla H_{L}\|_{L^{1}_{t}(\dot{B}^{\frac{d}{2}-1}_{2,1})}\Big)  \exp \bigg\{C\int_{0}^{t}\|u\|_{\dot{B}^{\frac{d}{2}+1}_{2,1}}{\rm d}\tau\bigg\}\nonumber\\
&\qquad \qquad\qquad\leq C(U_{0}^{\alpha}+\bar{U}^{\alpha}(T))(H_{0}^{\alpha})^{\frac{1}{2}}
\|H_{L}\|^{\frac{1}{2}}_{L_{T}^{1}(\dot{B}^{\frac{d}{2}+1}_{2,1})}\exp\bigg\{ C\int_{0}^{t}\|u\|_{\dot{B}^{\frac{d}{2}+1}_{2,1}}{\rm d}\tau\bigg\}.\nonumber
\end{align*}
In order to bound $\bar{u}$, we use Proposition \ref{prop4.2} with $c=-I(a)$. Thanks  to Proposition \ref{prop2.2}, we have,
for all $\beta\in\{0,\alpha\}$,
\begin{align}
&\|u_{L}\cdot\nabla u_{L}\|_{\dot{B}^{\frac{d}{2}-1+\beta}_{2,1}}\leq C\|\nabla u_{L}\|_{\dot{B}^{\frac{d}{2}}_{2,1}}
\| u_{L}\|_{\dot{B}^{\frac{d}{2}-1+\beta}_{2,1}},\nonumber\\
&\|\nabla G(a)\|_{\dot{B}^{\frac{d}{2}-1+\beta}_{2,1}}\leq C\|a\|_{\dot{B}^{\frac{d}{2}+\beta}_{2,1}},\nonumber\\
&\|I(a)\cdot\mathcal{A} u_{L}\|_{\dot{B}^{\frac{d}{2}-1+\beta}_{2,1}}\leq C \bar{\nu}\|I(a) \|_{\dot{B}^{\frac{d}{2}}_{2,1}
\cap\dot{B}^{\frac{d}{2}+\beta}_{2,1}}\|\nabla^{2} u_{L}\|_{\dot{B}^{\frac{d}{2}-1+\beta}_{2,1}},\nonumber\\
& \|I(a) \|_{\dot{B}^{\frac{d}{2}}_{2,1}\cap\dot{B}^{\frac{d}{2}+\beta}_{2,1}}\leq
C \|a \|_{\dot{B}^{\frac{d}{2}}_{2,1}\cap\dot{B}^{\frac{d}{2}+\beta}_{2,1}},\nonumber\\
& \Big\|\frac{1}{1+a}\big(H\cdot\nabla H-\frac{1}{2}\nabla |H|^{2}\big)\Big\|_{\dot{B}^{\frac{d}{2}-1+\beta}_{2,1}}
\leq C(1+\|a\|_{\dot{B}^{\frac{d}{2}}_{2,1}})\|\nabla H\|_{\dot{B}^{\frac{d}{2}}_{2,1}}\| H\|_{\dot{B}^{\frac{d}{2}-1+\beta}_{2,1}}.\nonumber
\end{align}
It is easily prove that
$$
\|u_{L}\|_{L^{\infty}_{T}(\dot{B}^{\frac{d}{2}-1}_{2,1}\cap\dot{B}^{\frac{d}{2}-1+\alpha}_{2,1})}
\leq C U_{0}^{\alpha}.
$$
Thus, we have
\begin{align}
\bar{U}^{\alpha}(T)\leq &C \exp\bigg\{C\int _{0}^{T}\Big(\|\bar{u}\|_{\dot{B}^{\frac{d}{2}+1}_{2,1}}+
\|u_{L}\|_{\dot{B}^{\frac{d}{2}+1}_{2,1}}+b_{*}\mu\big(\frac{\bar{\nu}}{b_{*}\mu}\big)^{\frac{2}{\alpha}}
\|a\|^{\frac{2}{\alpha}}_{\dot{B}^{\frac{d}{2}+\alpha}_{2,1}}\Big){\rm d}t\bigg\}\nonumber\\
&\times \left(\|u_{L}\|_{L^{1}(\dot{B}^{\frac{d}{2}+1}_{2,1})}
\|u_{L}\|_{L^{\infty}_{T}(\dot{B}^{\frac{d}{2}-1}_{2,1}\cap\dot{B}^{\frac{d}{2}-1+\alpha}_{2,1})}\right.\nonumber\\
&\qquad +\|a\|_{L^{\infty}_{T}(\dot{B}^{\frac{d}{2}}_{2,1}\cap\dot{B}^{\frac{d}{2}+\alpha}_{2,1})}
\Big(T+\bar{\nu}\|u_{L}\|_{L^{1}_{T}(\dot{B}^{\frac{d}{2}+1}_{2,1}\cap\dot{B}^{\frac{d}{2}+1+\alpha}_{2,1})}\Big)\nonumber\\
&\qquad \left. +\Big(1+\|a\|_{L^{\infty}_{T}(\dot{B}^{\frac{d}{2}}_{2,1}\cap\dot{B}^{\frac{d}{2}+\alpha}_{2,1})}\Big)
\|H\|_{L^{1}_{T}(\dot{B}^{\frac{d}{2}+1}_{2,1})}
\|H\|_{L^{\infty}_{T}(\dot{B}^{\frac{d}{2}-1}_{2,1}\cap\dot{B}^{\frac{d}{2}-1+\alpha}_{2,1})}\right). \label{bb11}
\end{align}
Now, if $T$ is sufficiently small so that
\begin{align}
& \exp\{CU_{L}^{\alpha}(T)\}\leq\sqrt{2},\quad \exp\Big\{C\frac{\bar{U}^{\alpha}(T)}{b_{*}\mu}\Big\}\leq\sqrt{2},\label{bb12}\\
& \exp{\Big\{Cb_{*}\mu\big(\frac{\bar{\nu}}{b_{*}\mu}\big)^{\frac{2}{\alpha}}T(A^{\alpha}(T))^{\frac{2}{\alpha}}\Big\}}\leq 2,\label{bb13}
\end{align}
we have:
\begin{align}
&A^{\alpha}(T)\leq 2A_{0}^{\alpha}+1,\label{bb14}\\
&\bar{U}^{\alpha}(T)\leq C (U_{0}^{\alpha}U_{L}^{\alpha}(T)+(1+A_{0}^{\alpha})(T+\bar{\nu}U_{L}^{\alpha}(T)+
(H_{0}^{\alpha})^{2}))  :=C X(T),\label{bb15}\\
&\|\bar{H}\|_{L^{1}_{T}(\dot{B}^{\frac{d}{2}+1}_{2,1})}\leq C (U_{0}^{\alpha}+X(T))(H_{0}^{\alpha})^{\frac{1}{2}}
\|H_{L}\|^{\frac{1}{2}}_{L^{1}_{T}(\dot{B}^{\frac{d}{2}+1}_{2,1})},\label{bb16}\\
&\|H\|_{L^{1}_{T}(\dot{B}^{\frac{d}{2}+1}_{2,1})}\leq C(U_{0}^{\alpha}+X(T)+1)(H_{0}^{\alpha})^{\frac{1}{2}}
\|H_{L}\|^{\frac{1}{2}}_{L^{1}_{T}(\dot{B}^{\frac{d}{2}+1}_{2,1})}. \label{bb17}
\end{align}
By \eqref{bb11}, \eqref{bb12}, \eqref{bb13}, and \eqref{bb17},    we obtain that
\begin{align}
\bar{U}^{\alpha}(T)&\leq C \big(U_{0}^{\alpha}U^{\alpha}_{L}(T)+(1+A_{0}^{\alpha})(T+\bar{\nu}U_{L}^{\alpha}(T))\nonumber\\
&\quad +(1+A_{0}^{\alpha})(U_{0}^{\alpha}+X(T)+1)(H_{0}^{\alpha})^{\frac{3}{2}}
H_{L}(T)^{\frac{1}{2}}\big)\nonumber\\
&\leq C\big(1+A_{0}^{\alpha}\big)^{2}
\big(T(1+(H_{0}^{\alpha})^{2})+\bar{\nu}U_{L}^{\alpha}(T)+(U_{0}^{\alpha}
+\bar{\nu})(H_{0}^{\alpha})^{2}U_{L}^{\alpha}(T)\nonumber\\
&\quad+(U_{0}^{\alpha}+(H_{0}^{\alpha})^{2}+1)(H_{0}^{\alpha})^{\frac{3}{2}}H_{L}^{\frac{1}{2}}(T)\big).\nonumber
\end{align}
Thus, if we choose $T>0$ such that \eqref{bb3}-\eqref{bb4} is satisfied for some sufficiently small constant $\eta$, then both \eqref{bb7} and
the above conditions \eqref{bb12}-\eqref{bb13}  are satisfied with a strict inequality.  It is now easy to complete the proof
by means of a bootstrap argument.
\end{proof}

\subsection{Existence of the local solution}

In this subsection, we shall prove the existence part of Theorem \ref{ThA}. We adopt the simlar process developed by Danchin
\cite{Db,De} for the compressible Navier-Stokes equations (see also \cite{BCD}). Briefly, this process can be described as follows.
First, we approximate the system \eqref{Aa5}-\eqref{Aa7} by a sequence of ordinary differential equations by applying the Friedrichs regularity method.
Then,  we prove uniform a priori estimates in $E_{T}^{\alpha}$ for these solutions. Next,
 we establish further boundedness properties involving the H\"{o}lder regularity with respect to time for these
approximate solutions. Finally, we use the previous steps  to show compactness and convergence of the  approximate solutions (up to an extraction).
We shall focus on the analysis on the
coupling term of the velocity field  and the magnetic field.

%

\subsubsection{Friedrichs approximation of the system.}
Let $\dot{L}_{n}^{2}$ be the set of $L^{2}$ functions spectrally supported in the annulus
$\mathcal{C}_{n} :=\{\xi\in\mathbb{R}^{d}|\,  n^{-1}\leq\xi\leq n\}$ and let $\Omega_{n}$
be the set of functions $(a,u,H)$ of $(\dot{L}^{2}_{n})^{2d+1}$ such that $\inf_{x\in \mathbb{R}^{d}}a>-1$.
The linear space $\dot{L}_{n}^{2}$ is endowed with the standard $L^{2}$ topology. Due to
the Bernstein's inequality, the $L^{\infty}$ topology on $\dot{L}_{n}^{2}$ is weaker than the usual $L^{2}$
topology, thus $\Omega_{n}$ is an open set of $(\dot{L}^{2}_{n})^{2d+1}$.
Let $$\dot{\mathbb{E}}_{n}:L^{2}\rightarrow \dot{L}_{n}^{2}$$ be the Friedrichs projector, defined by
$$
\mathcal{F}\mathbb{\dot{E}}_{n}U(\xi) := \textbf{1}_{\mathcal{C}_{n}}(\xi)\mathcal{F}U(\xi)\ \  \text{for all}\ \ \xi\in \mathbb{R}^{d}.
$$
We aim to solve the system of ordinary differential equations:
\begin{align}\label{bb18}
\frac{{\rm d}}{{\rm d}t}\begin{pmatrix}a\\\bar{u}\\H\end{pmatrix}=\begin{pmatrix}F_{n}(a,\bar{u},H)\\
G_{n}(a,\bar{u},H)\\Q_{n}(a,\bar{u},H)\end{pmatrix},\qquad
\begin{pmatrix}a\\ \bar{u}\\H\end{pmatrix}\Bigg|_{t=0}=\begin{pmatrix}\mathbb{\dot{E}}_{n}a_{0}\\
0\\ \mathbb{\dot{E}}_{n}H_{0}\end{pmatrix}
\end{align}
with $u  := \bar{u}+u_{L}$ and
\begin{align}
F_{n}(a,\bar{u},H) & := -\mathbb{\dot{E}}_{n}\dv((1+a)u), \nonumber\\
G_{n}(a,\bar{u},H)& := \mathbb{\dot{E}}_{n}\Big(\frac{1}{1+a}\mathcal{A}\bar{u}\Big)-\mathbb{\dot{E}}_{n}
(u\cdot\nabla u)-\mathbb{\dot{E}}_{n}(I(a)\mathcal{A}u_{L})\nonumber\\
&\quad \  -\mathbb{\dot{E}}_{n}\nabla\big(G(a)\Big)
+\mathbb{\dot{E}}_{n}\Big(\frac{1}{1+a}\big(H\cdot\nabla H-\frac{1}{2}\nabla |H|^{2}\big)\Big),\nonumber\\
Q_{n}(a,\bar{u},H)& := \mathbb{\dot{E}}_{n}(H\cdot\nabla u)
-\mathbb{\dot{E}}_{n}(u\cdot\nabla H)+\nu\mathbb{\dot{E}}_{n}(\Delta H)-\mathbb{\dot{E}}_{n}(H\dv u).\nonumber
\end{align}
Notice that if $1+a_{0}$ is positive and bounded away from zero, then so is $1+\mathbb{\dot{E}}_{n}a_{0}$ for sufficiently large $n$,
 and hence the initial data belongs to $\Omega_{n}$. It is easy to check that the map
$$(a,\bar{u},H)\rightarrow(F_{n}(a,\bar{u},H),G_{n}(a,\bar{u},H),Q_{n}(a,\bar{u},H))$$
belongs to  $\mathcal{C}(\mathbb{R}^{+}\times\Omega_{n};(\dot{L}^{2}_{n})^{2d+1})$ and is locally Lipschitz with respect to
the variable $(a,\bar{u},H)$. Therefore, the system \eqref{bb18} has a unique maximal solution $(a^{n},\bar{u}^{n},H^{n})$ in the
space $\mathcal{C}^{1}([0,T^{*}_{n});\Omega_{n})$.

%
\subsubsection{Uniform estimates of $(a^{n},{u}^{n},H^{n})$.}
First, we note that $(a^{n},\bar{u}^{n},H^{n})$ satisfies the system:
\begin{align}
\left\{\begin{array}{l}
\!\partial_{t}a^{n}+\mathbb{\dot{E}}_{n}(u^{n}\cdot\nabla a^{n})+\mathbb{\dot{E}}_{n}((1+a^{n})\dv u^{n} )=0,\\
\!\partial_{t}\bar{u}^{n}-\mathbb{\dot{E}}_{n}\Big(\frac{1}{1+a^{n}}\mathcal{A}\bar{u}^{n}\Big)+\mathbb{\dot{E}}_{n}
\big(u^{n}\cdot\nabla u^{n}\big)-\mathbb{\dot{E}}_{n}\big(I(a^{n})\mathcal{A}u^{n}_{L}\big)\\
\quad \qquad   +\nabla\mathbb{\dot{E}}_{n}\big(G(a^{n})\big)
-\mathbb{\dot{E}}_{n}\Big(\frac{1}{1+a^{n}}\big(H^{n}\cdot\nabla H^{n}-\frac{1}{2}\nabla |H^{n}|^{2}\big)\Big)=0,\\
\!\partial_{t}H^{n}-\mathbb{\dot{E}}_{n}(H^{n}\cdot\nabla u^{n})+\mathbb{\dot{E}}_{n}\big(u^{n}\cdot\nabla H^{n})-\nu\mathbb{\dot{E}}_{n}(\Delta H^{n})
+\mathbb{\dot{E}}_{n}(H^{n}\dv u^{n})=0 \nonumber
\end{array}\right.
\end{align}
with the initial data
$$( a^{n},\bar{u}^{n},H^{n})|_{t=0}=(\mathbb{\dot{E}}_{n}a_{0},0,\mathbb{\dot{E}}_{n}H_{0})(x), \quad x\in \mathbb{R}^d,$$
where $u^{n}:=u^{n}_{L}+\bar{u}^{n}$.
We claim that $T^{*}_{n}$ may be bounded from below by the supremum $T$ of all the time satisfying both \eqref{bb3} and \eqref{bb4},
and that $(a^{n},u^{n},H^{n})_{n\geq1}$ is bounded in $E_{T}^{\alpha}$. In fact,
since $\mathbb{\dot{E}}_{n}$ is an $L^{2}$ orthogonal projector, it has no effect on the energy estimates which are used in the proof of Proposition \ref{PrA}.
Hence, the  Proposition
\ref{PrA} applies to our approximate solution $(a^{n},u^{n},H^{n})$.
We remark  that the dependence on $n$ in the conditions \eqref{bb3} and \eqref{bb4} and in the inequalities \eqref{bb5}-\eqref{bb7} may be omitted.
Now, as $(a^{n},\bar{u}^{n},H^{n})$ is spectrally supported in $\mathcal{C}_{n}$,
the inequalities \eqref{bb5}-\eqref{bb7} ensure that it is bounded in $L^{\infty}_T(\dot{L}^{2}_{n})$.
Thus,  the standard continuation criterion for ordinary differential equations implies that
$T^{*}_{n}$ is greater than any time $T$ satisfying \eqref{bb3}-\eqref{bb4} and  that, for all $n\geq 1$,
\begin{align}
&\|a^{n}\|_{L^{\infty}_{T}(\dot{B}^{\frac{d}{2}}_{2,1}\cap\dot{B}^{\frac{d}{2}+\alpha}_{2,1})}\leq2A_{0}^{\alpha}+1,\nonumber\\
 &\|H^{n}\|_{L^{\infty}_{T}(\dot{B}^{\frac{d}{2}}_{2,1}\cap\dot{B}^{\frac{d}{2}+\alpha}_{2,1})}+
\nu\|H^{n}\|_{L^{1}_{T}(\dot{B}^{\frac{d}{2}+1}_{2,1}\cap\dot{B}^{\frac{d}{2}+1+\alpha}_{2,1})}\leq 2H_{0}^{\alpha},\nonumber\\
&\|\bar{u}^{n}\|_{L^{\infty}_{T}(\dot{B}^{\frac{d}{2}}_{2,1}\cap\dot{B}^{\frac{d}{2}+\alpha}_{2,1})}+
b_{*}\mu\|\bar{u}^{n}\|_{L^{1}_{T}(\dot{B}^{\frac{d}{2}+1}_{2,1}\cap\dot{B}^{\frac{d}{2}+1+\alpha}_{2,1})}\nonumber\\
& \qquad \leq C\big(1+A_{0}^{\alpha}\big)^{2}
\Big(T(1+(H_{0}^{\alpha})^{2})+\bar{\nu}U_{L}^{\alpha}(T) +(U_{0}^{\alpha}+\bar{\nu})(H_{0}^{\alpha})^{2}U_{L}^{\alpha}(T)\nonumber\\
& \qquad \qquad\qquad\qquad\quad
+(U_{0}^{\alpha}+(H_{0}^{\alpha})^{2}+1)(H_{0}^{\alpha})^{\frac{3}{2}}H_{L}^{\frac{1}{2}}(T)\Big).\nonumber
\end{align}
In particular, $(a^{n},u^{n},H^{n})_{n\geq 1}$ is bounded in $E_{T}^{\alpha}$.

%
\subsubsection{Time derivatives of $(a^{n},\bar {u}^{n},\bar H^{n})$.}
In order to  pass the limit in $(a^{n},{u}^{n},H^{n})$, we need the compactness in time of $(\bar a^{n},\bar {u}^{n},\bar H^{n})$ which
can be stated as the following lemma.

\begin{Lemma}\label{lemma2.2}
Let $\bar{a}^{n}:=a^{n}-\mathbb{\dot{E}}_{n}a_{0}$ and $\bar{H}^{n}:=H^{n}-\mathbb{\dot{E}}_{n}H_{0}$.
Then the sequence $(\bar{a}^{n})_{n\geq1}$ is   bounded in
$$\mathcal{C}([0,T];\dot{B}^{\frac{d}{2}}_{2,1}\cap\dot{B}^{\frac{d}{2}+\alpha}_{2,1})\cap
\mathcal{C}^{\frac{1}{2}}([0,T];\dot{B}^{\frac{d}{2}-1}_{2,1}\cap\dot{B}^{\frac{d}{2}-1+\alpha}_{2,1}), $$
  the sequence $(\bar{u}^{n})_{n\geq1}$ is bounded in
$$\mathcal{C}([0,T];\dot{B}^{\frac{d}{2}-1}_{2,1}\cap\dot{B}^{\frac{d}{2}-1+\alpha}_{2,1})\cap
\mathcal{C}^{\frac{1}{2}}([0,T];\dot{B}^{\frac{d}{2}-1}_{2,1}+\dot{B}^{\frac{d}{2}-2+\alpha}_{2,1}),$$
and the sequence $(\bar{H}^{n})_{n\geq1}$ is bounded in
$$
\mathcal{C}([0,T];\dot{B}^{\frac{d}{2}-1}_{2,1}\cap\dot{B}^{\frac{d}{2}-1+\alpha}_{2,1})\cap
\mathcal{C}^{\frac{1}{2}}([0,T];\dot{B}^{\frac{d}{2}-2}_{2,1}\cap\dot{B}^{\frac{d}{2}-2+\alpha}_{2,1}).
$$
\end{Lemma}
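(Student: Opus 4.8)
The plan is to separate the claim into two parts: the boundedness in $\mathcal{C}([0,T];\cdot)$ and the $\tfrac12$-Hölder-in-time bounds. The first part is essentially free. Since each $(a^n,\bar u^n,H^n)$ solves the ODE system \eqref{bb18} in the spectrally localized space $\dot L^2_n$, it is automatically $\mathcal C^1$ in time with values in every $\dot B^s_{2,1}$, and the Friedrichs projector $\mathbb{\dot E}_n$ is a frequency cut-off of operator norm at most $1$ on each $\dot B^s_{2,1}$, uniformly in $n$. Hence the uniform $L^\infty_T$ bounds established in the previous subsection, together with the boundedness of $\mathbb{\dot E}_n a_0$ and $\mathbb{\dot E}_n H_0$ in the relevant spaces, directly give the stated $\mathcal C([0,T];\cdot)$ bounds for $\bar a^n$, $\bar u^n$ and $\bar H^n$.

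The heart of the matter is the Hölder regularity, and here I would exploit that $\bar a^n$, $\bar u^n$, $\bar H^n$ all vanish at $t=0$. Then by the fundamental theorem of calculus and Cauchy--Schwarz one has $\|f(t)-f(s)\|_X\le |t-s|^{1/2}\|\partial_t f\|_{L^2_T(X)}$ and $\sup_t\|f(t)\|_X\le T^{1/2}\|\partial_t f\|_{L^2_T(X)}$, so it suffices to bound $\partial_t\bar a^n$, $\partial_t\bar u^n$, $\partial_t\bar H^n$ in $L^2_T$ of the respective target spaces; the exponent $\tfrac12$ is exactly the gain of Cauchy--Schwarz. The single recurring tool will be the interpolation inequality stating that any $v$ bounded in $L^\infty_T(\dot B^{\sigma}_{2,1})\cap L^1_T(\dot B^{\sigma+2}_{2,1})$ lies in $L^2_T(\dot B^{\sigma+1}_{2,1})$, which converts the two ends of the uniform energy bound into one $L^2$-in-time bound at the intermediate regularity.

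Reading off the equations, for $\partial_t\bar a^n=-\mathbb{\dot E}_n(u^n\cdot\nabla a^n)-\mathbb{\dot E}_n((1+a^n)\dv u^n)$ I would use $\dv u^n\in L^2_T(\dot B^{d/2-1}_{2,1}\cap\dot B^{d/2-1+\alpha}_{2,1})$ by interpolation and $\nabla a^n\in L^\infty_T(\dot B^{d/2-1}_{2,1}\cap\dot B^{d/2-1+\alpha}_{2,1})$, combined with the product law of Proposition \ref{prop2.2}, to land in $L^2_T(\dot B^{d/2-1}_{2,1}\cap\dot B^{d/2-1+\alpha}_{2,1})$. For $\partial_t\bar H^n$ the dissipative term $\nu\Delta H^n$ forces the loss of two derivatives: since $H^n\in L^2_T(\dot B^{d/2}_{2,1}\cap\dot B^{d/2+\alpha}_{2,1})$ by interpolation, $\Delta H^n\in L^2_T(\dot B^{d/2-2}_{2,1}\cap\dot B^{d/2-2+\alpha}_{2,1})$, while the coupling products $H^n\cdot\nabla u^n$, $u^n\cdot\nabla H^n$ and $H^n\dv u^n$ are placed in the same space by pairing an $L^\infty_T$ factor with an $L^2_T$ factor in the product law, which accounts for the index $d/2-2$.

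For the velocity the situation is the most delicate and produces the \emph{sum} space $\dot B^{d/2-1}_{2,1}+\dot B^{d/2-2+\alpha}_{2,1}$. Writing $\partial_t\bar u^n=G_n(a^n,\bar u^n,H^n)$, I would estimate each term separately: the viscous term $\tfrac{1}{1+a^n}\mathcal A\bar u^n$ (via $\bar u^n\in L^2_T(\dot B^{d/2}_{2,1}\cap\dot B^{d/2+\alpha}_{2,1})$), the convection $u^n\cdot\nabla u^n$, the term $I(a^n)\mathcal A u_L^n$ (via the same interpolation applied to $u_L^n$), and the magnetic forcing $\tfrac{1}{1+a^n}(H^n\cdot\nabla H^n-\tfrac12\nabla|H^n|^2)$ all fall into $L^2_T(\dot B^{d/2-2+\alpha}_{2,1})$, whereas the pressure gradient $\nabla G(a^n)$, controlled only by $\|a^n\|_{\dot B^{d/2+\beta}_{2,1}}$ in $L^\infty_T$, sits at regularity $\dot B^{d/2-1}_{2,1}$. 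Because homogeneous Besov spaces of different order are not nested and $\alpha\le1$, these two contributions cannot be merged, which is precisely why the target must be a sum space. The main obstacle I anticipate is exactly this bookkeeping for $\bar u^n$: correctly matching each nonlinear and coupling term to its summand, and in particular verifying that the strongly coupled magnetic force and the convective terms can be recovered in $L^2$-in-time at the \emph{lower} regularity rather than only in $L^1$-in-time. This is where the interpolation between the $L^\infty_T$ and $L^1_T$ uniform bounds, and the careful choice of which factor carries the derivative in each product estimate, are essential.
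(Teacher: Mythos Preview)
Your proposal is correct and follows essentially the same route as the paper: reduce the $\mathcal C^{1/2}$ bounds to $L^2_T$ bounds on the time derivatives, and control each term via the interpolation $L^\infty_T(\dot B^{\sigma}_{2,1})\cap L^1_T(\dot B^{\sigma+2}_{2,1})\hookrightarrow L^2_T(\dot B^{\sigma+1}_{2,1})$ together with the product and composition laws. The one minor discrepancy is the attribution in the sum space for $\partial_t\bar u^n$: the paper groups $\nabla G(a^n)$ with the $L^2_T(\dot B^{d/2-2+\alpha}_{2,1})$ part and $I(a^n)\mathcal A u_L$ with the $L^\infty_T(\dot B^{d/2-1}_{2,1})$ part, whereas you swap these two---and your assignment is in fact the one that is consistent with the available bounds.
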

\begin{proof}
The result for $(\bar{a}^{n})_{n\geq1}$ follows from the facts that $\bar{a}^{n}|_{t=0}=0$ and that
\begin{align}
 \partial_{t}\bar{a}^{n}=-\mathbb{\dot{E}}_{n}(\dv(u^{n}(1+a^{n}))). \label{bAB}
\end{align}
Indeed, as $(a^{n},u^{n})_{n\geq1}$ is bounded in $E_{T}^{\alpha}$, by the product law in Besov spaces,
the right-hand side of \eqref{bAB} is bounded in $L^{2}_T(\dot{B}^{\frac{d}{2}-1}_{2,1}\cap\dot{B}^{\frac{d}{2}-1+\alpha}_{2,1})$.

For $(\bar{u}^{n})_{n\geq1}$, it suffices to prove that $(\partial_{t}\bar{u}^{n})_{n\geq1}$ is bounded in
$L^{2}_T(\dot{B}^{\frac{d}{2}-1+\beta}_{2,1}+\dot{B}^{\frac{d}{2}-2+\beta}_{2,1})$ for $\beta\in\{0,\alpha\}$.
We rewrite the equation for $\bar{u}^n$ as
\begin{align}
\partial_{t}\bar{u}^{n}= \,& -\mathbb{\dot{E}}_{n}(u^{n}\cdot\nabla u^{n})+\mathbb{\dot{E}}_{n}\Big(\frac{1}{1+a^{n}}\cdot\mathcal{A} \bar{u}^{n}\Big)
-\nabla\mathbb{\dot{E}}_{n}(G(a))\nonumber\\
&+\mathbb{\dot{E}}_{n}\Big(\frac{1}{1+a^{n}}\big(H^{n}\cdot\nabla H^{n}
-\frac{1}{2}|H^{n}|^{2}\big)\Big)-\mathbb{\dot{E}}_{n}(I(a^{n}\mathcal{A}u_{L}^{n}))\label{bAC}
\end{align}
with $\bar{u}^n|_{t=0}=0$. Because  $(u^{n})_{n\geq1}$, $(\bar{u}^{n})_{n\geq1}$, and $(H^{n})_{n\geq1}$
are bounded in $L^{2}_{T}(\dot{B}^{\frac{d}{2}}_{2,1}\cap\dot{B}^{\frac{d}{2}+\alpha}_{2,1})
\cap L^{\infty}_{T}(\dot{B}^{\frac{d}{2}-1}_{2,1}\cap\dot{B}^{\frac{d}{2}-1+\alpha}_{2,1})$,
and $\bar{a}^{n}$ is bounded in $L^{\infty}_T(\dot{B}^{\frac{d}{2}}_{2,1}\cap\dot{B}^{\frac{d}{2}+\alpha}_{2,1})$,
we easily deduce that the first four terms on the right-hand side of \eqref{bAC} are in $L^{2}_{T}(\dot{B}^{\frac{d}{2}-2}_{2,1}\cap\dot{B}^{\frac{d}{2}-2+\alpha}_{2,1})$
and that the last one is in $L^{\infty}_T(\dot{B}^{\frac{d}{2}-1}_{2,1}\cap\dot{B}^{\frac{d}{2}-1+\alpha}_{2,1})$ uniformly.

Similarly, the estimate for $(\bar{H}^{n})_{n\geq1}$ follows from the facts that $\bar{H}^{n}|_{t=0}=0$ and that
\begin{align}
 \partial_{t}\bar{H}^{n}=-\mathbb{\dot{E}}_{n}(u^{n}\cdot\nabla H^{n})+\mathbb{\dot{E}}_{n}(H^{n}\cdot\nabla u^{n})
+\nu\mathbb{\dot{E}}_{n}(\Delta H^{n})-\nu\mathbb{\dot{E}}_{n}(H^{n}\dv u^{n}). \label{bAD}
\end{align}
Indeed, as   $u^{n} $  and $H^{n}$ are bounded in $L^{2}_{T}(\dot{B}^{\frac{d}{2}}_{2,1}\cap\dot{B}^{\frac{d}{2}+\alpha}_{2,1})
\cap L^{\infty}_{T}(\dot{B}^{\frac{d}{2}-1}_{2,1}\cap\dot{B}^{\frac{d}{2}-1+\alpha}_{2,1})$,
we reduce that the right-hand side of \eqref{bAD} is bounded in    $L^{2}_T(\dot{B}^{\frac{d}{2}-2}_{2,1}\cap\dot{B}^{\frac{d}{2}-2+\alpha}_{2,1})$.
This is a simple consequence of the product and composition laws
for the homogeneous Besov spaces, as stated in Appendix \ref{AppA}.
\end{proof}

%
\subsubsection{Compactness and convergence of $(a^{n},\bar {u}^{n},\bar H^{n})$.}
By the results obtained in the above three steps, we begin to discuss the compactness and convergence of $(a^{n},\bar {u}^{n},\bar H^{n})$.
The arguments are very similar to that of \cite{BCD} on the isentropic Navier-Stokes equations. Here
we present them for the sake of completeness.
As in \cite{BCD}, we introduce a sequence $(\varphi_{p})_{p\geq 1}$ of smooth functions with values in $[0,1]$, supported in the ball $B(0,p+1)$ and equal to $1$ on $B(0,p)$.
According to the previous lemma, the sequence $(\bar{a}^{n})_{n\geq1}$ is bounded
in the space $\mathcal{C}^{\frac{1}{2}}([0,T];\dot{B}^{\frac{d}{2}-1}_{2,1}\cap\dot{B}^{\frac{d}{2}-1+\alpha}_{2,1})$.
Moreover, we have:

(i) By virtue of Proposition \ref{prop2.3},
$(\varphi_{p}\bar{a}^{n})_{n\geq1}$ is bounded in $$\mathcal{C}([0,T];B^{\frac{d}{2}}_{2,1}\cap B^{\frac{d}{2}+\alpha}_{2,1})
\cap\mathcal{C}^{\frac{1}{2}}([0,T];B^{\frac{d}{2}-1}_{2,1}\cap B^{\frac{d}{2}-1+\alpha}_{2,1});$$

(ii) According to Proposition \ref{prop2.4}, \\
\centerline{the map $ z\rightarrow \varphi_{p}z $ is compact from $B^{\frac{d}{2}+\alpha}_{2,1}$  to $ B^{\frac{d}{2}-1+\alpha}_{2,1}$; }

(iii) Since $\varphi_{p}\bar{a}^{n}$ is uniformly bounded in $\mathcal{C}^{\frac{1}{2}}([0,T];B^{\frac{d}{2}-1+\alpha}_{2,1})$, we have
$\varphi_{p}\bar{a}^{n}$ is uniformly equicontinuous with values in $B^{\frac{d}{2}-1+\alpha}_{2,1}$.

Therefore,   the Ascoli's theorem ensues that there exists some function $\bar{a}_{p}$ such that, up to a subsequence, \\
\centerline{$(\varphi_{p}\bar{a}^{n})_{n\geq1}$  converges to $\bar{a}_{p}$ in $\mathcal{C}^{\frac{1}{2}}([0,T];B^{\frac{d}{2}-1+\alpha}_{2,1})$.}\\
Using Cantor's diagonal process, we can then find a subsequence of $(\bar{a}^{n})_{n\geq1}$  (still denoted by $(\bar{a}^{n})_{n\geq 1}$)
such that for,  all $p\geq1$,\\
\centerline{$\varphi_{p}\bar{a}^{n}$ converges to $\bar{a}_{p}$ in $\mathcal{C}^{\frac{1}{2}}([0,T];B^{\frac{d}{2}-1+\alpha}_{2,1})$.}\\
As $\varphi_{p}\varphi_{p+1}=\varphi_{p}$, we have  $\bar{a}_{p}=\varphi_{p}\bar{a}_{p+1}$. Thus, we can easily deduce that
there exists some function $\bar{a}$ such that, for all $\varphi\in\mathcal{C}_{c}^{\infty}(\mathbb{R}^{d})$, \\
\centerline{$\varphi \bar{a}^{n}$ tends to $\varphi \bar{a}$ in  $\mathcal{C}^{\frac{1}{2}}([0,T];B^{\frac{d}{2}-1+\alpha}_{2,1})$.}

A similar argument gives us that
there exists a vector field $\bar{u}$ such that (up to extraction), for all $\varphi\in\mathcal{C}_{c}^{\infty}(\mathbb{R}^{d})$,\\
\centerline{$(\varphi\bar{u}^{n})_{n\geq1}$ converges to $\varphi \bar{u}$ in $\mathcal{C}^{\frac{1}{2}}([0,T];B^{\frac{d}{2}-2+\alpha}_{2,1})$,}\\
and there exists a vector field $\bar{H}$ such that (up to extraction), for all $\varphi\in\mathcal{C}_{c}^{\infty}(\mathbb{R}^{d})$,\\
\centerline{the sequence $(\varphi\bar{H}^{n})_{n\geq1} $ converges to $\varphi \bar{H}$ in $\mathcal{C}^{\frac{1}{2}}([0,T];B^{\frac{d}{2}-2+\alpha}_{2,1})$.}

Next, the uniform bounds supplied by the second step and the Fatou property together ensure that  $1+a$ is positive and
\begin{align*}
 (\bar{a},\bar{u},\bar{H})\in \tilde{L}^{\infty}_{T}(\dot{B}^{\frac{d}{2}}_{2,1}\cap\dot{B}^{\frac{d}{2}+\alpha}_{2,1})
\times \tilde{L}^{\infty}_{T}(\dot{B}^{\frac{d}{2}-1}_{2,1}\cap\dot{B}^{\frac{d}{2}-1+\alpha}_{2,1})
\times\tilde{L}^{\infty}_{T}(\dot{B}^{\frac{d}{2}-1}_{2,1}\cap\dot{B}^{\frac{d}{2}-1+\alpha}_{2,1}).
\end{align*}
We claim that $(\bar{u},\bar{H})$ also belongs to $\big(L^{1}_T(\dot{B}^{\frac{d}{2}+1}_{2,1}\cap{\dot{B}^{\frac{d}{2}+1+\alpha}_{2,1}})\big)^{d+d}$.
Indeed,  since  $(\bar{u}^{n})_{n\geq 1}$ is bounded in $L^{1}_T(\dot{B}^{\frac{d}{2}+1}_{2,1})\cap L^{1}_T(\dot{B}^{\frac{d}{2}+1+\alpha}_{2,1})$,
we know
that $\bar{u}$ belongs to the set $\mathcal{M}_{T}(\dot{B}^{\frac{d}{2}+1}_{2,1}\cap \dot{B}^{\frac{d}{2}+1+\alpha}_{2,1})$ of bounded measures on $[0,T]$ with
 values in the space $\dot{B}^{\frac{d}{2}+1}_{2,1}\cap \dot{B}^{\frac{d}{2}+1+\alpha}_{2,1}$, and that
 $$
 \int_{0}^{T}{\rm d}\|u\|_{\dot{B}^{\frac{d}{2}+1}_{2,1}\cap \dot{B}^{\frac{d}{2}+1+\alpha}_{2,1}}\leq C_{T},
 $$
 where $C_{T}$  stands for the right-hand side of \eqref{bb6}.

 It is clear that the same inequality holds for $\dot{\mathbb{E}}_{n}\bar{u}$, for all $n\geq1$. As
 $\bar{u}\in L^{\infty}_{T}(\dot{B}^{\frac{d}{2}-1}_{2,1}\linebreak \cap\dot{B}^{\frac{d}{2}-1+\alpha}_{2,1})$,
 we  have $\dot{\mathbb{E}}_{n}\bar{u}\in L^{1}_{T}(\dot{B}^{\frac{d}{2}+1}_{2,1}\cap\dot{B}^{\frac{d}{2}+1+\alpha}_{2,1})$.
 Thus,  we may write
 \begin{align*}
 \int_{0}^{T}\|\dot{\mathbb{E}}_{n}\bar{u}\|_{\dot{B}^{\frac{d}{2}+1}_{2,1}
 \cap\dot{B}^{\frac{d}{2}+1+\alpha}_{2,1}}{\rm d}t\leq C_{T}\,\,\,\text{for all}\,\,\,n\geq1.
 \end{align*}
Using the definition of the norm in $\dot{B}^{\frac{d}{2}+1}_{2,1}\cap\dot{B}^{\frac{d}{2}+1+\alpha}_{2,1}$, the above inequality implies that
\begin{align*}
\underset{N\rightarrow\infty}{\lim}\underset{|j|\leq N}{\sum}2^{j(\frac{d}{2}+1+\beta)}\int_{0}^{T}\|\dot{\Delta}_{j}\bar{u}\|_{L^{2}}{\rm d}t\leq C_{T},
\quad \beta\in\{0,\alpha\}.
\end{align*}
 Therefore, $\bar{u} \in L^{1}_{T}(\dot{B}^{\frac{d}{2}+1}_{2,1}\cap\dot{B}^{\frac{d}{2}+1+\alpha}_{2,1})$.
A similar argument implies that $\bar{H}\in L^{1}_{T}(\dot{B}^{\frac{d}{2}+1}_{2,1}\cap\dot{B}^{\frac{d}{2}+1+\alpha}_{2,1})$.

Interpolating   the above convergence results, we may get better convergence results for $(\bar{a}^{n},\bar{u}^{n},\linebreak\bar{H}^{n})$
and pass to the limit in \eqref{bb18}. Defining
\begin{align*}
 (a,u,H):=(\bar{a}+a_{0},u_{L}+\bar{u},\bar{H}+H_{0}),
\end{align*}
we thus get a solution
$(a,u,H)$ of \eqref{Aa5}-\eqref{Aa8} with the initial data $(a_{0},u_{0},H_{0})$. Using the equations of $(a,u,H)$  and the product laws, we also have
$(\partial_{t}+u\cdot\nabla)a\in L^{1}_{T}(\dot{B}^{\frac{d}{2}}_{2,1}\cap\dot{B}^{\frac{d}{2}+\alpha}_{2,1})$,
 $\partial_{t}u\in L^{1}_{T}(\dot{B}^{\frac{d}{2}-1}_{2,1}\cap\dot{B}^{\frac{d}{2}-1+\alpha}_{2,1})$,
and $\partial_{t}H\in L^{1}_{T}(\dot{B}^{\frac{d}{2}-1}_{2,1}\cap\dot{B}^{\frac{d}{2}-1+\alpha}_{2,1})$.
Proposition \ref{le4.1} therefore guarantees that $a\in \mathcal{\tilde{C}}_{T}(\dot{B}^{\frac{d}{2}}_{2,1}\cap\dot{B}^{\frac{d}{2}+\alpha}_{2,1})$. Obviously, we have
$u\in \mathcal{\tilde{C}}_{T}(\dot{B}^{\frac{d}{2}-1}_{2,1}\cap\dot{B}^{\frac{d}{2}-1+\alpha}_{2,1})$ and
$H\in \mathcal{\tilde{C}}_{T}(\dot{B}^{\frac{d}{2}-1}_{2,1}\cap\dot{B}^{\frac{d}{2}-1+\alpha}_{2,1})$.

\begin{Remark} \label{Rem21}
According to the properties of the semigroup for the heat kernel, we have the following estimates:
\begin{align*}
 &\|u_{L}\|_{L^{1}_{T}(\dot{B}^{s+2}_{2,1})}\leq C \sum_{j\in \mathbb{Z}}2^{js}(1-e^{-\kappa\bar{\nu}2^{2j}T})\|\dot{\Delta}_{j} u_{0}\|_{L^{2}},\\
&\|H_{L}\|_{L^{1}_{T}(\dot{B}^{s+2}_{2,1})}\leq C \sum_{j\in \mathbb{Z}}2^{js}(1-e^{-\kappa\nu2^{2j}T})\|\dot{\Delta}_{j} H_{0}\|_{L^{2}},
\end{align*}
where $\kappa$ is constant.
\end{Remark}

\begin{Remark}\label{Rem22}
Combining \eqref{bb3} and \eqref{bb4} with Remark \ref{Rem21} yields a rather explicit lower bound on the lifespan $T^{*}$ of the solution.
Indeed, using the fact that
$$
1-e^{-\beta T 2^{2j}}\leq (\beta T)^{\frac{\alpha}{2}}2^{j\alpha},
$$
we may find some constant $c$, depending only on $d, b_{*}, b^{*}, \alpha, \lambda, \mu$, and $\nu$, such that
\begin{align}
T^{*}\geq &\sup\bigg\{T\Big|\, T\leq\frac{c}{(1+A_{0}^{\alpha})^{2}\big((1+(H_{0}^{\alpha})^{2}+(U_{0}^{\alpha})^{2})(H_{0}^{\alpha})^{2}+U_{0}^{\alpha}\big)},
\ \  T\leq\frac{c}{(1+A_{0}^{\alpha})^{\frac{2}{\alpha}}},\nonumber\\
&\ \ \  \sum_{j\in \mathbb{Z}}2^{j(\frac{d}{2}-1+\alpha)}(1-e^{-\kappa\bar{\nu}2^{2j}T})\|\dot{\Delta}_{j}u_{0}\|_{L^{2}}\leq \frac{c}{(1+A_{0}^{\alpha})^{2}
(1+(1+U_{0}^{\alpha})(H_{0}^{\alpha})^{2})}
\bigg\}.\nonumber
\end{align}
\end{Remark}

\smallskip
\subsection{Uniqueness of the local solution}
In this subsection, we discuss the uniqueness of the local solution obtained in the previous subsection.
Let $(a^{1},u^{1},H^{1})$ and  $(a^{2},u^{2},H^{2})$ be two solutions in $E^{\alpha}_{T}$ of the system \eqref{Aa5}-\eqref{Aa8} with the same
initial data.
We assume, without loss of generality, that $(a^{2},u^{2},H^{2})$ is the solution constructed in the previous subsection satisfying
$$1+\underset{(t,x)\in[0,T]\times\mathbb{R}^{d}}{\inf}a^{2}(t,x)>0.$$
We need to prove that $(a^{1},u^{1},H^{1})=(a^{2},u^{2},H^{2})$ on $[0,T]\times \mathbb{R}^{d}$. To this end,  we shall estimate
$$(\hat a,\hat u ,\hat H ):=(a^{2}-a^{1},u^{2}-u^{1},H^{2}-H^{1})$$ with respect to a suitable norm.
A direction computation $(\hat a,\hat u ,\hat H )$ satisfies
\begin{equation}\label{bb19}
\left\{\begin{array}{l}
\partial_{t}\hat a +u^{2}\cdot\nabla \hat a +\hat G_{0}=0,\\
\partial_{t}\hat u +u^{2}\cdot\nabla \hat u + \hat u \cdot\nabla u^{1}-\frac{1}{1+a^{2}}\mathcal{A}\hat u = \hat G_{1}+\hat G_{2}+\hat G_{3},\\
\partial_{t}\hat H +u^{2}\cdot\nabla \hat H +\hat H \cdot\nabla u^{1}-\nu\Delta \hat H =\hat G_{4},
\end{array}\right.
\end{equation}
with
\begin{align}
\hat G_{0}&:=\hat u \cdot\nabla a^{1}+\hat a \dv u^{2}+(1+a^{1})\dv \hat u, \nonumber\\
\hat G_{1}&:=\big(I(a^{1})-I(a^{2})\mathcal{A}u^{1}\big),\qquad\hat G_{2}:=\nabla\big(G(a^{1})-G(a^{2})\big),\nonumber\\
\hat G_{3}&:=\frac{1}{1+a^{2}}H^{2}\nabla H^{2}-\frac{1}{1+a^{1}}H^{1}\nabla H^{1}\nonumber\\
&\quad \ \ -\frac{1}{1+a^{2}}\nabla |H^{2}|^{2}+\frac{1}{1+a^{1}}\nabla |H^{1}|^{2},\nonumber\\
\hat G_{4}&:=H^{2}\cdot\nabla\hat u -\hat u \cdot\nabla H^{1}-\dv u^{1}\cdot \hat H -\dv \hat u  \cdot H^{2}.\nonumber
\end{align}
Due to the hyperbolic structure of the mass equation, we could not avoid a loss of one derivative in the stability
estimates (the term $\hat u \cdot \nabla a^1$ in the first equation of \eqref{bb19} can not be better than
$L^{\infty}_T(\dot{B}^{\frac{d}{2}-1+\alpha}_{2,1})$ since we only know that $\nabla a^1\in L^{\infty}_T(\dot{B}^{\frac{d}{2}-1+\alpha}_{2,1}\cap\dot{B}^{\frac{d}{2}-1}_{2,1})$).
In addition,  the strong coupling in the equations for $(\hat a ,\hat u ,\hat H)$ implies that this loss of one derivative also results in
a loss of one derivative when bounding $\hat u $ and $\hat H $. Hence, we expect to prove uniqueness in the following function space,
\begin{align}
F^{\alpha}_{T} :=\, &\mathcal{C}([0,T];\dot{B}^{\frac{d}{2}-1+\alpha}_{2,1})
\times \big(\mathcal{C}([0,T];\dot{B}^{\frac{d}{2}-2+\alpha}_{2,1})\cap L^{1}_{T}(\dot{B}^{\frac{d}{2}+\alpha}_{2,1})\big)^{d}\nonumber\\
&\times\big(\mathcal{C}([0,T];\dot{B}^{\frac{d}{2}-2+\alpha}_{2,1})\cap L^{1}_{T}(\dot{B}^{\frac{d}{2}+\alpha}_{2,1})\big)^{d}.\nonumber
\end{align}
First, we show that $(\hat a ,\hat u ,\hat H) $ belongs to $F^{\alpha}_{T}$.
For $\hat a $, a similar argument to that in the proof of  Lemma \ref{lemma2.2} implies that $\partial_{t}\bar{a}^{i}\in L^{2}_T(\dot{B}^{\frac{d}{2}-1+\alpha}_{2,1}) (i=1,2)$.
Hence, we get $\bar{a}^{i}\in\mathcal{C}^{\frac{1}{2}}([0,T];\dot{B}^{\frac{d}{2}-1+\alpha}_{2,1})$.
To deal with $\hat u $, we introduce $\bar{u}^{i} := u^{i}-\bar{u}_{L}$, where $\bar{u}_{L}$ is the solution of
$$\partial_{t}\bar{u}_{L}-\mathcal{A}\bar{u}_{L}=-\nabla G(a_{0}),\qquad\bar{u}_L|_{t=0}=u_{0}.$$
Obviously, we have $\bar{u}^{i}|_{t=0}=0$ and
\begin{align}
\partial_{t}\bar{u}^{i}=\, &\mathcal{A}\bar{u}^{i}-I(a^{i})\mathcal{A}u^{i}-u^{i}\nabla u^{i}
+\frac{1}{a^{i}}\Big(H^{i}\cdot\nabla H^{i}-\frac{1}{2}\nabla|H^{i}|^{2}\Big)-\nabla\big( G(a^{i})-G(a_{0})\big).\nonumber
\end{align}
Since $\bar{a}^{i}\in L^{\infty}_{T}(\dot{B}^{\frac{d}{2}-1+\alpha}_{2,1})$ and $(a^{i},u^{i},H^{i})\in E^{\alpha}_{T}$,
the right-hand side of the above equation belongs to $ L^{2}_T(\dot{B}^{\frac{d}{2}-2+\alpha}_{2,1})$. Hence, $\bar{u}^{i}$ belongs to
$\mathcal{C}([0,T];\dot{B}^{\frac{d}{2}-2+\alpha}_{2,1})$. Similarly, since $H^{i}$ satisfies
$$\partial_{t}H^{i}=H^{i}\cdot\nabla u^{i}-u^{i}\cdot\nabla H^{i}-\nu\Delta H^{i}-(\dv u^{i})H^{i},$$
we easily deduce that $\partial_{t}H^{i}\in  L^{2}_T(\dot{B}^{\frac{d}{2}-2+\alpha}_{2,1})$. Thus,
$H^{i}\in\mathcal{C}([0,T];\dot{B}^{\frac{d}{2}-2+\alpha}_{2,1})$  and hence we   conclude that
 $(\hat a ,\hat u ,\hat H )\in F^{\alpha}_{T}$.

Next, applying Proposition \ref{prop4.1} to the first equation of \eqref{bb19}, we get, for all $\bar{T}\in [0,T]$, that
$$\|\hat a \|_{L^{\infty}_{\bar{T}}(\dot{B}^{\frac{d}{2}-1+\alpha}_{2,1})}
\leq\exp\Big\{C\|u^{2}\|_{L^{1}_{\bar{T}}(\dot{B}^{\frac{d}{2}+1}_{2,1})}\Big\}\|\hat G_{0}\|_{L^{1}_{\bar{T}}(\dot{B}^{\frac{d}{2}-1+\alpha}_{2,1})}
.$$
By  Proposition \ref{prop2.2}, an easy computation gives
\begin{align}
\|\hat G_{0}\|_{\dot{B}^{\frac{d}{2}-1+\alpha}_{2,1}}\leq &C\Big\{\|\hat u \|_{\dot{B}^{\frac{d}{2}}_{2,1}}\|\nabla a^{1}\|_{\dot{B}^{\frac{d}{2}-1+\alpha}_{2,1}}
+\|\dv u^{2}\|_{\dot{B}^{\frac{d}{2}}_{2,1}}\|\hat a \|_{\dot{B}^{\frac{d}{2}-1+\alpha}_{2,1}}\nonumber\\
&\qquad +\Big(1+\|a^{1}\|_{\dot{B}^{\frac{d}{2}}_{2,1}}\Big)\|\hat u \|_{\dot{B}^{\frac{d}{2}+\alpha}_{2,1}}\Big\}.\nonumber
\end{align}
Hence, using Gronwall's lemma and interpolation implies that there exists a constant $C_{T}$, independent of $\bar{T}$,
such that
\begin{align}\label{bb20}
\|\hat a \|_{L^{\infty}_{\bar{T}}(\dot{B}^{\frac{d}{2}-1+\alpha}_{2,1})}\leq C_{T}
\bigg(\|\hat u \|_{L^{1}_{\bar{T}}(\dot{B}^{\frac{d}{2}+\alpha}_{2,1})}+\|\hat u \|_{L^{\infty}_{\bar{T}}(\dot{B}^{\frac{d}{2}-2+\alpha}_{2,1})}\bigg).
\end{align}
Similarly, applying Proposition \ref{prop4.3} to the third equation of \eqref{bb19} gives,  for all $\bar{T}\in [0,T]$, that
\begin{align}
& \|\hat H \|_{L^{\infty}_{\bar{T}}(\dot{B}^{\frac{d}{2}-2+\alpha}_{2,1})}+\|\hat H \|_{L^{1}_{\bar{T}}(\dot{B}^{\frac{d}{2}+\alpha}_{2,1})}\nonumber\\
& \qquad \leq C\exp\bigg\{C(\|u^{1}\|_{L^{1}_{\bar{T}}(\dot{B}^{\frac{d}{2}+1}_{2,1})}+\|u^{2}\|_{L^{1}_{\bar{T}}(\dot{B}^{\frac{d}{2}+1}_{2,1})})\bigg\}
 \|\hat G_{4}\|_{L^{1}_{\bar{T}}(\dot{B}^{\frac{d}{2}-2+\alpha}_{2,1})}.\nonumber
\end{align}
By    Proposition \ref{prop2.2} and  Lemma \ref{le2.2}, we have
\begin{align}
\|\hat G_{4}\|_{L^{1}_{\bar{T}}(\dot{B}^{\frac{d}{2}-2+\alpha}_{2,1})}\leq\,& C
\bigg (\|H^{2}\|_{L^{\infty}_{\bar{T}}(\dot{B}^{\frac{d}{2}-1}_{2,1})}\|\hat u \|_{L^{1}_{\bar{T}}(\dot{B}^{\frac{d}{2}+\alpha}_{2,1})}\nonumber\\
 & \qquad+\|H^{1}\|_{L^{1}_{\bar{T}}(\dot{B}^{\frac{d}{2}+1}_{2,1})}\|\hat u \|_{L^{\infty}_{\bar{T}}(\dot{B}^{\frac{d}{2}-2+\alpha}_{2,1})}\nonumber\\
&\qquad+\int_{0}^{T}\|u^{1}\|_{\dot{B}^{\frac{d}{2}+1}_{2,1}}\|\hat H \|_{\dot{B}^{\frac{d}{2}-2+\alpha}_{2,1}}\mathrm{d}t\bigg).\nonumber
\end{align}
Once again,    using Gronwall's lemma and interpolation, we obtain that there exists some constant $C_{T}$, independent of $\bar{T}$,
such that
\begin{align}\label{bb21}
\|\hat H \|_{L^{\infty}_{\bar{T}}(\dot{B}^{\frac{d}{2}-2+\alpha}_{2,1})}+\|\hat H \|_{L^{1}_{\bar{T}}(\dot{B}^{\frac{d}{2}+\alpha}_{2,1})}
\leq C_{T}\bigg(\|\hat u \|_{L^{1}_{\bar{T}}(\dot{B}^{\frac{d}{2}+\alpha}_{2,1})}
+\|\hat u \|_{L^{\infty}_{\bar{T}}(\dot{B}^{\frac{d}{2}-2+\alpha}_{2,1})}\bigg).
\end{align}
Similarly, applying Proposition \ref{prop4.2} to the second equation of \eqref{bb19} gives
\begin{align}
& \|\hat u \|_{L^{1}_{\bar{T}}(\dot{B}^{\frac{d}{2}+\alpha}_{2,1})}+\|\hat u \|_{L^{\infty}_{\bar{T}}(\dot{B}^{\frac{d}{2}-2+\alpha}_{2,1})}\nonumber\\
&\quad \leq C \exp\bigg\{C\int_{0}^{\bar{T}}
(\|u^{1}\|_{\dot{B}^{\frac{d}{2}+1}_{2,1}}+\|u^{2}\|_{\dot{B}^{\frac{d}{2}+1}_{2,1}}+\|a^{2}\|^{\frac{2}{\alpha}}_{\dot{B}^{\frac{d}{2}+\alpha}_{2,1}})
\mathrm{d}t\bigg\}
\sum_{i=1}^{3}\|\hat G_{i}\|_{L^{1}_{\bar{T}}(\dot{B}^{\frac{d}{2}-2+\alpha}_{2,1})}.\nonumber
\end{align}
Because $\dot{B}^{\frac{d}{2}}_{2,1}(\mathbb{R}^{d})\hookrightarrow \mathcal{C}(\mathbb{R}^{d})$, we have $a^{1}\in\mathcal{C}([0,T]\times\mathbb{R}^{d})$.
Hence, for sufficiently small $\bar{T}$, $a^{1}$ also satisfies \eqref{bb5}. Therefore, applying Proposition \ref{prop2.2} and Lemmas \ref{le2.2} and \ref{le2.3} yields
\begin{align}
\|\hat G_{1}\|_{L^{1}_{\bar{T}}(\dot{B}^{\frac{d}{2}-2+\alpha}_{2,1})}
&\leq C(1+\|a^{1}\|_{L^{\infty}_{\bar{T}}(\dot{B}^{\frac{d}{2}}_{2,1})}+\|a^{2}\|_{L^{\infty}_{\bar{T}}(\dot{B}^{\frac{d}{2}}_{2,1})})\nonumber\\
&\quad \times\|\hat a \|_{L^{\infty}_{\bar{T}}(\dot{B}^{\frac{d}{2}-1+\alpha}_{2,1})}\|u^{1}\|_{L^{1}_{\bar{T}}(\dot{B}^{\frac{d}{2}+1+\alpha}_{2,1})},
\nonumber\\
\|\hat G_{2}\|_{L^{1}_{\bar{T}}(\dot{B}^{\frac{d}{2}-2+\alpha}_{2,1})}
&\leq C \bar{T}(1+\|a^{1}\|_{L^{\infty}_{\bar{T}}(\dot{B}^{\frac{d}{2}}_{2,1})}+\|a^{2}\|_{L^{\infty}_{\bar{T}}(\dot{B}^{\frac{d}{2}}_{2,1})})
\|\hat a \|_{L^{\infty}_{\bar{T}}(\dot{B}^{\frac{d}{2}-1+\alpha}_{2,1})},\nonumber\\
\|\hat G_{3}\|_{L^{1}_{\bar{T}}(\dot{B}^{\frac{d}{2}-2+\alpha}_{2,1})}
&\leq C(1+\|a^{1}\|_{L^{\infty}_{\bar{T}}(\dot{B}^{\frac{d}{2}}_{2,1})}+\|a^{2}\|_{L^{\infty}_{\bar{T}}(\dot{B}^{\frac{d}{2}}_{2,1})})\nonumber\\
&\quad\times\bigg\{\Big(\sum_{i=1}^{2}\|H^{i}\|_{L^{\infty}_{\bar{T}}(\dot{B}^{\frac{d}{2}-1}_{2,1})} \|H^{i}\|_{L^{1}_{\bar{T}}(\dot{B}^{\frac{d}{2}+1}_{2,1})}\Big)
\|\hat a \|_{L^{\infty}_{\bar{T}}(\dot{B}^{\frac{d}{2}-1+\alpha}_{2,1})}\nonumber\\
&\qquad \ \   +\Big(\|H^{1}\|^{\frac{1}{2}}_{L^{\infty}_{\bar{T}}(\dot{B}^{\frac{d}{2}-1}_{2,1})}
\|H^{1}\|^{\frac{1}{2}}_{L^{1}_{\bar{T}}(\dot{B}^{\frac{d}{2}+1}_{2,1})}+\|H^{2}\|_{L^{1}_{\bar{T}}(\dot{B}^{\frac{d}{2}+1}_{2,1})}\Big)\nonumber\\
&\qquad\qquad \times\Big(\|\hat H \|_{L^{\infty}_{\bar{T}}(\dot{B}^{\frac{d}{2}-2+\alpha}_{2,1})}
+\|\hat H \|_{L^{1}_{\bar{T}}(\dot{B}^{\frac{d}{2}+\alpha}_{2,1})}\Big)\bigg\}.\nonumber
\end{align}
Therefore, there exists a constant $C_{T}$, independent of $\bar{T}$,
such that
\begin{align}
& \!\!\!\!\!\!\!\!\!\!\!\!\!
\|\hat u \|_{L^{1}_{\bar{T}}(\dot{B}^{\frac{d}{2}+\alpha}_{2,1})}+\|\hat u \|_{L^{\infty}_{\bar{T}}(\dot{B}^{\frac{d}{2}-2+\alpha}_{2,1})}\nonumber\\
\leq \, & C_{T}\bigg\{\Big(\bar{T}+\|u^{1}\|_{L^{1}_{\bar{T}}(\dot{B}^{\frac{d}{2}+1}_{2,1})} +\sum_{i=1}^{2}\|H^{i}\|_{L^{1}_{\bar{T}}(\dot{B}^{\frac{d}{2}+\alpha}_{2,1})}\Big)
\|\hat a \|_{L^{\infty}_{\bar{T}}(\dot{B}^{\frac{d}{2}-1+\alpha}_{2,1})}\nonumber\\
&+(\|H^{2}\|_{L^{1}_{\bar{T}}(\dot{B}^{\frac{d}{2}+1}_{2,1})}
+\|H^{1}\|^{\frac{1}{2}}_{L^{1}_{\bar{T}}(\dot{B}^{\frac{d}{2}+1}_{2,1})})
(\|\hat H \|_{L^{\infty}_{\bar{T}}(\dot{B}^{\frac{d}{2}-2+\alpha}_{2,1})}+
\|\hat H \|_{L^{1}_{\bar{T}}(\dot{B}^{\frac{d}{2}+\alpha}_{2,1})})\bigg\}.\nonumber
\end{align}
Note that the factors
 $$
 \bar{T}+\|u^{1}\|_{L^{1}_{\bar{T}}(\dot{B}^{\frac{d}{2}+1}_{2,1})}
+\sum_{i=1}^{2}\|H^{i}\|_{L^{1}_{\bar{T}}(\dot{B}^{\frac{d}{2}+\alpha}_{2,1})}  \ \ \text{and} \ \
 \|H^{2}\|_{L^{1}_{\bar{T}}(\dot{B}^{\frac{d}{2}+1}_{2,1})}
+\|H^{1}\|^{\frac{1}{2}}_{L^{1}_{\bar{T}}(\dot{B}^{\frac{d}{2}+1}_{2,1})}
$$
decay to 0 when $\bar{T}$ goes to zero. Hence, plugging the inequalities \eqref{bb20} and \eqref{bb21} into the
above inequality, we conclude that $(\hat a ,\hat u ,\hat H )\equiv 0$ on a small time interval
$[0,\bar{T}]$. In order to show that  $\bar{T}=T$, we introduce the set
$$I:=\left\{t\in [0,T]\big| (a^{1},u^{1},H^{1})\equiv (a^{2},u^{2},H^{2})\,\, \text{on}\,\, [0,t]\right\}.$$
Obviously, $I$ is a nonempty closed subset of $[0,T]$. In addition, the above arguments may be carried over
 to any $t\in I\cap [0,T),$ which ensures that $I$ is also an open subset of $[0,T]$. Therefore, $I\equiv [0,T]$.
\medskip

The proof of Theorem \ref{ThA} is now completed.

\subsection{A continuation criterion}

\begin{Proposition}\label{PropB}
Under the hypotheses of Theorem \ref{ThA}, assume that the system\eqref{Aa5}-\eqref{Aa8} has a solution $(a,u,H)$
on $[0,T)\times \mathbb{R}^{d}$ which belongs to $E^{\alpha}_{T'}$ for all $T'<T$ and satisfies
\begin{align*}
 &  a\in L^{\infty}_T(\dot{B}^{\frac{d}{2}+\alpha}_{2,1}\cap \dot{B}^{\frac{d}{2}}_{2,1}),\quad 1+\underset{(t,x)\in[0,T)\times\mathbb{R}^{d}}{\inf}
a(t,x)>0;\\
&\int_{0}^{T}\|\nabla u\|_{\dot{B}^{\frac{d}{2}}_{2,1}}\mathrm{d}t<\infty,\quad \int_{0}^{T}\|\nabla H\|_{\dot{B}^{\frac{d}{2}}_{2,1}}\mathrm{d}t<\infty.
\end{align*}
Then  there exists a $T^{*}>T$ such that $(a,u,H)$ may be extended
to a solution of \eqref{Aa4}-\eqref{Aa8} on $[0,T^{*}]\times\mathbb{R}^{d}$ which belongs to $E^{\alpha}_{T^{*}}$.
\end{Proposition}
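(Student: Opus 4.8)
The plan is to use the two integrability hypotheses on $\nabla u$ and $\nabla H$ as the ``Lipschitz-type'' control that propagates every norm of the solution up to time $T$, then to exhibit a trace $(a,u,H)(T)$ lying in the initial-data class of Theorem \ref{ThA}, and finally to restart the local existence theorem from $T$ and glue. First I would reapply the linear a priori estimates to the full system \eqref{Aa5}-\eqref{Aa7} on $[0,T')$ and pass to the limit $T'\to T$. Since $\|\nabla u\|_{\dot{B}^{\frac{d}{2}}_{2,1}}\simeq\|u\|_{\dot{B}^{\frac{d}{2}+1}_{2,1}}$ (and likewise for $H$), the hypotheses give $u,H\in L^{1}_{T}(\dot{B}^{\frac{d}{2}+1}_{2,1})$. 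Feeding this into the transport estimate of Proposition \ref{prop4.1} for $a$ (with forcing $-(1+a)\dv u$ handled by the product law Proposition \ref{prop2.2}) and into the parabolic estimates of Propositions \ref{prop4.2} and \ref{prop4.3} for $u$ and $H$, the crucial observation is that every exponential factor there involves \emph{only} controlled quantities: $\int_{0}^{T}(\|u\|_{\dot{B}^{\frac{d}{2}+1}_{2,1}}+\|H\|_{\dot{B}^{\frac{d}{2}+1}_{2,1}})\,{\rm d}\tau<\infty$ and $\int_{0}^{T}b_{*}\mu(\frac{\bar{\nu}}{b_{*}\mu})^{\frac{2}{\alpha}}\|a\|^{\frac{2}{\alpha}}_{\dot{B}^{\frac{d}{2}+\alpha}_{2,1}}\,{\rm d}\tau<\infty$, the latter because $a\in L^{\infty}_{T}(\dot{B}^{\frac{d}{2}+\alpha}_{2,1})$ by assumption. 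Consequently the full $E^{\alpha}$ norm (including the higher-regularity $+\alpha$ parts, which the raw hypotheses do not directly supply) grows at most linearly against these exponentials and the forcing, so Gronwall's lemma yields a bound on $\|(a,u,H)\|_{E^{\alpha}_{T'}}$ that is uniform in $T'<T$.

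Next I would upgrade these uniform bounds to the existence of a limit at $t=T$. Reading off from \eqref{Aa6}-\eqref{Aa7} together with the product laws, $\partial_{t}u$ and $\partial_{t}H$ belong to $L^{1}_{T}(\dot{B}^{\frac{d}{2}-1}_{2,1}\cap\dot{B}^{\frac{d}{2}-1+\alpha}_{2,1})$, so $u$ and $H$ are Cauchy in that space as $t\to T^{-}$ and extend to traces $u(T),H(T)$ there; for the density, the continuity-in-time statement of Proposition \ref{le4.1} applied on $[0,T]$ (its hypotheses being met thanks to $u\in L^{1}_{T}(\dot{B}^{\frac{d}{2}+1}_{2,1})$ and the uniform bounds above) gives $a\in\mathcal{C}([0,T];\dot{B}^{\frac{d}{2}}_{2,1}\cap\dot{B}^{\frac{d}{2}+\alpha}_{2,1})$ and hence a well-defined $a(T)$. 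Positivity of the density is preserved by the same device as in the proof of Proposition \ref{PrA}: from $(\partial_{t}+u\cdot\nabla)(1+a)^{\pm1}\pm(1+a)^{\pm1}\dv u=0$ and $\int_{0}^{T}\|\dv u\|_{L^{\infty}}\,{\rm d}\tau\leq C\int_{0}^{T}\|\nabla u\|_{\dot{B}^{\frac{d}{2}}_{2,1}}\,{\rm d}\tau<\infty$ (using $\dot{B}^{\frac{d}{2}}_{2,1}\hookrightarrow L^{\infty}$), Gronwall keeps $1+a(T)$ bounded away from zero.

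Finally, the trace $(a(T),u(T),H(T))$ satisfies all the hypotheses of Theorem \ref{ThA}, so there exist $\delta>0$ and a solution issued from this data on $[T,T+\delta]$ belonging to the corresponding space $E^{\alpha}$. Setting $T^{*}:=T+\delta$, I would concatenate this solution with the original one and invoke the uniqueness part of Theorem \ref{ThA} on overlapping subintervals to conclude that the concatenation is a single solution on $[0,T^{*}]$ lying in $E^{\alpha}_{T^{*}}$, which is the asserted extension. I expect the main obstacle to be the second step: securing a trace at $T$ that lands precisely in the initial-data class rather than in a rougher space, in the presence of both the loss of one derivative from the hyperbolic density equation and the strong $u$--$H$ coupling. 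This is exactly where the two integrability hypotheses are indispensable—the control of $\|\nabla H\|_{\dot{B}^{\frac{d}{2}}_{2,1}}$ is needed not only for the magnetic estimate itself but also because the coupling terms $H\cdot\nabla u$ and $(\dv u)H$ on the right-hand side of \eqref{bb2} would otherwise prevent the estimate from closing.
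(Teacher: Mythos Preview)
Your approach is correct and shares the same first step with the paper: apply the linear estimates (Propositions \ref{prop4.1}, \ref{prop4.2}, \ref{prop4.3}) to the full system, observe that every exponential weight involves only the quantities $\int_0^T\|\nabla u\|_{\dot{B}^{d/2}_{2,1}}$, $\int_0^T\|\nabla H\|_{\dot{B}^{d/2}_{2,1}}$ and $T\|a\|^{2/\alpha}_{L^\infty_T(\dot{B}^{d/2+\alpha}_{2,1})}$, and close by Gronwall to obtain a uniform bound on $\|(a,u,H)\|_{E^\alpha_{T'}}$ for $T'<T$.

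The difference lies in the restart. You take a trace at $t=T$ (via $\partial_t u,\partial_t H\in L^1_T(\dot{B}^{d/2-1}_{2,1}\cap\dot{B}^{d/2-1+\alpha}_{2,1})$ and Proposition \ref{le4.1} for $a$), check that $(a(T),u(T),H(T))$ lies in the initial-data class, and relaunch Theorem \ref{ThA} from $T$. The paper instead avoids the trace altogether: from the uniform $\widetilde{L}^\infty_T$ bounds it replaces $\|\dot{\Delta}_j u_0\|_{L^2}$, $\|\dot{\Delta}_j H_0\|_{L^2}$ in the explicit lifespan lower bound of Remark \ref{Rem22} by $\|\dot{\Delta}_j u\|_{L^\infty_T(L^2)}$, $\|\dot{\Delta}_j H\|_{L^\infty_T(L^2)}$, which produces an $\epsilon>0$ independent of the starting time; it then restarts from $T'=T-\epsilon/2$ and glues by uniqueness on the overlap $[T',T)$. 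Your route is the classical ``trace and restart'' argument and is perfectly valid once the uniform $E^\alpha$ bound is in hand; the paper's route is slightly shorter precisely because it sidesteps what you identified as the main obstacle---no trace argument is needed, and the quantitative lifespan bound of Remark \ref{Rem22} does all the work.
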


\begin{proof}
Note that $(u,H)$ satisfies
\[\left\{\begin{array}{ll}
\partial_{t} u+u\cdot\nabla u-\frac{1}{1+a}\mathcal{A}u+\nabla G(a)=\frac{1}{1+a}\big(H\cdot\nabla H-\frac{1}{2}\nabla |H|^{2}\big), & u|_{t=0}=u_{0},\\
\partial_{t} H +u\cdot\nabla H-H\cdot\nabla u-\nu\Delta H=-(\dv u)H,& H|_{t=0}=H_{0}.
\end{array}\right.\]
By taking the  same arguments as those used in the proof of Proposition \ref{prop4.3}, we easily see that
there exists  a universal constant $\kappa$ such that
\begin{align}
 \|u\|_{\tilde{L}^{\infty}_{t}(\dot{B}^{\frac{d}{2}-1+\beta}_{2,1})}&+\kappa b_{*}\mu \|u\|_{\tilde{L}^{1}_{t}(\dot{B}^{\frac{d}{2}+1+\beta}_{2,1})}\nonumber\\
\leq\,&  \|u_{0}\|_{\dot{B}^{\frac{d}{2}-1+\beta}_{2,1}}+\|a\|_{\tilde{L}^{1}_{t}(\dot{B}^{\frac{d}{2}+\beta}_{2,1})}\nonumber\\
&  +\int_{0}^{t}(1+\|a\|_{\dot{B}^{\frac{d}{2}}_{2,1}})\|H\|_{\dot{B}^{\frac{d}{2}+1}_{2,1}}\|H\|_{\dot{B}^{\frac{d}{2}-1+\beta}_{2,1}}\mathrm{d}\tau\nonumber\\
&+C\int_{0}^{t}\|u\|_{\dot{B}^{\frac{d}{2}+1}_{2,1}}\|u\|_{\dot{B}^{\frac{d}{2}-1+\beta}_{2,1}}\mathrm{d}\tau
+C\int_{0}^{t}\|c\|^{\frac{2}{\alpha}}_{\dot{B}^{\frac{d}{2}+\alpha}_{2,1}}\|u\|_{\dot{B}^{\frac{d}{2}-1+\beta}_{2,1}}\mathrm{d}\tau, \nonumber\\
\|H\|_{\tilde{L}^{\infty}_{t}(\dot{B}^{\frac{d}{2}-1+\beta}_{2,1})}&+\kappa \nu\|H\|_{\tilde{L}^{1}_{t}(\dot{B}^{\frac{d}{2}+1+\beta}_{2,1})}\nonumber\\
\leq\,&  \|H_{0}\|_{\dot{B}^{\frac{d}{2}-1+\beta}_{2,1}}+\|(\dv u) H\|_{\tilde{L}^{1}_{t}(\dot{B}^{\frac{d}{2}-1+\beta}_{2,1})}\nonumber\\
&+C\int_{0}^{t}\|\nabla u\|_{\dot{B}^{\frac{d}{2}}_{2,1}}\|H\|_{\dot{B}^{\frac{d}{2}-1+\beta}_{2,1}}\mathrm{d}\tau,\nonumber
\end{align}
with $c=-I(a),\beta\in\{0,\alpha\}$.
Adding the above two inequalities  and applying Gronwall's inequality, we then obtain,  for all $\beta\in\{0,\alpha\}$ and $T'<T$, that
\begin{align}
& \|u\|_{\tilde{L}^{\infty}_{T'}(\dot{B}^{\frac{d}{2}-1+\beta}_{2,1})}+\mu \|u\|_{\tilde{L}^{1}_{T'}(\dot{B}^{\frac{d}{2}+1+\beta}_{2,1})}
+\|H\|_{\tilde{L}^{\infty}_{T'}(\dot{B}^{\frac{d}{2}-1+\beta}_{2,1})}+ \nu\|H\|_{\tilde{L}^{1}_{T'}(\dot{B}^{\frac{d}{2}+1+\beta}_{2,1})}\nonumber\\
&\quad  \leq C\left( \|u_{0}\|_{\dot{B}^{\frac{d}{2}-1+\beta}_{2,1}}
+\|H_{0}\|_{\dot{B}^{\frac{d}{2}-1+\beta}_{2,1}}+\|a\|_{\tilde{L}^{1}_{T'}(\dot{B}^{\frac{d}{2}+\beta}_{2,1})}\right)\nonumber\\
& \qquad \times \exp\bigg\{ C\Big(\int_{0}^{T'}\|\nabla u\|_{\dot{B}^{\frac{d}{2}}_{2,1}}+(1+\|a\|_{\dot{B}^{\frac{d}{2}}_{2,1}})\|\nabla H\|_{\dot{B}^{\frac{d}{2}}_{2,1}}+
\|a\|^{\frac{2}{\alpha}}_{\dot{B}^{\frac{d}{2}+\alpha}_{2,1}}\Big)\mathrm{d}t\bigg\}\nonumber
\end{align}
for some constant $C$ depending only on $d,\alpha$, and the viscosity coefficients.
Hence, $(u,H)$ is bounded in $\tilde{L}^{\infty}_{T}(\dot{B}^{\frac{d}{2}-1}_{2,1}\cap\dot{B}^{\frac{d}{2}-1+\alpha}_{2,1}).$

Replacing $\|\dot{\Delta}_{j}u_{0}\|_{L^{2}}$ and $\|\dot{\Delta}_{j}H_{0}\|_{L^{2}}$ by $\|\dot{\Delta}_{j}u\|_{L^{\infty}_{T}(L^{2})} $ and
$\|\dot{\Delta}_{j}H\|_{L^{\infty}_{T}(L^{2})}$ in Remark \ref{Rem22}, respectively, we get an $\epsilon >0$ such that, for any $T'\in[0,T),$ the system
\eqref{Aa5}-\eqref{Aa8} with initial data $(a(T'),u(T'),H(T'))$ has a solution for $t\in [0,\epsilon].$ Taking $T'=T-\epsilon/2$ and using the fact that
the solution $(a,u,H)$ is unique on $[0,T)$, we thus get a continuation of $(a,u,H)$ beyond $T.$
\end{proof}

\bigskip

\section{Low Mach Number Limit for the Compressible MHD Equations}\label{lowlimit}

In this section we shall study the low Mach number limit of the compressible MHD equations  \eqref{Aa17}-\eqref{Aa19} for the
  local solution obtained in Theorem \ref{ThA}. The main strategy is to apply the Leray projector on the system to divide it into the
  incompressible part and  acoustic   part and then estimate the acoustic   part and the difference
  of the incompressible part with the incompressible MHD equations. We shall follow and adapt some ideas developed by
  Danchin \cite{DC} on the isentropic Navier-Stokes equations. Before we begin our proof, we briefly describe the   process as follows.
Firstly, we use the  dispersive inequalities of linear wave equations to bound a
suitable norm  of $(b^{\epsilon},\mathcal{P}^{\bot}u^{\epsilon})$,
and this bound will be controlled by the norm of $(b^{\epsilon},u^{\epsilon})$ in $E^{\frac{d}{2}+\beta}_{\epsilon,T}$ times some positive power of $\epsilon$.
Secondly, by means of estimates for the non-stationary incompressible MHD equations (see Proposition \ref{AProp3}) and paradifferential calculus,
 we get a priori bounds for $\epsilon^{-\beta}(\mathcal{P}u^{\epsilon}-v,H^{\epsilon}-B)$
in term of $\|(b^{\epsilon},u^{\epsilon},H^{\epsilon})\|_{E^{\frac{d}{2}+\beta}_{\epsilon,T}}$ and $\|(v,B)\|_{F^{\frac{d}{2}+\beta}_{T}}$.
Thirdly,  we show uniform bounds for $\|(b^{\epsilon},u^{\epsilon},H^{\epsilon})\|_{E^{\frac{d}{2}+\beta}_{\epsilon,T}}$
in term of $(v,B)$ and the initial data. We then use a bootstrap argument  to close the estimates on the first three steps. Finally, we use
 a continuity argument to complete our proof.

\begin{proof}[Proof of Theorem \ref{ThB}]

Throughout the proof we shall use the following notations:
\begin{align}
   w^{\epsilon}:=\,& \mathcal{P}u^{\epsilon}-v , \qquad  B^{\epsilon}:= H^{\epsilon}-B ,\nonumber\\
  X_{\beta}(T):=\,&\|b^{\epsilon}\|_{L_{T}^{1}(\tilde{B}^{\frac{d}{2}+\beta,1}_{\epsilon})}
+\|b^{\epsilon}\|_{L_{T}^{\infty}(\tilde{B}^{\frac{d}{2}+\beta,\infty}_{\epsilon})}\nonumber\\
&+\|\mathcal{P}^{\bot}u^{\epsilon}\|_{L_{T}^{1}(\dot{B}^{\frac{d}{2}+1+\beta}_{2,1})}
+\|\mathcal{P}^{\bot}u^{\epsilon}\|_{L_{T}^{\infty}(\dot{B}^{\frac{d}{2}-1+\beta}_{2,1})},\nonumber\\
V_{\beta}(T):=\,&\|v\|_{L_{T}^{1}(\dot{B}^{\frac{d}{2}+1+\beta}_{2,1})}
+\|v\|_{L_{T}^{\infty}(\dot{B}^{\frac{d}{2}-1+\beta}_{2,1})}\nonumber\\
&+\|B\|_{L_{T}^{1}(\dot{B}^{\frac{d}{2}+1+\beta}_{2,1})}
+\|B\|_{L_{T}^{\infty}(\dot{B}^{\frac{d}{2}-1+\beta}_{2,1})},\nonumber\\
W_{\beta}(T):=\,&\|w^{\epsilon}\|_{L_{T}^{1}(\dot{B}^{\frac{d}{2}+1+\beta}_{2,1})}
+\|w^{\epsilon}\|_{L_{T}^{\infty}(\dot{B}^{\frac{d}{2}-1+\beta}_{2,1})}\nonumber\\
&+\|B^{\epsilon}\|_{L_{T}^{1}(\dot{B}^{\frac{d}{2}+1+\beta}_{2,1})}
+\|B^{\epsilon}\|_{L_{T}^{\infty}(\dot{B}^{\frac{d}{2}-1+\beta}_{2,1})},\nonumber\\
Y^{p}_{\beta}(T):=\,& \|b^{\epsilon}\|_{L_{T}^{p}(\dot{B}^{\beta-1+\frac{1}{p}}_{\infty,1})}
+\|\mathcal{P}^{\bot}u^{\epsilon}\|_{L_{T}^{p}(\dot{B}^{\beta-1+\frac{1}{p}}_{\infty,1})} \qquad  \   \text{if}  \ \ d=3 \ \ \text{and}\ 2< p<\infty,\nonumber\\
Y_{\beta}(T):=\,& \|b^{\epsilon}\|_{L_{T}^{4}(\dot{B}^{\beta-\frac{3}{4}}_{\infty,1})}
+\|\mathcal{P}^{\bot}u^{\epsilon}\|_{L_{T}^{4}(\dot{B}^{\beta-\frac{3}{4}}_{\infty,1})}  \qquad  \   \text{if}  \ \  d=2. \nonumber
\end{align}
We shall also use the notations $P_{\beta}(T):=V_{\beta}(T)+W_{\beta}(T)$ and
$$
X_{0}^{\beta}:=\|b_{0}\|_{\tilde{B}^{\frac{d}{2}
+\beta,\infty}_{\epsilon}}+\|\mathcal{P}^{\bot}u_{0}\|_{\dot{B}^{\frac{d}{2}-1+\beta}_{2,1}}
+\|H_{0}\|_{\dot{B}^{\frac{d}{2}-1+\beta}_{2,1}}.
$$
In our arguments below the time $T$ will sometimes be omitted and $\beta$ will always stand for 0 or $\alpha$.


\subsection{Dispersive estimates for $(b^{\epsilon},\mathcal{P}^{\bot}u^{\epsilon})$.}

We first recall the dispersive inequalities for the following (reduced) system of acoustics:
\begin{equation}\label{Cc-1}
\left\{\begin{array}{l}
\partial_{t}b+\epsilon^{-1}\Lambda \Psi=F, \\
\partial_{t}\Psi-\epsilon^{-1}\Lambda b=G, \\
(b,\Psi)|_{t=0}=(b_{0},\Psi_{0})(x),\quad x\in \mathbb{R}^d.
\end{array}
\right.
\end{equation}
Recall that $\Lambda$ is defined as $\Lambda:=\sqrt{-\Delta}$ in Appendix \ref{AppA}.

\begin{Proposition}[\cite{DC}]\label{bprop3.1}
Let $(b,\Psi)$ be a solution of \eqref{Cc-1}. Then, for any $s\in \mathbb{R}$ and positive $T$ \emph{(}possibly infinite\emph{)}, the following
estimate holds:
$$\|(b,\Psi)\|_{\widetilde{L}^{r}_{T}(\dot{B}^{s+d(\frac{1}{p}-\frac{1}{2})\frac{1}{r}}_{p,1})}
\leq C\epsilon^{\frac{1}{r}}\|(b_{0},\Psi_{0})\|_{\dot{B}^{s}_{2,1}}+\epsilon^{1+\frac{1}{r}-\frac{1}{\overline{r}'}}
\|(F,G)\|_{\widetilde{L}^{\overline{r}'}_{T}(\dot{B}^{s+d(\frac{1}{\overline{p}'}-\frac{1}{2})+\frac{1}{\overline{r}}})}$$
with
\begin{align*}
&p\geq2,\quad \frac{2}{r}\leq \min\{1,\gamma(p)\}, \quad (r,p,d)\neq(2,\infty,3),\\
& \bar{p}\geq2, \quad \frac{2}{\bar{r}}\leq \min\{1,\gamma(\bar{p})\}, \quad (\bar{r},\bar{p},d)\neq(2,\infty,3),
\end{align*}
where
$$\gamma(q):=(d-1)\big(\frac{1}{2}-\frac{1}{q}\big),\quad
\frac{1}{\bar{p}}+\frac{1}{\bar{p}'}=1,  \quad \frac{1}{\bar{r}}+\frac{1}{\bar{r}'}=1.
$$
\end{Proposition}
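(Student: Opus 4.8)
The plan is to treat \eqref{Cc-1} as a rescaled half-wave equation and to deduce the stated bound from the classical dispersive estimate for the wave group together with a $TT^{*}$ (Strichartz) argument, carried out frequency block by frequency block so as to respect the Chemin--Lerner structure of $\widetilde{L}^{r}_{T}(\dot{B}^{\sigma}_{p,1})$. First I would diagonalize the system: since $\Lambda=\sqrt{-\Delta}$ is a positive self-adjoint Fourier multiplier commuting with $\dot{\Delta}_{j}$, setting $\phi:=b+i\Psi$ turns \eqref{Cc-1} into the single half-wave equation
$$\partial_{t}\phi-i\epsilon^{-1}\Lambda\phi=F+iG,\qquad \phi|_{t=0}=b_{0}+i\Psi_{0},$$
whose solution is given by Duhamel's formula
$$\phi(t)=e^{it\epsilon^{-1}\Lambda}\phi(0)+\int_{0}^{t}e^{i(t-\tau)\epsilon^{-1}\Lambda}(F+iG)(\tau)\,{\rm d}\tau.$$
Because $\|(b,\Psi)\|_{X}\simeq\|\phi\|_{X}$ for every Besov (or Chemin--Lerner) norm $X$ in sight, it suffices to bound $\phi$; the free term will produce the $\epsilon^{1/r}$ contribution and the Duhamel term the $\epsilon^{1+1/r-1/\bar{r}'}$ contribution.

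Second, I would record the fixed-frequency dispersive estimate. A stationary-phase/kernel analysis of $e^{i\tau\Lambda}$ on the annulus $\{|\xi|\sim 2^{j}\}$ gives the standard decay
$$\big\|e^{i\tau\Lambda}\dot{\Delta}_{j}f\big\|_{L^{\infty}}\leq C\,2^{j\frac{d+1}{2}}|\tau|^{-\frac{d-1}{2}}\|\dot{\Delta}_{j}f\|_{L^{1}},$$
which, interpolated with the $L^{2}$ conservation $\|e^{i\tau\Lambda}\dot{\Delta}_{j}f\|_{L^{2}}=\|\dot{\Delta}_{j}f\|_{L^{2}}$, yields the $L^{p'}\!\to L^{p}$ dispersion with the exponent $\gamma(p)=(d-1)(\tfrac12-\tfrac1p)$ appearing in the statement. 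The crucial point is that replacing $\tau$ by $t/\epsilon$ is what introduces every power of $\epsilon$ below, since a change of variables in the time integrals converts each $L^{\rho}_{t}$ norm into an $\epsilon^{1/\rho}$ factor.

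Third, I would run the non-endpoint Strichartz/$TT^{*}$ argument of Ginibre--Velo on each dyadic block $\dot{\Delta}_{j}\phi$. The admissibility conditions $p\geq2$, $\tfrac{2}{r}\leq\min\{1,\gamma(p)\}$ (and the analogous ones on $\bar{r},\bar{p}$) are exactly those guaranteeing the homogeneous and retarded estimates, while the excluded triple $(r,p,d)=(2,\infty,3)$ is the familiar wave endpoint, which I avoid by staying in the non-endpoint range. This produces, uniformly in $j$ and with $(c_{j})\in\ell^{1}$, $\sum_{j}c_{j}=1$,
$$\big\|e^{it\epsilon^{-1}\Lambda}\dot{\Delta}_{j}\phi(0)\big\|_{L^{r}_{T}(L^{p})}\leq C\,\epsilon^{\frac1r}\,2^{j(s+d(\frac1p-\frac12)\frac1r)}c_{j}\,\|\phi(0)\|_{\dot{B}^{s}_{2,1}},$$
together with the analogous retarded bound carrying the factor $\epsilon^{1+\frac1r-\frac1{\bar{r}'}}$ and the source measured in $\widetilde{L}^{\bar{r}'}_{T}(\dot{B}^{s+d(\frac{1}{\bar{p}'}-\frac12)+\frac{1}{\bar{r}}}_{\bar{p}',1})$. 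Summing these inequalities against the $\ell^{1}$-in-$j$ weights that define $\widetilde{L}^{r}_{T}(\dot{B}^{\sigma}_{p,1})$ — performing the $TT^{*}$ step at fixed frequency and only then summing — gives the claimed estimate.

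The main obstacle is bookkeeping rather than hard analysis: one must track the $\epsilon$-powers correctly through the rescaling $t\mapsto t/\epsilon$ in both the free and the retarded estimates, and must carry out the $TT^{*}$ argument block by block before summing, so that the $\ell^{1}$-in-$j$ (Besov $\dot{B}^{\,\cdot}_{\,\cdot,1}$) summability and the $\widetilde{L}^{r}_{T}$ Chemin--Lerner structure are honored simultaneously. The regularity shift $d(\tfrac1p-\tfrac12)\tfrac1r$ is precisely the Strichartz trade of integrability for derivatives dictated by the dispersion exponent $\gamma(p)$. Since the underlying dispersive inequality is classical and the result is quoted from \cite{DC}, no new phenomenon arises beyond this careful tracking.
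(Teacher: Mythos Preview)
Your approach is correct and is essentially the standard route to such Strichartz estimates: diagonalize to a half-wave equation, use the stationary-phase dispersive bound for $e^{it\Lambda}$ on dyadic annuli, run the $TT^{*}$ argument at fixed frequency, rescale $t\mapsto t/\epsilon$ to extract the $\epsilon$-powers, and sum in $\ell^{1}$ over $j$ to recover the Chemin--Lerner norm. Note, however, that the present paper does not actually prove this proposition; it is quoted verbatim from Danchin~\cite{DC} (as indicated in the proposition header), so there is no in-paper proof to compare against. Your sketch matches the argument in~\cite{DC}.
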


It is easy to check that $(b^{\epsilon},\mathcal{P}^{\bot}u^{\epsilon})$ satisfy the system:
\begin{equation}\label{Cc-2}
\left\{\begin{array}{l}
\partial_{t}b^{\epsilon}+\epsilon^{-1}\dv \mathcal{P}^{\bot}u^{\epsilon}=F^{\epsilon},\\
\partial_{t}\mathcal{P}^{\bot}u^{\epsilon}+\epsilon^{-1}\nabla b^{\epsilon}=G^{\epsilon},
\end{array}
\right.
\end{equation}
with $F^{\epsilon}:=-\dv(b^{\epsilon}u^{\epsilon})$ and
$$
G^{\epsilon}:=-\mathcal{P}^{\bot}\Big(u^{\epsilon}\cdot\nabla u^{\epsilon}+
\frac{\mathcal{A}u^{\epsilon}}{1+\epsilon b^{\epsilon}}+\frac{K(\epsilon b^{\epsilon})}{\epsilon}\nabla b^{\epsilon}
+\frac{1}{1+\epsilon b^{\epsilon}}\big(H^{\epsilon}\cdot\nabla H^{\epsilon}-\frac{1}{2}\nabla(|H^{\epsilon}|^{2})\big)\Big),
$$
where $K(z):=\frac{P'(1+z)}{1+z}-1$\,\,(hence $K(0)=0$).
Obviously, the dispersive estimates stated in Proposition \ref{bprop3.1} are aslo true for the system \eqref{Cc-2} since
$b^{\epsilon}$ and $d^{\epsilon}:=\Lambda^{-1}\dv \mathcal{P}^{\bot}u^{\epsilon}$ satisfy \eqref{Cc-1}
with source terms $F^{\epsilon}$ and $\Lambda^{-1}\dv G^{\epsilon}$, and $\Lambda^{-1}\dv$ is a homogeneous
multiplier of degree 0.
Hence, we have,
for $d=3$ and $2<p<+\infty$,
\begin{align}\label{Cc-3}
Y_{\alpha}^{p}
\leq C\epsilon^{\frac{1}{p}}\left(\|(b_{0},\mathcal{P}u_{0})\|_{\dot{B}^{\frac{1}{2}+\alpha}_{2,1}}+
\|(F^{\epsilon},\Lambda^{-1}\dv G^{\epsilon})\|_{L^{1}_{T}(\dot{B}^{\frac{1}{2}+\alpha}_{2,1})}\right).
\end{align}
and for $d= 2$,
\begin{align}\label{Cc-4}
Y_{\alpha}
\leq C\epsilon^{\frac{1}{4}}\left(\|(b_{0},\mathcal{P}u_{0})\|_{\dot{B}^{\alpha}_{2,1}}+
\|(F^{\epsilon},\Lambda^{-1}\dv G^{\epsilon})\|_{L^{1}_{T}(\dot{B}^{\alpha}_{2,1})}\right).
\end{align}

From Propositions  \ref{prop2.1} and \ref{prop2.2}, we easily conclude that, for  $d= 3$ or $d=2$,
\begin{align}
\|\dv(b^{\epsilon}u^{\epsilon})\|_{L^{1}_{T}(\dot{B}^{\frac{d}{2}-1+\alpha}_{2,1})}&\leq C
\Big(\|b^{\epsilon}\|_{L^{2}_{T}(\dot{B}^{\frac{d}{2}}_{2,1})}\|u^{\epsilon}\|_{L^{2}_{T}(\dot{B}^{\frac{d}{2}+\alpha}_{2,1})}\nonumber\\
&\qquad\quad +\|b^{\epsilon}\|_{L^{2}_{T}(\dot{B}^{\frac{d}{2}+\alpha}_{2,1})}\|u^{\epsilon}\|_{L^{2}_{T}(\dot{B}^{\frac{d}{2}}_{2,1})}\Big)\nonumber\\
&\leq C\big(X_{0}(X_{\alpha}+P_{\alpha})+X_{\alpha}(X_{0}+P_{0})\big),\nonumber\\
\|\mathcal{P}^{\bot}(u^{\epsilon}\cdot\nabla u^{\epsilon})\|_{L^{1}_{T}(\dot{B}^{\frac{d}{2}-1+\alpha}_{2,1})}&\leq C
\|u^{\epsilon}\|_{L^{2}_{T}(\dot{B}^{\frac{d}{2}}_{2,1})}\|\nabla u^{\epsilon}\|_{L^{2}_{T}(\dot{B}^{\frac{d}{2}-1+\alpha}_{2,1})}\nonumber\\
&\leq C(X_{0}+P_{0})(X_{\alpha}+P_{\alpha}),\nonumber\\
\|\mathcal{P}^{\bot}(I(\epsilon b^{\epsilon})\cdot \mathcal{A}u^{\epsilon})\|_{L^{1}_{T}(\dot{B}^{\frac{d}{2}-1+\alpha}_{2,1})}&\leq C \epsilon
\|b^{\epsilon}\|_{L^{\infty}_{T}(\dot{B}^{\frac{d}{2}}_{2,1})}\|\mathcal{A}u^{\epsilon}\|_{L^{1}_{T}(\dot{B}^{\frac{d}{2}-1+\alpha}_{2,1})}\nonumber\\
&\leq CX_{0}(X_{\alpha}+P_{\alpha}),\nonumber\\
\|K(\epsilon b^{\epsilon})\nabla b^{\epsilon}\|_{L^{1}_{T}(\dot{B}^{\frac{d}{2}-1+\alpha}_{2,1})}
&\leq C \epsilon \|b^{\epsilon}\|_{L^{2}_{T}(\dot{B}^{\frac{d}{2}}_{2,1})}\|\nabla b^{\epsilon}\|_{L^{2}_{T}(\dot{B}^{\frac{d}{2}-1+\alpha}_{2,1})}\nonumber\\
&\leq C \epsilon X_{0}X_{\alpha}.\nonumber
\end{align}
Since
\begin{align}
& \!\!\!\!\!\!\!\!\!\!\!\!\!\!\!\!\!\!
\mathcal{P}^{\bot}\Big(\frac{1}{1+\epsilon b^{\epsilon}}\big(H^{\epsilon}\cdot\nabla H^{\epsilon}-\frac{1}{2}\nabla (|H^{\epsilon}|^{2})\big) \Big)\nonumber\\
=\,& \mathcal{P}^{\bot}\Big(H^{\epsilon}\cdot\nabla H^{\epsilon}-\frac{1}{2}\nabla ( |H^{\epsilon}|^{2})\Big)
+\mathcal{P}^{\bot}\Big(I(\epsilon b^{\epsilon})\big(H^{\epsilon}\cdot\nabla H^{\epsilon}-\frac{1}{2}\nabla( |H^{\epsilon}|^{2})\big)\Big),\nonumber
\end{align}
we have
\begin{align}
\|\mathcal{P}^{\bot}(H^{\epsilon}\cdot\nabla H^{\epsilon})\|_{L^{1}_{T}(\dot{B}^{\frac{d}{2}-1+\alpha}_{2,1})}&\leq C
\|H^{\epsilon}\|_{L^{2}_{T}(\dot{B}^{\frac{d}{2}}_{2,1})}\|\nabla H^{\epsilon}\|_{L^{2}_{T}(\dot{B}^{\frac{d}{2}-1+\alpha}_{2,1})}\nonumber\\
&\leq C P_{0}P_{\alpha},\nonumber\\
\|\mathcal{P}^{\bot}(\nabla (|H^{\epsilon}|^{2}))\|_{L^{1}_{T}(\dot{B}^{\frac{d}{2}-1+\alpha}_{2,1})}&\leq C  P_{0}P_{\alpha},\nonumber\\
\Big\|\mathcal{P}^{\bot}\Big(I(\epsilon b^{\epsilon}) H^{\epsilon}\cdot\nabla H^{\epsilon}\Big)\Big\|_{L^{1}_{T}(\dot{B}^{\frac{d}{2}-1+\alpha}_{2,1})}&\leq C
\epsilon\|b^{\epsilon}\|_{L^{\infty}_{T}(\dot{B}^{\frac{d}{2}}_{2,1})}\| H^{\epsilon}\cdot\nabla H^{\epsilon}\|_{L^{1}_{T}(\dot{B}^{\frac{d}{2}-1+\alpha}_{2,1})}\nonumber\\
&\leq CX_{0} P_{0}P_{\alpha},\nonumber\\
\Big\|\mathcal{P}^{\bot}\Big(I(\epsilon b^{\epsilon}) |\nabla H^{\epsilon}|^{2}\Big)\Big\|_{L^{1}_{T}(\dot{B}^{\frac{d}{2}-1+\alpha}_{2,1})}&\leq CX_{0} P_{0}P_{\alpha}.\nonumber
\end{align}
Plugging the above inequalities into \eqref{Cc-3} or \eqref{Cc-4}, we conclude that
for $d=3$ and $2<p< \infty$,
\begin{align}\label{Cc-5}
Y_{\alpha}^{p}\leq C \epsilon^{\frac{1}{p}}\Big(X_{\alpha}^{0}+X_{\alpha}+(X_{0}+P_{0})(X_{\alpha}+P_{\alpha})+X_{0}P_{0}P_{\alpha}\Big),
\end{align}
and for $d=2$,
\begin{align}\label{Cc-6}
Y_{\alpha}\leq C \epsilon^{\frac{1}{4}}\Big(X_{\alpha}^{0}+X_{\alpha}+(X_{0}+P_{0})(X_{\alpha}+P_{\alpha})+X_{0}P_{0}P_{\alpha}\Big).
\end{align}

\subsection{Estimates for $(w^{\epsilon},B^{\epsilon})$.}

From the system \eqref{Aa18}-\eqref{Aa19} and \eqref{Aa14}-\eqref{Aa15}, a direct computation gives
\begin{align}
\left\{
\begin{array}{l}
\partial_{t}w^{\epsilon}-\mu\Delta w^{\epsilon}+\mathcal{P}(A^{\epsilon}\cdot\nabla w^{\epsilon})+\mathcal{P}(w^{\epsilon}\cdot\nabla A^{\epsilon})
-\mathcal{P}(B^{\epsilon}\cdot\nabla B)\\
\qquad -\mathcal{P}(B\cdot\nabla B^{\epsilon})
=-\mathcal{P}M^{\epsilon}+\mathcal{P} L^{\epsilon},\\
\partial_{t}B^{\epsilon}-\nu\Delta B^{\epsilon}+\mathcal{P}(A^{\epsilon}\cdot\nabla B^{\epsilon})-\mathcal{P}(B^{\epsilon}\cdot\nabla A^{\epsilon})
+\mathcal{P}(w^{\epsilon}\cdot\nabla B)\\
\qquad -\mathcal{P}(B\cdot\nabla w^{\epsilon})
=-\mathcal{P}Q^{\epsilon},\\
(w^{\epsilon},B^{\epsilon})|_{t=0}=(0,0),
\end{array}
\right.\nonumber
\end{align}
where
\begin{align}
A^{\epsilon}&:=\mathcal{P}^{\bot}u^{\epsilon}+v,\nonumber\\
M^{\epsilon}&:=\mathcal{P}^{\bot}u^{\epsilon}\cdot\nabla v+v\cdot\nabla\mathcal{P}^{\bot}u^{\epsilon}
+w^{\epsilon}\cdot\nabla w^{\epsilon}+I(\epsilon b^{\epsilon})\mathcal{A} u^{\epsilon},\nonumber\\
L^{\epsilon}&:=B^{\epsilon}\cdot\nabla B^{\epsilon}+I(\epsilon b^{\epsilon})(H^{\epsilon}\cdot \nabla H^{\epsilon}-\frac{1}{2}|\nabla H^{\epsilon}|^{2}),\nonumber\\
Q^{\epsilon}&:=B^{\epsilon}\cdot\nabla w^{\epsilon}- w^{\epsilon}\cdot\nabla B^{\epsilon}
+B\cdot\nabla \mathcal{P}^{\bot}u^{\epsilon}- \mathcal{P}^{\bot}u^{\epsilon}\cdot\nabla B-(\dv \mathcal{P}^{\bot}u^{\epsilon})H^{\epsilon}.\nonumber
\end{align}
Applying Proposition \ref{AProp3} with $s=d/2-1+\beta$ yields
\begin{align}
W_{\beta}\leq\,& C\left(\|M^{\epsilon}\|_{L^{1}_{T}(\dot{B}^{\frac{d}{2}-1+\beta})}+\|L^{\epsilon}\|_{L^{1}_{T}(\dot{B}^{\frac{d}{2}-1+\beta})}
+\|Q^{\epsilon}\|_{L^{1}_{T}(\dot{B}^{\frac{d}{2}-1+\beta})}\right)\nonumber\\
&\times
\exp\bigg\{ C \int_{0}^{T}\big(\|\nabla A^{\epsilon}\|_{\dot{B}^{\frac{d}{2}}_{2,1}}+\|\nabla B\|_{\dot{B}^{\frac{d}{2}}_{2,1}}\big)\mathrm{d}t\bigg\}\nonumber\\
\leq\,& C  e^{C(V_{0}+X_{0})}\left(\|M^{\epsilon}\|_{L^{1}_{T}(\dot{B}^{\frac{d}{2}-1+\beta})}+\|L^{\epsilon}\|_{L^{1}_{T}(\dot{B}^{\frac{d}{2}-1+\beta})}
+\|Q^{\epsilon}\|_{L^{1}_{T}(\dot{B}^{\frac{d}{2}-1+\beta})}\right).\label{Cc-7}
\end{align}

We now bound $M^{\epsilon}, L^{\epsilon}$, and $Q^{\epsilon}$. First, we readily have
\begin{align}
&\|w^{\epsilon}\cdot\nabla w^{\epsilon}\|_{L^{1}_{T}(\dot{B}^{\frac{d}{2}-1+\beta}_{2,1})}
\leq C\|w^{\epsilon}\|_{L^{2}_{T}(\dot{B}^{\frac{d}{2}}_{2,1})}\|\nabla w^{\epsilon}\|_{L^{2}_{T}(\dot{B}^{\frac{d}{2}-1+\beta}_{2,1})}
\leq C W_{0}W_{\beta},\label{Cc-8}\\
&\|B^{\epsilon}\cdot\nabla B^{\epsilon}\|_{L^{1}_{T}(\dot{B}^{\frac{d}{2}-1+\beta}_{2,1})}
\leq C\|B^{\epsilon}\|_{L^{2}_{T}(\dot{B}^{\frac{d}{2}}_{2,1})}\|\nabla B^{\epsilon}\|_{L^{2}_{T}(\dot{B}^{\frac{d}{2}-1+\beta}_{2,1})}
\leq C W_{0}W_{\beta},\label{Cc-9}\\
&\|B^{\epsilon}\cdot\nabla w^{\epsilon}\|_{L^{1}_{T}(\dot{B}^{\frac{d}{2}-1+\beta}_{2,1})}
\leq C\|B^{\epsilon}\|_{L^{2}_{T}(\dot{B}^{\frac{d}{2}}_{2,1})}\|\nabla w^{\epsilon}\|_{L^{2}_{T}(\dot{B}^{\frac{d}{2}-1+\beta}_{2,1})}
\leq C W_{0}W_{\beta},\label{Cc-11}\\
&\|w^{\epsilon}\cdot\nabla B^{\epsilon}\|_{L^{1}_{T}(\dot{B}^{\frac{d}{2}-1+\beta}_{2,1})}
\leq C\|w^{\epsilon}\|_{L^{2}_{T}(\dot{B}^{\frac{d}{2}}_{2,1})}\|\nabla B^{\epsilon}\|_{L^{2}_{T}(\dot{B}^{\frac{d}{2}-1+\beta}_{2,1})}
\leq C W_{0}W_{\beta}.\label{Cc-10}
\end{align}
Next, by interpolation and  (ii) in Remark \ref{re2.2}, we have
\begin{align}
\|b^{\epsilon}\|_{\dot{B}^{\frac{d}{2}}_{2,1}}& \leq \|b^{\epsilon}\|^{\alpha}_{\dot{B}^{\frac{d}{2}+\alpha-1}_{2,1}}
\|b^{\epsilon}\|^{1-\alpha}_{\dot{B}^{\frac{d}{2}+\alpha}_{2,1}}
\leq\epsilon^{\alpha-1}\|b^{\epsilon}\|_{\tilde{B}^{\frac{d}{2}+\alpha,\infty}_{\epsilon}},\label{Cc-12}\\
& \!\! \!\! \!\! \!\!  \!\! \!\! \!\!\!\!\!\!\!\! \|I(\epsilon b^{\epsilon})\mathcal{A}u^{\epsilon}\|_{L^{1}_{T}(\dot{B}^{\frac{d}{2}-1+\beta}_{2,1})}\nonumber\\
&\leq C\epsilon \| b^{\epsilon}\|_{L^{\infty}_{T}(\dot{B}^{\frac{d}{2}}_{2,1})} \|\mathcal{A}u^{\epsilon}\|_{L^{1}_{T}(\dot{B}^{\frac{d}{2}-1+\beta}_{2,1})}\nonumber\\
&\leq C\epsilon^{\alpha}\|b^{\epsilon}\|_{L^{\infty}_{T}(\tilde{B}^{\frac{d}{2}+\alpha,\infty}_{\epsilon})}
\|u^{\epsilon}\|_{L^{1}_{T}(\dot{B}^{\frac{d}{2}+1+\beta}_{2,1})}\nonumber\\
&\leq C \epsilon^{\alpha}X_{\alpha}(V_{\beta}+W_{\beta}+X_{\beta}),\label{Cc-13}\\
& \!\! \!\! \!\! \!\!  \!\! \!\! \!\!\!\!\!\!\!\! \|I(\epsilon b^{\epsilon})H^{\epsilon}\cdot\nabla H^{\epsilon}\|_{L^{1}_{T}(\dot{B}^{\frac{d}{2}-1+\beta}_{2,1})}\nonumber\\
&\leq C\epsilon \| b^{\epsilon}\|_{L^{\infty}_{T}(\dot{B}^{\frac{d}{2}}_{2,1})}
\|H^{\epsilon}\cdot\nabla H^{\epsilon}\|_{L^{1}_{T}(\dot{B}^{\frac{d}{2}-1+\beta}_{2,1})}\nonumber\\
&\leq C\epsilon^{\alpha}\|b^{\epsilon}\|_{L^{\infty}_{T}(\tilde{B}^{\frac{d}{2}+\alpha,\infty}_{\epsilon})}
\|H^{\epsilon}\|_{L^{2}_{T}(\dot{B}^{\frac{d}{2}}_{2,1})}\|\nabla H^{\epsilon}\|_{L^{2}_{T}(\dot{B}^{\frac{d}{2}-1+\beta}_{2,1})}\nonumber\\
&\leq C\epsilon^{\alpha} X_{\alpha}(W_{0}+V_{0})(W_{\beta}+V_{\beta}),\label{Cc-14}\\
& \!\! \!\! \!\! \!\!  \!\! \!\! \!\!\!\!\!\!\!\! \|I(\epsilon b^{\epsilon})\cdot\nabla| H^{\epsilon}|^{2}\|_{L^{1}_{T}(\dot{B}^{\frac{d}{2}-1+\beta}_{2,1})}
 \leq C\epsilon^{\alpha} X_{\alpha}(W_{0}+V_{0})(W_{\beta}+V_{\beta}).\label{Cc-15}
\end{align}

We will deal with the other terms in $M^{\epsilon}, L^{\epsilon}$, and $Q^{\epsilon}$ according to $d=3$ or $d=2$.
We first consider the case:\emph{ $d=3$ and $p_{\alpha}=1+\frac{1}{2\alpha}$}.
Since $\mathcal{P}^{\bot}u^{\epsilon}$ is small in $L^{2}([0,T];\dot{B}^{0}_{\infty,1})$, using interpolation and embedding, we have
\begin{align}
\|\mathcal{P}^{\bot}u^{\epsilon}\|_{L^{2}_{T}(\dot{B}^{0}_{\infty,1})}
&\leq \|\mathcal{P}^{\bot}u^{\epsilon}\|^{\frac{1}{2}-\alpha}_{L^{1}_{T}(\dot{B}^{1+\alpha}_{\infty,1})}
\|\mathcal{P}^{\bot}u^{\epsilon}\|^{\frac{1}{2}+\alpha}_{L^{(1+2\alpha)/2\alpha}_{T}(\dot{B}^{\alpha-1/(1+2\alpha)}_{\infty,1})}\nonumber\\
&\leq C\|\mathcal{P}^{\bot}u^{\epsilon}\|^{\frac{1}{2}-\alpha}_{L^{1}_{T}(\dot{B}^{5/2+\alpha}_{2,1})}
\|\mathcal{P}^{\bot}u^{\epsilon}\|^{\frac{1}{2}+\alpha}_{L^{(1+2\alpha)/2\alpha}_{T}(\dot{B}^{\alpha-1/(1+2\alpha)}_{\infty,1})}\nonumber\\
&\leq C \epsilon^{\alpha}(X_{\alpha}+\epsilon^{-\frac{2\alpha}{1+2\alpha}}Y_{\alpha}^{p_{\alpha}}).\label{Cc-16}
\end{align}
From \eqref{Cc-16}  we expect to gain some smallness for $\mathcal{P}^{\bot}u^{\epsilon}\cdot\nabla v$, $v\cdot\nabla \mathcal{P}^{\bot}u^{\epsilon}$,
$\mathcal{P}^{\bot}u^{\epsilon}\cdot\nabla B$, $B\cdot\nabla \mathcal{P}^{\bot}u^{\epsilon},$ and $(\dv \mathcal{P}^{\bot}u^{\epsilon}) H^{\epsilon}$,
 by means of a judicious application of paradifferential calculus. For $\mathcal{P}^{\bot}u^{\epsilon}\cdot\nabla v$, we shall use the following decomposition (with $\eta <1$ to be fixed hereafter):
$$\mathcal{P}^{\bot}u^{\epsilon}\cdot\nabla v=\underbrace{\sum_{q\in \mathbb{Z}}\dot{\Delta}_{q}\mathcal{P}^{\bot}u^{\epsilon}\cdot S_{q-1+[\log_{2}\eta]}\nabla v}_{T_{1}}
+\underbrace{\sum_{q\in \mathbb{Z}}\dot{\Delta}_{q}\nabla v\cdot S_{q+2-[\log_{2}\eta]}\mathcal{P}^{\bot}u^{\epsilon}}_{T_{2}},
$$
which may be seen as a slight modification of Bony decomposition.

 Recall that, for any $k\in \mathbb{Z},$ we have
 $$\|\dot{S}_{j}\nabla v\|_{L^{\infty}}\leq C2^{2j}\|\nabla v\|_{\dot{B}^{-2}_{\infty,1}}.$$
 Therefore,
\begin{align}
&\!\!\!\!\!\!\!\!\!\!\!\!\!\!\!\!\!\!\!\ \!\!\!\!
\|\dot{\Delta}_{q}\mathcal{P}^{\bot}u^{\epsilon}\cdot S_{q-1+[\log_{2}\eta]}\nabla  v\|_{L^{2}}\nonumber\\
\leq  & C
\|S_{q-1+[\log_{2}\eta]}\nabla  v\|_{L^{\infty}}\|\dot{\Delta}_{q}\mathcal{P}^{\bot}u^{\epsilon}\|_{L^{2}}\nonumber\\
\leq &C \eta^{2}2^{-q(\frac{d}{2}+\beta-1)}
\|\nabla v\|_{\dot{B}^{-2}_{\infty,1}}\big(2^{q(\frac{d}{2}+\beta+1)}\|\dot{\Delta}_{q}\mathcal{P}^{\bot}u^{\epsilon}\|_{L^{2}}\big).\nonumber
\end{align}
As the function $\dot{\Delta}_{q}\mathcal{P}^{\bot}u^{\epsilon}\cdot S_{q-1+[\log_{2}\eta]}\nabla  v\|_{L^{2}}$
is spectrally supported in dyadic annuli  $2^{q}\mathcal{C}(0,R_{1}, \linebreak R_{2})$ with $R_{1}$ and $R_{2}$ independent of $\eta$,
Lemma \ref{le2.4} yields
\begin{align}\label{Cc-17}
\|T_{1}\|_{\dot{B}^{\frac{d}{2}-1+\beta}}\leq C \eta^{2}\|\nabla v\|_{\dot{B}^{\frac{d}{2}-2}_{2,1}}
\|\mathcal{P}^{\bot}u^{\epsilon}\|_{\dot{B}^{\frac{d}{2}+1+\beta}_{2,1}}.
\end{align}
Next, according to the properties of quasi-orthogonality of the dyadic decomposition, we have, for all $k\in \mathbb{Z}$,
$$\dot{\Delta}_{k}T_{2}=\sum_{q\geq k-2+[\log_{2}\eta]}\dot{\Delta}_{k}\big(\dot{S}_{q+2-[\log_{2}\eta]}\mathcal{P}^{\bot}u^{\epsilon}
\cdot\dot{\Delta}_{q}\nabla v\big).$$
Therefore,
\begin{align}
& \!\!\!\!\!\!\!\!\!\!\!\!\!\! \!\!\!\! 2^{k(\frac{d}{2}+\beta-1)}\|\dot{\Delta}_{k}T_{2}\|_{L^{2}}\nonumber\\
\leq\, & C\|\mathcal{P}^{\bot}u^{\epsilon}\|_{L^{\infty}}
\sum_{q\geq k-2+[\log_{2}\eta]}2^{(k-q)(\frac{d}{2}+\beta-1)}2^{q(\frac{d}{2}+\beta-1)}\|\dot{\Delta}_{q}\nabla v\|_{L^{2}}\nonumber\\
\leq\,& C \eta^{1-\beta-\frac{d}{2}}\|\mathcal{P}^{\bot}u^{\epsilon}\|_{L^{\infty}}\|\nabla v\|_{\dot{B}^{\frac{d}{2}+\beta-1}_{2,1}},\nonumber
\end{align}
from which it follows that
\begin{align}
\|T_{2}\|_{\dot{B}^{\frac{d}{2}+\beta-1}_{2,1}}\leq C \eta^{1-\beta-\frac{d}{2}}\| v\|_{\dot{B}^{\frac{d}{2}+\beta}_{2,1}}
\|\mathcal{P}^{\bot}u^{\epsilon}\|_{L^{\infty}}.\label{Cc-18}
\end{align}
By \eqref{Cc-17}, \eqref{Cc-18}, and  H\"{o}lder's inequality, we thus get
\begin{align}
\|\mathcal{P}^{\bot}u^{\epsilon}\cdot\nabla v\|_{L^{1}_{T}(\dot{B}^{\frac{d}{2}+\beta-1}_{2,1})}\leq \,& C\Big(
 \eta^{2}\|v\|_{L^{\infty}_{T}(\dot{B}^{\frac{d}{2}-1}_{2,1})}
\|\mathcal{P}^{\bot}u^{\epsilon}\|_{L^{1}_{T}(\dot{B}^{\frac{d}{2}+1+\beta}_{2,1})}\nonumber\\
&\quad +\eta^{1-\beta-\frac{d}{2}}\| v\|_{L^{2}_{T}(\dot{B}^{\frac{d}{2}+\beta}_{2,1})}
\|\mathcal{P}^{\bot}u^{\epsilon}\|_{L^{2}_{T}(L^{\infty})}\Big).\nonumber
\end{align}
Since $\dot{B}^{0}_{\infty,1}\hookrightarrow L^{\infty}$, by choosing $\eta=\epsilon^{\frac{2\alpha}{2+d+2\beta}}$ and using \eqref{Cc-16}, we can
now conclude that
\begin{align}\label{Cc-19}
\|\mathcal{P}^{\bot}u^{\epsilon}\cdot\nabla v\|_{L^{1}_{T}(\dot{B}^{\frac{d}{2}+\beta-1}_{2,1})}\leq C
\epsilon^{\frac{4\alpha}{2+d+2\beta}}\big(V_{0}X_{\beta}+V_{\beta}(X_{\alpha}+\epsilon^{-\frac{2\alpha}{1+2\alpha}}Y^{p_{\alpha}}_{\alpha})\big).
\end{align}
Similarly,
\begin{align}\label{Cc-20}
\|\mathcal{P}^{\bot}u^{\epsilon}\cdot\nabla B\|_{L^{1}_{T}(\dot{B}^{\frac{d}{2}+\beta-1}_{2,1})}\leq C
\epsilon^{\frac{4\alpha}{2+d+2\beta}}\big(V_{0}X_{\beta}+V_{\beta}(X_{\alpha}+\epsilon^{-\frac{2\alpha}{1+2\alpha}}Y^{p_{\alpha}}_{\alpha})\big).
\end{align}

The term $v\cdot \nabla \mathcal{P}^{\bot}u^{\epsilon}$ may be treated similarly. In fact, using the decomposition
$$v\cdot\nabla\mathcal{P}^{\bot}u^{\epsilon}
=\underbrace{\sum_{q\in \mathbb{Z}}\dot{\Delta}_{q}\nabla\mathcal{P}^{\bot}u^{\epsilon}\cdot S_{q-1+[\log_{2}\eta]} v}_{\tilde{T}_{1}}
+\underbrace{\sum_{q\in \mathbb{Z}}\dot{\Delta}_{q} v\cdot S_{q+2-[\log_{2}\eta]}\nabla\mathcal{P}^{\bot}u^{\epsilon}}_{\tilde{T}_{2}}
$$
and following the previous argument, we readily get
\begin{align}
&\|\tilde{T}_{1}\|_{L^{1}_{T}(\dot{B}^{\frac{d}{2}-1+\beta}_{2,1})}\leq C \eta \|v\|_{L^{\infty}_{T}(\dot{B}^{\frac{d}{2}-1}_{2,1})}
\|\nabla \mathcal{P}^{\bot}u^{\epsilon}\|_{L^{1}_{T}(\dot{B}^{\frac{d}{2}+\beta}_{2,1})},\nonumber\\
&\|\tilde{T}_{2}\|_{L^{1}_{T}(\dot{B}^{\frac{d}{2}-1+\beta}_{2,1})}\leq C \eta^{-\frac{d}{2}-\beta} \|v\|_{L^{2}_{T}(\dot{B}^{\frac{d}{2}+\beta}_{2,1})}
\|\nabla \mathcal{P}^{\bot}u^{\epsilon}\|_{L^{1}_{T}(\dot{B}^{-1}_{2,1})}.\nonumber
\end{align}
Choosing $\eta=\epsilon^{\frac{2\alpha}{2+d+2\beta}},$ we conclude that
\begin{align}\label{Cc-21}
\|v\cdot\nabla\mathcal{P}^{\bot}u^{\epsilon}\|_{L^{1}_{T}(\dot{B}^{\frac{d}{2}+\beta-1}_{2,1})}\leq C
\epsilon^{\frac{2\alpha}{2+d+2\beta}}\big(V_{0}X_{\beta}+V_{\beta}(X_{\alpha}+\epsilon^{-\frac{2\alpha}{1+2\alpha}}Y^{p_{\alpha}}_{\alpha})\big).
\end{align}
Similarly,
\begin{align}\label{Cc-22}
\|B\cdot\nabla\mathcal{P}^{\bot}u^{\epsilon}\|_{L^{1}_{T}(\dot{B}^{\frac{d}{2}+\beta-1}_{2,1})}\leq C
\epsilon^{\frac{4\alpha}{2+d+2\beta}}\big(V_{0}X_{\beta}+V_{\beta}(X_{\alpha}+\epsilon^{-\frac{2\alpha}{1+2\alpha}}Y^{p_{\alpha}}_{\alpha})\big).
\end{align}

To deal with the term $\dv(\mathcal{P}^{\bot}u^{\epsilon})H^{\epsilon}$, we introduce the decomposition
\begin{align*}
\dv(\mathcal{P}^{\bot}u^{\epsilon})H^{\epsilon}
=\,&\underbrace{\sum_{q\in \mathbb{Z}}\dot{\Delta}_{q}\dv\mathcal{P}^{\bot}u^{\epsilon}\cdot S_{q-1+[\log_{2}\eta]} H^{\epsilon}}_{\bar{T}_{1}}\\
& +\underbrace{\sum_{q\in \mathbb{Z}}\dot{\Delta}_{q} H^{\epsilon}\cdot S_{q+2-[\log_{2}\eta]}\dv\mathcal{P}^{\bot}u^{\epsilon}}_{\bar{T}_{2}}.
\end{align*}
Following the previous argument, we readily get
\begin{align}
&\|\bar{T}_{1}\|_{L^{1}_{T}(\dot{B}^{\frac{d}{2}-1+\beta}_{2,1})}\leq C \eta \|H^{\epsilon}\|_{L^{\infty}_{T}(\dot{B}^{\frac{d}{2}-1}_{2,1})}
\|\dv \mathcal{P}^{\bot}u^{\epsilon}\|_{L^{1}_{T}(\dot{B}^{\frac{d}{2}+\beta}_{2,1})},\nonumber\\
&\|\bar{T}_{2}\|_{L^{1}_{T}(\dot{B}^{\frac{d}{2}-1+\beta}_{2,1})}\leq C \eta^{-\frac{d}{2}-\beta} \|H^{\epsilon}\|_{L^{2}_{T}(\dot{B}^{\frac{d}{2}+\beta}_{2,1})}
\|\dv \mathcal{P}^{\bot}u^{\epsilon}\|_{L^{2}_{T}(\dot{B}^{-1}_{2,1})}.\nonumber
\end{align}
Choosing $\eta=\epsilon^{\frac{2\alpha}{2+d+2\beta}},$ we conclude that
\begin{align}\label{Cc-23}
\|\dv(\mathcal{P}^{\bot}u^{\epsilon})H^{\epsilon}\|_{L^{1}_{T}(\dot{B}^{\frac{d}{2}+\beta-1}_{2,1})}
\leq\,& C
\epsilon^{\frac{2\alpha}{2+d+2\beta}}\Big((W_{0}+V_{0})X_{\beta}\nonumber\\
&+(W_{\beta}+V_{\beta})(X_{\alpha}+\epsilon^{-\frac{2\alpha}{1+2\alpha}}Y^{p_{\alpha}}_{\alpha})\Big).
\end{align}

Now  we consider the case:  $d=2$.
For $\mathcal{P}^{\bot}u^{\epsilon}$, we have the following estimate:
\begin{align}
\|\mathcal{P}^{\bot}u^{\epsilon}\|_{L^{\frac{1}{1-3\alpha}}_{T}(\dot{B}^{2-10\alpha}_{\frac{2}{1-4\alpha},1})}
&\leq \|\mathcal{P}^{\bot}u^{\epsilon}\|^{1-4\alpha}_{L^{1}_{T}(\dot{B}^{2+\alpha}_{2,1})}
\|\mathcal{P}^{\bot}u^{\epsilon}\|^{4\alpha}_{L^{4}_{T}(\dot{B}^{\alpha-\frac{3}{4}}_{\infty,1})}\nonumber\\
&\leq C \epsilon^{\alpha}(X_{\alpha}+\epsilon^{-\frac{1}{4}}Y_{\alpha})\label{Cc-24}
\end{align}
with $\alpha\in(0,\frac{1}{6}]$.
In this part of proof, we need the following refined Bony decomposition:
\begin{align}\label{bonya}
\mathcal{P}^{\bot}u^{\epsilon}\cdot\nabla v= &\sum_{j\in \mathbb{Z}}\dot{S}_{j-1+[\log_{2}\eta]}\mathcal{P}^{\bot}u^{\epsilon}\nabla\Delta_{j}v\nonumber\\
&+ \sum_{j\in \mathbb{Z}}\big(\dot{S}_{j-1}-\dot{S}_{j-1+[\log_{2}\eta]}\big)\mathcal{P}^{\bot}u^{\epsilon}\nabla\Delta_{j}v\nonumber\\
&+\dot{R}(\mathcal{P}^{\bot}u^{\epsilon},\nabla v)+\dot{T}_{\nabla v}\mathcal{P}^{\bot}u^{\epsilon}.
\end{align}
As in the proof of case $d=3$ and \eqref{Cc-24}, we have
\begin{align}
&\!\!\!\!\!\!\!\!\!\!\!\!\!\! \!\!\!\! \bigg \|\sum_{j\in \mathbb{Z}}\dot{S}_{j-1+[\log_{2}\eta]}\mathcal{P}^{\bot}u^{\epsilon}\nabla\Delta_{j}v\bigg\|_{L^{1}_{T}(\dot{B}^{\beta}_{2,1})}\nonumber\\
\leq\,& C \eta \|\mathcal{P}^{\bot}u^{\epsilon}\|_{L_{T}^{\infty}(\dot{B}^{-1}_{\infty,1})}\|\nabla v\|_{L_{T}^{1}(\dot{B}^{1+\beta}_{2,1})}\nonumber\\
\leq\,& C \eta X_{0}V_{\beta},\nonumber\\
&\!\!\!\!\!\!\!\!\!\!\!\!\!\! \!\!\!\! \bigg\|\sum_{j\in \mathbb{Z}}\big(\dot{S}_{j-1}-\dot{S}_{j-1+[\log_{2}\eta]}\big)\mathcal{P}^{\bot}u^{\epsilon}\nabla\Delta_{j}\bigg\|_{L^{1}_{T}(\dot{B}^{\beta}_{2,1})}\nonumber\\
\leq\,& C \eta^{6\alpha-1}\|\mathcal{P}^{\bot}u^{\epsilon}\|_{L_{T}^{\frac{1}{1-3\alpha}}(\dot{B}^{1-6\alpha}_{\infty,1})}
\|\nabla v\|_{L_{T}^{\frac{1}{3\alpha}}(\dot{B}^{6\alpha-1+\beta}_{2,1})}\nonumber\\
\leq\,& C \eta^{6\alpha-1}\epsilon^{\alpha}V_{\beta}\big(X_{\alpha}+\epsilon^{-\frac{1}{4}}Y_{\alpha}\big).\nonumber
\end{align}
Choosing $\eta=\epsilon^{\alpha/(2-6\alpha)}$, we have,
$$\|\dot{T}_{\mathcal{P}^{\bot}u^{\epsilon}}\nabla v\|_{L^{1}_{T}(\dot{B}^{\beta}_{2,1})}\leq C \epsilon^{\frac{\alpha}{2-6\alpha}}
\big(X_{0}V_{\beta}+V_{\beta}(X_{\alpha}+\epsilon^{-\frac{1}{4}}Y_{\alpha})\big).$$
Using \eqref{Cc-24} and Remark \ref{re2.3}, we get
\begin{align}
\|\dot{R}(\mathcal{P}^{\bot}u^{\epsilon},\nabla v)\|_{L^{1}_{T}(\dot{B}^{\beta}_{2,1})}&\leq C
\|\nabla v\|_{L^{\frac{1}{3\alpha}}_{T}(\dot{B}^{6\alpha+\beta-1}_{2,1})}
\|\mathcal{P}^{\bot}u^{\epsilon}\|_{L^{\frac{1}{1-3\alpha}}_{T}(\dot{B}^{2-10\alpha}_{\frac{2}{1-4\alpha},1})} \nonumber\\
&\leq C \epsilon^{\alpha} V_{\beta}(X_{\alpha}+\epsilon^{-\frac{1}{4}}Y_{\alpha}),\nonumber\\
\|\dot{S}_{q-1}\nabla v\Delta_{q}\mathcal{P}^{\bot}u^{\epsilon}\|_{L^{2}}&\leq \|\Delta_{q}\mathcal{P}^{\bot}u^{\epsilon}\|_{L^{\frac{2}{1-4\alpha}}}
\sum_{j\leq q-2}\|\Delta_{j}\nabla v\|_{L^{\frac{1}{2\alpha}}} \nonumber\\
&\leq 2^{-q\beta}\big(2^{q(2-10\alpha)}\|\Delta_{q}\mathcal{P}^{\bot}u^{\epsilon}\|_{L^{\frac{2}{1-4\alpha}}}\big)
\|\nabla v\|_{\dot{B}^{10\alpha+\beta-2}_{\frac{1}{2\alpha},1}}.\nonumber
\end{align}
Here we have used the facts that $\alpha,\beta\leq\frac{1}{6}$ and $10\alpha+\beta-2<0$.
By the embedding $\dot{B}^{6\alpha+\beta-1}_{2,1}\hookrightarrow \dot{B}^{10\alpha+\beta-2}_{\frac{1}{2\alpha},1}$, we get
\begin{align}
\|\dot{T}_{\nabla v}\mathcal{P}^{\bot}u^{\epsilon}\|_{L^{1}_{T}(\dot{B}^{\beta}_{2,1})}&\leq C
\|\nabla v\|_{L^{\frac{1}{3\alpha}}_{T}(\dot{B}^{6\alpha+\beta-1}_{2,1})}
\|\mathcal{P}^{\bot}u^{\epsilon}\|_{L^{\frac{1}{1-3\alpha}}_{T}(\dot{B}^{2-10\alpha}_{\frac{2}{1-4\alpha},1})}\nonumber\\
&\leq C \epsilon^{\alpha} V_{\beta}(X_{\alpha}+\epsilon^{-\frac{1}{4}}Y_{\alpha}).\nonumber
\end{align}
Plugging all the above inequalities into \eqref{bonya}, we finally obtain
\begin{align}\label{Cc-25}
\|\mathcal{P}^{\bot}u^{\epsilon}\cdot\nabla v\|_{L^{1}_{T}(\dot{B}^{\beta}_{2,1})}&\leq C\epsilon^{\frac{\alpha}{2-6\alpha}}
\big(X_{0}V_{\beta}+V_{\beta}(X_{\alpha}+\epsilon^{-\frac{1}{4}}Y_{\alpha})\big).
\end{align}
Similar arguments lead to
\begin{align}
\|\mathcal{P}^{\bot}u^{\epsilon}\cdot\nabla B\|_{L^{1}_{T}(\dot{B}^{\beta}_{2,1})}&\leq C\epsilon^{\frac{\alpha}{2-6\alpha}}
\big(X_{0}V_{\beta}+V_{\beta}(X_{\alpha}+\epsilon^{-\frac{1}{4}}Y_{\alpha})\big),\label{Cc-26}\\
\|v\cdot \nabla\mathcal{P}^{\bot}u^{\epsilon}\|_{L^{1}_{T}(\dot{B}^{\beta}_{2,1})}& \leq C\epsilon^{\frac{\alpha}{1+6\alpha+\beta}}
\big(V_{0}X_{\beta}+V_{\beta}(X_{\alpha}+\epsilon^{-\frac{1}{4}}Y_{\alpha})\big),\label{Cc-27}\\
\|\dv\mathcal{P}^{\bot}u^{\epsilon}H^{\epsilon}\|_{L^{1}_{T}(\dot{B}^{\beta}_{2,1})}&\leq C\epsilon^{\frac{\alpha}{1+6\alpha+\beta}}
\big(P_{0}X_{\beta}+P_{\beta}(X_{\alpha}+\epsilon^{-\frac{1}{4}}Y_{\alpha})\big).\label{Cc-28}
\end{align}

Plugging the estimates  \eqref{Cc-8}-\eqref{Cc-11}, \eqref{Cc-13}-\eqref{Cc-15},
\eqref{Cc-19}-\eqref{Cc-23} or \eqref{Cc-25}-\eqref{Cc-28}
in \eqref{Cc-7}, we eventually get,
if $d=3$,
\begin{align}\label{Cc-29}
W_{\beta}\leq\,& Ce^{C(V_{0}+X_{0})}\Big(W_{0}W_{\beta}+\epsilon^{\alpha}X_{\alpha}\big(V_{\beta}+(1+W_{0}+V_{0})(W_{\beta}+V_{\beta})\big)\nonumber\\
& +\epsilon^{\frac{2\alpha}{5+2\beta}}\big((W_{0}+V_{0})X_{\beta}+
(W_{\beta}+V_{\beta})(X_{\alpha}+\epsilon^{-\frac{2\alpha}{1+2\alpha}}Y^{p_{\alpha}}_{\alpha})\big)\Big),
\end{align}
while if $d=2$,
\begin{align}\label{Cc-30}
W_{\beta}\leq \,& Ce^{C(V_{0}+X_{0})}\Big(W_{0}W_{\beta}+\epsilon^{\alpha}X_{\alpha}\big(V_{\beta}+(1+W_{0}+V_{0})(W_{\beta}+V_{\beta})\big)\nonumber\\
& +\epsilon^{\frac{\alpha}{2+\beta}}\big(X_{0}V_{\beta}+(W_{0}+V_{0})X_{\beta}+
(W_{\beta}+V_{\beta})(X_{\alpha}+\epsilon^{-\frac{1}{4}}Y_{\alpha})\big)\Big).
\end{align}


\subsection{Estimates for $(b^{\epsilon},\mathcal{P}^{\bot}u^{\epsilon})$ in  $E^{\frac{d}{2}+\beta}_{\epsilon,T}$.}

We first need the following Proposition.

\begin{Proposition}[\cite{DC}]\label{bprop3.2}
Let $\epsilon >0,s\in \mathbb{R},1\leq p,r<\infty$, and $(a,u)$ be a solution of the following system:
\begin{equation}
\left\{\begin{array}{l}
\partial_{t}a+\dot{T}_{v^{j}}\partial_{j}a+\frac{\Lambda u}{\epsilon}=F,\\
\partial_{t}u+\dot{T}_{v^{j}}\partial_{j} u-\nu\Delta u-\frac{\Lambda a}{\epsilon}=G,
\end{array}
\right.\nonumber
\end{equation}
with $\dot{T}_{v}a:=\sum_{j}\dot{S}_{j-1}v\dot{\Delta}_{j}a$.
Then there exists a constant C, depending only on $d,p,r,$ and $s,$ such that the following estimate holds:
\begin{align}
& \|a(t)\|_{\tilde{B}^{s,\infty}_{\epsilon}}+\|u(t)\|_{\dot{B}^{s-1}_{2,1}}+\int_{0}^{t}(\|a\|_{\tilde{B}^{s,1}_{\epsilon}}+
\|u\|_{\dot{B}^{s+1}_{2,1}}){\rm d}\tau\nonumber\\
& \ \ \leq C e^{C V^{p,r}_{\epsilon}(t)}  \Big( \|a_{0}\|_{\tilde{B}^{s,\infty}_{\epsilon}}+\|u_{0}\|_{\dot{B}^{s-1}_{2,1}}
+\int_{0}^{t}e^{-CV^{p,r}_{\epsilon}(\tau)}\big(\|F\|_{\tilde{B}^{s,\infty}_{\epsilon}}+\|G\|_{\dot{B}^{s-1}_{2,1}}\big)
{\rm d}\tau\Big).\nonumber
\end{align}
where
\begin{equation}
V^{p,r}_{\epsilon}(t):=
\left\{\begin{array}{l}
\displaystyle{\int_{0}^{t}}\big(\nu^{1-p}\|\nabla v\|^{p}_{\dot{B}^{\frac{2}{p}-2}_{\infty,\infty}}
+(\epsilon^{2}\nu)^{r-1}\|\nabla v\|^{r}_{L^{\infty}}\big){\rm d}\tau, \quad \mathrm{if}\,\,p>1,\\
\displaystyle{\int_{0}^{t}}\big(\|\nabla v\|_{L^{\infty}}+ (\epsilon^{2}\nu)^{r-1}\|\nabla v\|^{r}_{L^{\infty}} \big){\rm d}\tau,\qquad \qquad\mathrm{if}\,\,p=1.
\end{array}
\right.\nonumber
\end{equation}
\end{Proposition}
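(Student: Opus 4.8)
The plan is to prove the estimate by a frequency-localized energy method, disentangling three effects: the singular skew-symmetric coupling $\pm\Lambda/\epsilon$, the parabolic dissipation $-\nu\Delta u$, and the paraproduct transport $\dot{T}_{v^{j}}\partial_{j}$. First I would apply the dyadic block $\dot{\Delta}_{j}$ to both equations and set $a_{j}:=\dot{\Delta}_{j}a$, $u_{j}:=\dot{\Delta}_{j}u$. The advantage of working with the paraproduct $\dot{T}_{v^{k}}\partial_{k}$ rather than the full convection $v\cdot\nabla$ is that $[\dot{\Delta}_{j},\dot{T}_{v^{k}}]\partial_{k}$ is a genuine commutator: by the standard paradifferential commutator lemma it is bounded, at each frequency, by $c_{j}2^{-js}\|\nabla v\|_{\ast}\|(a,u)\|_{\ast}$ with $(c_{j})_{j}\in\ell^{1}$, where the norm on $\nabla v$ is $L^{\infty}$ when $p=1$ and the weaker $\dot{B}^{\frac{2}{p}-2}_{\infty,\infty}$ when $p>1$. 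These are exactly the two branches appearing in the definition of $V^{p,r}_{\epsilon}$, and their different integrability in time is what forces the weights $\nu^{1-p}$ and $(\epsilon^{2}\nu)^{r-1}$ after a H\"{o}lder--Young argument against the available dissipation.

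The structural heart is the skew-symmetry of the singular terms. Pairing the localized density equation with $a_{j}$ and the localized velocity equation with $u_{j}$ in $L^{2}$, the contributions $\epsilon^{-1}\langle\Lambda u_{j},a_{j}\rangle$ and $-\epsilon^{-1}\langle\Lambda a_{j},u_{j}\rangle$ cancel exactly because $\Lambda=\sqrt{-\Delta}$ is self-adjoint, so the factor $1/\epsilon$ never enters the basic balance, which reads $\frac{1}{2}\frac{{\rm d}}{{\rm d}t}(\|a_{j}\|_{L^{2}}^{2}+\|u_{j}\|_{L^{2}}^{2})+\nu2^{2j}\|u_{j}\|_{L^{2}}^{2}\le(\text{forcing}+\text{commutator})$. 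The viscosity damps only the velocity, so to recover dissipation for the density block I would, following Danchin, add the corrector $-\delta\,\epsilon\,\langle a_{j},\Lambda u_{j}\rangle$ with $\delta$ small. Differentiating it and using the two equations produces $-\delta 2^{2j}\|a_{j}\|_{L^{2}}^{2}+\delta 2^{2j}\|u_{j}\|_{L^{2}}^{2}+(\text{lower order})$, so that choosing $\delta<\nu$ absorbs the velocity remainder into $\nu2^{2j}\|u_{j}\|_{L^{2}}^{2}$ and leaves genuine damping on $a_{j}$. The corrector is comparable to the base energy only while $\delta\epsilon 2^{j}\lesssim1$, i.e.\ in the low-frequency (acoustic) zone; beyond the threshold $2^{j}\sim1/\epsilon$ the density decays at the much slower $\epsilon$-dependent rate predicted by the dispersion relation $\lambda^{2}+\nu2^{2j}\lambda+2^{2j}/\epsilon^{2}=0$, and this is precisely the behaviour encoded in the two spectral zones of the hybrid norms $\tilde{B}^{s,\infty}_{\epsilon}$ and $\tilde{B}^{s,1}_{\epsilon}$.

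With the Lyapunov inequality $\frac{{\rm d}}{{\rm d}t}\mathcal{L}_{j}+\text{(dissipation)}_{j}\le\|\nabla v\|_{\ast}\,c_{j}2^{-js}(\cdots)+\text{forcing}$, where $\mathcal{L}_{j}\simeq\|(a_{j},u_{j})\|_{L^{2}}^{2}$, I would integrate in time, multiply by the appropriate hybrid weight (the $\epsilon$-weighted $2^{js}$ at low frequencies and the plain $2^{js}$ at high frequencies), and sum over $j\in\mathbb{Z}$. The $\ell^{1}$ summability of $(c_{j})_{j}$ turns the frequency-by-frequency bounds into the stated estimate for the full hybrid norm of $a$ together with the $\dot{B}^{s-1}_{2,1}$ and $\dot{B}^{s+1}_{2,1}$ norms of $u$, the transport terms being gathered into the exponential factor $e^{CV^{p,r}_{\epsilon}(t)}$ through Gronwall's lemma.

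I expect the main obstacle to be the uniform-in-$\epsilon$ control of the corrected functional $\mathcal{L}_{j}$ across the spectral splitting: one must check that $\mathcal{L}_{j}$ remains comparable to $\|(a_{j},u_{j})\|_{L^{2}}^{2}$ for every $j$ and every small $\epsilon$, and that the crossover at $2^{j}\sim1/\epsilon$ between oscillatory and dissipative behaviour is matched exactly by the weights defining $\tilde{B}^{s,\infty}_{\epsilon}$. The secondary difficulty is the $p>1$ branch, where the transport remainder must be bounded only by the weak quantity $\|\nabla v\|_{\dot{B}^{\frac{2}{p}-2}_{\infty,\infty}}$, requiring a careful paradifferential decomposition and the time-H\"{o}lder trade-off that manufactures the $\nu^{1-p}$ weight in $V^{p,r}_{\epsilon}$.
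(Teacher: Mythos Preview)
The paper does not prove this proposition: it is quoted verbatim from Danchin \cite{DC} and used as a black box in Section~\ref{lowlimit}. Your sketch is faithful to Danchin's original argument---frequency localization, cancellation of the singular $\pm\epsilon^{-1}\Lambda$ terms by skew-symmetry, the corrector $-\delta\epsilon\langle a_{j},\Lambda u_{j}\rangle$ to transfer dissipation from $u$ to $a$, and the resulting low/high frequency dichotomy at $2^{j}\sim\epsilon^{-1}$ that is encoded by the hybrid norms $\tilde{B}^{s,r}_{\epsilon}$---so there is nothing to compare against in the present paper.
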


Set $$d^{\epsilon}:=\Lambda^{-1}\dv\mathcal{P}^{\bot}u^{\epsilon}.$$
 By the system \eqref{Cc-2}, we know that $(b^{\epsilon},d^{\epsilon})$  satisfies  the following
system:
\begin{equation}\label{bonyb}
\left\{\begin{array}{l}
\partial_{t}b^{\epsilon}+\dot{T}_{u_{j}^{\epsilon}}\partial_{j}b^{\epsilon}+\frac{\Lambda d^{\epsilon}}{\epsilon}=S^{\epsilon},\\
\partial_{t}d^{\epsilon}+\dot{T}_{u_{j}^{\epsilon}}\partial_{j}d^{\epsilon}-\bar{\nu}\Delta d^{\epsilon}-\frac{\Lambda b^{\epsilon}}{\epsilon}=T^{\epsilon},
\end{array}
\right.
\end{equation}
with
\begin{align}
S^{\epsilon}:=\,&-\dot{T}'_{\partial_{j}b^{\epsilon}}u_{j}^{\epsilon}-b^{\epsilon}\dv u^{\epsilon},\nonumber\\
T^{\epsilon}:=\,&\Lambda^{-1}\dv \mathcal{P}^{\bot}\Big(K(\epsilon b^{\epsilon})\frac{\nabla b^{\epsilon}}{\epsilon}
-I(\epsilon b^{\epsilon})\mathcal{A}u^{\epsilon}\Big)\nonumber\\
&+\Lambda^{-1}\dv \mathcal{P}^{\bot}\Big(\frac{1}{1+\epsilon b^{\epsilon}}
\big(H^{\epsilon}\cdot\nabla H^{\epsilon}-\frac{1}{2}|\nabla H^{\epsilon}|^{2}\big)\Big)\nonumber\\
&+u^{\epsilon}\cdot\nabla \Lambda^{-1}\dv \mathcal{P}^{\bot}u^{\epsilon}- \Lambda^{-1}\dv \mathcal{P}^{\bot}(u^{\epsilon}\cdot\nabla u^{\epsilon})
- \dot{T}'_{\partial_{j}\Lambda^{-1}\dv \mathcal{P}^{\bot}u^{\epsilon}}u_{j}^{\epsilon} . \nonumber
\end{align}
Applying Proposition \ref{bprop3.2} to \eqref{bonyb}, we get
\begin{align}\label{Cc-31}
X_{\beta}(T)\leq Ce^{V^{p,r}_{\epsilon}(T)}&\bigg(\|b_{0}\|_{\tilde{B}^{\frac{d}{2}+\beta,\infty}_{\epsilon}}+\|d_{0}\|_{\dot{B}^{\frac{d}{2}-1+\beta}_{2,1}}\nonumber\\
&\qquad+\|S^{\epsilon}\|_{L^{1}_{T}(\tilde{B}^{\frac{d}{2}+\beta,\infty}_{\epsilon})}+
\|T^{\epsilon}\|_{L^{1}_{T}(\dot{B}^{\frac{d}{2}-1+\beta}_{2,1})}\bigg)
\end{align}
with
\begin{align}\label{Cc-32}
V^{p,r}_{\epsilon}(t):=\int_{0}^{t}\big(\bar{\nu}^{1-p}\|\nabla u^{\epsilon}(\tau)\|^{p}_{\dot{B}^{\frac{2}{P}-2}_{\infty,1}}+(\epsilon^{2}\bar{\nu})^{r-1}
\|\nabla u^{\epsilon}(\tau)\|^{r}_{L^{\infty}}\big){\rm d}\tau.
\end{align}
for any $p,r>1$ (to be fixed hereafter). Below we give estimates on $S^{\epsilon}$ and $T^{\epsilon}$.

\subsubsection{Estimates for $S^{\epsilon}$}

Applying (ii) in Remark \ref{re2.2}, we have
\begin{align}\label{Cc-33}
\|S^{\epsilon}\|_{L^{1}_{T}(\tilde{B}^{\frac{d}{2}+\beta,\infty}_{\epsilon})}\leq C
\bigg(\|S^{\epsilon}\|_{L^{1}_{T}(\dot{B}^{\frac{d}{2}-1+\beta}_{2,1})}+\epsilon \|S^{\epsilon}\|_{L^{1}_{T}(\dot{B}^{\frac{d}{2}+\beta}_{2,1})}\bigg).
\end{align}
For both $d=3$ and $d=2$,
according to \eqref{Cc-12} we have the following inequality:
\begin{align}\label{Cc-34}
\|\dot{T}'_{\partial_{j}b^{\epsilon}}u_{j}^{\epsilon}\|_{L^{1}_{T}(\dot{B}^{\frac{d}{2}+\beta}_{2,1})}&\leq C
\|\nabla b^{\epsilon}\|_{L^{\infty}_{T}(\dot{B}^{\frac{d}{2}-1}_{2,1})}\|u^{\epsilon}\|_{L^{1}_{T}(\dot{B}^{\frac{d}{2}+1+\beta}_{2,1})}\nonumber\\
&\leq C\epsilon^{\alpha-1}\| b^{\epsilon}\|_{L^{\infty}_{T}(\tilde{B}^{\frac{d}{2}+\alpha,\infty}_{\epsilon})}
\|u^{\epsilon}\|_{L^{1}_{T}(\dot{B}^{\frac{d}{2}-1+\beta}_{2,1})}\nonumber\\
&\leq C \epsilon^{\alpha-1}X_{\alpha}(X_{\beta}+P_{\beta});
\end{align}
and using (ii) in Remark \ref{re2.2} and \eqref{Cc-12}, we conclude that
\begin{align}
\|b^{\epsilon}\dv\mathcal{P}^{\bot}u^{\epsilon}\|_{L^{1}_{T}(\dot{B}^{\frac{d}{2}}_{2,1})}&\leq C
\|\dv \mathcal{P}^{\bot}u^{\epsilon}\|_{L^{1}_{T}(\dot{B}^{\frac{d}{2}}_{2,1})}
\|b^{\epsilon}\|_{L^{\infty}_{T}(\dot{B}^{\frac{d}{2}}_{2,1})}\nonumber\\
&\leq C \epsilon^{\alpha-1}X_{\alpha}X_{0}, \label{Cc-35}\\
\|b^{\epsilon}\dv\mathcal{P}^{\bot}u^{\epsilon}\|_{L^{1}_{T}(\dot{B}^{\frac{d}{2}+\alpha}_{2,1})}&\leq C
\|b^{\epsilon}\|_{L^{\infty}_{T}(\dot{B}^{\frac{d}{2}}_{2,1})}\|\dv \mathcal{P}^{\bot}u^{\epsilon}\|_{L^{1}_{T}(\dot{B}^{\frac{d}{2}+\alpha}_{2,1})}\nonumber\\
&\quad +C\|\dv \mathcal{P}^{\bot}u^{\epsilon}\|_{L^{2/(2-\alpha)}_{T}(\dot{B}^{\frac{d}{2}}_{2,1})}
\|b^{\epsilon}\|_{L^{2/\alpha}_{T}(\dot{B}^{\frac{d}{2}+\alpha}_{2,1})}\nonumber\\
&\leq C \epsilon^{\alpha-1}\Big(\| b^{\epsilon}\|_{L^{\infty}_{T}(\tilde{B}^{\frac{d}{2}+\alpha,\infty}_{\epsilon})}
\|\mathcal{P}^{\bot}u^{\epsilon}\|_{L^{1}_{T}(\dot{B}^{\frac{d}{2}+1+\alpha}_{2,1})}\nonumber\\
&\quad +\| b^{\epsilon}\|_{L^{\frac{2}{\alpha}}_{T}(\tilde{B}^{\frac{d}{2}+\alpha,\frac{2}{\alpha}}_{\epsilon})}
\|\mathcal{P}^{\bot}u^{\epsilon}\|_{L^{\frac{2}{2-\alpha}}_{T}(\dot{B}^{\frac{d}{2}+1}_{2,1})}\Big)\nonumber\\
&\leq C  \epsilon^{\alpha-1} X_{\alpha}^{2}.\label{Cc-36}
\end{align}

Now  we deal with the  term $\|S^{\epsilon}\|_{L^{1}_{T}(\dot{B}^{\frac{d}{2}-1+\beta}_{2,1})}$ in \eqref{Cc-33} separately for  $d=3$ or $d=2$.

For $d=3$, thanks to Remark \ref{re2.3} we have
\begin{align}\label{Cc-37}
\|\dot{T}'_{\partial_{j}b^{\epsilon}}u_{j}^{\epsilon}\|_{L^{1}_{T}(\dot{B}^{\frac{1}{2}+\beta}_{2,1})}&\leq C
\|\partial_{j}b^{\epsilon}\|_{L_{T}^{\frac{1}{\alpha}}(\dot{B}^{2\alpha-2}_{\infty,1})}
\|u_{j}^{\epsilon}\|_{L_{T}^{\frac{1}{1-\alpha}}(\dot{B}^{5/2-2\alpha+\beta}_{2,1})}\nonumber\\
&\leq C\epsilon^{\alpha}\big(X_{\alpha}+P_{\alpha}\big)\big(\epsilon^{-\alpha}Y_{\alpha}^{\frac{1}{\alpha}}\big).
\end{align}
Applying   \eqref{Cc-16} and Proposition \ref{prop2.2}, we get
\begin{align}\label{Cc-38}
\|b^{\epsilon}\dv \mathcal{P}^{\bot}u^{\epsilon}\|_{L^{1}_{T}(\dot{B}^{\frac{1}{2}+\beta}_{2,1})}&\leq C
\|b^{\epsilon}\|_{L_{T}^{\frac{1}{\alpha}}(\dot{B}^{2\alpha-1}_{\infty,1})}
\|\dv \mathcal{P}^{\bot}u_{j}^{\epsilon}\|_{L_{T}^{\frac{1}{1-\alpha}}(\dot{B}^{3/2-2\alpha+\beta}_{2,1})}\nonumber\\
&\quad +C\|\dv \mathcal{P}^{\bot}u_{j}^{\epsilon}\|_{L^{2}_{T}(\dot{B}^{-1}_{\infty,1})}\|b^{\epsilon}\|_{L^{2}_{T}(\dot{B}^{\frac{3}{2}+\beta}_{2,1})}\nonumber\\
&\leq C\epsilon^{\alpha} X_{\beta}\big(X_{\alpha}+\epsilon^{-\alpha}Y_{\alpha}^{\frac{1}{\alpha}}+\epsilon^{-2\alpha/(1+2\alpha)}Y_{\alpha}^{p_{\alpha}}\big).
\end{align}
Plugging  \eqref{Cc-34}-\eqref{Cc-38} into \eqref{Cc-33}, we have
\begin{align}\label{Cc-39}
\|S^{\epsilon}\|_{L^{1}_{T}(\tilde{B}^{\frac{3}{2}+\beta,\infty}_{\epsilon})}\leq C \epsilon^{\alpha}\big(X_{\beta}+P_{\beta}\big)
\big(X_{\alpha}+\epsilon^{-\alpha}Y_{\alpha}^{\frac{1}{\alpha}}+\epsilon^{-\frac{2\alpha}{1+2\alpha}}Y_{\alpha}^{p_{\alpha}}\big).
\end{align}

For $d=2$,
we first remark that $S^{\epsilon}$ can be rewritten as
$$S^{\epsilon}=-\dot{T}_{\dv\mathcal{P}^{\bot} u^{\epsilon}}b^{\epsilon}-\partial_{j}\dot{T}'_{b^{\epsilon}} u_{j}^{\epsilon}.$$
By Remark \ref{re2.3},   (4) in Proposition \ref{prop2.1}, and  H\"{o}lder's inequality, we get
\begin{align}
\|\dot{T}_{\dv\mathcal{P}^{\bot} u^{\epsilon}}b^{\epsilon}\|_{L_{T}^{1}(\dot{B}^{\beta}_{2,1})}
\leq\, &C
\|\dv\mathcal{P}^{\bot} u^{\epsilon}\|_{L_{T}^{\frac{7}{4-3\alpha}}(\dot{B}^{-1}_{\infty,1})}
\|b^{\epsilon}\|_{L_{T}^{\frac{7}{3+3\alpha}}(\dot{B}^{1+\beta}_{2,1})}\nonumber\\
\leq\,& C\epsilon^{\frac{6\alpha-1}{7}}\|\dv\mathcal{P}^{\bot} u^{\epsilon}\|_{L_{T}^{1}(\dot{B}^{1+\alpha}_{2,1})}^{\frac{3-4\alpha}{7}}\nonumber\\
&\times\|\dv\mathcal{P}^{\bot} u^{\epsilon}\|_{L_{T}^{4}(\dot{B}^{\alpha-\frac{7}{4}}_{\infty,1})}^{\frac{4+4\alpha}{7}}
\|b^{\epsilon}\|_{L_{T}^{\frac{7}{3+3\alpha}}(\tilde{B}^{1+\beta,\frac{7}{3+3\alpha}}_{\epsilon})}\nonumber\\
\leq\, &C\epsilon^{\alpha}X_{\beta}(X_{\alpha}+\epsilon^{-\frac{1}{4}}Y_{\alpha}),\label{Cc-40}\\
\|\partial_{j}\dot{T}'_{b^{\epsilon}} u_{j}^{\epsilon}\|_{L_{T}^{1}(\dot{B}^{\beta+\alpha-\frac{1}{4}}_{2,1})}
\leq\, &C
\|b^{\epsilon}\|_{L_{T}^{4}(\dot{B}^{\alpha-\frac{3}{4}}_{\infty,1})}\|u^{\epsilon}\|_{L_{T}^{\frac{4}{3}}(\dot{B}^{\beta+\frac{3}{2}}_{2,1})}\nonumber\\
\leq\,& C (X_{\beta}+P_{\beta})Y_{\alpha}.\label{Cc-41}
\end{align}
According to the definition of hybrid  Besov norms, we get the following equivalent forms:
$$\|S^{\epsilon}\|_{L^{1}_{T}(\tilde{B}^{1+\beta,\infty}_{\epsilon})}\approx \|S^{\epsilon}_{BF}\|_{L^{1}_{T}(\dot{B}^{\beta}_{2,1})}
+\epsilon\|S^{\epsilon}_{HF}\|_{L^{1}_{T}(\dot{B}^{1+\beta}_{2,1})}.$$
Thus,
\begin{align}
&\!\!\!\!\!\! \!\!\!\!\!\!  \|S^{\epsilon}\|_{L^{1}_{T}(\tilde{B}^{1+\beta,\infty}_{\epsilon})}\nonumber\\
\,&\leq C
\big(\|(\dot{T}_{\dv\mathcal{P}^{\bot} u^{\epsilon} }b^{\epsilon})_{BF}\|_{L^{1}_{T}(\dot{B}^{\beta}_{2,1})}
+\|(\partial_{j}\dot{T}'_{b^{\epsilon}} u_{j}^{\epsilon})_{BF}\|_{L^{1}_{T}(\dot{B}^{\beta}_{2,1})}
+\epsilon\|S^{\epsilon}_{HF}\|_{L^{1}_{T}(\dot{B}^{1+\beta}_{2,1})}\big)\nonumber\\
\,&\leq C\big(\|\dot{T}_{\dv\mathcal{P}^{\bot} u^{\epsilon} }b^{\epsilon}\|_{L^{1}_{T}(\dot{B}^{\beta}_{2,1})}
+\epsilon^{\alpha-\frac{1}{4}}\|\partial_{j}\dot{T}'_{b^{\epsilon}} u_{j}^{\epsilon}\|_{L^{1}_{T}(\dot{B}^{\beta+\alpha-\frac{1}{4}}_{2,1})}
+\epsilon\|S^{\epsilon}\|_{L^{1}_{T}(\dot{B}^{1+\beta}_{2,1})}\big).\nonumber
\end{align}
Combining   \eqref{Cc-34}-\eqref{Cc-36}  and \eqref{Cc-40}-\eqref{Cc-41} together, we have
\begin{align}\label{Cc-42}
\|S^{\epsilon}\|_{L^{1}_{T}(\tilde{B}^{1+\beta,\infty}_{\epsilon})}\leq C \epsilon^{\alpha}(X_{\beta}+P_{\beta})(X_{\alpha}+\epsilon^{-\frac{1}{4}}Y_{\alpha}).
\end{align}

\subsubsection{Estimates for $T^{\epsilon}$}

First we consider the case $d=3$.
Thanks to Proposition \ref{prop2.2} and Lemma \ref{le2.2}, we get
\begin{align}
\|K(\epsilon b^{\epsilon})\nabla b^{\epsilon}\|_{\dot{B}^{\frac{1}{2}+\beta}_{2,1}}&\leq C
\|K(\epsilon b^{\epsilon})\|_{L^{\infty}}\|\nabla b^{\epsilon}\|_{\dot{B}^{\frac{1}{2}+\beta}_{2,1}}
+C\|K(\epsilon b^{\epsilon})\|_{\dot{B}^{\frac{3}{2}+\beta}_{2,1}}\|\nabla b^{\epsilon}\|_{\dot{B}^{-1}_{\infty,1}}\nonumber\\
&\leq C \|\epsilon b^{\epsilon}\|_{L^{\infty}}\| b^{\epsilon}\|_{\dot{B}^{\frac{3}{2}+\beta}_{2,1}}
+C\|\epsilon b^{\epsilon}\|_{\dot{B}^{\frac{3}{2}+\beta}_{2,1}}\| b^{\epsilon}\|_{\dot{B}^{0}_{\infty,1}}\nonumber\\
&\leq C\epsilon\| b^{\epsilon}\|_{\dot{B}^{\frac{3}{2}+\beta}_{2,1}}\left(\|b^{\epsilon}_{BF}\|_{\dot{B}^{0}_{\infty,1}}
+\|b^{\epsilon}_{HF}\|_{\dot{B}^{0}_{\infty,1}}\right).\nonumber
\end{align}
Thus,
\begin{align}\label{Cc-43}
\Big\|\frac{K(\epsilon b^{\epsilon})\nabla b^{\epsilon}}{\epsilon}\Big\|_{L^{1}_{T}(\dot{B}^{\frac{1}{2}+\beta}_{2,1})}
\leq\,& C \|b^{\epsilon}_{BF}\|_{L^{2}_{T}(\dot{B}^{0}_{\infty,1})}\|b^{\epsilon}\|_{L^{2}_{T}(\dot{B}^{3/2+\beta}_{2,1})}\nonumber\\
&+C \|b^{\epsilon}\|_{L^{1/\alpha}_{T}(\dot{B}^{3/2+\beta}_{2,1})}\|b^{\epsilon}_{HF}\|_{L^{1/(1-\alpha)}_{T}(\dot{B}^{0}_{\infty,1})}.
\end{align}
We notice that we can replace $\mathcal{P}^{\bot}u^{\epsilon}$ by $b^{\epsilon}_{BF}$ in the proof of \eqref{Cc-16}, thus we have
\begin{align}\label{Cc-44}
\|b^{\epsilon}_{BF}\|_{L^{2}_{T}(\dot{B}^{0}_{\infty,1})}\leq C \epsilon^{\alpha}\big(X_{\alpha}+\epsilon^{-2\alpha/(1+2\alpha)}Y_{\alpha}^{p_{\alpha}}\big).
\end{align}
Moreover, applying  (ii) in Remark \ref{re2.2}, we   obtain  that
\begin{align}\label{Cc-45}
\|b^{\epsilon}\|_{L^{1/\alpha}_{T}(\dot{B}^{3/2+\beta}_{2,1})}\leq C\epsilon^{2\alpha-1}\|b^{\epsilon}\|_{L^{1/\alpha}_{T}(\tilde{B}^{3/2+\beta,1/\alpha}_{\epsilon})}.
\end{align}
Thanks  to (3) and (4) in Proposition \ref{prop2.1}, and (ii) in Remark \ref{re2.2}, we obtain that
\begin{align}\label{Cc-46}
\|b^{\epsilon}_{HF}\|_{L^{1/(1-\alpha)}_{T}(\dot{B}^{0}_{\infty,1})}&\leq C
\|b^{\epsilon}_{HF}\|^{(1-2\alpha)/(1-\alpha)}_{L^{1}_{T}(\dot{B}^{3/2+\alpha}_{2,1})}
\|b^{\epsilon}_{HF}\|^{\alpha/(1-\alpha)}_{L^{1/\alpha}_{T}(\dot{B}^{-1+2\alpha}_{\infty,1})}\nonumber\\
&\leq C\big(\epsilon\|b^{\epsilon}\|_{L^{1}_{T}(\tilde{B}^{3/2+\alpha,1}_{\epsilon})}\big)^{\frac{1-2\alpha}{1-\alpha})}
\epsilon^{\frac{\alpha^{2}}{1-\alpha}}\Big(\epsilon^{-\alpha}Y_{\alpha}^{1/\alpha}\Big)^{\frac{\alpha}{1-\alpha}}\nonumber\\
&\leq C \epsilon^{1-\alpha}\big(X_{\alpha}+\epsilon^{-\alpha}Y_{\alpha}^{1/\alpha}\big).
\end{align}
Plugging  \eqref{Cc-44}-\eqref{Cc-46} into \eqref{Cc-43} gives
\begin{align}\label{Cc-47}
\Big\|\frac{K(\epsilon b^{\epsilon})\nabla b^{\epsilon}}{\epsilon}\Big\|_{L^{1}_{T}(\dot{B}^{\frac{1}{2}+\beta}_{2,1})}
\leq C \epsilon^{\alpha} X_{\beta}\Big(X_{\alpha}+\epsilon^{-2\alpha/(1+2\alpha)}Y_{\alpha}^{p_{\alpha}}+\epsilon^{-\alpha}Y_{\alpha}^{1/\alpha}\Big).
\end{align}
Applying Remark \ref{re2.3}  and  \eqref{Cc-16}, we have
\begin{align}\label{Cc-48}
\|\dot{T}'_{\partial_{j}\Lambda^{-1}\dv \mathcal{P}^{\bot}u^{\epsilon}}u_{j}^{\epsilon}\|_{L^{1}_{T}(\dot{B}^{\frac{d}{2}-1+\beta}_{2,1})}
&\leq C \|\partial_{j}\Lambda^{-1}\dv \mathcal{P}^{\bot}u^{\epsilon}\|_{L^{2}_{T}(\dot{B}^{-1}_{\infty,1})}
\|u^{\epsilon}\|_{L^{2}_{T}(\dot{B}^{\frac{d}{2}+\beta}_{2,1})}\nonumber\\
&\leq C \epsilon^{\alpha}\big(X_{\alpha}+\epsilon^{-\frac{2\alpha}{1+2\alpha}}Y_{\alpha}^{p_{\alpha}}\big)\big(X_{\beta}+P_{\beta}\big).
\end{align}
By means of Proposition \ref{prop2.2}  and  \eqref{Cc-12}, we have
\begin{align}\label{Cc-49}
\|\Lambda^{-1}\dv \mathcal{P}^{\bot}\big(I(\epsilon b^{\epsilon})\mathcal{A}u^{\epsilon}\big)\|_{L^{1}_{T}(\dot{B}^{\frac{d}{2}-1+\beta}_{2,1})}
&\leq C\epsilon \| b^{\epsilon}\|_{L^{\infty}_{T}(\dot{B}^{\frac{d}{2}}_{2,1})}
\|u^{\epsilon}\|_{L^{1}_{T}(\dot{B}^{\frac{d}{2}+1+\beta}_{2,1})}\nonumber\\
&\leq C\epsilon^{\alpha}X_{\alpha}(X_{\beta}+P_{\beta}).
\end{align}

Now  we introduce  the following decomposition:
\begin{align}
&\!\!\!\!\!\!\!\!\!\!\!\!\!\! \!\!\!\!
u^{\epsilon}\cdot\nabla\Lambda^{-1}\dv\mathcal{P}^{\bot}u^{\epsilon}-\Lambda^{-1}\dv(u^{\epsilon}\cdot\nabla u^{\epsilon})\nonumber\\
=&\mathcal{P}^{\bot}u^{\epsilon}\cdot\nabla\Lambda^{-1}\dv\mathcal{P}^{\bot}u^{\epsilon}-
\Lambda^{-1}\dv(\mathcal{P}^{\bot}u^{\epsilon}\cdot\nabla \mathcal{P}^{\bot}u^{\epsilon})\nonumber\\
&-\Lambda^{-1}\dv(\mathcal{P}u^{\epsilon}\cdot\nabla \mathcal{P}u^{\epsilon})
-\Lambda^{-1}\dv(\mathcal{P}^{\bot}u^{\epsilon}\cdot\nabla \mathcal{P}u^{\epsilon})\nonumber\\
&-\Lambda^{-1}\dv(\mathcal{P}u^{\epsilon}\cdot\nabla \mathcal{P}^{\bot}u^{\epsilon})
+\mathcal{P}u^{\epsilon}\cdot\nabla\Lambda^{-1}\dv\mathcal{P}^{\bot}u^{\epsilon}.\label{bonyc}
\end{align}
Thanks to Proposition \ref{prop2.2}, we can get the bounds for the first two terms of the right-hand side of \eqref{bonyc} as the following:
\begin{align}\label{Cc-50}
&\!\!\!\!\!\!\!\!\!\!\!\!\!\! \!\!\!\!
\|\mathcal{P}^{\bot}u^{\epsilon}\cdot\nabla\Lambda^{-1}\dv\mathcal{P}^{\bot}u^{\epsilon}-
\Lambda^{-1}\dv(\mathcal{P}^{\bot}u^{\epsilon}\cdot\nabla \mathcal{P}^{\bot}u^{\epsilon})\|_{L^{1}_{T}(\dot{B}^{\frac{d}{2}-1+\beta}_{2,1})}\nonumber\\
\leq \, & C \|\mathcal{P}^{\bot}u^{\epsilon}\|_{L^{2}_{T}(\dot{B}^{0}_{\infty,1})}
\|\mathcal{P}^{\bot}u^{\epsilon}\|_{L^{2}_{T}(\dot{B}^{\frac{d}{2}+\beta}_{2,1})}\nonumber\\
\leq\,& C\epsilon^{\alpha}X_{\beta}(X_{\alpha}+\epsilon^{-\frac{2\alpha}{1+2\alpha}}Y_{\alpha}^{p_{\alpha}}).
\end{align}
For the  third term on the right-hand side of \eqref{bonyc}, we have
\begin{align}\label{Cc-51}
\|\Lambda^{-1}\dv(\mathcal{P}u^{\epsilon}\cdot\nabla \mathcal{P}u^{\epsilon})\|_{L^{1}_{T}(\dot{B}^{\frac{d}{2}-1+\beta}_{2,1})}
&\leq C\|\mathcal{P}u^{\epsilon}\|_{L^{2}_{T}(\dot{B}^{\frac{d}{2}}_{2,1})}
\|\nabla\mathcal{P}u^{\epsilon}\|_{L^{2}_{T}(\dot{B}^{\frac{d}{2}-1+\beta}_{2,1})}\nonumber\\
&\leq CP_{0}P_{\beta}.
\end{align}
Applying similar computations to those in the second step, we have,
\begin{align}
&\|\Lambda^{-1}\dv(\mathcal{P}^{\bot}u^{\epsilon}\cdot\nabla \mathcal{P}u^{\epsilon})\|_{L^{1}_{T}(\dot{B}^{\frac{d}{2}-1+\beta}_{2,1})}\nonumber\\
& \qquad \qquad \leq  C \epsilon^{\frac{4\alpha}{2+d+2\beta}}\big(P_{0}X_{\beta}
+P_{\beta}(X_{\alpha}+\epsilon^{-\frac{2\alpha}{1+2\alpha}}Y_{\alpha}^{p_{\alpha}})\big),\label{Cc-52}\\
&\|\Lambda^{-1}\dv(\mathcal{P}u^{\epsilon}\cdot\nabla \mathcal{P}^{\bot}u^{\epsilon})\|_{L^{1}_{T}(\dot{B}^{\frac{d}{2}-1+\beta}_{2,1})}\nonumber\\
& \qquad\qquad \leq  C \epsilon^{\frac{2\alpha}{2+d+2\beta}}\big(P_{0}X_{\beta}
+P_{\beta}(X_{\alpha}+\epsilon^{-\frac{2\alpha}{1+2\alpha}}Y_{\alpha}^{p_{\alpha}})\big),\label{Cc-53}\\
&\|\mathcal{P}u^{\epsilon}\cdot\nabla \Lambda^{-1}\dv\mathcal{P}^{\bot}u^{\epsilon}\|_{L^{1}_{T}(\dot{B}^{\frac{d}{2}-1+\beta}_{2,1})}\nonumber\\
& \qquad\qquad \leq  C \epsilon^{\frac{2\alpha}{2+d+2\beta}}\big(P_{0}X_{\beta}
+P_{\beta}(X_{\alpha}+\epsilon^{-\frac{2\alpha}{1+2\alpha}}Y_{\alpha}^{p_{\alpha}})\big).\label{Cc-54}
\end{align}

Now, we estimates the last two terms in $T^\epsilon$. First, we have
\begin{align}
\|H^{\epsilon}\cdot\nabla H^{\epsilon} -\frac{1}{2}\nabla |H^{\epsilon}|^{2} \|_{L^{1}_{T}(\dot{B}^{\frac{d}{2}-1+\beta}_{2,1})}
&\leq C\|H^{\epsilon}\|_{L^{2}_{T}(\dot{B}^{\frac{d}{2}}_{2,1})}\|H^{\epsilon}\|_{L^{2}_{T}(\dot{B}^{\frac{d}{2}+\beta}_{2,1})}\nonumber\\
&\leq C P_{0}P_{\beta},\label{Cc-55}\\
\|I(\epsilon b^{\epsilon})H^{\epsilon}\cdot\nabla H^{\epsilon}\|_{L^{1}_{T}(\dot{B}^{\frac{d}{2}-1+\beta}_{2,1})}
&\leq C\epsilon \| b^{\epsilon}\|_{L^{\infty}_{T}(\dot{B}^{\frac{d}{2}}_{2,1})}
\|H^{\epsilon}\cdot\nabla H^{\epsilon}\|_{L^{1}_{T}(\dot{B}^{\frac{d}{2}-1+\beta}_{2,1})}\nonumber\\
&\leq C\epsilon^{\alpha}X_{\alpha}P_{0}P_{\beta}.\label{Cc-56}
\end{align}
Similarly,
\begin{align}\label{Cc-57}
\|I(\epsilon b^{\epsilon})|\nabla H^{\epsilon}|^{2}\|_{L^{1}_{T}(\dot{B}^{\frac{d}{2}-1+\beta}_{2,1})}
\leq C\epsilon^{\alpha}X_{\alpha}P_{0}P_{\beta}.
\end{align}
By \eqref{Cc-55}-\eqref{Cc-57}, we obtain that
\begin{align}\label{Cc-58}
&\Big\|\frac{1}{1+\epsilon b^{\epsilon}}\big(H^{\epsilon}\cdot\nabla H^{\epsilon}
-\frac{1}{2}\nabla |H^{\epsilon}|^{2}\big)\Big\|_{L^{1}_{T}(\dot{B}^{\frac{d}{2}-1+\beta}_{2,1})}\leq  C (1+\epsilon^{\alpha}X_{\alpha})P_{0}P_{\beta}.
\end{align}
Thanks to \eqref{Cc-47}-\eqref{Cc-54} and \eqref{Cc-58}, we end up with
\begin{align}\label{Cc-59}
\|T^{\epsilon}\|_{L^{1}_{T}(\dot{B}^{\frac{1}{2}+\beta}_{2,1})}
\leq \,&C\Big(P_{0}P_{\beta}+\epsilon^{\alpha}
\big(X_{\beta}+P_{\beta}\big)\big(X_{\alpha}+\epsilon^{-2\alpha/(1+2\alpha)}Y_{\alpha}^{p_{\alpha}}+\epsilon^{-\alpha}Y_{\alpha}^{1/\alpha}\big)\nonumber\\
&+\epsilon^{\alpha}X_{\alpha}P_{0}P_{\beta}+\epsilon^{\frac{2\alpha}{5+2\beta}}\big(P_{0}X_{\beta}
+P_{\beta}(X_{\alpha}+\epsilon^{-\frac{2\alpha}{1+2\alpha}}Y_{\alpha}^{p_{\alpha}})\big)\Big).
\end{align}

We now consider the case $d=2$.  We just have to deal with $K(\epsilon b^{\epsilon})b^{\epsilon}$,
the other terms can be treated by following the proof in the case $d=3.$ In fact, one just has to use \eqref{Cc-24} instead of \eqref{Cc-16},  that is, one  only needs to replace
 $\epsilon^{\frac{-2\alpha}{1+2\alpha}}Y_{\alpha}^{p_{\alpha}}$ by $\epsilon^{-\frac{1}{4}}Y_{\alpha}$.
Applying Bony's decomposition for $K(\epsilon b^{\epsilon})\nabla b^{\epsilon}$:
$$K(\epsilon b^{\epsilon}) \nabla b^{\epsilon}=\dot{T}_{\nabla b^{\epsilon}}K(\epsilon b^{\epsilon})+\dot{R}(\nabla b^{\epsilon},K(\epsilon b^{\epsilon}))
+\dot{T}_{K(\epsilon b^{\epsilon})}\nabla b^{\epsilon}.$$
Thanks to Remarks \ref{re2.3} and Lemma \ref{le2.2}, we get
\begin{align}
\|\dot{T}_{\nabla b^{\epsilon}}K(\epsilon b^{\epsilon})\|_{\dot{B}^{\beta}_{2,1}}&\leq C \|\nabla b^{\epsilon}\|_{\dot{B}^{-1}_{\infty,1}}
\|K(\epsilon b^{\epsilon})\|_{\dot{B}^{\beta+1}_{2,1}}\leq C\epsilon \| b^{\epsilon}\|_{\dot{B}^{0}_{\infty,1}}\| b^{\epsilon}\|_{\dot{B}^{\beta+1}_{2,1}},\nonumber\\
\|\dot{R}(\nabla b^{\epsilon},K(\epsilon b^{\epsilon}))\|_{\dot{B}^{\beta}_{2,1}}&\leq C \|\nabla b^{\epsilon}\|_{\dot{B}^{\beta}_{2,1}}
\|K(\epsilon b^{\epsilon})\|_{\dot{B}^{\frac{3-4\alpha}{7}}_{\frac{14}{3-4\alpha},1}}\leq C\epsilon\| b^{\epsilon}\|_{\dot{B}^{\beta+1}_{2,1}}
\| b^{\epsilon}\|_{\dot{B}^{\frac{3-4\alpha}{7}}_{\frac{14}{3-4\alpha},1}},\nonumber\\
\|\dot{T}_{K(\epsilon b^{\epsilon})}\nabla b^{\epsilon}\|_{\dot{B}^{\beta}_{2,1}}&\leq C \|\nabla b^{\epsilon}\|_{\dot{B}^{\beta}_{2,1}}
\|K(\epsilon b^{\epsilon})\|_{L^{\infty}}\leq C\epsilon\| b^{\epsilon}\|_{\dot{B}^{\beta+1}_{2,1}}\| b^{\epsilon}\|_{L^{\infty}}.\nonumber
\end{align}
By means of embeddings  $\dot{B}^{(3-4\alpha)/3}_{6/(3-4\alpha),1}\hookrightarrow \dot{B}^{(3-4\alpha)/7}_{14/(3-4\alpha),1}\hookrightarrow
\dot{B}^{0}_{\infty,1} \hookrightarrow L^{\infty}$, we have
\begin{align}
\|K(\epsilon b^{\epsilon})\nabla b^{\epsilon}\|_{\dot{B}^{\beta}_{2,1}}&\leq C \epsilon\| b^{\epsilon}\|_{\dot{B}^{\beta+1}_{2,1}}
\| b^{\epsilon}\|_{\dot{B}^{\frac{3-4\alpha}{7}}_{\frac{14}{3-4\alpha},1}}\nonumber\\
&\leq C \epsilon\| b^{\epsilon}\|_{\dot{B}^{\beta+1}_{2,1}}\Big(\| b^{\epsilon}_{BF}\|_{\dot{B}^{\frac{3-4\alpha}{7}}_{\frac{14}{3-4\alpha},1}}
+\| b^{\epsilon}_{HF}\|_{\dot{B}^{\frac{3-4\alpha}{3}}_{\frac{6}{3-4\alpha},1}}\Big).\nonumber
\end{align}
Using interpolation, H\"{o}lder inequality, and (ii) in Remark \ref{re2.2}, we deduce that
\begin{align}
&\!\!\!\!\!\!\!\!\!\!\!\!\!\! \!\!\!\! \!\!\! \!\!\!\!
 \Big\|\frac{K(\epsilon b^{\epsilon})\nabla b^{\epsilon}}{\epsilon}\Big\|_{L^{1}_{T}(\dot{B}^{\beta}_{2,1})}\nonumber\\
\leq\, & C
\|b^{\epsilon}\|_{L^{7/(3+3\alpha)}_{T}(\dot{B}^{\beta+1}_{2,1})}
\|b^{\epsilon}_{BF}\|_{L^{7/(4-3\alpha)}_{T}(\dot{B}^{(3-4\alpha)/7}_{14/(3-4\alpha),1})}\nonumber\\
&+C\|b^{\epsilon}\|_{L^{1/\alpha}_{T}(\dot{B}^{\beta+1}_{2,1})}
\|b^{\epsilon}_{HF}\|_{L^{1/(1-\alpha)}_{T}(\dot{B}^{(3-4\alpha)/3}_{6/(3-4\alpha),1})}\nonumber\\
\leq\, & C \epsilon^{\frac{6\alpha-1}{7}}\|b^{\epsilon}\|_{L^{7/(3+3\alpha)}_{T}(\tilde{B}^{\beta+1,7/(3+3\alpha)}_{\epsilon})}
\|b^{\epsilon}_{BF}\|_{L^{1}_{T}(\dot{B}^{2+\alpha}_{2,1})}^{(3-4\alpha)/7}
\|b^{\epsilon}_{BF}\|_{L^{4}_{T}(\dot{B}^{\alpha-3/4}_{\infty,1})}^{(4+4\alpha)/7}\nonumber\\
&+C\epsilon^{2\alpha-1}\|b^{\epsilon}\|_{L^{1/\alpha}_{T}(\tilde{B}^{\beta+1,1/\alpha}_{\epsilon})}
\|b^{\epsilon}_{HF}\|_{L^{1}_{T}(\dot{B}^{1+\alpha}_{2,1})}^{1-\frac{4\alpha}{3}}
\|b^{\epsilon}_{HF}\|_{L^{4}_{T}(\dot{B}^{\alpha-3/4}_{\infty,1})}^{\frac{4\alpha}{3}}\nonumber\\
\leq\, & C \epsilon^{\alpha}X_{\beta}\|b^{\epsilon}\|_{L^{1}_{T}(\tilde{B}^{1+\alpha,1}_{\epsilon})}^{\frac{3-4\alpha}{7}}
\big(\epsilon^{-\frac{1}{4}}\|b^{\epsilon}\|_{L^{4}_{T}(\dot{B}^{\alpha-3/4}_{\infty,1})}\big)^{\frac{4+4\alpha}{7}}\nonumber\\
&+C \epsilon^{\alpha}X_{\beta}\|b^{\epsilon}\|_{L^{1}_{T}(\tilde{B}^{1+\alpha,1}_{\epsilon})}^{1-\frac{4\alpha}{3}}
\big(\epsilon^{-\frac{1}{4}}\|b^{\epsilon}_{HF}\|_{L^{4}_{T}(\dot{B}^{\alpha-3/4}_{\infty,1})}\big)^{\frac{4\alpha}{3}}.\nonumber
\end{align}
Finally, we conclude  that
\begin{align}\label{Cc-60}
\|T^{\epsilon}\|_{L^{1}_{T}(\dot{B}^{\beta}_{2,1})}\leq\, & C\Big(P_{0}P_{\beta}+\epsilon^{\alpha}(X_{\beta}+P_{\beta})
(X_{\alpha}+\epsilon^{-\frac{1}{4}}Y_{\alpha})+\epsilon^{\alpha}X_{\alpha}P_{0}P_{\beta}\nonumber\\
&+\epsilon^{\frac{2}{2+\beta}}\big(P_{0}X_{\beta}+X_{0}P_{\beta}+P_{\beta}(X_{\alpha}+\epsilon^{-\frac{1}{4}}Y_{\alpha})\big)\Big).
\end{align}

 Now we set  $p=\frac{1}{\alpha}$ and $r=\frac{2}{2-\alpha}$.
Making use of interpolation, the following estimates hold,
if $d=3$,
\begin{align}
\|\nabla u^{\epsilon}\|_{L^{\frac{1}{\alpha}}_{T}(\dot{B}^{2\alpha-2}_{\infty,1})}&\leq C
\|\nabla\mathcal{P}^{\bot} u^{\epsilon}\|_{L^{\frac{1}{\alpha}}_{T}(\dot{B}^{2\alpha-2}_{\infty,1})}+
C \|\nabla\mathcal{P} u^{\epsilon}\|_{L^{\frac{1}{\alpha}}_{T}(\dot{B}^{2\alpha-1/2}_{2,1})}\nonumber\\
&\leq C \epsilon^{\alpha} \big(\epsilon^{-\alpha}Y_{\alpha}^{\frac{1}{\alpha}}\big)+ C P_{0};\nonumber
\end{align}
while if $d=2$,
\begin{align}
\|\nabla u^{\epsilon}\|_{L^{\frac{1}{\alpha}}_{T}(\dot{B}^{2\alpha-2}_{\infty,1})}&\leq C
\|\nabla \mathcal{P}^{\bot}u^{\epsilon}\|^{4\alpha}_{L^{4}_{T}(\dot{B}^{\alpha-\frac{7}{4}}_{\infty,1})}
\|\nabla \mathcal{P}^{\bot}u^{\epsilon}\|^{1-4\alpha}_{L^{\infty}_{T}(\dot{B}^{\alpha-2}_{\infty,1})}
+ C\|\nabla\mathcal{P} u^{\epsilon}\|_{L^{\frac{1}{\alpha}}_{T}(\dot{B}^{2\alpha-1}_{2,1})}\nonumber\\
&\leq C \epsilon^{\alpha} \big(X_{\alpha}+\epsilon^{-\frac{1}{4}}Y_{\alpha}\big)+ C P_{0}.\nonumber
\end{align}
Meanwhile, for any $d\geq 2$, we have
\begin{align}
\|\nabla u^{\epsilon}\|_{L^{2/(2-\alpha)}_{T}(L^{\infty})} \leq C \| u^{\epsilon}\|_{L^{2/(2-\alpha)}_{T}(\dot{B}^{d/2+1}_{2,1})}
 \leq C ( P_{\alpha}+X_{\alpha}).\nonumber
\end{align}
According to \eqref{Cc-32}, we thus have
\begin{align}
\left\{\begin{array}{l}
V^{1/\alpha,2/(2-\alpha)}_{\epsilon}\leq C \big(P_{0}^{\frac{1}{\alpha}}+\epsilon\big(\epsilon^{-\alpha}Y^{\frac{1}{\alpha}}_{\alpha}\big)^{\frac{1}{\alpha}}
+\big(\epsilon^{\alpha}(P_{\alpha}+X_{\alpha})\big)^{\frac{2}{2-\alpha}}\big) \quad\qquad \, \, \mathrm{if}\quad\, d=3,\\
V^{1/\alpha,2/(2-\alpha)}_{\epsilon}\leq C \big(P_{0}^{\frac{1}{\alpha}}+\epsilon\big(X_{\alpha}+\epsilon^{-\frac{1}{4}}Y_{\alpha}\big)^{\frac{1}{\alpha}}
+\big(\epsilon^{\alpha}(P_{\alpha}+X_{\alpha})\big)^{\frac{2}{2-\alpha}}\big) \quad \mathrm{if}\quad\, d=2.\nonumber
\end{array}
\right.
\end{align}
Plugging this latter inequality, \eqref{Cc-39}, and \eqref{Cc-59} into \eqref{Cc-31}, we eventually find that,
for $d=3$,
\begin{align}\label{Cc-61}
X_{\beta}\leq  & C \exp\Big\{C \big(P_{0}^{\frac{1}{\alpha}}+\epsilon\big(\epsilon^{-\alpha}Y^{\frac{1}{\alpha}}_{\alpha}\big)^{\frac{1}{\alpha}}
+\big(\epsilon^{\alpha}(P_{\alpha}+X_{\alpha})\big)^{\frac{2}{2-\alpha}}\big)\Big\}
\Big(X_{\beta}^{0}+P_{0}P_{\beta}\nonumber\\
&+\epsilon^{\alpha_{3}}\big(P_{0}X_{\beta}+(X_{\beta}+P_{\beta})
\big(X_{\alpha}+\epsilon^{-\alpha}Y^{1/\alpha}_{\alpha}+\epsilon^{-\frac{2\alpha}{1+2\alpha}}Y_{\alpha}^{p_{\alpha}}\big)+X_{\alpha}P_{0}P_{\beta}\big)\Big);
\end{align}
while for $d=2$, we can apply \eqref{Cc-42} and \eqref{Cc-60} to obtain that
\begin{align}\label{Cc-62}
X_{\beta}\leq\, & C \exp\Big\{C \big(P_{0}^{\frac{1}{\alpha}}+\epsilon\big(X_{\alpha}+\epsilon^{-\frac{1}{4}}Y_{\alpha}\big)^{\frac{1}{\alpha}}
+\big(\epsilon^{\alpha}(P_{\alpha}+X_{\alpha})\big)^{\frac{2}{2-\alpha}}\big)\Big\}
 \Big(X_{\beta}^{0}+P_{0}P_{\beta}\nonumber\\
&+\epsilon^{\alpha_{2}}\big(P_{0}X_{\beta}+X_{0}P_{\beta}
+(X_{\beta}+P_{\beta})
\big(X_{\alpha}+\epsilon^{-\frac{1}{4}}Y_{\alpha}\big)+X_{\alpha}P_{0}P_{\beta}\big)\Big),
\end{align}
where  $\alpha_{d}=\frac{2\alpha}{2+d+2\alpha}$ with $d=2$ or $3$.

\subsection{Bootstrap}

 The remaining part of  the   proof  works for both dimensions $d= 3$ and   $d=2$.
 Set
 $$
 X  :=X_{0}+X_{\alpha},\,\,\, V :=V_{0}+V_{\alpha},\,\,\, W :=W_{0}+W_{\alpha},\, \,\,
 X^{0}:=X^{0}_{0}+X^{0}_{\alpha}.
 $$
  With these new notations, by combining together   \eqref{Cc-5} or  \eqref{Cc-6},
 \eqref{Cc-29} or \eqref{Cc-30}, and \eqref{Cc-61} or  \eqref{Cc-62}, we conclude that
\begin{align}
W\leq\, &C e^{C(V+X)}\Big(W^{2}\big(1+W+W^{2}+W^{4}+\epsilon^{\alpha_{d}}(V+V^{2}+V^{4})\big)\nonumber\\
&+\epsilon^{\alpha_{d}}\big(X^{2}+(X^{0})^{2}+X^{4}+V(X+X^{2}+X^{0}+XV^{2}+V^{2}+V^{3})\big)\Big),\label{Cc-66}\\
X\leq\,& C \exp\Big\{C\Big(\epsilon(X+X^{2})^{\frac{1}{\alpha}}+\epsilon(X(V+W)^{2})^{\frac{1}{\alpha}}+(\epsilon^{\alpha}X)^{\frac{2}{2-\alpha}}\Big)\Big\}\nonumber\\
&\quad\times\exp\Big\{C\Big((V+W)^{\frac{1}{\alpha}}+\epsilon(X^{0}+(V+W)^{2})^{\frac{1}{\alpha}}
+\epsilon^{\frac{2\alpha}{2-\alpha}}(V+W)^{\frac{2}{2-\alpha}}\Big)\Big\}\nonumber\\
&\quad\times\Big(X^{0}+(V+W)\big(V+W+\epsilon^{\alpha_{d}}(X^{0}+V^{2}+W^{2})\big)+\epsilon^{\alpha_{d}}X\big(X^{0}\nonumber\\
&+X+X^{2}+(1+X)(W+V+W^{2}+V^{2})+V^{3}+W^{3}\big)\Big).\label{Cc-67}
\end{align}

 In order to get a bound for $(b^{\epsilon},u^{\epsilon},H^{\epsilon})$, 
 we need a bootstrap argument.
More precisely, we have the following lemma.

\begin{Lemma}\label{lemma31}
Suppose that $(v,B)\in F^{\frac{d}{2}}_{T_{0}}\cap F^{\frac{d}{2}+\alpha}_{T_{0}}$  for some finite or infinite $T_{0}.$
Then,  there exists an $\epsilon_{0}>0$,  depending only on $\alpha,d,V(T_{0}),$ and the norm of $(b_{0},\mathcal{P}^{\bot}u_{0},H_{0})$
in
$$\dot{B}^{\frac{d}{2}-1}_{2,1}\cap\dot{B}^{\frac{d}{2}+\alpha}_{2,1}\times(\dot{B}^{\frac{d}{2}-1}_{2,1}\cap\dot{B}^{\frac{d}{2}-1+\alpha}_{2,1})^{d}
\times(\dot{B}^{\frac{d}{2}-1}_{2,1}\cap\dot{B}^{\frac{d}{2}-1+\alpha}_{2,1})^{d}$$
such that if $\epsilon\leq\epsilon_{0}, (b^{\epsilon},u^{\epsilon},H^{\epsilon})\in E^{\frac{d}{2}}_{\epsilon,T}\cap E^{\frac{d}{2}+\alpha}_{\epsilon,T},$
and $\epsilon |b^{\epsilon}|\leq 3/4$  for some $T\leq T_{0}$,   the following estimates hold with the constant $C=C(d,\mu,\lambda,\nu,P,\alpha)$
appearing in \eqref{Cc-66} and \eqref{Cc-67}:
\begin{align}
X_{T}\leq \,&X_{M}:=16 Ce^{CV^{\frac{1}{\alpha}(T_{0})}}\big(X^{0}+V^{2}(T_{0})\big),\nonumber\\
\epsilon^{-\alpha_{d}}W_{T}\leq\, & W_{M}:= 4Ce^{C(V(T_{0})+X_{M})}
\Big(X^{2}_{M}+X^{2}_{0}+X^{4}_{M}\nonumber\\
&\qquad \quad +V(T_{0})\big(X_{M}+X^{2}_{M}+X_{0}+X_{M}V^{2}(T_{0})+V^{2}(T_{0})+V^{3}(T_{0})\big)\Big).\nonumber
\end{align}
\end{Lemma}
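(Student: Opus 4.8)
The plan is to turn the two coupled nonlinear inequalities \eqref{Cc-66} and \eqref{Cc-67} into the explicit bounds $X_M$ and $W_M$ by a continuity (bootstrap) argument in the time variable. First I would observe that, under the standing hypotheses $(b^{\epsilon},u^{\epsilon},H^{\epsilon})\in E^{\frac{d}{2}}_{\epsilon,T}\cap E^{\frac{d}{2}+\alpha}_{\epsilon,T}$ and $\epsilon|b^{\epsilon}|\le 3/4$, the whole chain of product and composition estimates \eqref{Cc-5}--\eqref{Cc-62} leading to \eqref{Cc-66}--\eqref{Cc-67} is justified: the bound $\epsilon|b^{\epsilon}|\le 3/4$ keeps $1+\epsilon b^{\epsilon}$ bounded away from zero, so that the composition laws for $I(\epsilon b^{\epsilon})$ and $K(\epsilon b^{\epsilon})$ (Lemmas \ref{le2.2} and \ref{le2.3}) apply, and the same inequalities hold on every subinterval $[0,\tau]$ with $\tau\le T$. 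Moreover, the quantities $X(\tau),W(\tau),V(\tau),P(\tau)$ are nondecreasing and continuous in $\tau$, with $W(0)=0$ (since $(w^{\epsilon},B^{\epsilon})|_{t=0}=(0,0)$), $X(0)\le X^{0}$, and $V(\tau)\le V(T_0)$ throughout.

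Next I would introduce the bootstrap set
\[
\mathcal{T}:=\bigl\{\tau\in[0,T]:\ X(\tau)\le 2X_{M}\ \text{ and }\ \epsilon^{-\alpha_{d}}W(\tau)\le 2W_{M}\bigr\}.
\]
Since $X(0)\le X^{0}\le X_{M}$ (as $X_{M}\ge 16C X^{0}$) and $W(0)=0$, continuity shows that $\mathcal{T}$ contains a right-neighborhood of $0$, and it is relatively closed in $[0,T]$ by continuity. The heart of the argument is to prove that on $\mathcal{T}$ the bounds self-improve to the strict estimates $X(\tau)\le X_{M}$ and $\epsilon^{-\alpha_{d}}W(\tau)\le W_{M}$, provided $\epsilon\le\epsilon_{0}$ for a threshold $\epsilon_{0}=\epsilon_{0}(\alpha,d,V(T_0),X^{0})$ chosen \emph{after} $X_{M}$ and $W_{M}$ are fixed.

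For the $X$-bound I would insert $X\le 2X_{M}$ and $W\le 2\epsilon^{\alpha_{d}}W_{M}$ into \eqref{Cc-67}: every $\epsilon$-weighted term inside the two exponentials and inside the final bracket has a prefactor controlled by the fixed constants $X_{M},W_{M},V(T_0)$, hence tends to $0$ as $\epsilon\to0$; the first exponential tends to $1$, the second to $e^{CV^{1/\alpha}(T_0)}$, and the bracket to $X^{0}+V^{2}(T_0)$. Thus for $\epsilon_0$ small the right-hand side is at most $4Ce^{CV^{1/\alpha}(T_0)}(X^{0}+V^{2}(T_0))=\tfrac14 X_{M}$, so the generous factor $16$ yields $X(\tau)\le X_{M}<2X_{M}$. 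With this improved bound $X\le X_{M}$ in hand, I would then plug it, together with $V\le V(T_0)$ and $W\le 2\epsilon^{\alpha_{d}}W_{M}$, into \eqref{Cc-66}: the term $Ce^{C(V+X)}W^{2}\bigl(1+W+W^{2}+W^{4}+\dots\bigr)$ is $O(\epsilon^{2\alpha_{d}})$ and is absorbed into $\tfrac14\epsilon^{\alpha_{d}}W_{M}$ for $\epsilon_0$ small, while the remaining term equals exactly $\tfrac14\epsilon^{\alpha_{d}}W_{M}$ by the very definition of $W_{M}=4Ce^{C(V(T_0)+X_{M})}(X_{M}^{2}+X_{0}^{2}+X_{M}^{4}+V(T_0)(\cdots))$; this gives $\epsilon^{-\alpha_{d}}W(\tau)\le\tfrac12 W_{M}<2W_{M}$.

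The strict improvement, combined with continuity of $\tau\mapsto(X(\tau),\epsilon^{-\alpha_{d}}W(\tau))$, shows that $\mathcal{T}$ is also relatively open in $[0,T]$; since $[0,T]$ is connected and $\mathcal{T}$ is nonempty, closed, and open, we conclude $\mathcal{T}=[0,T]$, so the bounds hold in particular at $\tau=T$. The main obstacle is the mutually coupled dependence of $X$ on $W$ (through both exponentials and the final bracket of \eqref{Cc-67}) and of $W$ on $X$ (in \eqref{Cc-66}): this is precisely why both quantities must be carried in a single bootstrap set and why $\epsilon_{0}$ is fixed only after $X_{M},W_{M}$ are determined. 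A secondary but essential point is that all the $\epsilon$-powers occurring in \eqref{Cc-5}--\eqref{Cc-62} are genuinely positive—this is where the restrictions $\alpha\in(0,1/2)$ for $d=3$ and $\alpha\in(0,1/6]$ for $d=2$ enter—so that the correction terms truly vanish as $\epsilon\to0$ and the self-improvement goes through.
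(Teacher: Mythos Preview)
Your proposal is correct and follows essentially the same bootstrap/continuity argument as the paper. The only cosmetic differences are that the paper takes the bootstrap set $I=\{t\le T: X(t)\le X_M,\ W(t)\le \epsilon^{\alpha_d}W_M\}$ (with thresholds $X_M,W_M$ rather than your $2X_M,2W_M$), lists the smallness conditions on $\epsilon$ explicitly, and phrases the conclusion via $T^*=\sup I$ rather than the open--closed--connected formulation; the substance is identical.
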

\begin{proof}
Let $I:=\{t\leq T|X(t)\leq X_{M}\ \textrm{and}\ W(t) \leq \epsilon ^{\alpha_{d}} W_{M}\}$. Obviously, $X$ and
$W$ are continuous nondecreasing functions so that if, say, $C\geq 1$, then $I$ is a closed interval of $\mathbb{R}^{+}$
with lower bound 0.

Let $T^{*}:=\sup I.$ Choose $\epsilon$ sufficiently small so that the following conditions are satisfied:
\begin{align}
&Ce^{C(V(T_{0})+X_{M})}\epsilon^{\alpha_{d}}W_{M}\Big(1+W_{M}+W^{2}_{M}+W^{4}_{M}\nonumber\\
&\qquad \qquad\qquad\qquad \qquad+\epsilon^{\alpha_{d}}
\big(V(T_{0})+V^{2}(T_{0})+V^{4}(T_{0})\big)\Big)\leq \frac{1}{2},\nonumber\\
& \exp\big\{C(\epsilon(X_{M}+X_{M}^{2})^{1/\alpha}+\epsilon(X_{M}(V(T_{0})+\epsilon^{\alpha_{d}}W_{M})^{2})^{1/\alpha}
+(\epsilon^{\alpha}X_{M})^{2/(2-\alpha)})\big\}\leq 2,\nonumber\\
&\exp\Big\{C((V(T_{0})+\epsilon^{\alpha_{d}}W_{M})^{1/\alpha}+\epsilon(X^{0}+(V(T_{0})+\epsilon^{\alpha_{d}}W_{M})^{2})^{1/\alpha}\nonumber\\
&\qquad\qquad\qquad\qquad\qquad +\epsilon^{2\alpha/(2-\alpha)}(V(T_{0})+\epsilon^{\alpha_{d}}W_{M})^{2/(2-\alpha)})\Big\}
\leq 2e^{C V^{\frac{1}{\alpha}}(T_{0})},\nonumber\\
&X_{0}+\big(V(T_{0})+\epsilon^{\alpha_{d}}W_{M}\big)\big(V(T_{0})+\epsilon^{\alpha_{d}}W_{M}
+\epsilon^{\alpha_{d}}\big(X^{0}+V^{2}(T_{0})+\epsilon^{2\alpha_{d}}W^{2}_{M}\big)\big)\nonumber\\
&\quad \qquad\qquad\leq 2\big(X^{0}+V^{2}(T_{0})\big),\nonumber\\
&Ce^{C V^{1/\alpha}(T_{0})}\epsilon^{\alpha_{d}}\Big\{X^{0}+X_{M}+X^{2}_{M} +(1+X_{M})
\nonumber\\
&\quad\qquad\times\big(V(T_{0})+V^{2}(T_{0})+\epsilon^{\alpha_{d}}W_{M}+\epsilon^{2\alpha_{d}}W^{2}_{M}\big)
+V^{3}(T_{0})+\epsilon^{3\alpha_{d}}W^{3}_{M}\Big\}\leq \frac{1}{12}.\nonumber
\end{align}
Then, by the   \eqref{Cc-66} and \eqref{Cc-67}, we obtain that
\begin{align}
X(T^{*})\leq \,& 12 Ce^{CV^{1/\alpha}(T_{0})}\big(X^{0}+V^{2}(T)\big),\nonumber\\
W(T^{*})\leq\,& 2Ce^{C\big(V(T_{0})+X_{M}\big)}\epsilon^{\alpha_{d}}\big(X^{2}_{M}+X^{2}_{0}+X^{4}_{M}\nonumber\\
&+V(T_{0})\big(X_{M}+X^{2}_{M}+X_{0}+X_{M}V^{2}(T_{0})+V^{2}(T_{0})+V^{3}(T_{0})\big)\big).\nonumber
\end{align}
In other words, at time $T^{*}$ the desired inequalities are strict. Hence, we must have $T^{*}=T.$
\end{proof}

\subsection{Continuation argument}

First, we have to establish the existence of a local solution in $E^{\frac{d}{2}}_{\epsilon,T}\cap E^{\frac{d}{2}+\alpha}_{\epsilon,T}$.
Making the change of function $a^{\epsilon}=\epsilon b^{\epsilon}$,  Theorem \ref{ThA} will enable us to  get a
local solution $(b^{\epsilon},u^{\epsilon},H^{\epsilon})$ on $[0,T]\times \mathbb{R}^{d}$ which belongs to $E^{\alpha}_{T}$ and satisfies
 $$1+\epsilon\underset{(t,x)\in[0,T]\times \mathbb{R}^{d}}{\inf} b^{\epsilon}(t,x)>0.$$

Moreover, due to  the facts that $b_{0}\in\dot{B}^{\frac{d}{2}-1}_{2,1}$ and $\partial_{t}b^{\epsilon}+u^{\epsilon}\cdot\nabla b^{\epsilon}\in
 L^{1}_T(\dot{B}^{\frac{d}{2}-1}_{2,1}),$  we  readily get  that $b^{\epsilon} \in \mathcal{C}([0,T];\dot{B}^{\frac{d}{2}-1}_{2,1})$.
 Therefore, $(b^{\epsilon},u^{\epsilon},H^{\epsilon})\in E^{\frac{d}{2}}_{\epsilon,T}\cap E^{\frac{d}{2}+\alpha}_{\epsilon,T}.$
 Now, assuming that we have $(v,B)\in F^{\frac{d}{2}}_{T_{0}}\cap F^{\frac{d}{2}+\alpha}_{T_{0}}$ for some $T_{0}\in (0,+\infty]$,
 we shall prove that the  lifespan $T_{\epsilon}$ satisfies $T_{\epsilon}\geq T_{0}$ if $\epsilon$ is sufficiently small,
 where $T_{\epsilon}$ is the supremum of the set
 $$
 \Big\{T\in\mathbb{R}^{+}\big|\,  (b^{\epsilon},u^{\epsilon},H^{\epsilon})\in E^{\frac{d}{2}}_{\epsilon,T}\cap E^{\frac{d}{2}+\alpha}_{\epsilon,T}
 \ \ \text{and} \ \ \forall\,(t,x)\in[0,T]\times\mathbb{R}^{d},|\epsilon b^{\epsilon}|\leq \frac34\Big \}.
 $$
  We suppose
  that $T_{\epsilon}$ is finite and satisfies $T_{\epsilon}\leq T_{0}.$ Thanks to
  Lemma \ref{lemma31}, we have, for any $T<T_{\epsilon}$ and $\epsilon\leq\epsilon_{0}$, that
  $$X(T)\leq X_{M}\quad \text{and} \quad W(T)\leq \epsilon^{\alpha_{d}}W_{M.}$$
 From the first inequality and \eqref{Cc-12}, we conclude that
 $$\epsilon\|b^{\epsilon}\|_{L^{\infty}_{T}(\dot{B}^{\frac{d}{2}}_{2,1})}\leq \epsilon^{\alpha} X_{M}.$$
 Obviously,  we require that,  for $\epsilon_{0}$ sufficiently small, 
 $$
 1+\epsilon\underset{(t,x)\in [0,T_{\epsilon}]\times \mathbb{R}^{d}}{\inf}|b^{\epsilon}(t,x)|>0.
 $$
 Since $\epsilon b^{\epsilon}\in L^{\infty}_{T_{\epsilon}}(\dot{B}^{\frac{d}{2}}_{2,1}\cap\dot{B}^{\frac{d}{2}+\alpha}_{2,1}),$
$\nabla u^{\epsilon}\in L^{1}_{T_{\epsilon}}(\dot{B}^{\frac{d}{2}}_{2,1}) $, and
$\nabla u^{\epsilon}\in L^{1}_{T_{\epsilon}}( \dot{B}^{\frac{d}{2}}_{2,1}) $, the continuation criterion stated in Proposition \ref{PropB}
ensures that $(b^{\epsilon},u^{\epsilon},H^{\epsilon})$ may be continued beyond $T_{\epsilon}$,
which contradicts  definition of $T_{\epsilon}$.   Therefore, $T_\epsilon \geq T_{0}$ for $\epsilon\leq\epsilon_{0}.$

The proof of Theorem \ref{ThB} is  now completed.
\end{proof}

\bigskip

\appendix

\section{Basic Facts on Besov Spaces}\label{AppA}

In this section we recall the definition and some basic properties of homogeneous Besov space.
Most of the materials stated below can be found in the books \cite{BCD,Ch,T}. We collect them below for the reader's convenience.

\begin{Def}[\cite{T}] \label{def2.1}
Let $\left\{\phi_j\right\}_{j\in\mathbb{Z}}$ be the
Littlewood-Paley dyadic decomposition of unity that satisfies $\hat{\phi}\in
C_0^\infty ({B_2}\setminus{B_{{1}/{2}}})$, $\hat{\phi}_j(\xi)=\hat{\phi}%
(2^{-j}\xi)$ and $\sum_{j\in\mathbb{Z}}\hat{\phi}_j(\xi)=1$ for any $%
\xi\neq0$, where $\hat{\phi}$ is the Fourier transform of $\phi$ and $B_r$
is the ball with radius $r$ centered at the origin. The homogeneous
Besov space is defined as
$$
\dot{B}_{p, q}^s:=\left\{f\in
\mathcal{S}^{\prime }/\mathbf{P}: \|f\|_{\dot{B}_{p, q}^s}<\infty\right\}
$$
with the  norm
\begin{align*}
\|f\|_{\dot{B}_{p, q}^s}:=\left\{
\begin{array}{l}
 \Big\{\sum_{j\in\mathbb{Z}%
}\|2^{js}\phi_j\ast f \|^q_{L^p}\Big\}^{\frac{1}{q}}, \quad 1\leq q< \infty,  \\
  \sup_{j\in\mathbb{Z}}\|2^{js}\phi_j\ast f \|_{L^\infty} ,\quad \, \ \quad q= \infty,
\end{array}\right.
\end{align*}
for all $s\in\mathbb{R}$ and  $1\leq p\leq\infty$, where $\mathcal{S}^{\prime }$
is the space of tempered distributious and $\mathbf{P}$ is the space of polynomials.
\end{Def}

\begin{Def}[\cite{T}]\label{def2.2}
 For $T >0,s\in \mathbb{R}$, and $1\leq r,\rho\leq\infty,$ we set
 $$\|u\|_{\widetilde{L}^{\rho}_{T}(\dot{B}^{s}_{p,r})}:=
 \|2^{js}\|\dot{\Delta}_{j}u\|_{L^{\rho}_{T}(L^{p})}\|_{l^{r}(\mathbb{Z})}.$$
 We can then define the space $\widetilde{L}^{\rho}_{T}(\dot{B}^{s}_{p,r})$
 as the set of tempered distributions $u$ over $(0,T)\times \mathbb{R}^{d}$ such that
 $\underset{j\rightarrow-\infty}{\lim}\dot{S}_{j}u=0 $ in $L^{\rho}_T(L^{\infty})$
 and $\|u\|_{\widetilde{L}^{\rho}_{T}(\dot{B}^{s}_{p,r})}<+\infty$.
\end{Def}

\begin{Remark}[\cite{T}]\label{rem2.1}
The spaces $\widetilde{L}^{\rho}_{T}(\dot{B}^{s}_{p,r})$ may be linked with the more classical spaces
 $L^{\rho}_{T}(\dot{B}^{s}_{p,r})$ via the Minkowski inequality:
 $$
 \|u\|_{\widetilde{L}^{\rho}_{T}(\dot{B}^{s}_{p,r})}\leq \|u\|_{L^{\rho}_{T}(\dot{B}^{s}_{p,r})}\ \ \text{if} \ \ r\geq \rho,\quad
 \|u\|_{\widetilde{L}^{\rho}_{T}(\dot{B}^{s}_{p,r})}\geq\|u\|_{L^{\rho}_{T}(\dot{B}^{s}_{p,r})} \ \ \text{if} \ \ r\leq \rho.
 $$
 The general principles is that all the properties of continuity for the product, composition, remainder, and paraproduct remain true in these spaces.
 The exponent $\rho$ just has to behave according to H\"{o}lder's inequality for the
 time variable.
\end{Remark}

\begin{Proposition}[\cite{DC}]\label{prop2.1}
The following properties hold:
\begin{itemize}
\item[(1)] Derivation: there exists a universal constant C such that
$$C^{-1}\|u\|_{\dot{B}^{s}_{p,1}}\leq \|\nabla u\|_{\dot{B}^{s-1}_{p,1}}\leq C \|u\|_{\dot{B}^{s}_{p,1}};$$

\item[(2)] Fractional derivation: let $\Lambda:=\sqrt{-\Delta}$ and $\sigma\in \mathbb{R}$,
then the operator $\Lambda^{\sigma}$ is an isomorphism from $\dot{B}^{s}_{p,1}$ to $\dot{B}^{s-\sigma}_{p,1}$;

\item[(3)] Sobolev embeddings: if $p_{1}<p_{2}$ then $\dot{B}^{s}_{p,1}\hookrightarrow\dot{B}^{s-d(1/p_{1}-1/p_{2})}_{p,2}$;

\item[(4)] Interpolation : $[\dot{B}^{s_{1}}_{p,1},\dot{B}^{s_{1}}_{p,1}]_{\theta}=\dot{B}^{\theta s_{1}+(1-\theta)s_{2}}_{p,1}$;

\item[(5)] Algebraic properties: for $s>0,\dot{B}^{s}_{p,1}\cap L^{\infty}$ is an algebra;

\item[(6)] Scaling properties:
\begin{itemize}
\item[(a)] for all $\lambda>0$ and $u\in \dot{B}^{s}_{p,1}$, we have
$$\|u(\lambda\cdot)\|_{\dot{B}^{s}_{p,1}}\approx (\lambda)^{s-d/p}\|u\|_{\dot{B}^{s}_{p,1}},$$
\item[(b)] for $u=u(t,x)$ in $L^{r}_T(\dot{B}^{s}_{p,1})$, we have
$$\|u(\lambda^{a}\cdot,\lambda^{b}\cdot)\|_{L^{r}_{T}(\dot{B}^{s}_{p,1})}
\approx \lambda^{b(s-d/p)-a/r}\|u\|_{L^{r}_{\lambda^{a}T}(\dot{B}^{s}_{p,1})}.
$$
\end{itemize}
\end{itemize}
\end{Proposition}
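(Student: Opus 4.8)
The plan is to reduce every assertion to the behavior of the individual dyadic blocks $\dot{\Delta}_j u$ and then to sum the resulting dyadic inequalities against the weight $2^{js}$; the only genuine analytic inputs are the two-sided Bernstein inequalities, the multiplier lemma for symbols localized in annuli, and Bony's paraproduct decomposition. Since this statement is quoted from \cite{DC}, I would in practice cite it, but the self-contained argument runs as follows.

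First, for the derivation property (1) I would observe that $\dot{\Delta}_j\nabla u$ is spectrally supported in the same annulus $2^j\mathcal{C}$ as $\dot{\Delta}_j u$, so the two-sided Bernstein inequality gives $c\,2^{j}\|\dot{\Delta}_j u\|_{L^p}\le\|\dot{\Delta}_j\nabla u\|_{L^p}\le C\,2^{j}\|\dot{\Delta}_j u\|_{L^p}$, the lower bound using that the support stays away from the origin; multiplying by $2^{j(s-1)}$ and summing in $j$ yields the claim. Property (2) is the same argument with $\Lambda^\sigma$ in place of $\nabla$: since $\Lambda^\sigma$ is the Fourier multiplier $|\xi|^\sigma$, which is smooth and homogeneous of degree $\sigma$ on each dyadic annulus, the standard multiplier lemma (see \cite{BCD}) gives $\|\dot{\Delta}_j\Lambda^\sigma u\|_{L^p}\approx 2^{j\sigma}\|\dot{\Delta}_j u\|_{L^p}$, whence $\Lambda^\sigma$ maps $\dot{B}^s_{p,1}$ isomorphically onto $\dot{B}^{s-\sigma}_{p,1}$ with inverse $\Lambda^{-\sigma}$.

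For the Sobolev embedding (3), read with the intended indices $\dot{B}^s_{p_1,1}\hookrightarrow\dot{B}^{s-d(1/p_1-1/p_2)}_{p_2,2}$, I would invoke the $L^{p_1}\to L^{p_2}$ Bernstein inequality $\|\dot{\Delta}_j u\|_{L^{p_2}}\le C\,2^{jd(1/p_1-1/p_2)}\|\dot{\Delta}_j u\|_{L^{p_1}}$ for $p_1\le p_2$, so that with $s':=s-d(1/p_1-1/p_2)$ one has $2^{js'}\|\dot{\Delta}_j u\|_{L^{p_2}}\le C\,2^{js}\|\dot{\Delta}_j u\|_{L^{p_1}}$; taking the $\ell^2$ norm on the left and using $\ell^1\hookrightarrow\ell^2$ on the right closes the embedding. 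The interpolation identity (4) follows from the elementary relation $2^{j(\theta s_1+(1-\theta)s_2)}\|\dot{\Delta}_j u\|_{L^p}=\bigl(2^{js_1}\|\dot{\Delta}_j u\|_{L^p}\bigr)^{\theta}\bigl(2^{js_2}\|\dot{\Delta}_j u\|_{L^p}\bigr)^{1-\theta}$ together with H\"older's inequality for the summation in $j$ (both inclusions of the complex-interpolation identity are checked this way, or one simply cites the interpolation theory of Besov spaces). The scaling relations (6) come from the change of variables $x\mapsto\lambda x$, respectively $(t,x)\mapsto(\lambda^a t,\lambda^b x)$: since the dyadic blocks of $u(\lambda\cdot)$ are, up to a shift of the frequency index by $\log_2\lambda$, the dilates of the dyadic blocks of $u$, the $L^p$ change of variables produces the factor $\lambda^{-d/p}$ while the weight $2^{js}$ contributes $\lambda^{s}$, giving (a); item (b) is the same computation carried out first in space and then in the $L^r$ norm in time.

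The main obstacle is the algebra property (5), which, unlike the others, cannot be read off a single dyadic block. The plan is to use Bony's decomposition $uv=\dot{T}_uv+\dot{T}_vu+\dot{R}(u,v)$ and to prove the tame estimate $\|uv\|_{\dot{B}^s_{p,1}}\le C\bigl(\|u\|_{L^\infty}\|v\|_{\dot{B}^s_{p,1}}+\|v\|_{L^\infty}\|u\|_{\dot{B}^s_{p,1}}\bigr)$, from which the algebra property is immediate since $\dot{B}^s_{p,1}\cap L^\infty$ controls both factors. The two paraproduct terms are handled by the quasi-orthogonality of the low-high frequency interaction together with $\|\dot{S}_{j-1}u\|_{L^\infty}\le\|u\|_{L^\infty}$, and the only place where the hypothesis $s>0$ is essential is the remainder $\dot{R}(u,v)$: the high-high interaction sends frequencies down to arbitrarily low dyadic levels, so the geometric series reconstructing $\dot{\Delta}_k\dot{R}(u,v)$ converges precisely when $s>0$. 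I would therefore carry out this convergence estimate carefully, as it is the one genuinely delicate point; the remaining items are routine consequences of Bernstein's inequalities.
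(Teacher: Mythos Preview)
Your proposal is correct and follows the standard route. Note, however, that the paper does not prove this proposition at all: it is quoted verbatim from \cite{DC} (as the citation bracket in the header indicates) and stated without proof in Appendix~\ref{AppA} as background material. Your sketch---Bernstein for (1)--(3), H\"older on dyadic weights for (4), Bony's decomposition with the $s>0$ remainder estimate for (5), and frequency-shift under dilation for (6)---is exactly the argument one finds in \cite{BCD} and \cite{DC}, so there is nothing to compare against; you have simply supplied what the paper omits by citation.
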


Let us state some continuity results for the product.
\begin{Proposition}[\cite{DC}]\label{prop2.2}
If $u\in \dot{B}^{s_{1}}_{p_{1},1} $ and $v\in \dot{B}^{s_{2}}_{p_{2},1}$ with
$1\leq p_{1}\leq p_{2}\leq+\infty,s_{1}\leq d/p_{1},s_{2}\leq d/p_{2}$ and $s_{1}+s_{1}>0$.
 then $uv\in \dot{B}^{s_{1}+s_{2}-d/p_{1}}_{p_{2},1}$ and
 $$
 \|uv\|_{\dot{B}^{s_{1}+s_{2}-d/p_{1}}_{p_{2},1}}\leq C\|u\|_{\dot{B}^{s_{1}}_{p_{1},1}} \|v\|_{\dot{B}^{s_{2}}_{p_{2},1}}.
 $$
 If $u\in \dot{B}^{s_{1}}_{p_{1},1}\cap\dot{B}^{s_{2}}_{p_{2},1} $ and
  $v\in \dot{B}^{t_{1}}_{p_{2},1}\cap\dot{B}^{t_{2}}_{p_{2},1}$ with
  $1\leq p_{1}, p_{2}\leq+\infty,s_{1},t_{1}\leq d/p_{1}$ and
  $s_{1}+t_{2}=s_{2}+t_{1}>d \max\big\{0,\frac{1}{p_{1}}+\frac{1}{p_{2}}-1\big\}$, then
  $uv\in \dot{B}^{s_{1}+t_{2}-d/p_{1}}_{p_{2},1}$ and
 $$
 \|uv\|_{\dot{B}^{s_{1}+t_{2}-d/p_{1}}_{p_{2},1}}\leq C\|u\|_{\dot{B}^{s_{1}}_{p_{1},1}} \|v\|_{\dot{B}^{t_{2}}_{p_{2},1}}
 +\|u\|_{\dot{B}^{s_{2}}_{p_{2},1}} \|v\|_{\dot{B}^{t_{1}}_{p_{1},1}}.
 $$
 Moreover, if $s_{1}=0$ and $ p_{1}=+\infty$, then $\|u\|_{\dot{B}^{0}_{\infty,1}}$ may be replaced with $\|u\|_{L^{\infty}}$.
\end{Proposition}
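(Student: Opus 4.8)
The plan is to prove both inequalities by Bony's homogeneous paraproduct decomposition, treating the two paraproducts and the remainder separately and then matching the Lebesgue indices through the Sobolev embeddings of Proposition \ref{prop2.1}. I would write
\[
uv=\dot{T}_{u}v+\dot{T}_{v}u+\dot{R}(u,v),
\]
where $\dot{T}_{u}v:=\sum_{j}\dot{S}_{j-1}u\,\dot{\Delta}_{j}v$ denotes the paraproduct and $\dot{R}(u,v):=\sum_{|j-k|\le1}\dot{\Delta}_{j}u\,\dot{\Delta}_{k}v$ the remainder. The role of the hypotheses $s_{i}\le d/p_{i}$ is precisely to place the low-frequency factor of each paraproduct into an $L^{\infty}$-type space, while the positivity condition on the sum of regularities is what makes the remainder series converge.

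First I would bound the paraproducts. Since $s_{1}\le d/p_{1}$, the embedding (3) of Proposition \ref{prop2.1} gives $u\in\dot{B}^{s_{1}-d/p_{1}}_{\infty,1}$, and as $s_{1}-d/p_{1}\le0$ the standard continuity of the paraproduct yields $\dot{T}_{u}v\in\dot{B}^{s_{1}+s_{2}-d/p_{1}}_{p_{2},1}$ with bound $\|u\|_{\dot{B}^{s_{1}}_{p_{1},1}}\|v\|_{\dot{B}^{s_{2}}_{p_{2},1}}$. Symmetrically, using $s_{2}\le d/p_{2}$ together with the embedding $\dot{B}^{s_{1}}_{p_{1},1}\hookrightarrow\dot{B}^{s_{1}-d/p_{1}+d/p_{2}}_{p_{2},1}$ (valid because $p_{1}\le p_{2}$), the term $\dot{T}_{v}u$ lands in the same space $\dot{B}^{s_{1}+s_{2}-d/p_{1}}_{p_{2},1}$. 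In the borderline cases $s_{i}=d/p_{i}$ the relevant factor lies in $\dot{B}^{d/p_{i}}_{p_{i},1}\hookrightarrow L^{\infty}$, so the $L^{\infty}$ norm replaces the critical Besov norm; this is exactly the mechanism behind the final ``moreover'' assertion with $s_{1}=0$ and $p_{1}=\infty$.

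The main work, and the only place the positivity condition enters, is the remainder. Here I would use the continuity of $\dot{R}$ from $\dot{B}^{s_{1}}_{p_{1},1}\times\dot{B}^{s_{2}}_{p_{2},1}$ into $\dot{B}^{s_{1}+s_{2}}_{p,1}$ with $\tfrac1p=\tfrac1{p_{1}}+\tfrac1{p_{2}}$; the summability of the diagonal series in $\dot{B}^{s_{1}+s_{2}}_{p,1}$ is guaranteed precisely by $s_{1}+s_{2}>0$. Since $\tfrac1p\ge\tfrac1{p_{2}}$, that is $p\le p_{2}$, a final application of the Sobolev embedding of Proposition \ref{prop2.1}(3) sends $\dot{B}^{s_{1}+s_{2}}_{p,1}$ into $\dot{B}^{s_{1}+s_{2}-d(1/p-1/p_{2})}_{p_{2},1}=\dot{B}^{s_{1}+s_{2}-d/p_{1}}_{p_{2},1}$, the target space, and collecting the three bounds gives the first inequality. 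I expect the delicate bookkeeping to be the case $\tfrac1{p_{1}}+\tfrac1{p_{2}}>1$, where $p<1$ and the intermediate space $\dot{B}^{s_{1}+s_{2}}_{p,1}$ must be manipulated with $p<1$ before embedding.

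For the two-sided estimate the same decomposition applies, but the regularities are distributed asymmetrically so that the balance $s_{1}+t_{2}=s_{2}+t_{1}$ makes every piece land in $\dot{B}^{s_{1}+t_{2}-d/p_{1}}_{p_{2},1}$: I would estimate $\dot{T}_{u}v$ using the $s_{1}$-regularity of $u$ against the $t_{2}$-regularity of $v$, estimate $\dot{T}_{v}u$ using the $t_{1}$-regularity of $v$ against the $s_{2}$-regularity of $u$, and split the remainder according to both pairings, which produces the two products $\|u\|_{\dot{B}^{s_{1}}_{p_{1},1}}\|v\|_{\dot{B}^{t_{2}}_{p_{2},1}}$ and $\|u\|_{\dot{B}^{s_{2}}_{p_{2},1}}\|v\|_{\dot{B}^{t_{1}}_{p_{1},1}}$. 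The condition $s_{1}+t_{2}>d\max\{0,\tfrac1{p_{1}}+\tfrac1{p_{2}}-1\}$ is exactly what ensures the remainder series converges when $\tfrac1{p_{1}}+\tfrac1{p_{2}}$ exceeds $1$, and is the analogue of $s_{1}+s_{2}>0$ in the first part.
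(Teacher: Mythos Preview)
The paper does not supply its own proof of Proposition \ref{prop2.2}: it is quoted verbatim from \cite{DC} and listed in Appendix \ref{AppA} as a known tool, so there is no ``paper's proof'' to compare against. Your argument via Bony's decomposition $uv=\dot{T}_{u}v+\dot{T}_{v}u+\dot{R}(u,v)$, with the paraproducts handled through the embeddings $\dot{B}^{s_i}_{p_i,1}\hookrightarrow\dot{B}^{s_i-d/p_i}_{\infty,1}$ and the remainder through the positivity of $s_1+s_2$ (resp.\ $s_1+t_2$), is exactly the standard route taken in \cite{DC} and \cite{BCD}; in fact the paraproduct and remainder bounds you invoke are precisely those recorded in Remark \ref{re2.3} of the present paper, so your proof is correct and aligned with the source.
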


\begin{Def}[\cite{T}]\label{def2.3}
Let $s\in \mathbb{R},\alpha>0$ and $1\leq r\leq +\infty$ and
$$\|u\|_{\tilde{B}^{s,r}_{\alpha}} := \sum_{q\in \mathbb{Z}}2^{qs}\max\{\alpha,2^{-q}\}^{1-2/r}\|\dot{\Delta}_{q}u\|_{L^{2}}.$$

Let $m=-[d/2+2-2/r-s],$ we then define
\begin{align*}
 &\tilde{B}^{s,r}_{\alpha}(\mathbb{R}^{d}):=\big\{u\in \mathcal{S}'(\mathbb{R}^{d})\big|\|u\|_{\tilde{B}^{s,r}_{\alpha}}<+\infty\big\}\quad\quad \quad\qquad   \text{if} \quad m<0,\\
& \tilde{B}^{s,r}_{\alpha}(\mathbb{R}^{d})
:=\big\{u\in \mathcal{S}'(\mathbb{R}^{d})/\mathbf{P}_{m}(\mathbb{R}^{d})\big|\|u\|_{\tilde{B}^{s,r}_{\alpha}}<+\infty\big\}\quad \text{if} \quad m\geq0.
  \end{align*}
  \end{Def}

We will use the   following high-low frequencies decomposition:
$$u_{BF} := \sum_{q\leq[-\log_{2}\alpha]}\dot{\Delta}_{q}u, \quad u_{HF} := \sum_{q>[-\log_{2}\alpha]}\dot{\Delta}_{q}u. $$

\begin{Remark}[\cite{DC}]\label{re2.2}
\begin{itemize}
\item[(i)] $\tilde{B}^{s,2}_{\alpha}=\dot{B}^{s}_{2,1}$;

\item[(ii)] If $r\geq 2$ then $\tilde{B}^{s,r}_{\alpha}=\dot{B}^{s+2/r -1}_{2,1}\cap\dot{B}^{s}_{2,1}$ and
$$\|u\|_{\tilde{B}^{s,r}_{\alpha}}\approx \|u\|_{\dot{B}^{s+2/r -1}_{2,1}}+\alpha^{1-2/r}\|u\|_{\dot{B}^{s}_{2,1}},$$
If $r\leq 2$ then $\tilde{B}^{s,r}_{\alpha}=\dot{B}^{s+2/r -1}_{2,1}+\dot{B}^{s}_{2,1}$ and
$$\|u\|_{\tilde{B}^{s,r}_{\alpha}}\approx \|u_{BF}\|_{\dot{B}^{s+2/r -1}_{2,1}}+\alpha^{1-2/r}\|u_{HF}\|_{\dot{B}^{s}_{2,1}};$$

\item[(iii)] For all $\lambda >0$ and $u\in \tilde{B}^{s,r}_{\alpha}$, we have
$$\|u(\lambda\cdot)\|_{\tilde{B}^{s,r}_{\alpha}}\approx \lambda^{s-d/2+2/r-1}\|u\|_{\tilde{B}^{s,r}_{\lambda\alpha}}.$$
\end{itemize}
\end{Remark}
\smallskip
The paraproduct between $u$ and $v$ is given by
$$\dot{T}_{u}v := \sum_{q\in \mathbb{Z}}\dot{S}_{q-1}u\dot{\Delta}_{q}v, \quad
 \dot{R}(u,v) := \sum_{q\in \mathbb{Z}}\dot{\Delta}_{q}u\tilde{\dot{\Delta}}_{q} v,$$
with $\tilde{\dot{\Delta}}_{q}=\dot{\Delta}_{q-1}+\dot{\Delta}_{q}+\dot{\Delta}_{q+1}$.
We have the following Bony decomposition (modulo a polynomial):
$$uv=\dot{T}_{u}v+\dot{T}_{u}v+ \dot{R}(u,v).$$
The notation $\dot{T}'_{u}v:=\dot{T}_{u}v+\dot{R}(u,v)$ will be employed likewise.

\begin{Remark}[\cite{DC}]\label{re2.3}
Let $1\leq p_{1},p_{2}\leq+\infty$. For all $ s_{2}\in \mathbb{R}$ and $s_{1}\leq d/p_{1}$, we have
$$\|\dot{T}_{u}v\|_{\dot{B}^{s_{1}+s_{2}}_{p_{2},1}}\leq C \|u\|_{\dot{B}^{s_{1}}_{\infty,\infty}}\|v\|_{\dot{B}^{s_{2}}_{p_{2},1}}.$$
If $(s_{1},s_{2})\in \mathbb{R}^{2}$ satisfies $s_{1}+s_{2}>d \max\big\{0,\frac{1}{p_{1}}+\frac{1}{p_{2}}-1\big\}$,  then
$$\|\dot{R}(u,v)\|_{\dot{B}^{s_{1}+s_{2}-d/p_{1}}_{p_{2},1}}\leq C \|u\|_{\dot{B}^{s_{1}}_{p_{1},\infty}}\|v\|_{\dot{B}^{s_{2}}_{p_{2},1}}.$$
\end{Remark}

\begin{Proposition}[\cite{BCD}]\label{prop2.3}
Let K be  a compact subset of $\mathbb{R}^{d}$. Denote by $B^{s}_{p,r}(K)$ \emph{[}resp.,
$\dot{B}^{s}_{p,r}(K)$\emph{]} the set of distributions $u$ in $B^{s}_{p,r}(resp.,\dot{B}^{s}_{p,r})$, the support of which is included in $K$.
If $s>0$,  then the spaces $B^{s}_{p,r}(K)\,and\,\dot{B}^{s}_{p,r}(K)$ coincide.
Moreover, a constant C exists such that for any $u$ in $\dot{B}^{s}_{p,r}(K)$,
$$\|u\|_{B^{s}_{p,r}}\leq C(1+|K|)^{\frac{s}{d}}\|u\|_{\dot{B}^{s}_{p,r}}.$$
Here ${B}_{p, r}^s$ denotes the inhomogeneous Besov space.
\end{Proposition}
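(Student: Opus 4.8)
The plan is to compare the inhomogeneous and homogeneous Littlewood--Paley decompositions of $u$ and to observe that they differ only at low frequencies. For $j\ge 0$ the inhomogeneous block coincides with the homogeneous one $\dot\Delta_j$, while all lower frequencies are gathered into the single block $\dot S_0u$. Consequently the high-frequency part of $\|u\|_{B^s_{p,r}}$ is literally a sub-sum of $\|u\|_{\dot B^s_{p,r}}$, and the whole matter reduces to estimating $\|\dot S_0u\|_{L^p}$, or equivalently $\|u\|_{L^p}$: since $\|u\|_{L^p}\le\|\dot S_0u\|_{L^p}+\sum_{j\ge0}\|\dot\Delta_ju\|_{L^p}$ and $s>0$ makes $\sum_{j\ge0}2^{-jsr'}$ finite, the last sum is already $\le C\|u\|_{\dot B^s_{p,r}}$. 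The reverse inequality $\|u\|_{\dot B^s_{p,r}}\le C\|u\|_{B^s_{p,r}}$, which yields the coincidence of the two spaces, is easy and needs no support hypothesis: for $j<0$ one has $\|\dot\Delta_ju\|_{L^p}\le C\|\dot S_0u\|_{L^p}$, so $\sum_{j<0}\bigl(2^{js}\|\dot\Delta_ju\|_{L^p}\bigr)^r\le C\|\dot S_0u\|_{L^p}^{\,r}\sum_{j<0}2^{jsr}<\infty$ precisely because $s>0$.

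The core of the argument is therefore the bound $\|u\|_{L^p}\le C(1+|K|)^{s/d}\|u\|_{\dot B^s_{p,r}}$, and this is where the compact support enters. Writing $R\sim|K|^{1/d}$ for the diameter of $K$ and fixing a small $\delta>0$, I would split $\|u\|_{L^p}\le\sum_{j\in\mathbb Z}\|\dot\Delta_j u\|_{L^p}$ at the threshold $2^J\sim\delta R^{-1}$. For $j\ge J$ I use Hölder in $\ell^{r'}$ together with $s>0$,
\[
\sum_{j\ge J}\|\dot\Delta_j u\|_{L^p}\le\Big(\sum_{j\ge J}2^{-jsr'}\Big)^{1/r'}\|u\|_{\dot B^s_{p,r}}\le C\,2^{-Js}\|u\|_{\dot B^s_{p,r}}\le C_\delta R^s\|u\|_{\dot B^s_{p,r}},
\]
and $R^s\sim|K|^{s/d}$ produces exactly the advertised constant. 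For the very low frequencies $j<J$ I invoke Bernstein's inequality from $L^1$ to $L^p$ together with the support, $\|\dot\Delta_j u\|_{L^p}\le C2^{jd/p'}\|u\|_{L^1}\le C2^{jd/p'}|K|^{1/p'}\|u\|_{L^p}\le C(2^jR)^{d/p'}\|u\|_{L^p}$, and sum the resulting geometric series to obtain $\sum_{j<J}\|\dot\Delta_j u\|_{L^p}\le C\delta^{d/p'}\|u\|_{L^p}$, a small multiple of $\|u\|_{L^p}$.

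Combining the two pieces gives $\|u\|_{L^p}\le C\delta^{d/p'}\|u\|_{L^p}+C_\delta|K|^{s/d}\|u\|_{\dot B^s_{p,r}}$; choosing $\delta$ so small that $C\delta^{d/p'}\le\tfrac12$ lets me absorb the first term into the left-hand side and conclude $\|u\|_{L^p}\le C|K|^{s/d}\|u\|_{\dot B^s_{p,r}}$, whence the full inhomogeneous estimate follows. The main obstacle is precisely this low-frequency step: for a general $u\in\dot B^s_{p,r}$ the tail $\sum_{j<0}\dot\Delta_j u$ need not even converge in $L^p$, so the compact support is indispensable, and the scheme hinges on converting it (via Bernstein and $\|u\|_{L^1}\le|K|^{1/p'}\|u\|_{L^p}$) into geometric decay that simultaneously closes the circular appearance of $\|u\|_{L^p}$ and generates the factor $|K|^{s/d}$. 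I expect the only genuinely separate cases to be the endpoints $p=1$ (where $d/p'=0$, so one must instead exploit the vanishing moments of the convolution kernel $\check\phi$ to recover decay in $j$) and $p=\infty$, both handled by the same splitting with the source of decay adjusted accordingly.
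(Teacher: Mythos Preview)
The paper does not give its own proof of this proposition; it is simply quoted from \cite{BCD}. Your approach is the standard one and is correct for $1<p\le\infty$: Bernstein plus H\"older on the support give $\|\dot S_Ju\|_{L^p}\le C(2^J|K|^{1/d})^{d/p'}\|u\|_{L^p}$, and the absorption closes once one observes that $u\in L^p$ a priori (because $\sum_{j\ge0}\dot\Delta_ju$ converges in $L^p$ while $\dot S_0u$ is smooth, and $u$ has compact support). Incidentally, $p=\infty$ needs no separate treatment---there $d/p'=d>0$ and your geometric series still converges.

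The real gap is the endpoint $p=1$. The vanishing moments of the kernel do \emph{not} produce decay of $\|\dot\Delta_ju\|_{L^1}$: Taylor-expanding $\phi_j(x-y)$ about a fixed point $x_0\in K$, the leading term is $\phi_j(x-x_0)\!\int u$, whose $L^1$ norm equals $\bigl|\!\int u\bigr|\,\|\phi\|_{L^1}$, a nonzero constant independent of $j$. Higher vanishing moments only improve the remainder, never this term; equivalently, $\|\dot S_Ju\|_{L^1}\ge\bigl|\!\int u\bigr|$ for every $J$, so there is nothing small to absorb. A correct fix that recovers the stated constant is to pass through an intermediate exponent: by the homogeneous embedding $\dot B^{s}_{1,r}\hookrightarrow\dot B^{\,s-d/q'}_{q,r}$, choose $q>1$ with $s-d/q'>0$, apply your argument at exponent $q$ to obtain $\|u\|_{L^q}\le C(1+|K|)^{(s-d/q')/d}\|u\|_{\dot B^s_{1,r}}$, and return via $\|u\|_{L^1}\le|K|^{1/q'}\|u\|_{L^q}$; the exponents combine to $s/d$.

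Two minor remarks: $|K|$ is the Lebesgue measure (it enters through $\|u\|_{L^1}\le|K|^{1/p'}\|u\|_{L^p}$), not the diameter; and the ``$+1$'' in $(1+|K|)^{s/d}$ is exactly what your scheme yields when $|K|\le1$, since then one may take $J=0$ and the constant is $O(1)$.
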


\begin{Proposition}[\cite{BCD}]\label{prop2.4}
If $s'< s$, then for all $\varphi \in \mathcal{C}_{c}^{\infty}(\mathbb{R}^{d}),$ multiplication by $\varphi$ is a compact operator from $\dot{B}^{s}_{p,\infty}$
to $\dot{B}^{s'}_{p,1}$.
\end{Proposition}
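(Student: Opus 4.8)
The plan is to prove the statement through the usual characterization of compact operators: it suffices to show that every bounded sequence $(u_n)_{n\ge 1}$ in $\dot{B}^{s}_{p,\infty}$ admits a subsequence for which $(\varphi u_n)$ converges in $\dot{B}^{s'}_{p,1}$. First I would record that multiplication by $\varphi$ is bounded from $\dot{B}^{s}_{p,\infty}$ into itself: writing the Bony decomposition $\varphi u=\dot{T}_\varphi u+\dot{T}_u\varphi+\dot{R}(\varphi,u)$ and invoking the paraproduct and remainder estimates of Remark \ref{re2.3} together with the fact that $\varphi\in\mathcal{C}^\infty_c$ belongs to every Besov space, one obtains $\|\varphi u_n\|_{\dot{B}^{s}_{p,\infty}}\le C\|u_n\|_{\dot{B}^{s}_{p,\infty}}$. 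Setting $v_n:=\varphi u_n$, each $v_n$ is supported in the fixed compact set $K:=\mathrm{supp}\,\varphi$, so the uniform bound in $\dot{B}^{s}_{p,\infty}$ and Bernstein's inequality give a uniform bound on $\|v_n\|_{L^p}$, hence, by H\"older on $K$, on $\|v_n\|_{L^1}$.

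Next I would control the high- and low-frequency tails uniformly in $n$. For the high frequencies the gap $s'<s$ is decisive:
\begin{align*}
\sum_{j>N}2^{js'}\|\dot{\Delta}_j v_n\|_{L^p}
\le \Big(\sup_{j}2^{js}\|\dot{\Delta}_j v_n\|_{L^p}\Big)\sum_{j>N}2^{j(s'-s)}
\le C\,\|v_n\|_{\dot{B}^{s}_{p,\infty}}\,2^{N(s'-s)},
\end{align*}
which tends to $0$ as $N\to\infty$, uniformly in $n$. For the low frequencies I would use the compact support: Bernstein's inequality from $L^1$ to $L^p$ yields $\|\dot{\Delta}_j v_n\|_{L^p}\le C\,2^{jd/p'}\|v_n\|_{L^1}$, so that $\sum_{j<-M}2^{js'}\|\dot{\Delta}_j v_n\|_{L^p}\le C\sum_{j<-M}2^{j(s'+d/p')}$, a convergent geometric series (in the admissible range $s'>-d/p'$) which tends to $0$ as $M\to\infty$, again uniformly in $n$.

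It then remains to treat the finitely many intermediate blocks $-M\le j\le N$, and this is where the main work lies. For each such fixed $j$ the sequence $(\dot{\Delta}_j v_n)_n$ is bounded in $L^p$ with Fourier support in the fixed annulus $2^{j}\mathcal{C}$; by Bernstein it is bounded in every $W^{k,p}$, and because $v_n$ is essentially localized near $K$ (the kernel of $\dot{\Delta}_j$ being Schwartz, the contribution outside a large ball is uniformly small thanks to the $L^1$ bound), a Rellich--Kondrachov argument on a large ball extracts a subsequence converging in $L^p(\mathbb{R}^d)$. Carrying this out for each of the finitely many indices $j$ and applying Cantor's diagonal process, I obtain a single subsequence (still denoted $v_n$) such that $\dot{\Delta}_j v_n$ converges in $L^p$ for every $-M\le j\le N$.

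Finally I would assemble the pieces: given $\eta>0$, fix $M$ and $N$ so that the high- and low-frequency tails of every $v_n-v_m$ are each below $\eta/3$ by the uniform estimates above; then for the diagonal subsequence the middle sum $\sum_{-M\le j\le N}2^{js'}\|\dot{\Delta}_j(v_n-v_m)\|_{L^p}$ is below $\eta/3$ for $n,m$ large. Hence $(v_n)$ is Cauchy, and therefore convergent, in $\dot{B}^{s'}_{p,1}$, which establishes compactness. I expect the intermediate-frequency step to be the main obstacle, since it requires reconciling frequency localization in a dyadic annulus with spatial precompactness; the compact support of $\varphi$ together with the Schwartz decay of the Littlewood--Paley kernels is precisely what makes this Rellich-type extraction work.
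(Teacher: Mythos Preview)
The paper does not supply its own proof of this proposition; it is simply quoted from \cite{BCD}. Your argument is essentially the standard one found there: uniform control of the high-frequency tail via the gap $s'<s$, uniform control of the low-frequency tail via compact support and Bernstein, and a Rellich-type extraction on the finitely many intermediate blocks, assembled by a diagonal argument. The strategy is correct.

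Two minor points deserve a word more. First, your claim that the $\dot{B}^{s}_{p,\infty}$ bound on $v_n$ together with Bernstein yields a uniform $L^p$ bound is not quite self-evident: Bernstein controls individual dyadic pieces, not their sum. The clean way to get it is to invoke Proposition~\ref{prop2.3}, which identifies $\dot{B}^{s}_{p,r}(K)$ with the inhomogeneous space $B^{s}_{p,r}(K)$ for $s>0$; since $B^{s}_{p,\infty}\hookrightarrow L^p$ for $s>0$, the $L^p$ (hence $L^1$) bound follows. This implicitly imposes $s>0$. Second, as you note parenthetically, your low-frequency estimate requires $s'>-d/p'$. Both restrictions are present in the standard formulation and are harmless for the way the proposition is actually used in the paper (namely with $s=\tfrac{d}{2}+\alpha>0$ and $s'=\tfrac{d}{2}-1+\alpha$).
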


\begin{Proposition}[Fatou's property \cite{BCD}]\label{prop2.5}
Let $(s_{1},s_{2})\in \mathbb{R}^{2}$ and $1\leq p_{1},p_{2},r_{1},r_{2}\leq\infty$. Assume that
$(s_{1},p_{1},r_{1})$ satisfies $s_{1}<\frac{d}{p_{1}},\,\,or\,\,s_{1}=\frac{d}{p_{1}}\,and\,r_{1}=1.$ the space $\dot{B}^{s_{1}}_{p_{1},r_{1}}
\cap\dot{B}^{s_{2}}_{p_{2},r_{2}}$ endowed with the norm $\|\cdot\|_{\dot{B}^{s_{1}}_{p_{1},r_{1}}}+\|\cdot\|_{\dot{B}^{s_{2}}_{p_{2},r_{2}}}$
is then complete and satisfis the Fatou's property: If $(u_{n})_{n\geq 1}$
is bounded sequence of $\dot{B}^{s_{1}}_{p_{1},r_{1}}\cap\dot{B}^{s_{2}}_{p_{2},r_{2}}$,
then an element $u$ of $\dot{B}^{s_{1}}_{p_{1},r_{1}}\cap\dot{B}^{s_{2}}_{p_{2},r_{2}}$
and a subsequence $u_{\psi(n)}$ exist such that
$\lim_{n\rightarrow\infty} u_{\psi(n)}=u$ in $\mathcal{S}'$ and $\|u\|_{\dot{B}^{s_{k}}_{p_{k},r_{k}}}\leq C  \liminf_{n\rightarrow\infty}
\|u_{\psi(n)}\|_{\dot{B}^{s_{k}}_{p_{k},r_{k}}}$ for $k=1,2$.
\end{Proposition}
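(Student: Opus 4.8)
The plan is to prove the two assertions---completeness and the Fatou property---together, deriving completeness at the end from the compactness and lower-semicontinuity estimates that underlie the Fatou property. Throughout write $X:=\dot{B}^{s_1}_{p_1,r_1}\cap\dot{B}^{s_2}_{p_2,r_2}$ and let $(u_n)_{n\ge1}$ satisfy $\sup_n(\|u_n\|_{\dot{B}^{s_1}_{p_1,r_1}}+\|u_n\|_{\dot{B}^{s_2}_{p_2,r_2}})\le M<\infty$. The first point I would record is that the hypothesis on $(s_1,p_1,r_1)$---namely $s_1<d/p_1$, or $s_1=d/p_1$ with $r_1=1$---is precisely the condition guaranteeing that $\dot{B}^{s_1}_{p_1,r_1}$ embeds continuously into $\mathcal{S}'$ (rather than only into $\mathcal{S}'/\mathbf{P}$); in particular the low-frequency sums $\sum_{j<0}\dot{\Delta}_j$ converge in $L^\infty$, with a bound controlled by $\|\cdot\|_{\dot{B}^{s_1}_{p_1,r_1}}$. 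This is the fact that makes the homogeneous space a genuine complete space of tempered distributions, and it is used repeatedly below.

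First I would extract a candidate limit block by block. For each fixed $j\in\mathbb{Z}$ the function $\dot{\Delta}_j u_n=\phi_j*u_n$ is spectrally supported in the fixed annulus $2^j\mathcal{C}$, hence is entire of exponential type by Paley--Wiener, and by the Bernstein inequalities the sequence $(\dot{\Delta}_j u_n)_n$ is bounded, uniformly in $n$, in every $L^q$ with $q\ge\min\{p_1,p_2\}$ and in $C^k$ on each compact set. Arzel\`a--Ascoli then yields, for each $j$, a locally uniformly convergent subsequence; a Cantor diagonal extraction over $j\in\mathbb{Z}$ and over an exhaustion of $\mathbb{R}^d$ by compacts produces a single subsequence $u_{\psi(n)}$ and functions $v_j$, each spectrally supported in $2^j\mathcal{C}$, with $\dot{\Delta}_j u_{\psi(n)}\to v_j$ locally uniformly (hence in $\mathcal{S}'$) for every $j$.

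Next I would assemble the limit and identify its dyadic blocks. Set $u:=\sum_{j\in\mathbb{Z}}v_j$. Splitting into low and high frequencies, the high-frequency tail $\sum_{j\ge0}v_j$ is controlled in $\dot{B}^{s_2}_{p_2,r_2}$ by the Fatou estimate below, while the low-frequency part $\sum_{j<0}v_j$ converges in $\mathcal{S}'$ thanks to the embedding $\dot{B}^{s_1}_{p_1,r_1}\hookrightarrow\mathcal{S}'$ noted above; hence $u\in\mathcal{S}'$ is well defined. Testing against an arbitrary $\varphi\in\mathcal{S}$ and using the uniform bounds to justify interchanging the sum over $j$ with the limit in $n$, one checks that $u_{\psi(n)}\to u$ in $\mathcal{S}'$. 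Since each $\dot{\Delta}_j$ is continuous on $\mathcal{S}'$, applying it to this convergence and comparing with $\dot{\Delta}_j u_{\psi(n)}\to v_j$ identifies $\dot{\Delta}_j u=v_j$ for every $j$.

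Finally I would run the lower-semicontinuity estimate. For each fixed $j$ and $k\in\{1,2\}$, local uniform convergence $\dot{\Delta}_j u_{\psi(n)}\to\dot{\Delta}_j u$ together with the lower semicontinuity of the $L^{p_k}$ norm (the usual Fatou lemma when $p_k<\infty$, a direct supremum argument when $p_k=\infty$) gives $\|\dot{\Delta}_j u\|_{L^{p_k}}\le\liminf_n\|\dot{\Delta}_j u_{\psi(n)}\|_{L^{p_k}}$. Inserting the weights $2^{js_k}$ and applying Fatou's lemma to the $\ell^{r_k}(\mathbb{Z})$ sum (or to the $\liminf$ of suprema when $r_k=\infty$) yields $\|u\|_{\dot{B}^{s_k}_{p_k,r_k}}\le C\liminf_n\|u_{\psi(n)}\|_{\dot{B}^{s_k}_{p_k,r_k}}$ for $k=1,2$, which is the asserted Fatou property; completeness then follows by the standard argument of applying this to a Cauchy sequence in $X$ and verifying that the extracted $u$ is its genuine limit. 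I expect the main obstacle to be the realization step of the third paragraph: ensuring that $u$ is an honest tempered distribution with $\dot{\Delta}_j u=v_j$ and no polynomial ambiguity, which is exactly where the hypothesis $s_1<d/p_1$ (or $s_1=d/p_1$, $r_1=1$) must be invoked carefully to control the low-frequency summation.
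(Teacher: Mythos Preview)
The paper does not supply its own proof of this proposition: it is stated in Appendix~\ref{AppA} with the citation \cite{BCD} and no argument, so there is nothing in the paper to compare against directly. Your outline is the standard proof one finds in that reference (block-by-block extraction via Bernstein plus Arzel\`a--Ascoli, diagonal subsequence, reassembly into a tempered distribution using the low-frequency control granted by the hypothesis on $(s_1,p_1,r_1)$, and then Fatou's lemma in $L^{p_k}$ and $\ell^{r_k}$), and it is correct. You have also correctly identified the only genuinely delicate point, namely the realization of $u$ as an element of $\mathcal{S}'$ (not merely $\mathcal{S}'/\mathbf{P}$) with $\dot{\Delta}_j u=v_j$; this is exactly where the condition $s_1<d/p_1$ or $s_1=d/p_1,\ r_1=1$ enters, via the convergence of the low-frequency sum in $L^\infty$.
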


\begin{Lemma}[Bernstein inequality \cite{BCD}]\label{le2.1}
Let $\mathcal{C}$ be an annulus and $B$ a ball, A constant $C$ exists such that for
any nonnegative integer $k$, any couple $(p,q)$ in $[1,\infty]^{2}$ with $q\geq p\geq 1$, and any function $u$ of $L^{p}$, we have
\begin{align*}
 &  {\rm Supp}\, \widehat{u}\subset \lambda B\Rightarrow \|D^{k}u\|_{L^{q}}
:=\sup_{|\alpha|=k}\|\partial^{\alpha}u\|_{L^{q}}
\leq C^{k+1}\lambda ^{k+d(\frac{1}{p}-\frac{1}{q})}\|u\|_{L^{p}},\\
&{\rm Supp}\,  \widehat{u}\subset \lambda \mathcal{C}\Rightarrow C^{-k-1}\lambda^{k}\|u\|_{L^{p}}
\leq\|D^{k}u\|_{L^{p}}\leq C^{k+1}\lambda ^{k}\|u\|_{L^{p}}.
\end{align*}
\end{Lemma}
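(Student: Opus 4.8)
The plan is to prove both inequalities first in the normalized case $\lambda=1$ and then recover the general $\lambda$ by a dilation argument. If $\widehat{u}$ is supported in $\lambda B$, I would set $v(x):=u(x/\lambda)$, so that $\widehat{v}$ is supported in $B$ and $\partial^{\alpha}v=\lambda^{-|\alpha|}(\partial^{\alpha}u)(\cdot/\lambda)$; since $\|v\|_{L^{q}}=\lambda^{d/q}\|u\|_{L^{q}}$ and likewise for the $L^{p}$ norm, any estimate established at $\lambda=1$ transfers to general $\lambda$ with exactly the stated powers $\lambda^{k+d(\frac1p-\frac1q)}$ and $\lambda^{k}$. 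Thus I would reduce everything to $\lambda=1$ at the outset, so that all subsequent constants are genuinely $\lambda$-independent.

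For the upper bound (ball case) at $\lambda=1$, fix once and for all a function $\phi\in C_{c}^{\infty}(\mathbb{R}^{d})$ with $\phi\equiv 1$ on $B$. Since $\widehat{u}=\phi\,\widehat{u}$, for each multi-index $\alpha$ with $|\alpha|=k$ one has $\widehat{\partial^{\alpha}u}=(i\xi)^{\alpha}\phi\,\widehat{u}$, whence $\partial^{\alpha}u=g_{\alpha}\ast u$ with $g_{\alpha}:=\mathcal{F}^{-1}\big((i\xi)^{\alpha}\phi\big)$ a fixed Schwartz function. Young's convolution inequality with $1+\tfrac1q=\tfrac1r+\tfrac1p$ then yields $\|\partial^{\alpha}u\|_{L^{q}}\le\|g_{\alpha}\|_{L^{r}}\|u\|_{L^{p}}$, and taking the supremum over $|\alpha|=k$ gives the claim, provided $\|g_{\alpha}\|_{L^{r}}\le C^{k+1}$. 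The annulus upper bound is then an immediate corollary, since any annulus is contained in a ball.

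For the lower bound (annulus case) at $\lambda=1$, I would exploit that $|\xi|$ is bounded away from $0$ on $\mathcal{C}$. Choosing $\psi\in C_{c}^{\infty}(\mathbb{R}^{d}\setminus\{0\})$ with $\psi\equiv 1$ on $\mathcal{C}$, I set $\theta_{\alpha}(\xi):=\binom{k}{\alpha}\overline{(i\xi)^{\alpha}}|\xi|^{-2k}\psi(\xi)$; the multinomial identity $\sum_{|\alpha|=k}\binom{k}{\alpha}\xi^{2\alpha}=|\xi|^{2k}$ shows that $\sum_{|\alpha|=k}\theta_{\alpha}(\xi)(i\xi)^{\alpha}=\psi(\xi)$. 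Since $\widehat{u}=\psi\,\widehat{u}$, this reconstructs $u=\sum_{|\alpha|=k}(\mathcal{F}^{-1}\theta_{\alpha})\ast\partial^{\alpha}u$, and Young's inequality gives $\|u\|_{L^{p}}\le\sum_{|\alpha|=k}\|\mathcal{F}^{-1}\theta_{\alpha}\|_{L^{1}}\|\partial^{\alpha}u\|_{L^{p}}\le C^{k+1}\|D^{k}u\|_{L^{p}}$, which after rescaling is the desired lower bound $\lambda^{k}\|u\|_{L^{p}}\le C^{k+1}\|D^{k}u\|_{L^{p}}$.

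The main obstacle is the uniform control of the constants in the form $C^{k+1}$. Both $\|g_{\alpha}\|_{L^{r}}$ and $\|\mathcal{F}^{-1}\theta_{\alpha}\|_{L^{1}}$ must be shown to grow at most geometrically in $k$: this requires tracking, through repeated integration by parts against the Fourier kernel, how derivatives distribute over the fixed cutoffs $\phi,\psi$ and, in the annulus case, over the homogeneous factor $|\xi|^{-2k}$, whose bounds on $\mathrm{Supp}\,\psi$ degrade with $k$. Combined with the fact that the number of multi-indices with $|\alpha|=k$ is only polynomial in $k$ and hence absorbable into $C^{k+1}$, this combinatorial and analytic bookkeeping is the technical heart of the argument, while the remainder is routine. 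As this is a classical fact, for the precise constant estimates I would ultimately defer to \cite{BCD}.
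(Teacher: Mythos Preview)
The paper does not prove this lemma; it is stated in Appendix~\ref{AppA} purely as a quotation from \cite{BCD}, with no argument given. Your outline is the standard proof one finds in \cite{BCD}: reduce to $\lambda=1$ by dilation, obtain the upper bound by writing $\partial^{\alpha}u=g_{\alpha}\ast u$ with $g_{\alpha}=\mathcal{F}^{-1}((i\xi)^{\alpha}\phi)$ and applying Young's inequality, and obtain the lower bound on the annulus by building a Fourier-side parametrix $\sum_{|\alpha|=k}\theta_{\alpha}(\xi)(i\xi)^{\alpha}=\psi(\xi)$ that reconstructs $u$ from its $k$-th derivatives. The one point you flag---that the constants in $\|g_{\alpha}\|_{L^{r}}$ and $\|\mathcal{F}^{-1}\theta_{\alpha}\|_{L^{1}}$ grow at most geometrically in $k$---is indeed where the work lies, and your plan to control this by integrating by parts against the oscillatory kernel and tracking derivatives of the fixed cutoffs is exactly how \cite{BCD} handles it. So your proposal is correct and coincides with the reference the paper cites.
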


We also need the following  composition propositions  in $\dot{B}^{s}_{p,1}$.

\begin{Lemma}[\cite{DC}]\label{le2.2}
Let $s>0$, $p\in [1,+\infty],   u\in\dot{B}^{s}_{p,1}\cap L^{\infty}$, and $F \in   W^{[s]+2,\infty}_{loc} $
 such that $F(0)=0$. Then $F(u)\in \dot{B}^{s}_{p,1} $ and there exists
a constant $C=C(s,p,d,F, \|u\|_{L^{\infty}})$ such that
$$\|F(u)\|_{\dot{B}^{s}_{p,1}}\leq C \|u\|_{\dot{B}^{s}_{p,1}}.$$
\end{Lemma}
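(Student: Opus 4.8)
The plan is to reduce $F(u)$ to a telescoping series along the low-frequency truncations and then estimate each Littlewood--Paley block by a Bernstein argument. Concretely, since $F(0)=0$ while $\dot S_j u\to u$ as $j\to+\infty$ and $\dot S_j u\to 0$ as $j\to-\infty$ (the latter being the usual low-frequency convention in the homogeneous setting, where $\|\dot S_j u\|_{L^\infty}\le C\|u\|_{L^\infty}$ uniformly in $j$ because $u\in L^\infty$), I would write the telescoping identity
\begin{equation*}
F(u)=\sum_{j\in\mathbb Z}\big(F(\dot S_{j+1}u)-F(\dot S_{j}u)\big)=\sum_{j\in\mathbb Z}m_j\,\dot\Delta_j u,\qquad m_j:=\int_0^1 F'\big(\dot S_j u+\tau\dot\Delta_j u\big)\,{\rm d}\tau,
\end{equation*}
using $\dot S_{j+1}u-\dot S_j u=\dot\Delta_j u$ and the fundamental theorem of calculus. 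It then suffices to bound $\|\sum_j m_j\dot\Delta_j u\|_{\dot B^s_{p,1}}=\sum_q 2^{qs}\big\|\dot\Delta_q\sum_j m_j\dot\Delta_j u\big\|_{L^p}$.

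The heart of the matter is a family of pointwise bounds on $m_j$ and its derivatives. Since $\dot S_j$ and $\dot\Delta_j$ are bounded on $L^\infty$ uniformly in $j$, the argument $\dot S_j u+\tau\dot\Delta_j u$ stays in a fixed ball of radius $R=C\|u\|_{L^\infty}$; as $F\in W^{[s]+2,\infty}_{\rm loc}$, the function $F'$ together with its first $[s]+1$ derivatives are bounded on that ball, whence $\|m_j\|_{L^\infty}\le C_0:=C(F,\|u\|_{L^\infty})$. Differentiating under the integral sign, using the chain rule, and invoking the Bernstein estimates $\|D^k\dot S_j u\|_{L^\infty}+\|D^k\dot\Delta_j u\|_{L^\infty}\le C\,2^{jk}\|u\|_{L^\infty}$, I would establish
\begin{equation*}
\|D^k m_j\|_{L^\infty}\le C_k\,2^{jk},\qquad 0\le k\le [s]+1,
\end{equation*}
with constants depending only on $s,d,F$, and $\|u\|_{L^\infty}$. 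This is where the regularity hypothesis $F\in W^{[s]+2,\infty}_{\rm loc}$ is sharply used, and it is the main obstacle: one must verify that repeated application of the chain rule to $m_j$ never forces more than $[s]+1$ derivatives onto $F'$, so that all terms remain controlled by the locally bounded derivatives of $F$.

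With these bounds in hand, the summation is routine. Splitting $\dot\Delta_q(m_j\dot\Delta_j u)$ according to whether $j\ge q-N_0$ or $j<q-N_0$ for a fixed integer $N_0$, I would estimate the high-index part directly by $\|m_j\|_{L^\infty}\|\dot\Delta_j u\|_{L^p}\le C_0\|\dot\Delta_j u\|_{L^p}$, and the low-index part by the Bernstein gain $\|\dot\Delta_q g\|_{L^p}\le C\,2^{-q([s]+1)}\|D^{[s]+1}g\|_{L^p}$ together with the Leibniz formula and the derivative bounds above, which yield
\begin{equation*}
\|\dot\Delta_q(m_j\dot\Delta_j u)\|_{L^p}\le C\,2^{(j-q)([s]+1)}\,\|\dot\Delta_j u\|_{L^p},\qquad j<q-N_0.
\end{equation*}
Writing $c_j:=2^{js}\|\dot\Delta_j u\|_{L^p}$, so that $\sum_j c_j=\|u\|_{\dot B^s_{p,1}}$, the two contributions to $\sum_q 2^{qs}\|\dot\Delta_q F(u)\|_{L^p}$ become discrete convolutions of $(c_j)$ with the sequences $2^{ms}\mathbf{1}_{\{m\le N_0\}}$ and $2^{m(s-[s]-1)}\mathbf{1}_{\{m>N_0\}}$ (with $m=q-j$); both are summable precisely because $s>0$ and $s-[s]-1<0$, so Young's inequality for series gives $\sum_q 2^{qs}\|\dot\Delta_q F(u)\|_{L^p}\le C\sum_j c_j$. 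This is exactly the asserted bound, with $C=C(s,p,d,F,\|u\|_{L^\infty})$, completing the proof.
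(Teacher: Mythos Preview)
The paper does not prove this lemma itself; it is quoted from \cite{DC} (and is treated in \cite{BCD}, Theorem~2.61). Your argument is precisely the classical Meyer first-linearization proof used in those references: the telescoping $F(u)=\sum_j m_j\dot\Delta_j u$, the Bernstein-based bounds $\|D^k m_j\|_{L^\infty}\lesssim 2^{jk}$ for $0\le k\le [s]+1$ (which is exactly where $F\in W^{[s]+2,\infty}_{\rm loc}$ enters), and the two-sided geometric summation using $s>0$ and $s-[s]-1<0$ are all correct and standard.

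One small point to make fully rigorous: the identity $F(u)=\sum_j m_j\dot\Delta_j u$ relies on $F(\dot S_j u)\to 0$ as $j\to-\infty$. Since $\dot S_j u$ need not tend to $0$ in $L^\infty$, the cleanest route is the one taken in \cite{BCD}: first prove the series $\sum_j m_j\dot\Delta_j u$ converges in $\dot B^s_{p,1}$ by your summation argument, then identify it with $F(u)$ in $\mathcal S'/\mathbf P$ using the definition of the homogeneous space. This is a bookkeeping issue rather than a gap in the idea.
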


\begin{Lemma}[\cite{DC}]\label{le2.3}
Let $f$ be a smooth function such that $f'(0)=0$, $s$ be a positive real number and $(p,r)$ in $[1,\infty]^{2}$
  such that  $s<\frac{d}{p}$, or $s=\frac{d}{p}$ and $r=1$.  For any couple $(u,v)$ of functions
in $\dot{B}^{s}_{p,r} \cap L^{\infty}$, the function $f\circ v-f\circ u$ then belongs to $\dot{B}^{s}_{p,r} \cap L^{\infty}$ and
\begin{align}
\|f(u)-f(v)\|_{\dot{B}^{s}_{p,r}}\leq\, &C\big(\|v-u\|_{\dot{B}^{s}_{p,r}}\sup_{\tau\in[0,1]}\|u+\tau(v-u)\|_{L^{\infty}}\nonumber\\
&+\|v-u\|_{L^{\infty}}\sup_{\tau\in[0,1]}\|u+\tau(v-u)\|_{\dot{B}^{s}_{p,r}}\big),\nonumber
\end{align}
where $C$ depends on $f'', \|u\|_{L^{\infty}}$ and $\|v\|_{L^{\infty}}.$
\end{Lemma}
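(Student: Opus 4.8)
The plan is to reduce the difference $f(u)-f(v)$ to a product and then combine a tame product estimate with the composition estimate of Lemma \ref{le2.2}. First I would write, by the fundamental theorem of calculus,
$$
f(u)-f(v)=(u-v)\,g,\qquad g:=\int_{0}^{1}f'\big(u+\tau(v-u)\big)\,{\rm d}\tau,
$$
so that the regularity is carried by the factor $u-v$, while $g$ is an amplitude built from $f'$. Since $g\in L^{\infty}$ and $u-v\in L^{\infty}$, membership of $f(u)-f(v)$ in $L^{\infty}$ is immediate, and the whole point becomes controlling the product in $\dot{B}^{s}_{p,r}$.

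Next I would apply the tame (Moser-type) product estimate valid for $s>0$,
$$
\|(u-v)\,g\|_{\dot{B}^{s}_{p,r}}\leq C\big(\|u-v\|_{\dot{B}^{s}_{p,r}}\|g\|_{L^{\infty}}+\|u-v\|_{L^{\infty}}\|g\|_{\dot{B}^{s}_{p,r}}\big),
$$
which is obtained from the Bony decomposition $(u-v)g=\dot{T}_{u-v}g+\dot{T}_{g}(u-v)+\dot{R}(u-v,g)$ together with the paraproduct and remainder bounds of Remark \ref{re2.3} (in the spirit of Proposition \ref{prop2.2}). It then remains to bound the two norms of $g$. For the sup norm I would use the hypothesis $f'(0)=0$: writing $f'(z)=\int_{0}^{1}f''(\sigma z)\,z\,{\rm d}\sigma$ gives $|f'(z)|\leq\|f''\|_{L^{\infty}}|z|$ on $[-M,M]$ with $M:=\max\{\|u\|_{L^{\infty}},\|v\|_{L^{\infty}}\}$, whence
$$
\|g\|_{L^{\infty}}\leq\sup_{\tau\in[0,1]}\|f'(u+\tau(v-u))\|_{L^{\infty}}\leq\|f''\|_{L^{\infty}}\,\sup_{\tau\in[0,1]}\|u+\tau(v-u)\|_{L^{\infty}}.
$$

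For the Besov norm of $g$ I would commute $\|\cdot\|_{\dot{B}^{s}_{p,r}}$ with the $\tau$-integral by Minkowski's inequality and then apply the composition estimate to $f'$ itself. This is legitimate precisely because $f'(0)=0$, which is the homogeneous analogue of Lemma \ref{le2.2} under the stated condition $s<d/p$, or $s=d/p$ and $r=1$; it yields $\|f'(u+\tau(v-u))\|_{\dot{B}^{s}_{p,r}}\leq C\|u+\tau(v-u)\|_{\dot{B}^{s}_{p,r}}$ with $C$ depending on $f''$ and $\|u+\tau(v-u)\|_{L^{\infty}}$, hence $\|g\|_{\dot{B}^{s}_{p,r}}\leq C\sup_{\tau}\|u+\tau(v-u)\|_{\dot{B}^{s}_{p,r}}$. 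Substituting the two bounds on $g$ into the product estimate, and noting $\|u-v\|_{\dot{B}^{s}_{p,r}}=\|v-u\|_{\dot{B}^{s}_{p,r}}$, produces exactly the claimed inequality with $C$ depending only on $f''$, $\|u\|_{L^{\infty}}$, and $\|v\|_{L^{\infty}}$.

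The main obstacle I expect is the composition step for $f'$ at the level of the \emph{homogeneous} space $\dot{B}^{s}_{p,r}$ with general $r$: Lemma \ref{le2.2} is stated only for $r=1$, and homogeneous composition is delicate because the low frequencies must converge. The restriction $s<d/p$ (or $s=d/p$, $r=1$) is exactly what guarantees that the paraproduct–remainder decomposition of $f'(u+\tau(v-u))$ is summable in $\dot{B}^{s}_{p,r}$. I would therefore carry out this composition estimate explicitly via the Bony decomposition and Remark \ref{re2.3}, adapting the $r=1$ argument rather than reusing it verbatim, and tracking that the constant depends only on $f''$ and the $L^{\infty}$ norms.
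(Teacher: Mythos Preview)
The paper does not supply its own proof of this lemma; it is quoted from Danchin \cite{DC} (see also \cite{BCD}, Corollary~2.66) as a known result. Your approach---writing $f(u)-f(v)=(u-v)\int_0^1 f'(u+\tau(v-u))\,{\rm d}\tau$, applying the tame product estimate for $s>0$, and then invoking the composition estimate for $f'$ (which vanishes at the origin)---is exactly the standard argument used in those references, and your identification of the need to extend the composition estimate from $r=1$ to general $r$ via paralinearization is correct and is how the cited proofs proceed.
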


\begin{Lemma}[\cite{DC}]\label{le2.4}
Let $\mathcal{C}'$ be an annulus and $(u_{j})_{j\in \mathbb{Z}}$ be a sequence of functions such that
$${\rm Supp}\,\hat{u}_{j}\subset 2^{j}\mathcal{C}'\quad and\quad \big\| (2^{js}\|u_{j}\|_{L^{p}})_{j\in \mathbb{Z}}\big\|_{l^{r}}<\infty.$$
If the series $\sum_{j\in \mathbb{Z}} u_{j}$ converges in $\mathcal{S}'$  to some $u$ in $\mathcal{S}'_{h}$, then $u$ is in $\dot{B}^{s}_{p,r}$
and
$$\|u\|_{\dot{B}^{s}_{p,r}}\leq C \big\| (2^{js}\|u_{j}\|_{L^{p}})_{j\in \mathbb{Z}}\big\|_{l^{r}}.$$
\end{Lemma}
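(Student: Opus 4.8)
The plan is to exploit the spectral localization of each $u_j$ together with the quasi-orthogonality of the Littlewood--Paley blocks. First I would observe that, since $\operatorname{Supp}\hat{u}_j\subset 2^j\mathcal{C}'$ while the convolution kernel of $\dot{\Delta}_k$ is (up to dilation) supported in a fixed dyadic annulus, there exists an integer $N_0$, depending only on $\mathcal{C}'$ and on the support of the profile $\hat{\phi}$ from Definition \ref{def2.1}, such that $\dot{\Delta}_k u_j\equiv 0$ whenever $|k-j|>N_0$. This reduces the otherwise infinite interaction between frequency blocks to a band of bounded width, which is the crux of the argument.

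Next I would use the hypothesis that $\sum_j u_j$ converges to $u$ in $\mathcal{S}'$ with $u\in\mathcal{S}'_h$. The continuity of $\dot{\Delta}_k$ on $\mathcal{S}'$ lets me commute it with the series, so that $\dot{\Delta}_k u=\sum_{|j-k|\le N_0}\dot{\Delta}_k u_j$ is in fact a finite sum. Invoking the uniform $L^p$-boundedness of $\dot{\Delta}_k$ (a consequence of Young's inequality, the kernel being an $L^1$ dilate; cf.\ Lemma \ref{le2.1}), I obtain $\|\dot{\Delta}_k u\|_{L^p}\le C\sum_{|j-k|\le N_0}\|u_j\|_{L^p}$ with $C$ independent of $k$. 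The membership $u\in\mathcal{S}'_h$ is what guarantees that the homogeneous norm $\|u\|_{\dot{B}^s_{p,r}}$ is unambiguously defined, with no polynomial lost in the low-frequency limit.

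It then remains to pass to the weighted $\ell^r$ norm. Writing $2^{ks}\|\dot{\Delta}_k u\|_{L^p}\le C\sum_{|j-k|\le N_0}2^{(k-j)s}\,\bigl(2^{js}\|u_j\|_{L^p}\bigr)$ and bounding $2^{(k-j)s}\le 2^{N_0|s|}$ on the band $|k-j|\le N_0$, I recognize the right-hand side as the discrete convolution of the sequence $c_j:=2^{js}\|u_j\|_{L^p}$ with a finitely supported kernel. Young's inequality for sequences (equivalently, the triangle inequality in $\ell^r$ applied $2N_0+1$ times) then yields $\|(2^{ks}\|\dot{\Delta}_k u\|_{L^p})_k\|_{\ell^r}\le C(2N_0+1)2^{N_0|s|}\,\|(c_j)_j\|_{\ell^r}$, which is precisely the claimed estimate $\|u\|_{\dot{B}^s_{p,r}}\le C\|(2^{js}\|u_j\|_{L^p})_{j\in\mathbb{Z}}\|_{\ell^r}$.

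The only genuinely delicate point I anticipate is the bookkeeping behind the quasi-orthogonality step and the justification that $\dot{\Delta}_k$ commutes with the infinite series; both hinge on the $\mathcal{S}'$ convergence and on $u$ lying in $\mathcal{S}'_h$, which is exactly what makes the homogeneous Besov norm meaningful here. Everything else is the routine combination of the Bernstein-type $L^p$ bounds on the individual blocks with the discrete Young inequality.
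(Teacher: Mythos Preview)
Your argument is correct and is exactly the standard proof of this lemma: quasi-orthogonality between the annuli $2^j\mathcal{C}'$ and the Littlewood--Paley annuli forces $\dot{\Delta}_k u_j=0$ for $|k-j|>N_0$, after which the $\ell^r$ bound follows from a discrete convolution with a finitely supported kernel. The care you take with the $\mathcal{S}'$ convergence and the membership $u\in\mathcal{S}'_h$ is appropriate and sufficient.

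Note, however, that the paper does not supply its own proof of this lemma: it is stated in Appendix~\ref{AppA} with a citation to \cite{DC} (and the result is also standard in \cite{BCD}), so there is nothing in the paper to compare your argument against. Your proof is precisely the one found in those references.
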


\bigskip


\section{Local Existence Results for Incompressible MHD Equations}\label{lo-imhd}

\begin{Proposition}\label{imhd}
Let $\alpha \geq 0$, $d=2$ or $3$. and  $(u_{0},b_{0})\in \dot{B}^{\frac{d}{2}-1}_{2,1}\cap \dot{B}^{\frac{d}{2}+\alpha-1}_{2,1}$
be two divergence-free vector fields. Then there exists a time $T$ and a unique local solution $(u,b)$ to the following initial value problem
\begin{align}\label{eqb1}
\left\{\begin{array}{l}
\partial_{t} u-\Delta u+ u\cdot\nabla u-b\cdot\nabla b-\nabla P=0,\\
\partial_{t} b-\Delta b +u\cdot\nabla b-b\cdot\nabla u=0,\\
\dv u=0,\quad \dv b=0,\\
(u,b)|_{t=0}=(u_{0},b_{0}), \quad x\in \mathbb{R}^d,
\end{array}\right.
\end{align}
such that
$(u,b)\in F^{\frac{d}{2}}_{T}\cap F^{\frac{d}{2}+\alpha}_{T}$
and there exist two constants c and C depending only on $d$ such that the time $T$ is bounded from below by
$$\sup\bigg\{T'>0\bigg|\sum_{j\in \mathbb{Z}}2^{j(\frac{d}{2}-1)}\big(1-e^{-c2^{2j}T'}\big)^{\frac{1}{2}}\big(\|\dot{\Delta}_{j}u_{0}\|_{L^{2}}
+\|\dot{\Delta}_{j}b_{0}\|_{L^{2}}\big)\bigg\}\leq C.$$
\end{Proposition}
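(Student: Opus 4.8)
The plan is to solve \eqref{eqb1} by the classical Fujita--Kato fixed point scheme for the heat semigroup in critical Besov spaces, exactly as for the incompressible Navier--Stokes equations (see \cite{BCD,DC}). Since the density has disappeared, the system is purely parabolic and the hyperbolic loss of one derivative that plagued the uniqueness argument in Section \ref{local} is absent; consequently the proof is considerably softer than that of Theorem \ref{ThA}. First I would apply the Leray projector $\mathcal{P}$ to eliminate the pressure and recast \eqref{eqb1} in integral (Duhamel) form, treating $(u,b)$ as a single unknown: $u=e^{t\Delta}u_{0}+\int_{0}^{t}e^{(t-\tau)\Delta}\mathcal{P}(b\cdot\nabla b-u\cdot\nabla u)\,{\rm d}\tau$ and $b=e^{t\Delta}b_{0}+\int_{0}^{t}e^{(t-\tau)\Delta}(b\cdot\nabla u-u\cdot\nabla b)\,{\rm d}\tau$, the nonlinearity of the $b$-equation being already divergence free. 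The solution map $\Phi$ will be shown to be a contraction on a ball of $F^{\frac{d}{2}}_{T}\cap F^{\frac{d}{2}+\alpha}_{T}$, and the limit will automatically satisfy $\dv u=\dv b=0$.

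The analytic core is a single bilinear estimate. By the second (tame) product law in Proposition \ref{prop2.2}, for $\beta\in\{0,\alpha\}$ one has $\|w\cdot\nabla z\|_{\dot{B}^{\frac{d}{2}-1+\beta}_{2,1}}\leq C\big(\|w\|_{\dot{B}^{\frac{d}{2}}_{2,1}}\|z\|_{\dot{B}^{\frac{d}{2}+\beta}_{2,1}}+\|w\|_{\dot{B}^{\frac{d}{2}+\beta}_{2,1}}\|z\|_{\dot{B}^{\frac{d}{2}}_{2,1}}\big)$; integrating in time and splitting with Cauchy--Schwarz, the source term of the Duhamel formula is bounded in $L^{1}_{T}(\dot{B}^{\frac{d}{2}-1+\beta}_{2,1})$ by products of $L^{2}_{T}(\dot{B}^{\frac{d}{2}}_{2,1})$ and $L^{2}_{T}(\dot{B}^{\frac{d}{2}+\beta}_{2,1})$ norms, the latter being controlled by the $F^{\frac{d}{2}+\beta}_{T}$ norm through interpolation between $\widetilde{L}^{\infty}_{T}(\dot{B}^{\frac{d}{2}-1+\beta}_{2,1})$ and $L^{1}_{T}(\dot{B}^{\frac{d}{2}+1+\beta}_{2,1})$. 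Combined with the heat-flow smoothing recalled in Remark \ref{Rem21}, this closes the contraction provided the free-evolution part is small. The decisive observation is that the relevant smallness is measured by $\|e^{t\Delta}u_{0}\|_{L^{2}_{T}(\dot{B}^{\frac{d}{2}}_{2,1})}+\|e^{t\Delta}b_{0}\|_{L^{2}_{T}(\dot{B}^{\frac{d}{2}}_{2,1})}$, and since $\|e^{t\Delta}\dot{\Delta}_{j}u_{0}\|_{L^{2}_{T}(L^{2})}=\big((1-e^{-2c2^{2j}T})/(2c2^{2j})\big)^{1/2}\|\dot{\Delta}_{j}u_{0}\|_{L^{2}}$, this quantity equals, up to a dimensional constant, $\sum_{j\in\mathbb{Z}}2^{j(\frac{d}{2}-1)}(1-e^{-c2^{2j}T})^{1/2}(\|\dot{\Delta}_{j}u_{0}\|_{L^{2}}+\|\dot{\Delta}_{j}b_{0}\|_{L^{2}})$. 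This is precisely the expression in the stated lower bound for $T$; by dominated convergence it tends to $0$ as $T\to0$, which both yields existence on some $[0,T]$ and delivers the quantitative lifespan estimate.

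To obtain the solution in $F^{\frac{d}{2}+\alpha}_{T}$ on the same time interval I would run the iteration in the intersection $F^{\frac{d}{2}}_{T}\cap F^{\frac{d}{2}+\alpha}_{T}$: the tame form of the bilinear estimate above controls the higher-order ($\beta=\alpha$) norm linearly by itself times the small critical ($\beta=0$) norm, so a routine bootstrap shows the $F^{\frac{d}{2}+\alpha}_{T}$ bound does not shorten the lifespan fixed by the critical data. For uniqueness, since the scheme already converges at the critical regularity $\dot{B}^{\frac{d}{2}-1}_{2,1}$ and the product estimate closes there with no derivative loss, I would estimate the difference $(\delta u,\delta b)$ of two solutions directly in $F^{\frac{d}{2}}_{T}$ and conclude via Gronwall on a short interval, then propagate along $[0,T]$ by the usual connectedness argument.

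The step requiring the most care is the sharp lifespan: one must route the smallness through the $L^{2}_{T}(\dot{B}^{\frac{d}{2}}_{2,1})$ norm of the linear evolution---this is what produces the square-root weight $(1-e^{-c2^{2j}T})^{1/2}$---rather than through the full data norm $\|(u_{0},b_{0})\|_{\dot{B}^{\frac{d}{2}-1}_{2,1}}$, and one must check that the $\alpha$-regularity propagation does not degrade this bound. The coupling between $u$ and $b$ is, by contrast, only a bookkeeping issue: the quadratic interactions $b\cdot\nabla b$, $b\cdot\nabla u$, and $u\cdot\nabla b$ all obey the same product estimate as $u\cdot\nabla u$, so treating $(u,b)$ as one vector-valued unknown dispenses with any genuinely new difficulty.
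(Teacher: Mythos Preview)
Your proposal is correct and follows essentially the same route as the paper: a Fujita--Kato fixed point for the Duhamel formulation, with the bilinear term controlled via the tame product law of Proposition~\ref{prop2.2}, smallness coming from $\|(e^{t\Delta}u_{0},e^{t\Delta}b_{0})\|_{L^{2}_{T}(\dot{B}^{d/2}_{2,1})}$, and the lifespan bound read off from the explicit heat-kernel computation. The only cosmetic difference is that the paper handles the contraction in $F^{\frac{d}{2}}_{T}\cap F^{\frac{d}{2}+\alpha}_{T}$ in one shot by equipping the intersection with a weighted norm $\|\cdot\|_{F^{\frac{d}{2}}_{T}}+\eta\|\cdot\|_{F^{\frac{d}{2}+\alpha}_{T}}$ with $\eta$ small, whereas you phrase the $\alpha$-propagation as a bootstrap; both exploit the same tame structure and yield the same conclusion.
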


\begin{proof}
We shall adopt the fixed point method. Denote by $e^{t\Delta}$ the semi-group of the heat equation.
Let $(u_{L},b_{L})\in F^{\frac{d}{2}}\cap F^{\frac{d}{2}+\alpha}$ be the solution of
\begin{align}
\left\{\begin{array}{l}
\partial_{t}u_{L}-\Delta u_{L}=0,\\
\partial_{t}b_{L}-\Delta b_{L}=0,\\
(u_{L},b_{L})_{|t=0}=(u_{0},b_{0})(x), \quad x\in \mathbb{R}^d.
\end{array}\right.\nonumber
\end{align}
Assume that the time $T\in(0,+\infty]$ has been chosen in such a way that
\begin{align}\label{eqb2}
\|(u_{L},b_{L})\|_{L^{2}_{T}(\dot{B}^{\frac{d}{2}}_{2,1})}\leq \frac{1}{4C}
\end{align}
for a constant C to be defined below.

Let $0<R<\frac{1}{4C}$ and $R_{\alpha}:=\|(u_{L},b_{L})\|_{L^{2}_{T}(\dot{B}^{\frac{d}{2}+\alpha}_{2,1})}.$
Let $\mathcal{G}$ be the set of divergence-free vector fields with coefficients in $ F_{T}^{\frac{d}{2}}\cap F_{T}^{\frac{d}{2}+\alpha}$, and such that
$\|(u,b)\|_{F_{T}^{\frac{d}{2}}}\leq R$ and $\|(u,b)\|_{F_{T}^{\frac{d}{2}+\alpha}}\leq R_{\alpha}$.
Define
\begin{align}
\mathcal{F}(\bar{u},\bar{b}) := \bigg(\int_{0}^{t}e^{(t-\tau)\Delta}\mathcal{P}(b\cdot\nabla b-u\cdot\nabla u)\mathrm{d}\tau,\
\int_{0}^{t}e^{(t-\tau)\Delta}\mathcal{P}(b\cdot\nabla u-u\cdot\nabla b)\mathrm{d}\tau\bigg)\nonumber
\end{align}
with $u=\bar{u}+u_{L}$ and $b=\bar{b}+b_{L}$.
According to Propositions \ref{prop2.2} and \ref{prop2.3}, $\mathcal{F}$ maps $F^{\frac{d}{2}}_{T}\cap F^{\frac{d}{2}+\alpha}_{T}$
into itself, and, for $\beta=0$  or $\alpha$,
we have
\begin{align}
\|\mathcal{F}(\bar{u},\bar{b})\|_{ F^{\frac{d}{2}+\beta}_{T}}
\leq\, &  C\bigg(\|(\bar{u},\bar{b})\|_{F^{\frac{d}{2}}_{T}}+\|(u_{L},b_{L})\|_{L^{2}_{T}(\dot{B}^{\frac{d}{2}}_{2,1})}\bigg)\nonumber\\
& \quad \times \bigg(\|(\bar{u},\bar{b})\|_{F^{\frac{d}{2}+\beta}_{T}}+\|(u_{L},b_{L})\|_{L^{2}_{T}(\dot{B}^{\frac{d}{2}+\beta}_{2,1})}\bigg).\nonumber
\end{align}
Hence it is easy to check that  $\mathcal{F}$ maps $\mathcal{G}$ to $\mathcal{G}$. Similar computations imply that
\begin{align}
\|\mathcal{F}(\bar{u}_{1}-\bar{u}_{2},\bar{b}_{1}-\bar{b}_{2})\|_{F^{\frac{d}{2}}_{T}}\leq & C \Big(\|(\bar{u}_{1},\bar{b}_{1})\|_{F^{\frac{d}{2}}_{T}}
+\|(\bar{u}_{2},\bar{b}_{2})\|_{F^{\frac{d}{2}}_{T}}+\|(\bar{u}_{L},\bar{b}_{L})\|_{L^{2}_{T}(\dot{B}^{\frac{d}{2}}_{T})}\Big)\nonumber\\
&\quad \times\|(\bar{u}_{1}-\bar{u}_{2},\bar{b}_{1}-\bar{b}_{2})\|_{F^{\frac{d}{2}}_{T}},\nonumber\\
\|\mathcal{F}(\bar{u}_{1}-\bar{u}_{2},\bar{b}_{1}-\bar{b}_{2})\|_{F^{\frac{d}{2}+\alpha}_{T}}
\leq & C \Big(\|(\bar{u}_{2},\bar{b}_{2})\|_{F^{\frac{d}{2}+\alpha}_{T}}+\|(\bar{u}_{L},\bar{b}_{L})\|_{L^{2}_{T}(\dot{B}^{\frac{d}{2}+\alpha}_{T})}\Big)\nonumber\\
&\quad \times\|(\bar{u}_{1}-\bar{u}_{2},\bar{b}_{1}-\bar{b}_{2})\|_{F^{\frac{d}{2}}_{T}}\nonumber\\
&+C \Big(\|(\bar{u}_{1},\bar{b}_{1})\|_{F^{\frac{d}{2}}_{T}}+\|(\bar{u}_{L},\bar{b}_{L})\|_{L^{2}_{T}(\dot{B}^{\frac{d}{2}}_{T})}\Big)\nonumber\\
&\quad \times\|(\bar{u}_{1}-\bar{u}_{2},\bar{b}_{1}-\bar{b}_{2})\|_{F^{\frac{d}{2}}_{T}+\alpha}.\nonumber
\end{align}
Set $k=\frac{1}{2}+2 R C$ and $K=4R_{\alpha}C$. According to the above inequalities, we have, for all $\eta >0,$
\begin{align}
&\!\!\!\!\!\!\!\!\!\!\!\!\!\! \!\!\!\!
\|\mathcal{F}(\bar{u}_{1}-\bar{u}_{2},\bar{b}_{1}-\bar{b}_{2})\|_{F^{\frac{d}{2}}_{T}}+
\eta\|\mathcal{F}(\bar{u}_{1}-\bar{u}_{2},\bar{b}_{1}-\bar{b}_{2})\|_{F^{\frac{d}{2}+\alpha}_{T}}\nonumber\\
\leq & C \|(\bar{u}_{1}-\bar{u}_{2},\bar{b}_{1}-\bar{b}_{2})\|_{F^{\frac{d}{2}}_{T}}
 \bigg(\sum_{i=1}^{2}\|(\bar{u}_{i},\bar{b}_{i})\|_{F^{\frac{d}{2}}_{T}}+\|(\bar{u}_{L},\bar{b}_{L})\|_{L^{2}_{T}(\dot{B}^{\frac{d}{2}}_{T})}\nonumber\\
& \qquad +\eta\|(\bar{u}_{2},\bar{b}_{2})\|_{F^{\frac{d}{2}+\alpha}_{T}}+\eta\|(\bar{u}_{L},\bar{b}_{L})\|_{L^{2}_{T}(\dot{B}^{\frac{d}{2}+\alpha}_{T})}\bigg)\nonumber\\
&+C\eta\|(\bar{u}_{1}-\bar{u}_{2},\bar{b}_{1}-\bar{b}_{2})\|_{F^{\frac{d}{2}+\alpha}_{T}}
(\|(\bar{u}_{1},\bar{b}_{1})\|_{F^{\frac{d}{2}}_{T}}+\|(\bar{u}_{L},\bar{b}_{L})\|_{L^{2}_{T}(\dot{B}^{\frac{d}{2}}_{T})})\nonumber\\
\leq\, & C\big(\|(\bar{u}_{1}-\bar{u}_{2},\bar{b}_{1}-\bar{b}_{2})\|_{F^{\frac{d}{2}}_{T}}+
\eta\|(\bar{u}_{1}-\bar{u}_{2},\bar{b}_{1}-\bar{b}_{2})\|_{F^{\frac{d}{2}+\alpha}_{T}}\big)\nonumber\\
&\times \bigg(\sum_{i=1}^{2}\|(\bar{u}_{i},\bar{b}_{i})\|_{F^{\frac{d}{2}}_{T}}+\|(\bar{u}_{L},\bar{b}_{L})\|_{L^{2}_{T}(\dot{B}^{\frac{d}{2}}_{T})} \nonumber\\
&\qquad  +
\eta\|(\bar{u}_{2},\bar{b}_{2})\|_{F^{\frac{d}{2}+\alpha}_{T}}+\eta\|(\bar{u}_{L},\bar{b}_{L})\|_{L^{2}_{T}(\dot{B}^{\frac{d}{2}+\alpha}_{T})}\bigg)\nonumber\\
\leq &(k+\eta K)\Big(\|(\bar{u}_{1}-\bar{u}_{2},\bar{b}_{1}-\bar{b}_{2})\|_{F^{\frac{d}{2}}_{T}}+
\eta\|(\bar{u}_{1}-\bar{u}_{2},\bar{b}_{1}-\bar{b}_{2})\|_{F^{\frac{d}{2}+\alpha}_{T}}\Big).\nonumber
\end{align}
Choosing $\eta$   and $R$  sufficiently small such that $k+\eta K<1$,
we conclude that $\mathcal{F}$  is a contraction map on $\mathcal{G}$ endowed with the norm
$\|\cdot\|_{F^{\frac{d}{2}}_{T}}+\eta\|\cdot\|_{F^{\frac{d}{2}+\alpha}_{T}}.$
Denoting
$$u=\bar{u}+u_{L},\quad b=\bar{b}+b_{L},$$
where $(\bar{u},\bar{b})$
is the unique point of $\mathcal{F}$ in $\mathcal{G}$, we easily find that $(u,b)$ solves the problem \eqref{eqb1}.

Now, according to Proposition 2.3 in \cite{DA}, we have, for the two constants $c$ and $C$ depending only on $d$,
$$\|u_{L}\|_{L^{2}_{T}(\dot{B}^{\frac{d}{2}}_{2,1})}
\leq C \bigg(\sum_{j\in \mathbb{{Z}}}2^{j(\frac{d}{2}-1)}\big(1-e^{-c2^{2j}T}\big)^{\frac{1}{2}}\bigg)\|\dot{\Delta}_{j}u_{0}\|_{L^{2}},$$
$$\|b_{L}\|_{L^{2}_{T}(\dot{B}^{\frac{d}{2}}_{2,1})}
\leq C \bigg(\sum_{j\in \mathbb{Z}}2^{j(\frac{d}{2}-1)}\big(1-e^{-c2^{2j}T}\big)^{\frac{1}{2}}\bigg)\|\dot{\Delta}_{j}b_{0}\|_{L^{2}}.$$
Thanks to the Lebesgue's dominated convergence theorem, the right-hand sides on the above equalities tend to zero as $T$ tends to zero.
Combining  this with \eqref{eqb2}  gives us a bound from
below for the life span of $(u,b)$. The uniqueness of solution can be proved in a standard way.
\end{proof}

Below we give a priori estimates for the following initial value problem:
\begin{equation}\label{eqb3}
\left\{
\begin{array}{l}
\partial_{t} w-\mu\Delta w+A\cdot\nabla w+w\cdot\nabla A-B\cdot\nabla E-E\cdot\nabla B=f,\\
\partial_{t} B-\nu\Delta B+A\cdot\nabla B-B\cdot\nabla A+w\cdot\nabla E-E\cdot\nabla w=g,\\
(w,B)|_{t=0}=(w_{0},B_{0})(x), \quad x\in \mathbb{R}^d.
\end{array}
\right.
\end{equation}

\begin{Proposition}\label{AProp2}
Let $s\in(-\frac{d}{2},\frac{d}{2}].$
Assume that $w_0,B_0\in \dot{B}^{s}_{2,1}$,
$f,g\in L^1_T(  \dot{B}^{s}_{2,1})$, and
$A,E\in L^1_T( \dot{B}^{\frac{d}{2}+1}_{2,1})$ are time-dependent vector fields.
Then there exists a universal constant $\kappa$, and a constant C depending only on $d$
and $s$, such that, for all $t\in [0,T]$,
\begin{align}
\|(w,B)\|_{\tilde{L}^{\infty}_{t}(\dot{B}^{s}_{2,1})}+\kappa\underline{\nu}\|(w,B)\|_{L^{1}_{t}(\dot{B}^{s+2}_{2,1})}
\leq\, &\left(\|(w_{0},B_{0})\|_{\dot{B}^{s}_{2,1}}+\|(f,g)\|_{L^{1}_{t}(\dot{B}^{s}_{2,1})}\right)\nonumber\\
&  \times\exp\bigg(C\int_{0}^{t}(\|\nabla A\|_{\dot{B}^{\frac{d}{2}}_{2,1}}+\|\nabla E\|_{\dot{B}^{\frac{d}{2}}_{2,1}})\mathrm{d}\tau\bigg)\nonumber
\end{align}
with $\underline{\nu}:=\min\{\mu,\nu\}$.
\end{Proposition}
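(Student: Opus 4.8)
The plan is to generalize the Littlewood--Paley energy argument used for Proposition \ref{prop4.3} to the coupled system \eqref{eqb3}, estimating $(w,B)$ jointly so that the genuinely coupled terms can be exploited through a cancellation of MHD type. First I would apply $\dot{\Delta}_j$ to both equations and set $w_j:=\dot{\Delta}_j w$, $B_j:=\dot{\Delta}_j B$. Extracting a commutator from each transport-type term whose gradient falls on an unknown, i.e. writing $\dot{\Delta}_j(A\cdot\nabla w)=A\cdot\nabla w_j+R_j^{Aw}$, $\dot{\Delta}_j(A\cdot\nabla B)=A\cdot\nabla B_j+R_j^{AB}$, $\dot{\Delta}_j(E\cdot\nabla B)=E\cdot\nabla B_j+R_j^{EB}$ and $\dot{\Delta}_j(E\cdot\nabla w)=E\cdot\nabla w_j+R_j^{Ew}$ with $R_j^{Vz}:=\sum_k[V^k,\dot{\Delta}_j]\partial_k z$, the localized system takes the form
\begin{align*}
\partial_t w_j-\mu\Delta w_j+A\cdot\nabla w_j-E\cdot\nabla B_j&=\dot{\Delta}_j(B\cdot\nabla E)-\dot{\Delta}_j(w\cdot\nabla A)-R_j^{Aw}+R_j^{EB}+f_j,\\
\partial_t B_j-\nu\Delta B_j+A\cdot\nabla B_j-E\cdot\nabla w_j&=\dot{\Delta}_j(B\cdot\nabla A)-\dot{\Delta}_j(w\cdot\nabla E)-R_j^{AB}+R_j^{Ew}+g_j.
\end{align*}

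Second, I would take the $L^2$ inner product of the $w_j$-equation with $w_j$ and of the $B_j$-equation with $B_j$ and add them. The two viscous contributions $\mu\|\nabla w_j\|_{L^2}^2+\nu\|\nabla B_j\|_{L^2}^2$ are bounded below, by Bernstein's inequality (Lemma \ref{le2.1}), by $\kappa\underline{\nu}2^{2j}(\|w_j\|_{L^2}^2+\|B_j\|_{L^2}^2)$, which is where $\underline{\nu}=\min\{\mu,\nu\}$ enters and why a single gain of two derivatives appears on the left. The principal parts of the four advection terms carried on the left reduce, after integration by parts, to divergence-controlled expressions:
\[
\langle A\cdot\nabla w_j,w_j\rangle+\langle A\cdot\nabla B_j,B_j\rangle=-\tfrac12\int(\dv A)(|w_j|^2+|B_j|^2)\,{\rm d}x,
\]
\[
-\langle E\cdot\nabla B_j,w_j\rangle-\langle E\cdot\nabla w_j,B_j\rangle=\int(\dv E)(w_j\cdot B_j)\,{\rm d}x.
\]
Both right-hand sides are controlled by $\|\dv A\|_{L^\infty}$ and $\|\dv E\|_{L^\infty}$, hence by $\|\nabla A\|_{\dot{B}^{d/2}_{2,1}}+\|\nabla E\|_{\dot{B}^{d/2}_{2,1}}$ through the embedding $\dot{B}^{d/2}_{2,1}\hookrightarrow L^\infty$.

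Third, every remaining term on the right is estimated exactly as the three displayed bounds in the proof of Proposition \ref{prop4.3}: by Proposition \ref{prop2.2} and the commutator estimate (Lemma 2.100 in \cite{BCD}) one obtains, for a sequence $(c_j)$ with $\sum_j c_j=1$ and for $s\in(-d/2,d/2]$,
\[
\|R_j^{Aw}\|_{L^2}+\|\dot{\Delta}_j(w\cdot\nabla A)\|_{L^2}+\|R_j^{AB}\|_{L^2}+\|\dot{\Delta}_j(B\cdot\nabla A)\|_{L^2}\leq Cc_j2^{-js}\|\nabla A\|_{\dot{B}^{d/2}_{2,1}}\big(\|w\|_{\dot{B}^s_{2,1}}+\|B\|_{\dot{B}^s_{2,1}}\big),
\]
together with the analogous bounds for the four $E$-terms in terms of $\|\nabla E\|_{\dot{B}^{d/2}_{2,1}}$ (the commutators $R_j^{EB},R_j^{Ew}$ being admissible since the commutator estimate holds up to $s\le d/2+1$). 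Dividing the summed energy inequality by $(\|w_j\|_{L^2}^2+\|B_j\|_{L^2}^2)^{1/2}$, integrating on $[0,t]$, multiplying by $2^{js}$ and summing over $j\in\mathbb{Z}$ yields a bound of the form $\|(w,B)\|_{\tilde{L}^\infty_t(\dot{B}^s_{2,1})}+\kappa\underline{\nu}\|(w,B)\|_{L^1_t(\dot{B}^{s+2}_{2,1})}\le \|(w_0,B_0)\|_{\dot{B}^s_{2,1}}+\|(f,g)\|_{L^1_t(\dot{B}^s_{2,1})}+C\int_0^t(\|\nabla A\|_{\dot{B}^{d/2}_{2,1}}+\|\nabla E\|_{\dot{B}^{d/2}_{2,1}})\|(w,B)\|_{\dot{B}^s_{2,1}}\,{\rm d}\tau$, after which Gronwall's lemma gives the claim.

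The main obstacle is the pair of cross terms $E\cdot\nabla B$ and $E\cdot\nabla w$, whose gradients fall on the unknown fields: their principal parts $E\cdot\nabla B_j$ and $E\cdot\nabla w_j$ cannot be placed on the right-hand side and controlled by the product law directly, so the argument must keep them on the left and rely on the antisymmetric structure of the magnetic coupling, which makes them cancel into the $\dv E$ term above; only the commutator remainders $R_j^{EB},R_j^{Ew}$ are then estimated. Correctly pairing the two equations so that this cancellation occurs, and combining the distinct viscosities $\mu,\nu$ through $\underline{\nu}$, are the points requiring care; the rest is the routine bookkeeping already present in Proposition \ref{prop4.3}.
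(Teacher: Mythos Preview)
Your proposal is correct and follows essentially the same approach as the paper: localize via $\dot{\Delta}_j$, extract commutators, perform a joint $L^2$ energy estimate on $(w_j,B_j)$ so that the cross terms $E\cdot\nabla B_j$ and $E\cdot\nabla w_j$ cancel into a $\dv E$ contribution, bound the remaining products and commutators by Proposition \ref{prop2.2} and the commutator lemma, then sum and apply Gronwall. The only cosmetic difference is notation for the commutators; the key structural observation---the antisymmetric MHD coupling yielding the $\int(\dv E)(w_j\cdot B_j)\,{\rm d}x$ term---is exactly what the paper uses.
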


\begin{proof}
The desired estimate will be obtained after localizing the equations \eqref{eqb3} by means  of the homogeneous Littlewood-Paley decomposition.
More precisely, applying $\dot{\Delta}_{j}$ to \eqref{eqb3} yields

\begin{equation}\nonumber
\left\{
\begin{array}{l}
 \partial_{t} w_{j}-\mu\Delta w_{j}+A\cdot\nabla w_{j}-E\cdot\nabla B_{j}\nonumber\\
  \qquad\qquad =f_{j}-\dot{\Delta}_{j}(w\cdot\nabla A)+\dot{\Delta}_{j}(B\cdot\nabla E)
+R_{j}^{1}-R_{j}^{2},\\
\partial_{t} B_{j}-\nu\Delta B_{j}+A\cdot\nabla B_{j}-E\cdot\nabla w_{j}\nonumber\\
  \qquad\qquad =g_{j}+\dot{\Delta}_{j}(B\cdot\nabla A)-\dot{\Delta}_{j}(w\cdot\nabla E)
+R_{j}^{3}-R_{j}^{4},
\end{array}
\right.
\end{equation}
with
\begin{align*}
&  w_{j}:=\dot{\Delta}_{j}w,\quad \qquad \qquad  \ \ \quad B_{j}:=\dot{\Delta}_{j}B,\\
& R_{j}^{1}:=\sum_{k}[A^{k},\dot{\Delta}_{j}]\partial_{k}w,\qquad R_{j}^{2}:=\sum_{k}[E^{k},\dot{\Delta}_{j}]\partial_{k}B,\\
& R_{j}^{3}:=\sum_{k}[A^{k},\dot{\Delta}_{j}]\partial_{k}B,\qquad R_{j}^{4}:=\sum_{k}[E^{k},\dot{\Delta}_{j}]\partial_{k}w.
\end{align*}
Taking the $L^{2}$ inner product of the above equations with $w_{j}$ and $ B_{j},$ respectively, we easily get
\begin{align}
&\!\!\!\!\!\! \!\!\! \!\!\!   \frac{1}{2}\frac{\mathrm{d}}{\mathrm{dt}}\big(\|w_{j}\|^{2}_{L^{2}}+\|B_{j}\|^{2}_{L^{2}}\big)
+\mu\int|\nabla w_{j}|^{2}\mathrm{d}x+\nu\int|\nabla B_{j}|^{2}\mathrm{d}x\nonumber\\
=\,&\frac{1}{2}\int(\dv A)(|w_{j}|^{2}+|B_{j}|^{2})\mathrm{d}x+\int f_{j}w_{j}\mathrm{d}x+\int g_{j}B_{j}\mathrm{d}x-\int\dv E(B\cdot w)\mathrm{d}x\nonumber\\
&-\int\dot{\Delta}_{j}(w\cdot\nabla A)w_{j}\mathrm{d}x+\int\dot{\Delta}_{j}(B\cdot\nabla E)w_{j}\mathrm{d}x-\int\dot{\Delta}_{j}(w\cdot\nabla E)B_{j}\mathrm{d}x
\nonumber\\
&+\int\dot{\Delta}_{j}(B\cdot\nabla A)B_{j}\mathrm{d}x
+\int(R_{j}^{1}-R_{j}^{2})w_{j}\mathrm{d}x+\int(R_{j}^{3}-R_{j}^{4})w_{j}\mathrm{d}x.\nonumber
\end{align}
Hence, thanks to the Bernstein's inequality, we get, for some universal constant $\kappa,$
\begin{align}\label{eqb4}
&\!\!\!\!\!\! \!\!\! \!\!\! \!\!\frac{1}{2}\frac{\mathrm{d}}{\mathrm{dt}}\big(\|w_{j}\|^{2}_{L^{2}}+\|B_{j}\|^{2}_{L^{2}}\big)+\kappa\underline{\nu}2^{2j}
\big(\|w_{j}\|^{2}_{L^{2}}+\|B_{j}\|^{2}_{L^{2}}\big)\nonumber\\
\leq &
\big(\|f_{j}\|_{L^{2}}+\|\dv A\|_{L^{\infty}}\|w_{j}\|_{L^{2}}\nonumber\\
&+\|\dot{\Delta}_{j}(w\cdot\nabla A)\|_{L^{2}}+\|\dot{\Delta}_{j}(B\cdot\nabla E)\|_{L^{2}}
+\|R_{j}^{1}\|_{L^{2}}+\|R_{j}^{2}\|_{L^{2}}\big)\|w_{j}\|^{2}_{L^{2}}
\nonumber\\
&+\Big(\|g_{j}\|_{L^{2}}+\|\dv A\|_{L^{\infty}}\|B_{j}\|_{L^{2}}
+\|\dot{\Delta}_{j}(w\cdot\nabla E)\|_{L^{2}}\nonumber\\
& \quad +\|\dot{\Delta}_{j}(B\cdot\nabla A)\|_{L^{2}}
+\|R_{j}^{3}\|_{L^{2}}+\|R_{j}^{4}\|_{L^{2}}\Big)\|B_{j}\|^{2}_{L^{2}}\nonumber\\
&
+\|\dv E\|_{L^{\infty}}\|w_{j}\|_{L^{2}}\|B_{j}\|_{L^{2}}.
\end{align}
By   Propositions \ref{prop2.2} and \ref{prop2.3}  and   the commutator estimates in \cite{BCD}, we have the following estimates:
\begin{align*}
 \|\dot{\Delta}_{j}(w\cdot\nabla A)\|_{L^{2}}&\leq C c_{j}2^{-js}\|\nabla A\|_{\dot{B}^{\frac{d}{2}}_{2,1}}\|w\|_{\dot{B}^{s}_{2,1}},\\
 \|\dot{\Delta}_{j}(B\cdot\nabla E)\|_{L^{2}}&\leq C c_{j}2^{-js}\|\nabla E\|_{\dot{B}^{\frac{d}{2}}_{2,1}}\|B\|_{\dot{B}^{s}_{2,1}},\\
 \|\dot{\Delta}_{j}(B\cdot\nabla A)\|_{L^{2}}&\leq C c_{j}2^{-js}\|\nabla A\|_{\dot{B}^{\frac{d}{2}}_{2,1}}\|B\|_{\dot{B}^{s}_{2,1}},\\
 \|\dot{\Delta}_{j}(w\cdot\nabla E)\|_{L^{2}}&\leq C c_{j}2^{-js}\|\nabla E\|_{\dot{B}^{\frac{d}{2}}_{2,1}}\|w\|_{\dot{B}^{s}_{2,1}},\\
 \|R_{j}^{1}\|_{L^{2}}&\leq C c_{j}2^{-js}\|\nabla A\|_{\dot{B}^{\frac{d}{2}}_{2,1}}\|w\|_{\dot{B}^{s}_{2,1}},\nonumber\\
 \|R_{j}^{2}\|_{L^{2}}&\leq C c_{j}2^{-js}\|\nabla E\|_{\dot{B}^{\frac{d}{2}}_{2,1}}\|B\|_{\dot{B}^{s}_{2,1}},\nonumber\\
 \|R_{j}^{3}\|_{L^{2}}&\leq C c_{j}2^{-js}\|\nabla A\|_{\dot{B}^{\frac{d}{2}}_{2,1}}\|B\|_{\dot{B}^{s}_{2,1}},\nonumber\\
 \|R_{j}^{4}\|_{L^{2}}&\leq C c_{j}2^{-js}\|\nabla E\|_{\dot{B}^{\frac{d}{2}}_{2,1}}\|w\|_{\dot{B}^{s}_{2,1}},\nonumber
\end{align*}
where $(c_{j})_{j\in \mathbb{Z}}$ denotes a positive sequence such that $\sum_{j\in \mathbb{Z}}c_{j}=1$.

Formally dividing both sides of the inequality \eqref{eqb4} by $\|w_{j}\|_{L^{2}}+\|B_{j}\|_{L^{2}}$ and integrating over $[0,t]$  yields
\begin{align}\label{eqb5}
&\!\!\!\!\!\!\!\!\!\!\!\!\!\!\!\!\!\!\!
\!\!
\|w_{j}(t)\|_{L^{2}}+\|B_{j}(t)\|_{L^{2}}+\underline{\nu}2^{2j}\int_{0}^{t}(\|w_{j}(\tau)\|_{L^{2}}+\|B_{j}(\tau)\|_{L^{2}})\mathrm{d}\tau\nonumber\\
\leq\,&\|w_{j}(0)\|_{L^{2}}+\|B_{j}(0)\|_{L^{2}}+\int_{0}^{t}(\|f_{j}(\tau)\|_{L^{2}}+\|g_{j}(\tau)\|_{L^{2}})\mathrm{d}\tau\nonumber\\
&+C\int_{0}^{t}\big(\|\nabla A\|_{\dot{B}^{\frac{d}{2}}_{2,1}}+\|\nabla E\|_{\dot{B}^{\frac{d}{2}}_{2,1}}\big)
\big(\|w_{j}(\tau)\|_{L^{2}}+\|B_{j}(\tau)\|_{L^{2}}\big)\mathrm{d}\tau\nonumber\\
&+2^{-js}C\int_{0}^{t}\big(\|\nabla A\|_{\dot{B}^{\frac{d}{2}}_{2,1}}+\|\nabla E\|_{\dot{B}^{\frac{d}{2}}_{2,1}}\big)
\big(\|w\|_{\dot{B}^{s}_{2,1}}+\|B\|_{\dot{B}^{s}_{2,1}}\big)\mathrm{d}\tau.
\end{align}
Now, multiplying the both sides of \eqref{eqb5} by $2^{js}$ and summing over $j$, we end up with
\begin{align}
\|w\|_{\tilde{L}^{\infty}_{t}(\dot{B}^{s}_{2,1})}&+\|B\|_{\tilde{L}^{\infty}_{t}(\dot{B}^{s}_{2,1})}
+\kappa\underline{\nu}\|w\|_{L^{1}_{t}(\dot{B}^{s+2}_{2,1})}+\kappa\underline{\nu}\|B\|_{L^{1}_{t}(\dot{B}^{s+2}_{2,1})}\nonumber\\
\leq \, &\|w_{0}\|_{\dot{B}^{s}_{2,1}}+\|B_{0}\|_{\dot{B}^{s}_{2,1}}+\|f\|_{L^{1}_{t}(\dot{B}^{s}_{2,1})}+\|g\|_{L^{1}_{t}(\dot{B}^{s}_{2,1})}\nonumber\\
&+C\int_{0}^{t}\big(\|\nabla A\|_{\dot{B}^{\frac{d}{2}}_{2,1}}+\|\nabla E\|_{\dot{B}^{\frac{d}{2}}_{2,1}}\big)
\big(\|w\|_{\dot{B}^{s}_{2,1}}+\|B\|_{\dot{B}^{s}_{2,1}}\big)\mathrm{d}\tau\nonumber
\end{align}
for some constant $C$ depending only on $d$ and $s$. Applying Gronwall's lemma then completes the proof.
\end{proof}


If $(w,B)$ solve the following systems
\begin{equation}\label{eqb6}
\left\{
\begin{array}{l}
\partial_{t} w-\mu\Delta w+\mathcal{P}(A\cdot\nabla w)+\mathcal{P}(w\cdot\nabla A)\\
\qquad \qquad \qquad \qquad -\mathcal{P}(B\cdot\nabla E)-\mathcal{P}(E\cdot\nabla B)=\mathcal{P}f,\\
\partial_{t} B-\nu\Delta B+\mathcal{P}(A\cdot\nabla B)-\mathcal{P}(B\cdot\nabla A)\\
\qquad \qquad \qquad \qquad +\mathcal{P}(w\cdot\nabla E)-\mathcal{P}(E\cdot\nabla w)=\mathcal{P}g, \\
\dv w=0,\quad \dv B=0,\\
(w,B)_{|t=0}=(w_{0},B_{0})(x), \quad x\in\mathbb{R}^d,
\end{array}
\right.
\end{equation}
we have

\begin{Proposition}\label{AProp3}
Let $s\in (-\frac{d}{2},\frac{d}{2}].$
Let $s\in(-\frac{d}{2},\frac{d}{2}].$
Assume that $w_0,B_0\in \dot{B}^{s}_{2,1}$ with $\dv w_0=\dv B   =0$, $f,g\in L^1_T( \dot{B}^{s}_{2,1})$, and
$A,E\in L^1_T( \dot{B}^{\frac{d}{2}+1}_{2,1})$ are time-dependent vector fields.
Then there exists a universal constant $\kappa$, and a constant C depending only on $d$
and $s$, such that, for all $t\in [0,T]$,
\begin{align}
&\!\!\!\!\!\!\!\!\!\!\!\!\|(w,B)\|_{\tilde{L}^{\infty}_{t}(\dot{B}^{s}_{2,1})}+\kappa\underline{\nu}\|(w,B)\|_{L^{1}_{t}(\dot{B}^{s+2}_{2,1})}\nonumber\\
\leq\, &\big(\|(w_{0},B_{0})\|_{\dot{B}^{s}_{2,1}}+\|(f,g)\|_{L^{1}_{t}(\dot{B}^{s}_{2,1})}\big)
 \exp\bigg\{C\int_{0}^{t}\big(\|\nabla A\|_{\dot{B}^{\frac{d}{2}}_{2,1}}+\|\nabla E\|_{\dot{B}^{\frac{d}{2}}_{2,1}}\big)\mathrm{d}\tau\bigg\}\nonumber
\end{align}
with $\underline{\nu}:=\min\{\mu,\nu\}$.
\end{Proposition}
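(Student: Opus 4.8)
The plan is to mimic the proof of Proposition \ref{AProp2} almost line by line; the only genuinely new feature here is the interaction of the Leray projector $\mathcal{P}$ with the energy estimates, and the crux will be to show that $\mathcal{P}$ can be harmlessly discarded from every scalar product. The starting point is to record the divergence-free structure. The constraints $\dv w=\dv B=0$ are built into the system \eqref{eqb6}, and they are compatible: applying $\dv$ to the two evolution equations and using $\dv\mathcal{P}=0$ together with $\dv\Delta=\Delta\dv$ shows that each of $\dv w$ and $\dv B$ solves a heat equation with vanishing initial data. In particular, setting $w_j:=\dot{\Delta}_jw$ and $B_j:=\dot{\Delta}_jB$, the localized fields $w_j$ and $B_j$ are divergence-free for every $j\in\mathbb{Z}$ and every $t\in[0,T]$.

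Next I would localize \eqref{eqb6} in frequency by applying $\dot{\Delta}_j$. Since $\mathcal{P}$ is a Fourier multiplier, it commutes with $\dot{\Delta}_j$, so this produces evolution equations for $(w_j,B_j)$ of exactly the same shape as those arising in the proof of Proposition \ref{AProp2}, the only difference being that each nonlinear term now carries an extra factor $\mathcal{P}$. One then takes the $L^2$ inner product of the localized $w$-equation with $w_j$ and of the localized $B$-equation with $B_j$. Here is the key observation: because $\mathcal{P}$ is an orthogonal (hence $L^2$-selfadjoint) projection and $w_j,B_j$ are divergence-free, so that $\mathcal{P}w_j=w_j$ and $\mathcal{P}B_j=B_j$, for every vector field $X$ one has
\begin{equation*}
\int \dot{\Delta}_j\mathcal{P}X\cdot w_j\,\mathrm{d}x
=\int \dot{\Delta}_j X\cdot \mathcal{P}w_j\,\mathrm{d}x
=\int \dot{\Delta}_j X\cdot w_j\,\mathrm{d}x,
\end{equation*}
and likewise with $B_j$. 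Thus the projector drops out of all the integrals, and the resulting energy identity coincides verbatim with the one obtained in the proof of Proposition \ref{AProp2}. In particular, the transport contributions are handled exactly as before, by writing $\dot{\Delta}_j(A\cdot\nabla w)=A\cdot\nabla w_j+R_j^1$ and integrating by parts, which yields the same $\dv A$, $\dv E$ terms and the same commutators $R_j^1,\dots,R_j^4$.

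From this point on the argument is identical to that of Proposition \ref{AProp2}: I would estimate the localized products $\dot{\Delta}_j(w\cdot\nabla A)$, $\dot{\Delta}_j(B\cdot\nabla E)$, etc., and the commutators $R_j^1,\dots,R_j^4$ by means of Propositions \ref{prop2.2} and \ref{prop2.3} and the commutator estimates of \cite{BCD}, then divide the energy inequality by $\|w_j\|_{L^2}+\|B_j\|_{L^2}$, integrate in time, multiply by $2^{js}$, sum over $j\in\mathbb{Z}$, and close with Gronwall's lemma; the factor $\underline{\nu}=\min\{\mu,\nu\}$ appears through Bernstein's inequality applied to the two dissipative terms. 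The only new step relative to Proposition \ref{AProp2} is the removal of $\mathcal{P}$ described above, and this is where I expect the single point of care to lie, since every subsequent nonlinear and commutator estimate is precisely the one already performed for the non-projected system \eqref{eqb3}.
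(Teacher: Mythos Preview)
Your proposal is correct and follows essentially the same approach as the paper: localize with $\dot{\Delta}_j$, use the self-adjointness of $\mathcal{P}$ together with $\mathcal{P}w_j=w_j$, $\mathcal{P}B_j=B_j$ to drop the projector from every $L^2$ inner product, and then repeat the argument of Proposition~\ref{AProp2} verbatim. The paper's proof is precisely this, stated as the identity $\int h\cdot w_j\,\mathrm{d}x=\int \mathcal{P}h\cdot w_j\,\mathrm{d}x$ for divergence-free $w_j$.
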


\begin{proof}
The proof  is similar to that of Proposition \ref{AProp2}. The evolution equations for $(w_{j},B_{j}):=(\dot{\Delta}_{j}w,\dot{\Delta}_{j}B)$
now read
\begin{equation}\label{eqb7}\left\{
\begin{array}{l}
\partial_{t} w_{j}-\mu\Delta w_{j}+\mathcal{P}(A\cdot\nabla w_{j})-\mathcal{P}(E\cdot\nabla B_{j})\\
\qquad\qquad  =\mathcal{P}f_{j}-\dot{\Delta}_{j}\mathcal{P}(w\cdot\nabla A)+\dot{\Delta}_{j}\mathcal{P}(B\cdot\nabla E)+\mathcal{P}R_{j}^{1}-\mathcal{P}R_{j}^{2}£¬\\
\partial_{t} B_{j}-\nu\Delta B_{j}+\mathcal{P}A\cdot\nabla B_{j}-\mathcal{P}E\cdot\nabla w_{j}\\
\qquad \qquad =\mathcal{P}g_{j}+\dot{\Delta}_{j}\mathcal{P}(B\cdot\nabla A)-\dot{\Delta}_{j}\mathcal{P}(w\cdot\nabla E)+\mathcal{P}R_{j}^{3}-\mathcal{P}R_{j}^{4}¡£
\end{array}
\right.\nonumber
\end{equation}
Since $\dv w_{j}=0$ and $\dv H_{j}=0$, we can deduce that
$$\int h\cdot w_{j}\mathrm{d}x=\int \mathcal{P}h\cdot w_{j}\mathrm{d}x$$ for any $h\in L^{2}(\mathbb{R}^{d})$.
Taking the $L^{2}$ inner product for the equations in \eqref{eqb7} with $w_{j}$ and $B_{j}$ respectively,
the operator $\mathcal{P}$ may be ``omitted" in the computations so that
by proceeding along the lines of the proof of Proposition \ref{AProp2}, we get the desired inequality.
\end{proof}

\begin{Remark}
In the case of $d=2$, the  existence  time $T$ in  Propositions \ref{imhd}, \ref{AProp2}, and  \ref{AProp3} may take $+\infty$.
Since we  mainly study the local solution, we shall not discuss the details here.
\end{Remark}

\bigskip
\section*{Acknowledgements}

F.-C. Li is supported in part by NSFC  Grant Nos. 11271184, 11671193  and
   PAPD.
Y.-M. Mu is supported by the Doctoral Starting up Foundation of Nanjing University of Finance \& Economics  Grant No. MYMXW16001.
D. Wang's research is supported in part by the NSF Grant DMS-1312800 and NSFC Grant No. 11328102.

 \bigskip

\medskip

Received  xxxx  20xx; revised  xxxx  20xx.

\medskip


\begin{thebibliography}{99}


\bibitem{BCD}
 \newblock H. Bahouri, J.-Y. Chemin and R. Danchin,
 \newblock \emph{ Fourier Analysis and Nonlinear Partial Differential Equations},
 \newblock  Grundlehren der Mathematischen Wissenschaften  \textbf{343}. Springer, Heidelberg, 2011.

\bibitem{BY}
 \newblock D. Bian, B.  Yuan,
\newblock \emph{Well-posedness in super critical Besov spaces for the compressible MHD equations},
 \newblock {Int. J. Dyn. Syst. Differ. Equ.} \textbf{3} (2011),   383-399.





\bibitem{CD}
\newblock F. Charve, R. Danchin,
 \newblock  \emph{A global existence result for the compressible Navier-Stokes equations in the critical $L^p$ framework},
 \newblock { Arch. Ration. Mech. Anal.} \textbf{198} (2010), no. \textbf{1}, 233-271.



\bibitem{Ch} 
 \newblock J.-Y. Chemin,
  \newblock \emph{Perfect Incompressible Fluids},
   \newblock   Translated from the
1995 French original by Isabelle Gallagher and Dragos Iftimie. Oxford
Lecture Series in Mathematics and its Applications \textbf{14}, The Clarendon Press,
Oxford University Press, New York, 1998.



\bibitem{CMZ}
\newblock Q.-L. Chen, C.-X. Miao and Z.-F. Zhang,
\newblock \emph {Global well-posedness for compressible navier-stokes equations with highly oscillating initial velocity},
\newblock { Comm. Pure Appl. Math.} \textbf{63} (2010), no. \textbf{9}, 1173-1224.


\bibitem{Daa}
\newblock R.  Danchin,
\newblock \emph {Global existence in critical spaces for compressible navier-stokes equations},
\newblock {Invent. Math.} \textbf{141} (2000), no. \textbf{3}, 579-614.

\bibitem{DA}
\newblock R. Danchin,
\newblock \emph{ Local theory in critical spaces for compressible viscous and heat-conductive gases},
\newblock {Comm.Partial Differential Equations. }\textbf{26} (2001),  1183-1233.

\bibitem{Db}
\newblock R. Danchin,
\newblock \emph {On the uniqueness in critical spaces for compressible navier-stokes equations},
\newblock {NoDEA Nonlinear Differential Equations Appl.} \textbf{12} (2005), no. \textbf{1}, 111-128.


\bibitem{De}
\newblock R. Danchin,
\newblock  \emph{Well-posedness in critical spaces for barotropic viscous fuids with truly not constant density}.
\newblock { Comm. Partial Differential Equations.} \textbf{32} (2007), 1373-1397.

\bibitem{DC} 
 \newblock R.Danchin,
  \newblock  \emph{Zero Mach number limit in critial spaces for
compressible navier-stokes equations},
 \newblock  {Ann. Sci. \'{E}c. Norm. Sup\'{e}r.} (4), \textbf{ 35} (2002), 27-75.


\bibitem{D02}
\newblock R.  Danchin,
 \newblock  \emph{Zero Mach number limit for compressible flows with periodic boundary conditions}.
\newblock { Amer. J. Math.} \textbf{124} (2002), no. \textbf{6}, 1153-1219.




\bibitem{DJJ}
\newblock C.-S. Dou, S. Jiang and Q.-C. Ju,
\newblock \emph{Global existence and the low Mach number limit for the compressible magnetohydrodynamic equations in a bounded domain with perfectly conducting boundary},
\newblock  { Z. Angew. Math. Phys.} \textbf{64} (2013), no. \textbf{6}, 1661-1678.

\bibitem{f2}
\newblock E. Feireisl,
\newblock \emph{ Dynamics of viscous compressible
fluids, Oxford Lecture Series in Mathematics and its Applications, \textbf{26},}
\newblock  Oxford University Press, Oxford, 2004.

\bibitem{f1}
\newblock E. Feireisl, A. Novotn$\acute{\mathrm{y}}$ and H. Petzeltov$\acute{\mathrm{a}}$,
\newblock \emph { On the
existence of globally defined weak solutions to
the Navier-Stokes equations,}
\newblock  {J. Math. Fluid Mech.} \textbf{3} (2001),358-392.

\bibitem{FNS}
\newblock E. Feireisl, A.  Novotny and Y. Sun,
\newblock \emph{Dissipative solutions and the incompressible inviscid limits of the compressible magnetohydrodynamic system in unbounded domains},
\newblock  {Discrete Contin. Dyn. Syst.} \textbf{34} (2014), no. \textbf{1}, 121-143.



\bibitem{Ha} 
 \newblock  C. C. Hao,
  \newblock  \emph {Well-posedness to the compressible viscous
magnetohydrddynamic system},
 \newblock{ Nonlinear Anal. Real World Appl.},
\textbf{12} (2011), 2962-2972.

\bibitem{Has}
\newblock  B. Haspot,
\newblock \emph {Existence of global strong solutions in critical spaces for barotropic viscous fluids,}
 \newblock { Arch. Ration. Mech. Anal.} \textbf{202} (2011), no. \textbf{2}, 427-460.



\bibitem{HW}  
 \newblock X-P. Hu, D.-H. Wang,
 \newblock  \emph{Low mach number limit of viscous compressible magnetohydrodynamic flows},
  \newblock { SIAM J. Math. Anal.}, \textbf{41} (2009), no. 3, 1272-1294.



\bibitem{HW1}
 \newblock X.-P. Hu, D.-H.  Wang,
  \newblock \emph {Global existence and large-time behavior of solutions to the three-dimensional
equations of compressible magnetohydrodynamic flows, }
 \newblock  {Arch. Ration. Mech. Anal.}, \textbf{197} (2010)  203-238.


\bibitem{JJL1}
 \newblock S. Jiang, Q.C. Ju and F.C. Li,
 \newblock  \emph {Incompressible   limit of the compressible magnetohydrodynamic equations with vanishing viscosity coefficients},
 \newblock {SIAM J. Math. Anal.}, \textbf{42} (2010), 2539-2553

\bibitem{JJL2} 
 \newblock S. Jiang, Q.C. Ju and  F.C. Li,
  \newblock \emph {Incompressible limit of the
compressible Magnetohydrodynamic equations with periodic boundary conditions},
 \newblock {Comm. Math. Phys.}, \textbf{ 297} (2010), 371-400.

\bibitem{JLi}
\newblock S. Jiang, F.-C. Li,
\newblock \emph{Rigorous derivation of the compressible magnetohydrodynamic equations from the electromagnetic fluid system},
\newblock { Nonlinearity} \textbf{25} (2012), no. \textbf{6}, 1735-1752.



\bibitem{KL}
\newblock A. G. Kulikovskiy, G. A. Lyubimov,
\newblock \emph{ Magnetohydrodynamics,}
\newblock  {Addison-Wesley, Reading, Massachusetts}, 1965.


\bibitem{LL} 
 \newblock  L. D. Landau, E. M. Lifshitz,
  \newblock \emph{Electrodynamics of Continuous Media},
  \newblock 2nd ed., New York, Pergamon, 1984.


\bibitem{LY}
\newblock F.-C. Li, H.-Y. Yu,
\newblock \emph{Optimal decay rate of classical solutions to the compressible magnetohydrodynamic equations},
 \newblock { Proc. Roy. Soc. Edinburgh Sect. A } \textbf{141} (2011), no. 1, 109-126.

\bibitem{LXZ}
\newblock H.-L. Li, X.-Y. Xu and J.-W.Zhang,
\newblock \emph {Global classical solutions to 3D compressible magnetohydrodynamic equations with large oscillations and vacuum.}
\newblock {SIAM J. Math. Anal.} \textbf{45} (2013), no. \textbf{3}, 1356-1387.

\bibitem{LSW}
\newblock  X.-L. Li, N. Su and D.-H.  Wang,
\newblock \emph { Local strong solution to the compressible magnetohydrodynamic flow with large data.}
\newblock   {J. Hyperbolic Differ. Equ.} \textbf{8} (2011), no. \textbf{3}, 415-436.

\bibitem{Li}
\newblock Y.P. Li,
  \newblock \emph {Convergence of the compressible magnetohydrodynamic equations to incompressible magnetohydrodynamic equations}.
  \newblock {J. Differential Equations}, \textbf{252} (2012),  2725--2738.

\bibitem{Lio}
\newblock P.-L. Lions,
\newblock \emph{ Mathematical Topics in Fluid Mechanics,
 Vol.\textbf{2}. Compressible Models.}
\newblock {Oxford Lecture Series in Mathematics and Its Applications,
 \textbf{10}.Oxford Science Publications. Clarendon, Oxford University Press}, 1998.

\bibitem{Mu}
\newblock Y.-M. Mu,
\newblock \emph{Convergence of the compressible isentropic magnetohydrodynamic equations to the
 incompressible magnetohydrodynamic equations in critical spaces, }
\newblock  {Kinet. Relat. Models}, \textbf{7} (2014), no. \textbf{4}, 739¨C753.

  \bibitem{LYZ}
\newblock  S. Liu, H. Yu and J.-W. Zhang,
\newblock \emph{Global weak solutions of 3D compressible MHD with discontinuous initial data and vacuum.}
\newblock  {J. Differential Equations} \textbf{254} (2013), no.\textbf{ 1}, 229-255.




\bibitem{PD}
\newblock R.V. Polovin, V.P. Demutskii,
 \newblock \emph {Fundamentals Of Magnetohydrodynamics,}
\newblock Consultants, Bureau, New York, 1990.




\bibitem{Su}
\newblock A. Suen,
 \newblock \emph{A blow-up criterion for the 3D compressible magnetohydrodynamics in terms of density.}
 \newblock {Discrete Contin. Dyn. Syst.} \textbf{33} (2013), no. \textbf{8}, 3791-3805.

\bibitem{XZ}
\newblock X. Xu, J. Zhang,
\newblock \emph{ A blow-up criterion for 3D compressible magnetohydrodynamic equations with vacuum.}
 \newblock { Math. Models Methods Appl. Sci.} \textbf{22} (2012), no. \textbf{2}, 1150010, 23 pp.



\bibitem{SH}
\newblock A. Suen, D. Hoff,
\newblock \emph{ Global low-energy weak solutions of the equations of three-dimensional compressible magnetohydrodynamics.}
\newblock { Arch. Ration. Mech. Anal.} \textbf{205} (2012), no. \textbf{1}, 27-58.



\bibitem{T} 
 \newblock H. Triebel,
 \newblock \emph{Theory of Function Spaces},
 \newblock   Monographs in Mathematics  \textbf{78}. Birkh\"{a}user Verlag, Basel, 1983.




\end{thebibliography}
\end{document}